\numberwithin{equation}{section}
\theoremstyle{plain} 
\newtheorem{theorem}{Theorem}[section]
\newtheorem{lemma}[theorem]{Lemma}
\newtheorem{corollary}[theorem]{Corollary}
\newtheorem{proposition}[theorem]{Proposition}
\newtheorem{remark}[theorem]{Remark}
\newtheorem{definition}[theorem]{Definition}
\newtheorem{assumption}[theorem]{Assumption}
\theoremstyle{definition}
\newtheorem{notation}[theorem]{Notation}
\renewcommand{\Re}{\mathrm{Re}\,}
\renewcommand{\Im}{\mathrm{Im}\,}
\newcommand{\E}{{\mathbf E }}
\newcommand{\Cov}{{\mathbf{Cov} }}
\newcommand{\V}{{\mathbf{Var} }}
\newcommand{\R}{{\mathbb R }}
\newcommand{\N}{{\mathbb N}}
\newcommand{\Z}{{\mathbb Z}}
\renewcommand{\P}{{\mathbf P}}
\newcommand{\C}{{\mathbb C}}
\newcommand{\OO}{{ O}}
\newcommand{\X}{{\mathcal X}}
\newcommand{\m}{{\mathfrak m}}
\newcommand{\A}{{\mathcal A}}
\newcommand{\Q}{{\mathcal Q}}
\newcommand{\B}{{\mathcal B}}
\newcommand{\s}{{\mathcal S}}
\newcommand{\ii}{\mathrm{i}}
\newcommand{\dd}{\mathrm{d}}
\newcommand{\ie}{\emph{i.e., }}
\newcommand{\eg}{\emph{e.g., }}
\newcommand{\cf}{\emph{c.f., }}
\newcommand{\wt}{\widetilde}
\newcommand{\ud}{\underline}
\newcommand{\wh}{\widehat}
\newcommand{\gz}{G^{z}}
\newcommand{\bs}{\boldsymbol}
\def\ga{G^{z_1}}
\def\gb{G^{z_2}}
\def\ma{\m^{z_1}}
\def\mb{\m^{z_2}}
\def\Tr{\mathrm{Tr}}
\def\F{\mathcal{F}}
\def\K{\mathcal{K}}
\def\one{\mathds{1}}
\def\Dim{\Delta \widetilde{\mathrm{Im}}\,}
\def\<{\langle}
\def\>{\rangle}
\renewcommand{\mathbf}[1]{\bs{#1}}
\begin{document}
	
	\begin{minipage}{0.85\textwidth}
		\vspace{2.5cm}
	\end{minipage}
	\begin{center}
		\large\bf  
		Precise asymptotics for the spectral radius of a large random matrix	
	
	\end{center}

	\renewcommand*{\thefootnote}{\fnsymbol{footnote}}	
	\vspace{0.5cm}
	
	\begin{center}
		\begin{minipage}{1.0\textwidth}
			\begin{minipage}{0.33\textwidth}
				\begin{center}
					Giorgio Cipolloni\footnotemark[1]\\
					\footnotesize 
					{Princeton University}\\
					{\it gc4233@princeton.edu}
				\end{center}
			\end{minipage}
			\begin{minipage}{0.33\textwidth}
				\begin{center}
					L\'aszl\'o Erd\H{o}s\footnotemark[2]\\
					\footnotesize 
					{IST Austria}\\
					{\it lerdos@ist.ac.at}
				\end{center}
			\end{minipage}
			\begin{minipage}{0.33\textwidth}
				\begin{center}
					Yuanyuan Xu\footnotemark[3]\\
					\footnotesize 
					{AMSS,CAS}\\
					{\it yyxu2023@amss.ac.cn}
				\end{center}
			\end{minipage}
		\end{minipage}
	\end{center}
	
	\bigskip

	\footnotetext[2]{\footnotesize{Partially supported by ERC Advanced Grant ``RMTBeyond" No.~101020331.}}
	\footnotetext[3]{Supported by ERC Advanced Grant ``RMTBeyond" No.~101020331.}

\renewcommand*{\thefootnote}{\arabic{footnote}}
	
	\vspace{5mm}

	\begin{center}
		\begin{minipage}{0.91\textwidth}\small{
				{\bf Abstract.}}
We consider  the spectral radius of 
a large  random matrix $X$ with independent, identically distributed 
entries. 
We show that 
its typical size is given by a precise three-term asymptotics  with an optimal error term
beyond the radius of the celebrated circular law. The coefficients in this asymptotics
are universal but they differ from a similar asymptotics recently proved
for the rightmost eigenvalue of $X$ in~\cite{maxRe}.  To access the more complicated
spectral radius, we need to establish a new decorrelation mechanism
for the low-lying singular values  of $X-z$ for 
different complex shift parameters $z$ using the Dyson Brownian Motion.

		\end{minipage}
	\end{center}

	\vspace{5mm}
	
	{\footnotesize
		{\noindent\textit{Keywords}: Extremal statistics, Gumbel distribution, Ginibre ensemble, Dyson Brownian motion}\\
		{\noindent\textit{MSC number}: 15B52, 60B20}\\
		{\noindent\textit{Date}:  \today \\
	}
	
	\vspace{2mm}

	\thispagestyle{headings}

\bigskip

\normalsize

\section{Introduction}

Large $n\times n$ random matrices $X=(x_{ij})_{i,j=1}^n$ 
with independent, identically distributed (i.i.d.) entries are the natural non-Hermitian
counterparts of the celebrated Wigner matrices in the Hermitian world. They have been used to study
large systems of damped linear ordinary differential equations, 
$$
\frac{\dd}{\dd t}{\mathbf u}(t) = X{\mathbf u}(t) -{\mathbf u}(t), \qquad {\mathbf u}(t) \in\C^n,
$$
where no specific information is available for the coefficients $x_{ij}$ apart from a general statistical ansatz
that they are i.i.d.. Such situation arises in mathematical biology~\cite{Hastings82} to model
the temporal growth of an ecological system with $n$ interacting species as well as
in theoretical neuroscience~\cite{SCS88} to describe the evolution of $n$ fully connected neurons.
 In his pioneering paper~\cite{May72} in 1972 R. May  raised the question of the long time stability of this ODE system
 which amounts to understand the typical behavior of the rightmost eigenvalue of $X$, which we denote by $\max\Re \mbox{Spec}(X)$.
 He correctly identified a sharp transition in stability, namely that the system is stable if 
 the variance $\E |x_{ij}|^2$ is strictly smaller than $1/n$ and unstable if it is strictly bigger\footnote{
 Interestingly, May correctly {\it located} the transition but his very intuitive
 prediction on its {\it width}  was erroneous since he  connected it with the similar transition for the 
 singular values of $X$ that, as a Hermitian eigenvalue problem,
  behave quite differently from the genuine non-Hermitian eigenvalues.}.
  
  In the early literature on the subject there has   been some ambiguity whether
  stability of the system should be  defined via the spectral radius $\rho(X)$ of $X$ or via the rightmost  eigenvalue 
   $\max\Re \mbox{Spec}(X)$.
  While the growth of ${\mathbf u}(t) $ is determined by the latter, it is bounded by the former
  and  the exact behavior of the
  spectral radius became  the main 
  focus of interest since  the first mathematically rigorous work on the subject  in 1984 by
   Cohen and Newman~\cite[p.285]{CohNew1984}. They also 
  raise the basic question about the precise relation between $\rho(X)$ and $n\E |x_{ij}|^2$
  in~\cite[Section 4]{CohNew1984}, which has initiated many subsequent works.
  
  From now on, without loss of generality, we assume that
  $\E |x_{ij}|^2 =\frac{1}{n}$ to fix the scale. The question of Cohen and Newman from one side
   was answered shortly afterwards
   by Geman~\cite{Geman1986} (see also~\cite{Hwang86}) by showing that $\limsup_n \rho(X) \le 1$ almost surely
   under a high moment condition  on $\sqrt{n}x_{ij}$, which was relaxed by Bai and Yin~\cite{BaiYin} to a finite four moment condition.
   This was further relaxed to $2+\epsilon$ moment in~\cite{BCCT18}  more than thirty years later
   and finally the result under the optimal
   second moment condition was achieved by Bordenave, Chafa\"i and Garcia-Zelada very recently in~\cite{BCG22}.
  The matching lower bound follows from Girko's circular law~\cite{Girko1984, Bai1997}, asserting 
  that the density of eigenvalues converges to the uniform measure on the unit disk, 
  that was proven in 2010 by Tao and Vu~\cite{TV10b} under the weakest  condition that the second moment of $\sqrt{n} x_{ij}$ is finite. 
  This yields the asymptotics $\lim_{n\to\infty} \rho(X)=1$.  
   An almost optimal bound on the
  speed of convergence, in the form
  \begin{equation}
  |\rho(X)-1|\le n^{-1/2+\epsilon}
  \label{rhospeed}
  \end{equation}
    with  high probability, was established  a few years ago
  in~\cite{AEK19b} under high moment conditions\footnote{The paper~\cite{AEK19b} also  proved the 
  analogue of~\eqref{rhospeed} for much  more general non-Hermitian random matrices 
  with independent centred entries that may have different distribution; here even the
  identification of the leading term of $\rho(X)$ was a challenge.  Later this leading term  was also identified
  even if the entries $x_{ij}$ have certain nontrivial correlation in~\cite{AK21}.}.  
  Note that all these results would hold verbatim for $\max\Re \mbox{Spec}(X)$ as well, since the difference between
   $\rho(X)$ and $\max\Re \mbox{Spec}(X)$ is not visible at scales that the above mentioned results can identify. 
   In contrast, we now compute the size $\rho(X)$ and $\max\Re \mbox{Spec}(X)$ 
   with a very high precision so that their difference indeed emerges. 
  
  Much more precise estimates on $\rho(X)$ are known for the special  Ginibre case, i.e.  where $x_{ij}$ are 
  standard i.i.d. Gaussians; in this case
    exact formulas are available. The complex Ginibre case
  is especially simple since $X$ has an additional 
   rotational symmetry (in distribution). Using Kostlan's observation~\cite{Kostlan92} the precise result, 
   stated in  this form by Rider in~\cite{R03}, asserts that
    \begin{equation}\label{Crho}
    \rho(X) \stackrel{\text{d}}{=} 1 + \sqrt{\frac{\gamma_n}{4n}} + 
    \frac{1}{\sqrt{4n\gamma_n}} G_n, \qquad \gamma_n:=\log n -2\log\log n-
    \log (2\pi),
\end{equation}
where $G_n$ converges in distribution to a standard \emph{Gumbel random variable}, i.e.
\[
\lim_{n\to \infty} \mathbf{P} (G_n\le t) = \exp{(-e^{-t})}
\]
for any fixed $t\in \R$. The analogous result for the real Ginibre  
ensemble, obtained by Rider and Sinclair in~\cite{RS14},   shows
that~\eqref{Crho} also holds for the real Ginibre case with the same
scaling factor $\gamma_n$, but the limit of $G_n$ 
is a  rescaled Gumbel  with distribution function $\exp{(-\frac{1}{2}e^{-t})}$.
The emergence of the Gumbel law is not surprising since the few eigenvalues that compete
for the maximal modulus are typically far away from each other, hence are
almost independent (in fact asymptotically they form a Poisson process~\cite{Bender, Ake, maxRe_Gin}).
 This scenario is very different from the strong correlation among
the top eigenvalues of a Hermitian random matrix with their Tracy-Widom fluctuations.

In the spirit of {\it universality} of spectral statistics for random matrices, it is very natural to 
ask whether~\eqref{Crho} holds beyond Gaussians, i.e. for matrices $X$ with more general i.i.d. entry distribution. 
This obvious question was raised several times, e.g. in~\cite[Section 7]{BC12}, in the first online
version of~\cite{BCG22} as well as in D. Chafa\"i's  excellent blog~\cite{Chblog}.
Our main result in this paper is the optimal asymptotics for $\rho(X)$ up to the precision of the conjectured
Gumbel scale, i.e. that for any $\epsilon>0$ there is a $C_\epsilon$ such that
\begin{equation}
\label{maxR}
 \limsup_n \mathbf{P}\Bigg( \Big|\rho(X)-1- \sqrt{\frac{\gamma_n}{4n}}\Big|  \ge \frac{C_\epsilon}{\sqrt{n\log n}} \Bigg) 
 \le \epsilon.
\end{equation}
Note that this result substantially  improves on~\eqref{rhospeed} by replacing the $n^\epsilon$ factor
by a precise   three-term  asymptotics encoded in $\gamma_n$.

An analogous result for $\max\Re \mbox{Spec}(X)$ instead of $\rho(X)$
was recently obtained in~\cite{maxRe}, with the only difference that the coefficients of
the three terms in $\gamma_n$ in~\eqref{Crho} are modified to 
$\gamma_n' = \frac{1}{2} \log n -\frac{5}{2}\log\log n -\frac{1}{2} \log(2\pi^4)$.
We remark that three-term asymptotics for extremal statistics is quite common;
in the random matrix context this has been extensively investigated 
in connection with the Fyodorov-Hiary-Keating conjecture~\cite{FHK}
for extreme values of the characteristic polynomial of various ensembles~\cite{FS,ABB, PZ, CMN, Lambert, PZ1}.

 We remark that more than a year after the completion of the current paper in October 2022, we succeeded in 
proving the original conjecture on the universality of the Gumbel distribution both for 
$\max\Re \mbox{Spec}(X)$ and $\rho(X)$, the result was posted on ArXiv in December 2023
 \cite{Gumbel}. We will comment on the relation between 
these two works at the end of Section~\ref{sec:summary}. 

\bigskip

Now we explain the key novelties of our current work compared with the proof of
the similar result for $\max\Re \mbox{Spec}(X)$ in~\cite{maxRe}.  Both proofs start with
Girko's formula~\cite{Girko1984} for the   linear eigenvalue statistics that translates the original non-Hermitian
spectral problem to a Hermitian one:
\begin{equation}\label{girko}
\sum_{\sigma\in \mbox{Spec}(X)} f(\sigma)=-\frac{1}{4 \pi} \int_{\C} \Delta f(z) A(z) \dd^2 z, \qquad
A(z):= \int_0^\infty \Im \Tr G^{z}(\ii \eta) \dd \eta ,
\end{equation}
where $G^z$  is family of Hermitized resolvents, 
parametrized by a new spectral parameter $z\in \C$:
\begin{align}\label{initial1}
	 G^{z}(w):=(H^{z}-w)^{-1}, \qquad H^{z}:=\begin{pmatrix}
		0  &  X-z  \\
		X^*-\overline{z}   & 0
	\end{pmatrix},\qquad w \in \C\setminus \R.
\end{align}
The test function $f$ is chosen carefully  as a (smoothed) characteristic function
of a domain where the few largest (in modulus) eigenvalues of $X$ are expected. In the current analysis of $\rho(X)$
the support of $f$ will be a narrow  annulus with radius $1+\sqrt{\gamma_n/4n}$ and width $C/\sqrt{n\log n}$;
in the study of $\max\Re \mbox{Spec}(X)$ the corresponding domain was a narrow vertical  rectangle of width
$n^{-1/2}$ and height $n^{-1/4}$ (modulo logarithmic factors) at a distance $1+\sqrt{\gamma_n'/4n}$ from the
origin.  Both domains are meticulously scaled to contain (essentially)
 finitely many eigenvalues and to
 contain the one that realizes the extremal statistics. The precise dimensions are
determined  from 
the explicit Ginibre calculation.
 
 The key point is that we work in the {\it atypical} or {\it large deviation} regime for the eigenvalues of $X$. Locally 
 the eigenvalues have density $n$ on the unit disk that decays as $n e^{-n(|z|^2-1)/2}$ 
 outside of the disk, $|z|>1$ as seen from the explicit formulas.
  The eigenvalues fluctuate near the unit circle\footnote{This can be explicitly computed in the Ginibre case while for  the general i.i.d. case one can infer from  the universality of local correlation functions near the 
 edge~\cite{CES21}.} only on scale $n^{-1/2}$.
 Hence in a small neighborhood 
 of any fixed point $z_0$ on the unit circle there will be typically no  eigenvalue that is
  $\sqrt{\gamma_n/4n}\sim n^{-1/2}\sqrt{\log n} $ away from the unit disk. The  relatively large area  of our domains
  make up for the small  probability of finding an eigenvalue locally. 
  
  Technically we work
  on the right hand (Hermitian) side of Girko's formula 
  and we essentially need two types of information on $H^z$. To explain them
  we may write
  \begin{equation}\label{Az}
     A(z)=  \int_0^\infty \Im \Tr G^{z}(\ii \eta) \dd \eta =  \int_0^{\eta_0} \sum_{i=-n}^n \frac{\eta}{ (\lambda_i^z)^2+\eta^2}
     \dd\eta + \int_{\eta_0}^\infty \Im \Tr G^{z}(\ii \eta) \dd \eta,
  \end{equation}
  where $\lambda_i^z$ are the eigenvalues of $H^z$  (equivalently, singular values of $X-z$)
   and $\eta_0:= n^{-1+\epsilon}$    is a cutoff parameter. 
As a consequence of the block structure of $H^z$, its $2n$ eigenvalues come in opposite pairs, 
  $\lambda_{-i}^z=-\lambda_i^z$.
  We intentionally wrote out the integrand in~\eqref{Az} for  small $\eta$ in terms of the Hermitized eigenvalues
    since  in this regime essentially only the  two smallest (in modulus) 
    eigenvalues $\lambda_1^z=-\lambda_{-1}^z$ will play a role, while in the large $\eta$
    regime the entire resolvent is relevant.
   Among the two terms in~\eqref{Az}, the first one is more critical since the small $\eta$ regime is 
  more sensitive to the behavior of the individual low lying eigenvalues $\lambda_i^z$.
  The second term can be dealt with more robust resolvent methods, as long as $\eta\gg 1/n$ 
   ensuring a small error term in the local law of $G^z$.
   The cutoff $\eta_0$ is therefore chosen to be the smallest possible so that we are still 
    able to use resolvent methods.

    More precisely, for the second term in~\eqref{Az} we use a sophisticated {\it iterative cumulant expansion}\footnote{The iterative cumulant expansion has been systematically developed in~\cite{SX22, SX22+} extending 
  the  iterative  gain from so-called un-matched indices~\cite{EYY12, EKY13} and 
  exploiting that the leading deterministic terms may cancel 
   in certain situations~\cite{LS18, HLY20, HK21}.}
     in the {\it Green function comparison (GFT)}
      argument to compare the Green function $G^z$ 
     for i.i.d. and  Ginibre ensembles. Note that $\eta_0$ is just a  bit above the scale $1/n$ which is the threshold
     for iterative GFT arguments, since every expansion step, roughly speaking, 
     has a potential to gain a factor $1/(n\eta_0)$.  We remark that, unlike in most applications of GFT, 
     in our case there are no explicit formulas for the distribution of second term  in~\eqref{Az} even for
     the Gaussian case since the joint distribution of the spectra of $H^z$ for {\it different} $z$'s is unknown
     for Ginibre. Thus after GFT, for the Ginibre case we need to use Girko's formula ``backward" 
     since the distribution of its left hand side is explicitly understood.

    As to the first term in~\eqref{Az}, when $z$ is very close to the unit circle,
      the density $\rho^z$ of eigenvalues of $H^z$ vanishes near zero as a
    cubic root cusp, hence the typical fluctuation scale of $\lambda_1^z$ is of order $n^{-3/4}$. 
    Note that the cutoff $\eta_0$ is much smaller than this scale, thus the first term  in~\eqref{Az}
    is of order $(\eta_0/\lambda_1^z)^2
    \sim n^{-1/2+2\epsilon}$ in the typical regime for $\lambda_1^z$.  For the event where $\lambda_1^z$ is
    atypically small, we will need a lower tail estimate
    in the outside regime, $|z|>1$ of the form 
\begin{equation}\label{tailp}
\P\big( |\lambda_1^z| \leq y n^{-3/4}\big)\lesssim y^2 e^{-n\delta^2/2}, \qquad \delta:=|z|^2-1,
\end{equation}
for any $y>0$ (see~\eqref{tail} below for the precise statement). The smallness in our relevant $y\ll 1$ regime 
comes from two unrelated effects: the $y^2$ factor represents the level repulsion (between $\lambda_1^z$ and
its mirror image $\lambda_{-1}^z=-\lambda_1^z$), while the exponential factor stems from the 
decay of the density to find an eigenvalue $\lambda_1^z$ well within the gap
in the support of the limiting density of states $\rho^z$. 
 The tail bound~\eqref{tailp} with the exponential factor was originally proved and used in~\cite{maxRe} but only for the Ginibre ensemble. 
For the current paper we have to extend it to the i.i.d. case by  another GFT argument
 because now we need to exploit both smallness
effects on the i.i.d. level.

    In the analysis of both terms in~\eqref{Az} the main complication is that Girko's formula~\eqref{girko} contains $\Delta f$
    and for our  $f$, a smoothed characteristic function of a very anisotropic annular regime,
     we have $\int |\Delta f| \sim n^{1/2}$. So in the error terms    we need to obtain
    a precision that is by a factor $n^{-1/2}$ better than any leading term we compute.  This is 
    an unnaturally stringent requirement, but it is unclear how to exploit the cancellation effect 
    present in the $\int \Delta f(z) [\ldots] \dd^2 z$ integrals for the error terms that require to insert
    an absolute value. Moreover, we need
    these estimates not only in expectation but also in variance sense since we need to 
    prove a concentration of $\sum f(\sigma)$ around its nontrivial mean  to show that 
    with some large  probability  there is an eigenvalue on the support of $f$.
    
    All these difficulties have already been present in~\cite{maxRe} but less dramatically.
    The relevant domain, the vertical rectangle, used in~\cite{maxRe} is less anisotropic
    and yields only an additional factor $\int |\Delta f|\sim n^{1/4}$ to fight against, 
     which we could handle  solely with GFT methods. 
    In particular
    we could choose the cutoff threshold at an intermediate value $\eta_0= n^{-7/8-\epsilon}$ to make
    the typical contribution of the first term in~\eqref{Az} of order $(\eta_0/\lambda_1^z)^2
    \sim n^{-1/4-2\epsilon}$, immediately compensating for the loss in $\int |\Delta f|\sim n^{1/4}$, hence negligible.
    Moreover, the larger $\eta_0$ threshold made the analysis of the second term in~\eqref{Az} easier;
    since GFT gains a factor $1/(n\eta_0)\ge n^{-1/8+\epsilon}$ in every step, hence after at most three iteration
    steps we could compensate for $\int |\Delta f|\sim n^{1/4}$. In the current work  $1/(n\eta_0)=n^{-\epsilon}$, so
    we practically need to perform  $1/\epsilon$ iterations and carefully track all the leading terms
    whose integral against $\Delta f$ is small by explicit calculation, similarly to a few explicit terms in~\cite{maxRe}. 
    
     More fundamentally, however, all the  methods and ideas used in~\cite{maxRe}  even in some improved 
     form, like a more refined iteration scheme,  
     would not be able to handle the $\int |\Delta f|\sim n^{1/2}$ loss present in the spectral radius problem.
  This loss  is prohibitive in both 
     regimes in~\eqref{Az}; but it is the simplest to see in the contribution of order
      $(\eta_0/\lambda_1^z)^2    \sim n^{-1/2+2\epsilon}$ to $A(z)$
       from the typical $\lambda_1^z$-regime as we discussed above.
      On the one hand, the threshold $\eta_0$ cannot be chosen smaller than $n^{-1+\epsilon}$
       otherwise the iterative cumulant
      expansion  does not work. On the other hand,  $\lambda_1^z$ is a genuinely fluctuating quantity, it cannot be
      approximated effectively by any leading deterministic term  with the hope to integrate it out explicitly against 
      $\Delta f$ without inserting absolute value. So with our best efforts we are still off by an $n^\epsilon$ factor.
     
     We thus need to  exploit 
      a new mechanism that we call the {\it $|z_1-z_2|$-decorrelation effect}, which constitutes
      the main methodological novelty of this paper. 
      Based upon the explicit Ginibre formulas 
      the local spectrum of $X$ around a point $z_1$ and around $z_2$ are asymptotically independent 
      if $|z_1-z_2|\gg n^{-1/2}$. This  intuitively indicates, but {\it does not prove}\footnote{Although there is no direct  relation between the eigenvalues   $\sigma\in \mbox{Spec}(X)$ and the singular values of $H^z$ 
  apart from the trivial fact that $z=\sigma$ is an eigenvalue if and only if
  $H^z$ has a zero singular value, we still expect their correlation decay to be similar.}, 
  that $\lambda_1^{z_1}$
      and $\lambda_2^{z_2}$ are also asymptotically independent if   $|z_1-z_2|\gg n^{-1/2}$. Similarly, one expects
      independence of    $G^{z_1}$ and $G^{z_2} $ in the same regime.  Both these independences
      are necessary to handle the variance of~\eqref{girko}. Clearly we have
      \begin{equation}\label{cov}
         \V \Big[ \sum_{\sigma\in \mbox{Spec}(X)} f(\sigma)\Big]=\frac{1}{(4 \pi)^2} \iint_{\C} \Delta f(z_1) \Delta f(z_2) 
     \Cov\big(    A(z_1),
         A(z_2) \big) \dd^2 z_1 \dd^2 z_2,
      \end{equation}
     and we will prove that in the regime where $|z_1-z_2|\ge n^{-\gamma}$, with some small $\gamma>0$,
     the covariance in~\eqref{cov} is much smaller than the geometric mean of the corresponding two variances.
     In the opposite regime, $|z_1-z_2|< n^{-\gamma}$, we gain an additional factor $n^{-\gamma}$
     from the integration volume. These steps provide an additional small factor to compensate for 
     the $n^\epsilon$ explained above.
   
   A certain  version of the $|z_1-z_2|$-decorrelation effect
   has already been  used in the proof of the CLT for linear statistics~\cite[Theorem 5.2]{CES19}
   and~\cite[Theorem 3.2]{Zigzag}
   in the form of a two-resolvent local law for $\Tr G^{z_1}  A G^{z_2} B$ where the error
   term improves if $z_1$ and $z_2$ are far away. However, these results were in the 
    bulk regime $|z_1|, |z_2|\le 1-\epsilon$, now we need this effect  also at the edge (and even a bit beyond)
    and  more importantly in the  atypical tail regimes.
    
   The most remarkable instance of the $|z_1-z_2|$-decorrelation effect is  used for the
   first term in~\eqref{Az} in the atypical  regime of $\lambda_1^z$.  We prove the 
   essential decorrelation of the  lower tails of $\lambda_1^{z_1}$ and $\lambda_1^{z_2}$
  in the form
     \begin{align}\label{lambdatail1}
		\P\Big( |\lambda_1^{z_1}| \leq n^{-3/4-\alpha}, ~ |\lambda_1^{z_2}| \leq n^{-3/4-\alpha}\Big) 
		\lesssim \Big[\P\big( |\lambda_1^z| \leq 10n^{-3/4-\alpha}\big)\Big]^2 + O(n^{-D})
	\end{align}
	if $|z_1-z_2|\ge n^{-\gamma}$
with some small $\alpha, \gamma$ and any large $D$.  To appreciate this estimate,
notice that in our regime the probability on the right hand side is unusually  small for two unrelated reasons as we explained after~\eqref{tailp} (with $y\sim n^{-\alpha}$) 
and our decorrelation estimate accurately catches both effects.

We remark that we need and prove  the bound~\eqref{lambdatail1} only for the Ginibre ensemble
 (see Proposition~\ref{lemma_new_1} below), but it is
easy to extend to arbitrary i.i.d. matrices with a weaker error term.
 Note also that the left hand side of~\eqref{lambdatail1} is not accessible with 
explicit calculations since no formula is available for the {\it joint} distribution of the spectra of $H^{z_1}$ and 
$H^{z_2}$. Our proof uses the decorrelation mechanism of
the  {\it Dyson Brownian Motion (DBM)} with weakly correlated driving processes. It is somewhat surprising that sophisticated DBM methods are apparently necessary for a purely equilibrium result.

 The second instance of the $|z_1-z_2|$-decorrelation effect is to estimate
the contribution of the  first term in~\eqref{Az} to the covariance in~\eqref{cov} 
on the typical event when $\lambda_1^z$ is not too small. On this event we approximate
this term by a resolvent  $G^z(\ii\wt\eta)$ with an  increased spectral parameter $\wt\eta= n^{-3/4-\alpha}$,
then we have 
\begin{equation}\label{covv}
	\Cov\big( \Tr G^{z_1} (\ii\wt\eta), \Tr G^{z_2} (\ii\wt\eta)\big) 
	\leq   n^{-\gamma'}   \Big[ \V \big(\Tr G^{z_1} (\ii\wt\eta)) \V \big(\Tr G^{z_2} (\ii\wt\eta))\Big]^{1/2}
\end{equation}
as long as $|z_1-z_2|\ge n^{-\gamma}$ with some small  $\gamma,\gamma'>0$. 
We prove~\eqref{covv} for the Ginibre case in Proposition~\ref{lemma_new_2} but it can be
directly extended to the general i.i.d. case using Proposition~\ref{prop2_old}. We point out that while this decorrelation 
inequality is natural and a similar bound was proven in the bulk regime $|z_i|\le 1-\epsilon$
 in~\cite[Proposition 3.4]{CES19},
now we need~\eqref{covv} in the atypical regime well outside of the unit disk, $|z_i|- 1\gtrsim \sqrt{\log n/n}$
 where $\Tr G^z$ is much smaller.
This smallness factor needs to be preserved along all the estimates. 
 Moreover a similar $|z_1-z_2|$-decorrelation effect will also be used to estimate products
  of multiple resolvents on the level $\eta_0=n^{-1+\epsilon}$ stemmed from the
   iterative GFT for the second term in~\eqref{Az}; see \eg Lemma \ref{lemma4}.

In summary, the proof of the precise three term asymptotics for the spectral radius is not only
technically more demanding than the similar analysis for the rightmost eigenvalue, 
but it requires to explore a new decorrelation mechanism in the atypical regime.  We did not
mention several other additional difficulties in this introduction, but they will be highlighted
in Section~\ref{sec:summary}, where we give a more extensive summary of the proof strategy.

\subsection*{Notations and conventions}

We introduce some notations we use throughout the paper. For integers \(k,l\in\N \) with $k\leq l$ we use the notation 
\(\llbracket k, l \rrbracket:= \{k,k+1,\dots, l\}\).  For positive quantities \(f,g\) we write \(f\lesssim g\) and \(f\sim g\) if \(f \le C g\) or \(c g\le f\le Cg\), 
respectively, for some constants \(c,C>0\).
For $n$-dependent positive
sequences $f=f_n, g=g_n$ we also introduce $f\ll g$ indicating that $f_n=o(g_n)$. Even if not stated explicitly, $n$ is always taken sufficiently large depending on all other parameters throughout the paper. We always use $c, C>0$ to denote some $n$-indepedent constants that might be different from line to line.

We denote vectors by bold-faced lower case Roman letters \(\mathbf{x}, \mathbf{y}\in\C ^k\), for some \(k\in\N\). 
Vector and matrix norms, \(\lVert{\mathbf x}\rVert\) and \(\lVert A\rVert\), indicate the usual Euclidean norm 
and the corresponding induced matrix norm. For any \(2n\times 2n\) matrix \(A\) we use 
the notation \(\langle A\rangle:= (2n)^{-1}\mathrm{Tr}  A\) to denote the normalized trace of \(A\). 
Moreover, for vectors \({\mathbf x}, {\mathbf y} \in\C ^n\) and matrices \(A,B\in \C ^{2n\times 2n}\) we define 
\[ 
\langle{\mathbf x},{\mathbf y}\rangle:= \sum \overline{x}_i y_i, \qquad \langle A,B\rangle:= \langle A^*B\rangle. 
\]
Moreover, we use $\Delta = 4\partial_z\partial_{\bar z}$ to denote the usual Laplacian
and \(\mathrm{d}^2 z \) denotes the Lebesgue measure on $\C$.

We use $\E^{\mathrm{Gin}}$ and $\E$ to denote the expectation with respect to the Ginibre ensemble and generic i.i.d. marices repsectively. We also use similar notations $\V^{\mathrm{Gin}}$ and $\V$ for the corresponding variances. We will use the concept of ``with very high probability'' meaning that for any fixed \(D>0\) the probability of the event is bigger
than \(1-n^{-D}\) if \(n\ge n_0(D)\). Moreover, we use the convention that \(\xi>0\) denotes an arbitrary small 
constant which is independent of \(n\). Finally, we introduce the notion of 
\emph{stochastic domination} (see e.g.~\cite{EKYY13}): given two families of non-negative random variables
\[
X=\left(X^{(n)}(u) : n\in\N, u\in U^{(n)} \right)\quad\text{and}\quad Y=\left( Y^{(n)}(u) : n\in\N, u\in U^{(n)} \right)
\] 
indexed by \(n\) (and possibly some parameter \(u\)  in some parameter space $U^{(n)}$), 
we say that \(X\) is stochastically dominated by \(Y\), if for all \(\xi, D>0\) we have \begin{equation}\label{stochdom}
	\sup_{u\in U^{(n)}} \mathbb{P}\left[X^{(n)}(u)>n^\xi  Y^{(n)}(u)\right]\leq n^{-D}
\end{equation}
for large enough \(n\geq n_0(\xi,D)\). In this case we use the notation \(X\prec Y\) or \(X= \OO_\prec(Y)\). We often use the notation 
$\prec$ also for deterministic quantities, then the probability in~\eqref{stochdom} is
zero for any $\xi>0$ and sufficiently large $n$.

\section{Statement of the main result} 

We consider $n\times n$  matrices $X$ with independent identically distributed (i.i.d.) entries $x_{ab}\stackrel{\mathrm{d}}{=}n^{-1/2}\chi$. 
On the $n$-independent random variable $\chi$ we make the following assumption:

\begin{assumption}
	\label{ass:mainass}
	We assume that $\E \chi=0$, $\E |\chi|^2=1$; additionally in the complex case we also assume that 
	$\E \chi^2=0$.
	 Furthermore, for any $p\in\N$ we assume that there exists constants $C_p>0$ such that
	\begin{equation}
		\label{eq:hmb}
		\E\big|\chi^p\big|\le C_p.
	\end{equation}
Moreover, we assume that there exists $\alpha,\beta>0$ such that the probability density of $\chi$, denoted by $g$, satisfies 
\begin{align}\label{assumption_b}
	g \in L^{1+\alpha}(\mathbb{F}), \quad \|g\|_{1+\alpha} \leq n^{\beta}, \qquad \mathbb{F}=\R~\mathrm{or}~\C.
\end{align}
\end{assumption}
Let $\{\sigma_i\}_{i\in \llbracket 1, n \rrbracket}$ be the eigenvalues of $X$ and define the spectral radius 
$$ \rho(X):=\max_{i \in \llbracket 1,n\rrbracket}|\sigma_i|.$$ 
The main result of this paper is the estimate of the precise size of the spectral radius $\rho(X)$ in the complex case:
\begin{theorem}\label{main}
	Let $X$ be an $n\times n$ matrix satisfying\footnote{ The matrix entries of $X$ do 
	not have to be identically distributed. Our proof still works with minor modifications
	if $\E x_{ij}=\E x_{ij}^2=0$, 
	 $\E |x_{ij}|^2=1/n$ and $\E |\sqrt{n}x_{ij}|^p \le C_p$, but for simplicity we consider the i.i.d. case only.  }
	 Assumption \ref{ass:mainass} in the complex case. Set
	$$\gamma_n:= \log n-2\log \log n-\log 2 \pi.$$
	Then we have
	\begin{align}\label{spectral_radius}
		\lim_{n\rightarrow \infty}\P \Big( \Big|\rho(X)-1-\sqrt{\frac{\gamma_n }{4n}} 
		\Big| \geq \frac{C_n}{\sqrt{n \log n}}\Big)=0,
	\end{align}	
	for any\footnote{Our proof also gives an  effective control on the probability  in~\eqref{spectral_radius}
	of order $O(C_n^{-\tau}+ n^{-\tau})$ for some small $\tau>0$.} sequence $C_n \rightarrow \infty$.
\end{theorem}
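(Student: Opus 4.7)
The plan is to implement the Girko--Hermitization strategy outlined in the introduction and reduce Theorem~\ref{main} to a concentration statement about an eigenvalue count. I choose $f$ to be a smooth non-negative cutoff supported in the thin annulus $\A_n := \{ z \in \C : \big|\,|z|-1-\sqrt{\gamma_n/(4n)}\,\big| \le C/\sqrt{n\log n} \}$, equal to $1$ on a slightly thinner sub-annulus, and set $N_f := \sum_{\sigma \in \mathrm{Spec}(X)} f(\sigma)$. Then~\eqref{spectral_radius} follows once one proves (a) that $\E N_f$ matches the Ginibre extreme-value count predicted by~\eqref{Crho} and tends to a nontrivial limit, and (b) that $\V N_f \ll (\E N_f)^2$, so that Chebyshev's inequality gives an eigenvalue inside (or outside, for the opposite tail) a target annulus with high probability. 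The upper and lower tails of $\rho(X)$ are extracted by appropriate choices of the inner and outer radii.

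For both $\E N_f$ and $\V N_f$, Girko's formula~\eqref{girko} expresses the relevant quantity as an integral of $A(z)$ against $\Delta f$. I would split $A(z)$ at the scale $\eta_0 = n^{-1+\xi}$ as in~\eqref{Az}. The high-$\eta$ piece is a smooth functional of $G^z(\ii\eta)$, and I would compare it to its Ginibre counterpart by a long \emph{iterative cumulant expansion}, nominally gaining $(n\eta_0)^{-1} = n^{-\xi}$ per step and therefore requiring $O(1/\xi)$ iterations; the deterministic subleading terms generated by this expansion must be computed explicitly, because only then can one exploit the cancellation in $\int \Delta f(z)[\,\cdots\,]\,\dd^2 z$ that is required to beat the unavoidable factor $\int|\Delta f|\sim n^{1/2}$. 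The small-$\eta$ piece is essentially $(\eta_0/\lambda_1^z)^2$; it is harmless in the typical regime $\lambda_1^z \sim n^{-3/4}$, and in the atypical regime it is controlled by the tail bound~\eqref{tailp}, which I would transfer from the Ginibre case to generic i.i.d.\ by a further GFT argument that preserves both the $y^2$ level-repulsion and the $e^{-n\delta^2/2}$ large-deviation smallness. The Ginibre calculation itself is closed by running Girko's formula backward, since on the left-hand side the distribution of $\rho(X)$ is explicit via Kostlan.

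For the variance I would insert the decomposition of $A(z)$ into the covariance identity~\eqref{cov} and exploit the $|z_1-z_2|$-decorrelation effect in two forms: the joint lower-tail bound~\eqref{lambdatail1} for the small-$\eta$ contribution, and the two-resolvent covariance estimate~\eqref{covv} for the high-$\eta$ contribution. When $|z_1-z_2|\ge n^{-\gamma}$ these provide an extra factor $n^{-\gamma'}$ relative to the product of the marginal variances, while in the opposite regime the integration measure itself is $O(n^{-2\gamma})$. Combined with the $n^{\xi}$ residual loss from the iterative GFT and the unavoidable $n^{1/2}$ from $\int|\Delta f|$, this decorrelation gain produces a variance small enough to close Chebyshev and deliver~\eqref{spectral_radius}.

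I expect the principal obstacle to be~\eqref{lambdatail1}, which is a genuinely joint statement about the minimal singular values of $X-z_1$ and $X-z_2$ for which no explicit formula is available even in the Ginibre case, since the joint law of the spectra of $H^{z_1}$ and $H^{z_2}$ is inaccessible. The proof must proceed through Dyson Brownian Motion for the two Hermitized spectra driven by weakly correlated Brownian noises, quantifying that the cross-correlation dies out fast enough for the lowest eigenvalues of $H^{z_1}$ and $H^{z_2}$ to decouple on the scale $n^{-3/4-\alpha}$. Producing a bound of the form $\big(\P(|\lambda_1^z|\le 10\,n^{-3/4-\alpha})\big)^2$ that simultaneously captures the level-repulsion factor $y^2$ and the large-deviation factor $e^{-n\delta^2/2}$, with the required $n^{-D}$ tolerance, is the central technical hurdle; once it is in place, assembling it with the high-$\eta$ GFT and the typical-regime bounds yields the three-term asymptotics.
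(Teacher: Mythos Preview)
Your proposal is correct and follows essentially the same approach as the paper: the annular test function, the split of Girko's formula at $\eta_0=n^{-1+\epsilon}$, the iterative cumulant GFT with explicit tracking of subleading deterministic terms for the large-$\eta$ piece, the GFT-transferred tail bound~\eqref{tailp} for the small-$\eta$ piece, and the DBM-based decorrelation~\eqref{lambdatail1} together with~\eqref{covv} for the variance are exactly the paper's ingredients. One minor bookkeeping point: in the paper~\eqref{covv} is applied to the small-$\eta$ variance after first lifting the spectral parameter to $\widetilde\eta=n^{-3/4-\alpha}$, while the large-$\eta$ variance is handled by the GFT of Proposition~\ref{gft} (which itself uses a resolvent-level $|z_1-z_2|$-decorrelation via the stability factor~\eqref{stable_0}) followed by running Girko's formula backward on the Ginibre side.
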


\begin{remark}\label{TV_tech} 
	The assumption~(\ref{assumption_b}) is used only 
	to control the  unlikely event that there is a tiny singular value of $X-z$ in a simple way (see~\eqref{tiny_int} below). 
	We make this assumption only  to simplify the presentation of the proof, but it can easily be removed with a separate argument as
	in~\cite[Section 6.1]{TV15} (see also a slightly streamlined version in \cite[Section 2.2]{Kopel15}) 
	as explained in \cite[Remark 2.2]{maxRe}.	
	We will not present the details here
	since they are fairly standard and they are independent of our main arguments. 	
\end{remark}

Similarly to \cite{maxRe}, we stated the main result only for the complex case. Even though it also holds for the real case,
we do not carry out the complete proof, the reason is explained in \cite[Section 2.4]{maxRe}. 
 In particular, the GFT argument would still work with minor modifications; see Remark~\ref{handwave_real}. 
  Our  proof for the spectral radius has one more new ingredient, the Dyson Brownian motion analysis in 
  Section~\ref{sec:DBM}, that in principle is sensitive
to the symmetry class. The necessary modifications to the DBM analysis from the complex to the real 
case have been handled in detail in \cite[Section 7]{real_CLT} assuming  that we have the overlap bound
(see~\eqref{ovass} later) for all eigenvectors not just the ones near the cusp regime that we now use in the complex case.
Since the cusp regime is the most complicated one, similar but easier arguments would also give the overlap bound
uniformly in the spectrum, but we did not work out  the detailed proof.

We also comment that our result in (\ref{spectral_radius}) directly implies that the sequence of normalized fluctuations
$$G_n:=\sqrt{4n \gamma_n}\Big( \rho(X)-1-\sqrt{\frac{\gamma_n}{4n}} \Big)$$
has subsequential limits by Prokhorov’s theorem. The limit is conjectured to be a unique Gumbel distribution as in (\ref{Crho}) for the special Ginibre case. Similar statements can also be found in \cite[Remark 2.4]{maxRe} for the rightmost eigenvalue. The uniqueness of the limit and the universality of the Gumbel distribution are left to future work.

\section{Summary of the  methods of the proof}\label{sec:summary}

In this section we collect necessary background information and we sketch the main
ideas of the proof, explaining the main novelties of our approach.

\subsection{Hermitization and local laws}

From the local circular law in \cite[Theorem 1.2]{BYY14} and \cite[Theorem 2.1]{AEK19b}, for any small $\tau>0$, the eigenvalue which determines the spectral radius  is located 
 in the following annulus
\begin{align}\label{omega_0}
	\Omega_0:=\Big\{ z=re^{\ii \theta} \in \C \; : \; r \in \Big[1-\frac{n^{\tau}}{\sqrt{n}},1+\frac{n^{\tau}}{\sqrt{n}}\Big],\quad \theta \in [0,2\pi)\Big\},
\end{align}
with very high probability.  To prove the precise location in~(\ref{spectral_radius}), we introduce a narrow annulus $\Omega_1\subset \Omega_0$ around the center circle with radius $L_n:=1+\sqrt{\frac{\gamma_n}{4n}}$ with width $l_n:=\frac{C_n}{\sqrt{n \log n}}$~~$(C_n \gg 1)$ beyond the conjectured Gumbel scale and its complement annulus $\Omega_2 \subset \Omega_0$. We may assume without loss of generality that $C_n\ll \sqrt{\log n}$. More precisely, we define
\begin{align}\label{omega_2}
		\Omega_1:=\Big\{ r \in [L_n-l_n,L_n+l_n],~\theta \in [0,2\pi)\Big\}, \qquad \Omega_2:=\Big\{ r \in \Big[L_n+l_n,1+\frac{n^{\tau}}{\sqrt{n}}\Big],~ \theta \in [0,2\pi)\Big\}. 
\end{align}
It then suffices to show the expectation estimates
 \begin{align}\label{upper}
 	\E [\#\{\sigma_i \in \Omega_2\}]=o(1), \qquad \E [\#\{\sigma_i \in \Omega_1\}] \geq C,
 \end{align}
 for some constant $C>0$, and the concentration estimate
 \begin{align}\label{concentrate}
 	\E \big|\#\{\sigma_i \in \Omega_1\}-\E[\#\{\sigma_i \in \Omega_1\}]\big|=o(1)\E[\#\{\sigma_i \in \Omega_1\}].
 \end{align}
Given the estimates in (\ref{upper}) and (\ref{concentrate}), we can prove Theorem~\ref{main} using the Markov inequality; see similar arguments in \cite[Section 3]{maxRe} locating the rightmost eigenvalue among $\{\sigma_i\}_{i\in \llbracket 1, n \rrbracket}$.

\bigskip

To study the eigenvalues of $X$, we use Girko's Hermitization formula \cite{Girko1984, TV15}. For any $T>0$ and for any compactly supported smooth test function $f \in C_c^{2}(\C)$, we have
\begin{align}\label{linear_stat}
	\sum_{i=1}^{n} f(\sigma_i)=&-\frac{1}{4 \pi} \int_{\C} \Delta f(z) \int_0^T \Im \Tr G^{z}(\ii \eta) \dd \eta \dd^2 z+\frac{1}{4 \pi} \int_{\C} \Delta f(z) \log |\det (H^{z}-\ii T)| \dd^2 z,
\end{align} 
where the  $2n \times 2n$ Hermitian matrix $H^{z}$ and its resolvent $G^{z}$ are defined by
\begin{align}\label{initial}
	H^{z}:=\begin{pmatrix}
		0  &  X-z  \\
		X^*-\overline{z}   & 0
	\end{pmatrix}, \qquad G^{z}(w):=(H^{z}-w)^{-1}, \quad w \in \C\setminus \R,~z\in \C.
\end{align}
The $2\times 2$ block structure of $H^z$ induces a symmetric spectrum around zero, i.e. the eigenvalues of $H^{z}$ are $\{\lambda^{z}_{\pm i}\}_{i\in \llbracket 1, n \rrbracket}$ (labelled in a non-decreasing order) with $\lambda_{-i}^z=-\lambda_i^z$ for $i\in \llbracket 1, n \rrbracket$. Note that $\{\lambda_i^{z}\}_{i\in \llbracket 1, n \rrbracket}$ exactly coincide with the singular values of $X-z$. Moreover, the corresponding normalized eigenvectors of $\lambda^z_{\pm i}$ are denoted by $\mathbf{w}^z_{\pm i}=(\mathbf{u}^z_{i}, \pm \mathbf{v}^z_{i})$. As a consequence of the spectral symmetry of $H^{z}$, we find that, on the imaginary axis,
\begin{align}\label{Gvv}
	G_{vv}^{z} (\ii \eta)=\ii \Im G_{vv}^{z}(\ii \eta), \quad \Im G^{z}_{vv}(\ii \eta) >0, \qquad   v \in \llbracket 1, 2n \rrbracket, \quad \eta>0.
\end{align}

Before we state the local law for $G^{z}$, we first define a deterministic $2n \times 2n$ block constant matrix by
\begin{align}\label{Mmatrix}
	M^{z}(w)=\begin{pmatrix}
		m^{z}(w)  &  -zu^z(w)  \\
		-\overline{z}u^z(w)   & m^{z}(w)
	\end{pmatrix},\qquad  u^z(w):=\frac{{m}^z(w)}{w+ {m}^z(w)},
\end{align}
where ${m}^z(w)$ is the unique solution of the scalar equation
\begin{align}
	\label{m_function}
	-\frac{1}{{m}^z(w)}=w+{m}^z(w)-\frac{|z|^2}{w+{m}^z(w)}, \quad \mbox{with}\quad \mathrm{Im}[m^z(w)]\mathrm{Im}w>0.
\end{align}
 In fact, the given form of $M^z$  comes from the corresponding \emph{matrix Dyson equation} 
  (MDE)~\cite{AEK19a} for the Hermitian
matrix $H^z$ which has the form
\begin{equation}\label{dyson}
 - \big[ M^z(w)\big]^{-1} = w + Z + \mathcal{S}\big[ M^z(w) \big], \qquad 
 Z:= \begin{pmatrix} 0 & z \cr \bar z & 0 \end{pmatrix}, \quad w\in \C\setminus \R,
\end{equation}
where $\mathcal{S}$ is the covariance (or self-energy) operator acting on $2n\times 2n$ matrices and given by
\begin{equation}\label{S}
   \mathcal{S}[T]:= \E \Big[\begin{pmatrix} 0 & X \cr X^* & 0 \end{pmatrix} T \begin{pmatrix} 0 & X \cr X^* & 0 \end{pmatrix}\Big],
\end{equation}
where the expectation is taken for the i.i.d. matrix $X$. Simple calculation shows that in our case
$$
  \mathcal{S}[T] =  \langle T \rangle - \langle TE_- \rangle E_-, \qquad E_-: = \begin{pmatrix} 1 & 0 \cr 0 & -1 \end{pmatrix}.
$$
It follows from the
general theory of MDE (see e.g.~\cite{HRS07})
that~\eqref{dyson} has a unique solution with the side condition that $(\Im w)\Im M^z(w)>0$.
Owing to the special block constant form of $Z$ and that the image of $\mathcal{S}$ is also a
block constant matrix, it is easy to see that $M^z$ is also block constant with identical 
upper and lower blocks in its diagonal. In particular $\mathcal{S}[M^z] =  \langle M^z \rangle$
and thus~\eqref{dyson} simplifies to
\begin{equation}\label{dyson1}
 - \big[ M^z(w)\big]^{-1} = w + Z +  \langle M^z \rangle.
\end{equation}
This observation allows us to use many results from~\cite{EKS20}, \cite{AEK18b} and \cite{AEK19a}.
Note that these papers  considered a matrix
Dyson equation with a covariance operator satisfying $\mathcal{S}[T]\ge c \langle T \rangle $
to hold for any matrix $T>0$ with some positive constant $c$ (the so-called 
 \emph{flatness condition}). 
The main observation is that even though our $\mathcal{S}$ does not satisfy the flatness condition,
for the purpose of analysing $M^z$, 
defined as the solution of~\eqref{dyson} with $(\Im M^z)(\Im w)>0$,
 we may replace $\mathcal{S}$ from~\eqref{S} with $\mathcal{S}'[T]:= \langle T \rangle$
thanks to~\eqref{dyson1}. In other words, our $M^z$ is the same as the solution to the MDE corresponding to
a deformed Wigner matrix with deformation given by the Hermitian matrix $Z$, hence the results
 from~\cite{EKS20, AEK18b, AEK19a} directly apply.

Note that the functions $m^{z}$ and $u^{z}$  depend only on the radial part  $r=|z|$
of $z$.  Both functions (hence $M^z(w)$)  have continuous extensions to the real axis
that we will denote by $M^z(E\pm \ii 0)$ for $E\in \R$.
The \emph{self-consistent density of states} is defined as
\begin{equation}
\label{eq:scdos}
 \rho^z(E):=\pi^{-1}\Im \<M^z(E+\ii 0)\>,
 \end{equation}
 where $\langle M \rangle = \frac{1}{2n} \Tr M$ denotes the normalized trace. 
 If $|z|\leq 1$, then  the density $\rho^z(E)$
  has a local minimum at $E=0$ of height $\rho^z(0) \sim (1-|z|^2)^{1/2}$ from \cite[Eq. (3.13)]{AEK19b} 
  and the general shape analysis \cite[Theorem 7.1(d) and Remark 7.3]{AEK18b}.
  For $|z|=1$ the density has a cubic cusp singularity at the origin.
  Moreover if $|z|> 1$, then there is a small gap $[-\frac{\Delta}{2},\frac{\Delta}{2}]$ 
   in the support of the symmetric density function $\rho^z(E)$. 
  The size of the gap, $\Delta \sim \big(|z|^2-1\big)^{3/2}$, 
  can be easily computed from taking the imaginary part of~\eqref{m_function} and using the behavior of $\rho^z$
  (see below)
  and hence its Stieltjes transform  $m^z$ near zero.
  
 We extend the density $\rho^z$ to the complex plane,  
 and for $|z|\le 1$ it has the following scaling behaviour
$$ \rho^z(w):=\frac{\Im \<M^z(w)\>}{\pi} \sim (1-|z|^2)^{1/2} +(E+\eta)^{1/3}, \qquad |E|\leq c,\quad 0\leq \eta\leq 1, \qquad w:= E+\ii \eta
 $$
while in the complementary regime
 $|z|\geq 1$, for any $w=\frac{\Delta}{2}+\kappa+\ii \eta$, and a small constant $c$, we have
\begin{align}\label{rho_E}
	\rho^z(w)=\frac{\Im \<M^z(w)\>}{\pi} \sim \begin{cases}
		(|\kappa|+\eta)^{1/2} (\Delta+|\kappa|+\eta)^{-1/6}, &\quad \kappa \in [0,c]\\ 
		\frac{\eta}{(\Delta+|\kappa|+\eta)^{1/6}(|\kappa|+\eta)^{1/2}} , &\quad  \kappa \in [-\Delta/2,0]
	\end{cases},
	\quad 0\leq \eta\leq 1.  
\end{align}
These asymptotics  can be  computed  from solving the cubic equation~\eqref{m_function} by Cardano's formula
and selecting the correct branch satisfying the side condition,  but they actually directly 
follow from~\cite[Proposition 3.2 (ii)]{EKS20} that itself relies on~\cite[Remark 7.3]{AEK18b}.

In particular for $w=\ii \eta$ on the imaginary axis, by taking the real part of \eqref{m_function}, it follows that 
$m^z(\ii \eta)$ is purely imaginary, hence ${m}^z(\ii \eta)=\ii \Im {m}^z(\ii \eta)$ (which also implies that $u^z(\ii\eta)$ is real) and
\begin{align} \label{rho}
\rho^z(\ii \eta)=\frac{\Im m^z(\ii \eta)}{\pi} \sim \begin{cases}
		\frac{\eta}{|1-|z|^2|+\eta^{2/3}}, &\qquad |z| > 1\\
		\eta^{1/3}+|1-|z|^2|^{1/2} , &\qquad |z| \leq 1
	\end{cases},
	\qquad  0\leq \eta\leq 1.
\end{align}

With these notations we have the  following local law for the resolvent $G^z$ 
for $z$ near the edge of the circular law, $||z|-1| \leq \tau$, not only on the imaginary axis, but also in its small neighborhood. 
\begin{theorem}
	\label{local_thmw}
	There are sufficiently small constants $\tau,\tau'>0$ such that
	for any deterministic vectors $\mathbf{x}, \mathbf{y}\in\C^{2n}$
	and matrix $A\in \C^{2n \times 2n}$, for any $z$ with $\big| |z|-1\big| \leq \tau$  and 
	for any $w\in \C_+$ with $|\Re w|\le \tau'$ and any $\eta:=\Im w>0$, we have
	\begin{align}\label{entrywisew}
		\big| \langle \mathbf{x},  (G^{z}(w)-M^{z}(w)) \mathbf{y} \rangle\big| \prec \|\mathbf{x}\| \|\mathbf{y}\| \left( \sqrt{\frac{\rho^z(w)}{n\eta}}+\frac{1}{n\eta}\right),
	\end{align}
	\begin{align}\label{averagew}
		\big|\big\<A\big( G^{z}(w)-M^{z}(w)\big)\big\>\big| \prec \frac{\|A\|}{n\eta}.
	\end{align}
\end{theorem}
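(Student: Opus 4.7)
The plan is to reduce Theorem~\ref{local_thmw} to the general local-law framework for Hermitian random matrices with bounded cumulants and a Hermitian deterministic shift, as developed in \cite{EKS20,AEK18b,AEK19a}. The matrix $H^z$ is such a Wigner-type matrix: writing $H^z = H + Z$ with $H$ the Hermitization of $X$ and $Z$ as in~\eqref{dyson}, the entries of $H$ are independent up to Hermitian symmetry, have bounded cumulants by Assumption~\ref{ass:mainass}, and the covariance operator $\mathcal{S}$ reduces, on solutions of~\eqref{dyson}, to the flat one $\mathcal{S}'[T] = \langle T\rangle$ as already noted after~\eqref{S}. Thus $M^z$ coincides with the MDE solution for a deformed Wigner ensemble with deformation $Z$, and the cited local-law theorems apply provided one verifies the required deterministic input.

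First, I would verify the uniform boundedness of $\|M^z(w)\|$ and the stability of the MDE in a small complex neighborhood of the imaginary axis. Since $M^z(w)$ has the explicit block form~\eqref{Mmatrix} determined by the scalar $m^z(w)$ solving the cubic~\eqref{m_function}, these properties follow from Cardano's formula together with the shape analysis in \cite[Theorem 7.1, Remark 7.3]{AEK18b}. The density estimate~\eqref{rho_E}, stated there for $w = \Delta/2 + \kappa + \ii\eta$, extends uniformly to the two-dimensional strip $|\Re w|\le \tau'$ by the same computation, and in particular places $w$ in precisely one of the three shape regimes: a regular soft edge when $|z|>1$, a cubic cusp when $|z|=1$, or a small-gap/bulk regime when $|z|<1$ with $w$ near the local minimum.

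Second, with this deterministic input in hand, I would invoke the general Wigner-type local laws in each of these regimes: the cusp regime is covered by \cite[Theorems 2.5--2.6]{EKS20}; the regular edge regime for $|z|>1$, where a hard-edge gap of size $\sim (|z|^2-1)^{3/2}$ opens, is treated in \cite[Theorem 2.1]{AEK19a}; and the bulk/small-gap regime for $|z|<1$ is also handled there. In each case the output is precisely the envelope $\sqrt{\rho^z(w)/(n\eta)} + 1/(n\eta)$ appearing in~\eqref{entrywisew} and~\eqref{averagew}, so the three regimes patch together seamlessly into a single statement.

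The main technical obstacle, and the only point where the previously known statements on the imaginary axis (\eg in \cite{AEK19b}) have to be extended, is ensuring that allowing $|\Re w|\le \tau'$ does not introduce new difficulties. This is the case because~\eqref{m_function} and its stability operator depend holomorphically on $w$, so all a priori bounds on $M^z$, on the stability operator, and on its spectral gap remain uniform in a two-dimensional complex neighborhood of the imaginary axis. The standard bootstrap argument via the master inequalities of \cite[Section 5]{AEK19a} then lowers $\eta$ all the way down to $1/n$ without modification, giving both~\eqref{entrywisew} and~\eqref{averagew}. Thus the proof is essentially a reformulation of already established results, the only novelty being the explicit tracking of a small nonzero real part in $w$.
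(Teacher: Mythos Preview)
Your reduction strategy has a genuine gap: the observation that $\mathcal{S}$ reduces to $\mathcal{S}'[T]=\langle T\rangle$ \emph{on the solution $M^z$} is exactly what the paper notes after~\eqref{dyson1}, and it does let you import all the deterministic shape analysis from \cite{AEK18b,EKS20}. But it does \emph{not} let you cite the probabilistic local laws in \cite[Theorems~2.5--2.6]{EKS20} or \cite{AEK19a} as black boxes. Those results assume the flatness condition $\mathcal{S}[T]\ge c\langle T\rangle$ for all positive $T$, which the present $\mathcal{S}$ violates, and this matters in two concrete places that your sketch does not address. First, the stability operator $\mathcal{B}[T]=T-M\mathcal{S}[T]M$ acquires an \emph{additional} small eigenvalue $\beta_*=w/(w+m)$, with eigenvector parallel to $E_-$, beyond the cusp eigenvalue $\beta$; this direction is absent in flat models and must be neutralized via the block symmetry $\langle E_-,G\rangle=\langle E_-,M\rangle=0$. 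Second, the cusp-specific \emph{improved} fluctuation averaging $\|\langle RD\rangle\|_p\lesssim \sigma_q(\psi+\psi_q')^2$ for (almost) off-diagonal block-constant $R$, which is indispensable to close the cubic self-consistent equation for $\Theta\sim\langle G-M\rangle$, is proved in \cite[Prop.~5.5]{AEK19b} only for $\Re w=0$; off the imaginary axis one must redo the key estimate $\mathcal{S}[MJK^{(b)}M^*]=O(\rho+|\sigma|)$, using that $M$ is still almost off-diagonal with $\|M_d\|\lesssim\rho+|\sigma|$, and redefine $\sigma_q$ to include the new $|\sigma|$ term.

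The paper's proof therefore does not simply quote existing local laws but re-runs the cubic-equation machinery of \cite[Sections~4--5.1]{AEK19b} with two modifications: the coefficients $\xi_1\sim\eta/\rho+\rho(\rho+|\sigma|)$ and $\xi_2\sim\rho+|\sigma|$ now carry the cusp parameter $|\sigma|$ (zero on the imaginary axis, $\sim|E|^{1/3}$ off it), and the improved averaged $D$-bound is re-established with this modified $\sigma_q$. Only after obtaining the cubic relation (the analogue of \cite[Lemma~3.8]{EKS20}) does the bootstrap of \cite[Section~3.3]{EKS20} apply; the extension from $\eta\ge n^\xi\eta_f$ down to $\eta>0$ then goes via the monotonicity of $\eta\mapsto\eta\langle\mathbf{x},\Im G\,\mathbf{x}\rangle$ as in \cite{CES21}. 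Your sentence that ``the standard bootstrap argument \dots\ lowers $\eta$ all the way down to $1/n$ without modification'' elides precisely the non-flat stability analysis and the re-proof of the improved averaged bound that constitute the actual work.
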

In \cite[Theorem 5.2]{AEK19b} this local law was restricted to the imaginary axis~($\Re w=0$)
in the regime $||z|-1|\leq \tau$ and $n^{\xi}\eta_f \leq \eta\leq 1$, for any small $\xi>0$, with 
 $\eta_f>0$ being the local eigenvalue spacing at zero defined in \cite[Eq (5.2)]{AEK19b}. This result
 was further extended down to $\eta=n^{-1}$~(even to any $\eta>0$) in~\cite{CES21} for a smaller regime $||z|-1|\lesssim n^{-1/2}$.
 In Theorem~\ref{local_thmw}  we extend the local law to a small neighborhood of the 
 imaginary axis and for any $z$ with $\big| |z|-1 \big| \leq \tau$. Though such a broad regime 
 of $z$ will not be used in the paper, we still present the statement in this generality. 
The proof heavily relies on both \cite{AEK19b} and \cite{EKS20} and will be presented in Appendix~\ref{app:local_law}.

As a corollary of Theorem \ref{local_thmw}, we have the following rigidity estimates for the eigenvalues. The proof is standard and similar to \cite[Corollary 2.6-2.7]{EKS20} so we omit the details. 
\begin{corollary}
\label{cor:rigidity} 
	Fix any small $\tau>0$. Then for any $\big| |z|-1\big| \leq \tau$, there exists a small $c>0$ such that
\begin{equation}\label{rigidity}
	|\lambda_i^z-\gamma_i^z|\prec \max\left\{\frac{1}{n^{3/4}|i|^{1/4}}, \frac{\Delta^{1/9}}{n^{2/3}|i|^{1/3}}\right\}, \qquad \quad  \mathrm{for}\quad |i|\le c n,
\end{equation}
where $\gamma_i^z$ is the $i$-th quantile of the self-consistent (symmetric) density $\rho^z$, i.e.
$$\int_0^{\gamma_i^z}\rho^z = \frac{i}{2n}, \qquad \gamma_{-i}^z=-\gamma_i^z,\qquad i\in \llbracket 1, n \rrbracket,$$ 
and $\Delta\sim (|z|-1)_{+}^{3/2}$ denotes the size of the gap for $|z|>1$ around zero
in the support of $\rho^z$. 
In addition there exists a small $c'>0$ such that, for any $E_1<E_2$ with $\max\{|E_1|,|E_2|\} \leq c'$, we have
\begin{equation}\label{rigidity3}
	\Big| \#\{j: E_1 \leq \lambda^z_j \leq E_2\}-2n \int_{E_1}^{E_2} \rho^z(x) \dd x \Big| \prec 1.
\end{equation} 
\end{corollary}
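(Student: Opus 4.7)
The plan is to derive both statements as standard consequences of the averaged local law \eqref{averagew}, following the template of \cite[Cor.~2.6--2.7]{EKS20}. Throughout, all estimates are to be made uniform in $z$ in the annulus $\bigl||z|-1\bigr|\le\tau$, using a grid argument together with the Lipschitz continuity in $z$ of both sides.

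First I would establish \eqref{rigidity3}. Given $E_1<E_2$ with $\max\{|E_1|,|E_2|\}\le c'$, introduce a smooth cutoff $\chi=\chi_{E_1,E_2,\eta^*}$ of $[E_1,E_2]$ with smoothing scale $\eta^*:=n^{-1+\xi}$ for an arbitrarily small $\xi>0$, and apply the Helffer--Sj\"ostrand representation
\[
\chi(\lambda)=\frac{1}{2\pi}\int_{\C}\frac{\partial_{\bar w}\tilde\chi(w)}{\lambda-w}\,\dd^2 w,
\]
where $\tilde\chi$ is a quasi-analytic extension supported in a thin strip $|\Re w|\le\tau'$, $|\Im w|\le 1$. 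Summing over the eigenvalues and subtracting the corresponding integral against $\rho^z$, the difference
\[
\sum_j\chi(\lambda_j^z)-2n\int\chi(x)\rho^z(x)\,\dd x
\]
becomes a $w$-integral of $\bigl\<G^z(w)-M^z(w)\bigr\>$ weighted by $\partial_{\bar w}\tilde\chi$. The bulk of the strip $|\Im w|\ge\eta^*$ is controlled by \eqref{averagew}, which gives $O_\prec((n\Im w)^{-1})$ and produces an $O_\prec(\log n)$ contribution; the near-axis piece $|\Im w|\le \eta^*$ is handled by integration by parts using $\partial_{\bar w}^{2}\tilde\chi$ exactly as in \cite{EKYY13}. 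A standard monotonicity argument upgrades the smoothed counting function bound to the sharp counting statement \eqref{rigidity3} with an $O_\prec(1)$ error. This is where the extension of the local law to a neighborhood of the imaginary axis in Theorem~\ref{local_thmw} is essential, since the $w$-integral leaves the imaginary axis.

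Second, I would invert \eqref{rigidity3} to obtain the quantile rigidity \eqref{rigidity}. The general principle is that if the counting function is known up to $O_\prec(1)$, then $|\lambda_i^z-\gamma_i^z|$ is controlled by the local spacing $[2n\rho^z(\gamma_i^z)]^{-1}$. Using the scaling of $\rho^z$ recalled before the statement: for $|z|\le 1$ one has $\rho^z(E)\gtrsim|E|^{1/3}$ near zero, so the $i$-th quantile satisfies $\gamma_i^z\sim(i/n)^{3/4}$, yielding the first term $n^{-3/4}|i|^{-1/4}$; for $|z|>1$ the density behaves like $\sqrt{\kappa}(\Delta+\kappa)^{-1/6}$ with $\kappa=E-\Delta/2$, giving $\gamma_i^z-\Delta/2\sim(i/n)^{2/3}\Delta^{1/9}$ and local spacing $\sim\Delta^{1/9}n^{-2/3}|i|^{-1/3}$. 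The $\max$ in \eqref{rigidity} simply covers the two regimes (bulk-cusp vs.\ edge-with-gap), and reduces to the first term when $\Delta=0$.

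The only mildly delicate point is to ensure the local law \eqref{averagew} can be applied at imaginary parts down to $\eta^*\sim n^{-1+\xi}$ in a full neighborhood of the real axis (not only on $\Re w=0$), which is precisely the content of Theorem~\ref{local_thmw}; the remainder of the argument is routine. I therefore expect no substantive obstacle beyond bookkeeping in the quantile computation near the cusp/edge transition, and the full proof would indeed be omitted as it parallels \cite[Cor.~2.6--2.7]{EKS20}.
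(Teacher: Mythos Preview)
Your proposal is correct and follows precisely the approach the paper has in mind: the paper omits the proof entirely, stating only that it is standard and similar to \cite[Corollary~2.6--2.7]{EKS20}, i.e.\ the Helffer--Sj\"ostrand argument with the averaged local law \eqref{averagew} to get the counting estimate \eqref{rigidity3}, followed by inversion using the edge/cusp shape of $\rho^z$ to obtain \eqref{rigidity}. Your identification of the need for the local law off the imaginary axis (Theorem~\ref{local_thmw}) and your quantile/spacing computations in both regimes are accurate.
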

Moreover we have the following delocalization estimates for eigenvectors as in \cite[Corollary 3.12]{EKS20}
\begin{corollary}\label{cor:eigenvector}
Fix any small $\tau>0$. Then, for any $\big| |z|-1\big| \leq \tau$, we have
	\begin{align}\label{eigenvector}
		|\<\mathbf{w}^z_{i}, \mathbf{x}\>| \prec n^{-1/2} \|\mathbf{x}\| ,
	\end{align}
for any deterministic unit vector $\mathbf{x}\in \C^{2n}$,   whenever $|i|\le c n$.
\end{corollary}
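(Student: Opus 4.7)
The plan is to employ the standard delocalization argument: a single-term lower bound on the spectral decomposition of $\Im G^z$ combined with the isotropic local law. Using
\[
\Im \langle \mathbf{x}, G^z(E+\ii \eta) \mathbf{x}\rangle \;=\; \sum_{k} \frac{\eta\, |\langle \mathbf{w}_k^z, \mathbf{x}\rangle|^2}{(\lambda_k^z - E)^2 + \eta^2},
\]
and retaining only the $k=i$ term, for any $E \in \R$ and $\eta>0$ we have
\[
|\langle \mathbf{w}_i^z, \mathbf{x}\rangle|^2 \;\le\; \frac{(\lambda_i^z-E)^2 + \eta^2}{\eta}\,\Im \langle \mathbf{x}, G^z(E+\ii\eta) \mathbf{x}\rangle.
\]
I would set $E := \gamma_i^z$ and choose $\eta = \eta_i$ just above the rigidity scale in~\eqref{rigidity}, so that $|\lambda_i^z - \gamma_i^z| \le \eta_i$ with very high probability and the prefactor collapses to $O(\eta_i)$.

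Applying the isotropic local law~\eqref{entrywisew} at $w = \gamma_i^z + \ii \eta_i$, together with the bound $\|\Im M^z(w)\|_{\mathrm{op}} \le 2\pi \rho^z(w)$ (which follows from the block-constant form of $M^z$ in~\eqref{Mmatrix}: the matrix $\Im M^z(w)$ has only two distinct eigenvalues $\Im m^z(w) \pm |z|\,|\Im u^z(w)|$, each of multiplicity $n$, and both are bounded by $2\Im m^z(w) = 2\pi \rho^z(w)$ by the positive semi-definiteness of $\Im M^z$), one obtains
\[
|\langle \mathbf{w}_i^z, \mathbf{x}\rangle|^2 \;\prec\; \Big( \eta_i\, \rho^z(\gamma_i^z+\ii\eta_i) + \tfrac{1}{n}\Big)\|\mathbf{x}\|^2.
\]
It then suffices to choose $\eta_i$ so that $\eta_i\, \rho^z(\gamma_i^z+\ii\eta_i) \le n^{-1+\xi}$ for arbitrarily small $\xi>0$, i.e.\ essentially at the local eigenvalue spacing at the quantile $\gamma_i^z$.

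The remaining step is a regime-by-regime check that such $\eta_i$ can indeed be picked above the rigidity scale. In the bulk, where $\rho^z \sim 1$, the choice $\eta_i \sim n^{-1+\xi}$ dominates the rigidity scale $\sim 1/n$ and yields the claim at once. In the cusp regime $|z|\approx 1$ the density scales like $\rho^z \sim (|E|+\eta)^{1/3}$, the rigidity scale is $\sim n^{-3/4}|i|^{-1/4}$, and picking $\eta_i$ slightly above the latter balances the estimate. In the hard-edge regime $|z|>1$ with $|i|$ small, one inserts the asymptotics $(|\kappa|+\eta)^{1/2}(\Delta+|\kappa|+\eta)^{-1/6}$ from~\eqref{rho_E} at $\kappa = \gamma_i^z - \Delta/2$ and uses the second branch of the max in~\eqref{rigidity}; the soft-edge case $|z|<1$ is symmetric. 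The main obstacle is the bookkeeping in the crossover regime where $\Delta$ is comparable to $\eta_i$ and the two branches of the rigidity estimate cross, but this is a routine calculation driven entirely by~\eqref{rho_E} and~\eqref{rigidity}, and no tool beyond Theorem~\ref{local_thmw} and Corollary~\ref{cor:rigidity} is required.
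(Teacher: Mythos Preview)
Your argument is correct and is precisely the standard delocalization-from-isotropic-local-law argument that the paper invokes by citing~\cite[Corollary 3.12]{EKS20}; the paper gives no independent proof beyond this reference. The only cosmetic difference is that one often evaluates $\Im G^z$ at the random point $\lambda_i^z+\ii\eta$ (using uniformity of the local law in $E$) rather than at the quantile $\gamma_i^z+\ii\eta_i$ via rigidity, but the two versions are equivalent.
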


Next we sketch the proof strategy of Theorem \ref{main}.
\subsection{Proof strategy}
Having reduced the proof of Theorem \ref{main} to showing the upper and lower bound estimates in~(\ref{upper})-(\ref{concentrate}), we introduce the following
smooth cut-off functions $f^-_1, f^+_2 \in [0,1]$ for $\Omega_1$ and $\Omega_2$ in (\ref{omega_2}) respectively.  Fixing a small $\tau>0$,  define
\begin{align}
	&f^-_1(z)=f^-_1(r):=\begin{cases}
	1, &  |r-L_n| \leq 4l_n/5, \\
	0, & |r -L_n| \geq l_n, 
\end{cases}, \qquad z=re^{\ii \theta},\quad\theta\in [0,2\pi),\label{function1}\\
&f^+_2(z)=f^+_2(r):=\begin{cases}
	1, &  r \in  [L_n+l_n, 1+n^{-\frac{1}{2}+\tau}], \\
	0, & r \in [0, L_n+4l_n/5) \cup (1+n^{-\frac{1}{2}+\tau}+l_n/5,\infty),
\end{cases}\label{function2}
\end{align} 
with $L_n$ and $l_n$ given as in (\ref{omega_2}),
such that
\begin{align}\label{lower_upper}
	\#\{\sigma_i \in \Omega_1\} \geq\sum_{i=1}^n f^-_1(\sigma_i), \qquad \#\{\sigma_i \in \Omega_2\} \leq\sum_{i=1}^n f^+_2(\sigma_i).
\end{align}
We also assume that the second derivatives of $f_1^-$ and $f_2^+$ satisfy 
$$|(f^-_1)''(r)| \lesssim l_n^{-2},  \qquad  |(f^+_2)''(r)| \lesssim l_n^{-2},\qquad \qquad  r\in \R,$$
which lead to the following estimates for the $L^\infty$ and  $L^{1}$ norms of $\Delta f_1^-$ and $\Delta f_2^+$, \ie
\begin{align}\label{deltaf}
\|\Delta f\|_\infty \lesssim l_n^{-2} \lesssim n\log n,\qquad \|\Delta f \|_{1} \lesssim l_n^{-1} \lesssim n^{1/2}(\log n)^{1/2}, \qquad f=f_1^-,~f_2^+.
\end{align}
Using (\ref{lower_upper}) and a similar argument with the Markov inequality as in~\cite[Section 3]{maxRe}, the following estimates will directly imply Theorem~\ref{main}~(\cf \cite[Eq. (3.8)-(3.9)]{maxRe}):
\begin{align}
	&\E\Big[\sum_{i=1}^n f^{+}_2(\sigma_i)\Big]=o(1);\qquad  \E\Big[\sum_{i=1}^n f^{-}_1(\sigma_i)\Big] \geq C,\label{main_part_1}\\
	&\E\Big|\sum_{i=1}^n f^{-}_1(\sigma_i)-\E\Big[\sum_{i=1}^n f^{-}_1(\sigma_i)\Big] \Big|=o(1) \Big( \E\Big[\sum_{i=1}^n f^{-}_1(\sigma_i)\Big] \Big).\label{main_part_2}
\end{align}

We start with considering the complex Ginibre ensemble $X$ and use the explicit kernel formula for the eigenvalues $\{\sigma_i\}_{i=1}^n$ of $X$ to prove the following lemma. The proof of this lemma is presented in Appendix~\ref{app:Gincal}.
\begin{lemma}
\label{lemma_Gin}
	Consider the Ginibre ensemble, then we have
	\begin{align}\label{main_part_1_Gin}
		\E^{\mathrm{Gin}}\Big[\sum_{i=1}^n f^{+}_2(\sigma_i)\Big]=o(1);\qquad  \E^{\mathrm{Gin}}\Big[\sum_{i=1}^n f^{-}_1(\sigma_i)\Big] \geq C,
	\end{align}
	for some constant $C>0$, and the concentration result in variance sense
	\begin{align}\label{main_part_2_Gin}
		\V^{\mathrm{Gin}}\Big[\sum_{i=1}^n f^{-}_1(\sigma_i) 
		\Big]=o(1) \Big( \E^{\mathrm{Gin}}\Big[\sum_{i=1}^n f^{-}_1(\sigma_i)\Big] \Big)^2.
	\end{align}
\end{lemma}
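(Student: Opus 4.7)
The plan is to exploit that for the complex Ginibre ensemble the radial distribution is fully explicit via Kostlan's theorem: the unordered multiset $\{|\sigma_i|^2\}_{i=1}^n$ is equal in law to $\{Z_k/n\}_{k=1}^n$, where $Z_1,\dots,Z_n$ are \emph{independent} with $Z_k \sim \mathrm{Gamma}(k,1)$. Since $f_1^-$ and $f_2^+$ are radial, this immediately yields
\begin{align}\label{plan:Kostlan}
\E^{\mathrm{Gin}}\Big[\sum_i f(\sigma_i)\Big] = \sum_{k=1}^n \E\big[f(\sqrt{Z_k/n})\big], \qquad \V^{\mathrm{Gin}}\Big[\sum_i f(\sigma_i)\Big] = \sum_{k=1}^n \V\big[f(\sqrt{Z_k/n})\big],
\end{align}
reducing the lemma to estimates for sums of independent gamma tails.

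First I would evaluate the expectations. Using the Poisson identity $\P(Z_k>t)=e^{-t}\sum_{j=0}^{k-1} t^j/j!$ together with Stirling's formula, a direct computation parallel to Rider's analysis of $\rho(X)$ in~\cite{R03} gives the uniform tail asymptotic
\begin{align}\label{plan:asymp}
\sum_{k=1}^n \P\big(Z_k > n r^2\big) = e^{-s}\,\big(1 + o(1)\big), \qquad r = L_n + \frac{s}{\sqrt{4n\gamma_n}},
\end{align}
valid for $s$ in the moderate deviation range $|s|\lesssim\sqrt{\log n}$, with the dominant contribution coming from $k$ within $O(\sqrt{n/\gamma_n})$ of $n$. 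The supports of $f_1^-$ and $f_2^+$ translate, via the identity $l_n\sqrt{4n\gamma_n}=2C_n(1+o(1))$, into Gumbel windows of the form $s\in[-2C_n,2C_n]$ and $s\in[2C_n,\,n^\tau\sqrt{\log n}]$ respectively (up to corrections from the smoothing regions). Taking the difference of~\eqref{plan:asymp} at the two endpoints yields
\begin{align*}
\E^{\mathrm{Gin}}\Big[\sum_i f_1^-(\sigma_i)\Big] = (e^{2C_n} - e^{-2C_n})(1+o(1)), \qquad \E^{\mathrm{Gin}}\Big[\sum_i f_2^+(\sigma_i)\Big] \leq e^{-2C_n}(1+o(1)),
\end{align*}
from which~(\ref{main_part_1_Gin}) follows because $C_n\to\infty$.

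Second, for the variance the independence in~\eqref{plan:Kostlan} combined with the trivial pointwise bound $f_1^-\in[0,1]$ gives $\V[f_1^-(\sqrt{Z_k/n})]\le \E[f_1^-(\sqrt{Z_k/n})]$ term by term, hence
\begin{align*}
\V^{\mathrm{Gin}}\Big[\sum_i f_1^-(\sigma_i)\Big] \leq \E^{\mathrm{Gin}}\Big[\sum_i f_1^-(\sigma_i)\Big] = e^{2C_n}(1+o(1)),
\end{align*}
whereas the squared expectation is $e^{4C_n}(1+o(1))$. Their ratio is $e^{-2C_n}(1+o(1)) = o(1)$, establishing~(\ref{main_part_2_Gin}).

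The main obstacle is the uniform asymptotic~\eqref{plan:asymp}: it must hold not just for fixed $s$ but throughout the moderate-deviation window where $s\sim C_n$ is allowed to diverge slowly, which requires carefully tracking Stirling and Mills-ratio corrections around the saddle point $k\approx n$ and then summing the one-term gamma tail asymptotics uniformly in $k$. For $r$ in the outer part of $\Omega_2$ (well beyond the Gumbel scale) a crude concentration estimate for $Z_k$ around its mean $k$ suffices; the precision is only needed near $r\approx L_n$. Apart from this bookkeeping, the argument is a direct descendant of Rider's edge computation and is structurally identical to the analogous step for $\max\Re\mathrm{Spec}(X)$ in~\cite{maxRe}.
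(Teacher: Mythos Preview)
Your approach is correct and takes a genuinely different route from the paper. The paper (Appendix~F) works with the determinantal kernel $\widetilde K_n(z,w)$, proving the pointwise asymptotic $\widetilde K_n(z,z)/\sqrt{n\gamma_n}=\pi^{-1}e^{-x}(1+o(1))$ in the rescaled variable $x$ (Lemma~\ref{lem:kernbound}) together with a uniform exponential upper bound, and then defers the remaining integration and the variance estimate to the analogous argument in~\cite[Lemma~3.1]{maxRe}, which in particular requires controlling the double integral $\iint f(z)f(w)|\widetilde K_n(z,w)|^2$. You instead invoke Kostlan's theorem to write $\sum_i f(\sigma_i)$ as a sum of \emph{independent} terms $f(\sqrt{Z_k/n})$, so the variance factorizes exactly and the bound $\V\le\E$ follows from $f\in[0,1]$ without ever touching the off-diagonal kernel. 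For the expectation you compute the same incomplete-gamma asymptotic as the paper, but organized as a CDF (expected count with $|\sigma|>r$) rather than a density.

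Both approaches rest on the same analytic input---the asymptotics of $\Gamma(n,nt)/\Gamma(n)$ for $t=1+O(\sqrt{\gamma_n/n})$---so the ``main obstacle'' you flag is present in either case and is handled in the paper via~\eqref{eq:gammaexp}. Your route is more elementary and more transparent for radial test functions, since independence replaces the determinantal variance formula; the paper's kernel route has the advantage of applying unchanged to non-radial $f$, which is why it was the natural choice in~\cite{maxRe} for the rightmost-eigenvalue problem. One minor point: your displayed formula $(e^{2C_n}-e^{-2C_n})(1+o(1))$ treats $f_1^-$ as a sharp indicator; with the smoothing over width $l_n/5$ you only get two-sided bounds with constants $8C_n/5$ and $2C_n$ in the exponent, but this is harmless since the lemma only asks for $\E\ge C$ and $\V/\E^2=o(1)$.
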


We next perform a GFT analysis to extend the Ginibre estimates to i.i.d. cases. 
To deal with the non-Hermitian eigenvalues of $X$, we use Girko's formula in (\ref{linear_stat})  with $T=n^{100}$ and $f=f^-_1$ or $f=f^+_2$
(in the rest of the paper we use the convention that we write $f$ for either $f_1^{-}$
or $f_2^+$)
\begin{align}\label{expectation_eq}
	\sum_{i=1}^n f(\sigma_i)=&-\frac{1}{4 \pi}  \int_{\C} \Delta f(z) \big(\int_{0}^{\eta_0}+\int_{\eta_0}^{T}\big)  \Im\Tr G^{z}(\ii \eta) \dd \eta \dd^2 z+O_\prec(n^{-100})\nonumber\\
=:&I_{0}^{\eta_0}(f)+I_{\eta_0}^{T}(f)+O_\prec(n^{-100}), \qquad \qquad \mbox{with}\quad	\eta_0:=n^{-1+\epsilon},
\end{align}
for a sufficiently small $\epsilon>0$. Note that the last term in (\ref{linear_stat}) with $T=n^{100}$ is bounded by $n^{-100}$ with high probability, as in \cite[Eq. (3.14)-(3.15)]{maxRe}. 
Choosing the truncation level at $n^{-3/4-\epsilon}$ would be natural
since $n^{-3/4}$ is the scale of fluctuations of the smallest eigenvalue $\lambda_1^z$, 
 but to estimate the variance of the more critical $I_{0}^{\eta_0}(f)$ we would need to prove a decorrelation bound on the tails
 of $\lambda_1^z$ and $\lambda_1^{z'}$ (see (\ref{truth}) below) for any $|z-z'| \gg n^{-1/2}$, 
 which is not known even for the Ginibre ensemble. So here we choose a much smaller truncation level $\eta_0$
  to compensate the loss in a weaker version of (\ref{truth}) that holds only for $|z-z'| \gg n^{-\gamma}$ with some small $\gamma>0$.

  Lacking the joint eigenvalue distribution of $H^{z}$ and $H^{z'}$ even in Ginibre cases to estimate the variances of the two $\eta$-integrals in~(\ref{expectation_eq}), we need to first estimate them using Girko's formula 'backward' together with Lemma~\ref{lemma_Gin}. We then extend the corresponding Ginibre estimates for these $\eta$-integrals to the i.i.d. cases using several GFT arguments.
The main new ingredient to perform the GFT analysis is the $|z-z'|$-\emph{decorrelation effect}
which is critically used in several steps of our proof. Additionally, we also need 
an accurate lower tail  bound on the smallest singular value $\lambda_1^z$
for the Ginibre ensemble
	\begin{equation}\label{tail1}
		\mathbf{P}^{\mathrm{Gin}}\left(\lambda_1^z\le y\delta^{3/2}\right)
		\lesssim y^2 (n\delta^2)^{4/3} e^{-n\delta^2(1+\OO(\delta))/2}, \qquad \delta:=|z|^2-1, 
	\end{equation}
 that was proven in \cite{maxRe} and is recalled here in Proposition~\ref{prop:tail}. 
We now informally explain the usage of these inputs.
\begin{enumerate}

	\item {\bf The $|z-z'|$-decorrelation effect}  asserts
	that $G^z$ and $G^{z'}$ becomes largely independent
	when $|z-z'|$ is somewhat large. This effect comes in several places.  First,
	the control over the typical large eigenvalues of $H^z$ has already been exploited in \cite[Theorem 5.2]{CES19}
	using a  local law for $G^{z}AG^{z'}$ with a slight $|z-z'|$ improvement\footnote{ 
	This improvement was
	substantially enhanced recently in~\cite{Zigzag} but here we do not need it.}  over the $z=z'$ case.  
		Second, here we will need a similar effect also for the small eigenvalues.
	 More precisely, we need 
	 that the smallest eigenvalues $\lambda_1^{z}$ and $\lambda_1^{z'}$ are almost independent.  
	 One expects that for any $|z-z'| \gg n^{-1/2}$, 
	\begin{align}\label{truth}
		\P\big( |\lambda_1^z| \leq \eta, ~ |\lambda_1^{z'}| \leq \eta\big) \lesssim \P\big( |\lambda_1^z| \leq \eta\big)\P\big( |\lambda_1^{z'}| \leq \eta\big),
	\end{align}
	but it is hard to prove this strong form of independence in the entire regime $|z-z'| \gg n^{-1/2}$.  However, 
thanks to  the small truncation level in (\ref{expectation_eq}), we only need to gain a bit from the $|z-z'|$-decorrelation effect, 
	so it suffices to prove \eqref{truth} only for $|z-z'| \gg n^{-\gamma}$ and for $ \eta = n^{-3/4-\alpha}$ with some small constants $\alpha,\gamma>0$; this is 
	 Proposition \ref{lemma_new_1} for the small eigenvalues.  
	 Third, the $|z-z'|$-decorrelation effect is also used to prove the independence of  $\langle G^{z}(\ii\eta)\rangle$
	  and $\langle G^{z'}(\ii\eta)\rangle$,
	  at least when $|z-z'|\ge n^{-\gamma}$ and $\eta = n^{-3/4-\alpha}$
	    (Proposition \ref{lemma_new_2}). 
	    Note that both Proposition \ref{lemma_new_1}
	  and Proposition \ref{lemma_new_2} are proven by analyzing weakly correlated DBMs;
	   pure resolvent methods are not sufficient for their proof.
	  Finally, the $|z-z'|$-decorrelation effect is 
	  also used in the improved estimates for 
	  various products of the resolvents on the level $\eta_0=n^{-1+\epsilon}$~(\eg (\ref{estimate7})-(\ref{estimate6})) that come up in the GFT argument; here we use that the 
	   stability factor of some self-consistent equation for the resolvent products in \eqref{stable_0} behaves nicely when $|z-z'| \gg n^{-\gamma}$.

\item {\bf The exponential factor} $e^{-n\delta^2/2}$ in (\ref{tail1}) 
expressing the effect that  $z$ is outside the unit disk. 
Note that $\gamma_n$ hence $L_n$ in \eqref{omega_2}
are chosen exactly such that this exponential factor  compensates
the volume factor loss in $\int |\Delta f|$. This exponential factor was first
used only for the Ginibre ensemble in \cite{maxRe} to study the precise location of $\max\Re\mathrm{Spec}(X)$;
 here we need it for generic i.i.d. matrices as well which will be proved using another GFT argument.

	\item {\bf The level repulsion} between the two smallest eigenvalues
	 $\lambda^z_1$ and $\lambda^z_{-1}=-\lambda_1^z$,
	 \ie $y^2$ factor in the tail estimate in 
	(\ref{tail1}).  This level repulsion factor was first
	 used in \cite{CES21} 
	 to prove the edge universality and
	 we will use it here, too.

	\end{enumerate}

Before we turn to the actual proof of the main Theorem~\ref{main}, we outline its 
 three main steps and  we give some heuristic ideas about them. These three steps will be precisely stated and proved in the following Section~\ref{sec:step1} through Section ~\ref{sec:GFT}, respectively. 

\begin{enumerate}

    \item[Step 1.]  This step analyses the Ginibre ensemble and proves bounds on the variances
    of the small and large $\eta$-regimes. We first prove  (in Lemma \ref{lemma_step1} below) that the small $\eta$-integral satisfies
    \begin{align}\label{eq_some}
    	I_{0}^{\eta_0}(f)
    	=&\frac{n\eta_0^2}{2\wt\eta}\int_{\C} \Delta f(z) \<G^{z}(\ii \wt\eta)\> \dd^2 z+\mathcal{E}, 
	\qquad \E|\mathcal{E}|^2=o(1).
    \end{align}
    where $\eta_0=n^{-1+\epsilon}$, $\wt\eta=n^{-3/4-\alpha}$ with sufficiently small $\epsilon,\alpha$ and $\alpha>\epsilon$, 
    and the error term $o(1)$ holds true in the second moment sense. 
    
    For a brief sketch of the proof of (\ref{eq_some}): in the spectral decomposition of the resolvent
    we split the eigenvalues of $H^z$ into two parts truncated at the level $\wt \eta=n^{-3/4-\alpha}$. Firstly, using the tail bound for the smallest (in absolute value) eigenvalues $\lambda_1^z,\lambda_1^{z'}$, 
    with two different $z,z'$ in Proposition \ref{lemma_new_1},
     the small eigenvalues below the level $\wt \eta$ will contribute $o(1)$ in the second moment sense. 
    Here we also exploit the factor $e^{-n\delta^2/2}$ in the tail bound that is already known for Ginibre matrices.
    Then, for the remaining large eigenvalues above $\wt \eta$, we can replace the spectral parameter of the resolvent $\eta$ with the
    much larger level $\wt\eta$ using Proposition \ref{lemma2} for the Ginibre ensemble. In particular, this replacement does not rely on the decorrelation effect coming from $|z-z'|$ being fairly large.

    Once the spectral parameter $\eta$ in $I_{0}^{\eta_0}(f)$ has been increased to $\wt\eta$ in (\ref{eq_some}), 
    we can use Proposition~\ref{lemma_new_2} (manifesting a slight
     $|z-z'|$-decorrelation effect, \ie $\<G^{z}\>$ and $\<G^{z'}\>$ are weakly independent) to show that the right side of (\ref{eq_some}) satisfies
    \begin{align}\label{Evar_3}
    	\V^{\mathrm{Gin}} \Big[\frac{n\eta_0^2}{2\wt\eta}\int_{\C} \Delta f(z)  \<G^z(\ii \wt\eta)\> \dd^2 z \Big]=o(1),
    \end{align}
as stated more precisely in Lemma \ref{lemma_step12}.
Therefore, using (\ref{eq_some}) and (\ref{Evar_3}), we obtain that
    \begin{align}
    	&\V^{\mathrm{Gin}} \Big[I_{0}^{\eta_0}(f)\Big]=o(1).
	\label{Evar_22}
    \end{align} 

 With the variance bound of $I_{0}^{\eta_0}(f)$ in~(\ref{Evar_22}) for $f=f^{-}_1$, we use 
 Girko's formula in (\ref{expectation_eq}) 'backward' together with
  (\ref{main_part_2_Gin})~(the variance estimate of the left side of (\ref{expectation_eq}))
   to get corresponding variance bound for the other part $I_{\eta_0}^{T}(f^{-}_1)$, \ie
	\begin{align}\label{some_Gini_2}
	\V^{\mathrm{Gin}} \Big[I_{\eta_0}^{T}(f^{-}_1) \Big]=
	o(1)\Big(\E^{\mathrm{Gin}}\Big[\sum_{i=1}^n f^{-}_1(\sigma_i)\Big]\Big)^2,
\end{align}
where $\E^{\mathrm{Gin}}\Big[\sum_{i=1}^n f^{-}_1(\sigma_i)\Big] \geq C$ 
with some constant $C>0$ given in (\ref{main_part_1_Gin}).

\item[Step 2.]	In this step we transfer information from Ginibre matrices to generic i.i.d. matrices
 in the small $\eta$-regime.
We use a similar argument~(Lemma~\ref{lemma_step2}) as in Step 1 to show that, for $f= f_1^-$ or~$f_2^+$,
       \begin{align}\label{step_2_G}
	I_{0}^{\eta_0}(f) = &\frac{n\eta_0^2}{2\wt\eta}\int_{\C} \Delta f(z) \<G^{z}(\ii \wt\eta)\>\dd^2 z+
	\mathcal{E}, \qquad \E|\mathcal{E}|=o(1),
\end{align} 
where $\eta_0=n^{-1+\epsilon}$, $\wt\eta=n^{-3/4-\alpha}$ with sufficiently small $\epsilon,\alpha$ and 
$\alpha>\epsilon$. Notice
the difference compared with~\eqref{eq_some}:
 now the error term is controlled only in the first absolute moment sense. 
  The extension of 
 \eqref{step_2_G} to the  second moment sense, as available for Ginibre in~\eqref{eq_some},
would  require much more effort, in particular, we would need to prove the analogue of Proposition \ref{lemma_new_1} for i.i.d. matrices, 
but our proof circumvents this.
 To prove~\eqref{step_2_G}, the key step is to use a GFT and Gronwall argument~\cite{EX22} to  transfer
 the tail bound of the smallest $\lambda^z$ with the  exponential factor $e^{-n\delta^2/2}$  from Ginibre to  i.i.d. matrices
 (Proposition \ref{prop1}).
 
Since the spectral parameter of the Green function  has been increased from 
$\eta_0=n^{-1+\epsilon}$  to $\wt\eta=n^{-3/4-\alpha}$ thanks to (\ref{step_2_G}),
we can use a standard  iterative GFT argument to compare
$\E[ \<G^{z}(\ii \wt\eta)\>]$ with its Ginibre counterpart. This is done in Proposition~\ref{prop2_old} which is stated more generally for  any $\eta \ge n^{-1+\epsilon}$ and its proof requires
$1/\epsilon$-many steps in the bootstrap, but for the current application we need it only for $\wt\eta=n^{-3/4-\alpha}$
with a few bootstrap steps since $1/(n\wt\eta)$, the gain in 
each step, is relatively small.  
Using this GFT comparison we get 
 \begin{align}\label{step_2_ex}
\E \Big[I_{0}^{\eta_0}(f)\Big] = &\frac{n\eta_0^2}{2\wt\eta}\int_{\C} \Delta f(z) \E\Big[\<G^{z}(\ii \wt\eta)\>\Big] \dd^2 z+o(1)\nonumber\\
 = &\frac{n\eta_0^2}{2\wt\eta}\int_{\C} \Delta f(z) \E^{\mathrm{Gin}}\Big[\<G^{z}(\ii \wt\eta)\>\Big] \dd^2 z+o(1)=\E^{\mathrm{Gin}} \Big[I_{0}^{\eta_0}(f)\Big]+o(1),
\end{align} 
where 
in the last step we used (\ref{eq_some}) or \eqref{step_2_G}. This is presented as the first statement of Lemma~\ref{lemma_step2_2} in a more precise way.

To prove the concentration result of the small $\eta$-component
  in (\ref{main_part_2}), we need to estimate the first absolute moment of $I^{\eta_0}_{0}-\E[I^{\eta_0}_{0}]$. Using \eqref{step_2_G} we have
	\begin{align}\label{step2_middle1}
	\E\Big|I^{\eta_0}_{0}(f)-\E[I^{\eta_0}_{0}(f)]\Big|=&\frac{n\eta_0^2}{2\wt\eta} \E\Big|(1-\E)\int_{\C} \Delta f(z) \<G^{z}(\ii \wt\eta)\>\dd^2 z\Big|+o(1)\nonumber\\
	\leq &\frac{n\eta_0^2}{2\wt\eta} \sqrt{\V \Big[\int_{\C} \Delta f(z) \<G^{z}(\ii \wt\eta)\>\dd^2 z \Big]} +o(1)  \nonumber\\
	=& \frac{n\eta_0^2}{2\wt\eta} \sqrt{\V^{\mathrm{Gin}} \Big[\int_{\C} \Delta f(z) \<G^{z}(\ii \wt\eta)\>\dd^2 z \Big]+o(1)}+o(1), \end{align}
where we used Cauchy-Schwarz inequality in the second line and
in the last line  we used standard  iterative GFT  for $\V\big[ \<G^{z}(\ii \wt\eta)\>\big]$,
 see Proposition~\ref{prop2_old},  (again, only for  $\wt\eta=n^{-3/4-\alpha}$).
 Finally, using (\ref{Evar_3}) we have
 \begin{align}\label{step2_middle}
	\E\Big|I^{\eta_0}_{0}(f)-\E[I^{\eta_0}_{0}(f)]\Big|=o(1);
\end{align}	
 see the second statement of Lemma \ref{lemma_step2_2} for a more precise form.

	\item[Step 3.] In this last step we consider the remaing large $\eta$-integral $I_{\eta_0}^{T}$ for in i.i.d.~ cases and
	 show (Proposition \ref{gft}) that, for $f= f_1^-$ or $f=f_2^+$,
	\begin{align}
	&\E\Big[I_{\eta_0}^{T}(f)\Big] = \E^{\mathrm{Gin}}\Big[I_{\eta_0}^{T}(f)\Big]+o(1),\label{genvar0}\\
	 &\V\Big[I_{\eta_0}^{T}(f)\Big] = \V^{\mathrm{Gin}}\Big[I_{\eta_0}^{T}(f)\Big]+o(1). \label{genvar}
	\end{align} 
This is a more delicate iterative GFT than the ones used in previous papers, e.g. \cite[Proposition 3.8]{maxRe}
or in Proposition~\ref{prop2_old} below, not only because 
we are  operating down to the most involved $\eta_0=n^{-1+\epsilon}$ level, but,
more critically,  also because the large $\eta$-regime is sensitive to larger eigenvalues. In this typical part of
the spectrum  the earlier exponential factor $e^{-n\delta^2/2}$ is not
present, so the compensation for the $\int |\Delta f|\sim n^{1/2}$ loss  has to come 
from a very precise comparison between Ginibre and i.i.d. ensembles.

Compared to the previous GFTs done in \cite[Proposition 3.8]{maxRe}, behind
the proof of  \eqref{genvar0}--\eqref{genvar} in Proposition \ref{gft} there are mainly two new refinements (see Proposition \ref{key_lemma}): 
\begin{itemize}
\item  for the third order terms with distinct summation indices, we need to iteratively expand these terms up to a sufficient precision using the so-called unmatched index~(see Definition~\ref{def:unmatch_form}), and explicitly identify the remaining leading deterministic terms, which will all vanish against $\Delta f(z)$ after the $z$-integrations. Here we really need $1/\epsilon$ iteration steps, in contrast to fewer steps needed in Step 2.
\item for the restricted third order terms with index coincidence as well as all the fourth order terms, again we identify the precise leading terms that vanish after the $z$-integrations with improved error terms; here we need to gain an extra smallness from the $|z-z'|$-decorrelation effect for the improved error terms. 
\end{itemize}
\end{enumerate}

Armed with the results of these three steps 
we are now ready to prove Theorem~\ref{main}.
\begin{proof}[Proof of Theorem~\ref{main}]
As in~\cite[Section 3]{maxRe}, the proof of Theorem~\ref{main} has been reduced to showing (\ref{main_part_1})-(\ref{main_part_2}). Firstly using (\ref{expectation_eq}), (\ref{step_2_ex}) and (\ref{genvar0}) for the expectation estimate, we have
\begin{align}\label{expectation_f}
	\E\Big[\sum_{i=1}^n f(\sigma_i)\Big]=&\E[I_{\eta_0}^{T}(f)]+\E[I_{0}^{\eta_0}(f)]+O_\prec(n^{-100})\nonumber\\
=&\E^{\mathrm{Gin}}[I_{\eta_0}^{T}(f)]+\E^{\mathrm{Gin}}[I_{0}^{\eta_0}(f)]+o(1)=\E^{\mathrm{Gin}}\Big[\sum_{i=1}^n f(\sigma_i)\Big]+o(1).
\end{align}
The expectation estimates in (\ref{main_part_1}) then follows directly from the Ginibre estimates in \eqref{main_part_1_Gin}.

We next prove the the concentration estimate in (\ref{main_part_2}). Recalling  the decomposition of the linear statistics in (\ref{expectation_eq}) and using (\ref{step2_middle}) for $f= f_1^-$, we have
\begin{align}\label{mainterm}
	\Big|(1-\E)\Big[\sum_{i=1}^n f (\sigma_i)\Big]\Big|=
	&\E\Big|I_{\eta_0}^{T}(f )-\E[I_{\eta_0}^{T}(f )]\Big| +o(1) \leq \sqrt{\V[I_{\eta_0}^{T}(f_1^{-} )]}+o(1). 
\end{align} 
Using the GFT result in (\ref{genvar}) for $f=f_1^-$ and (\ref{some_Gini_2}) for the Ginibre ensemble, we have
\begin{align}\label{var_f}
	\Big|(1-\E)\Big[\sum_{i=1}^n f^{-}_1(\sigma_i)\Big]\Big| \leq&  \sqrt{\V^{\mathrm{Gin}}[I_{\eta_0}^{T}(f^{-}_1)]+o(1)}+o(1)\nonumber\\
	=&o(1)\Big(\E^{\mathrm{Gin}}\Big[\sum_{i=1}^n f^{-}_1(\sigma_i)\Big]\Big)=o(1)\Big(\E\Big[\sum_{i=1}^n f^{-}_1(\sigma_i)\Big]\Big),
\end{align}
where we also used (\ref{expectation_f}) and that $\E\Big[\sum_{i=1}^n f^{-}_1(\sigma_i)\Big] \geq C$ for some constant $C>0$. Hence we completed the proof of Theorem~\ref{main}.
	
\end{proof}

\bigskip
 
We conclude this sketch of the proof strategy by commenting on the relation to our newer paper~\cite{Gumbel}
where the Gumbel universality was proven more than a year after the completion of the current work.
Both proofs start with Girko's formula with specifically designed cutoff
functions $f^\pm$ locating the largest eigenvalue, 
but the actual analysis is conceptually different in the two papers.
First, in~\cite{Gumbel} we did not split
the $\eta$-integration into two regimes at some $\eta_0$ as in~\eqref{expectation_eq},
instead, we effectively separated the microscopic and mesoscopic  regimes by monitoring the
smallest singular value $\lambda_1^z$ and using it as an effective small-scale cutoff in 
the typical regime where $\lambda_1^z\ge n^{-3/4-\epsilon}$. However, this event needed to be controlled
dynamically along the Ornstein-Uhlenbeck  flow similar to~\eqref{flow}, adding an extra complication.
  Second,  in~\cite{Gumbel} we
performed two integrations by parts (in $z$) in~\eqref{linear_stat}. This moved the complication 
due to $\Delta_z f$ from the test function to the resolvent. For the mesoscopic regime
it required to analyse the local law for $\Delta_z G^z = G^z F G^z F^* G^z + \ldots $ with 
$F = \begin{pmatrix} 0 & 0 \cr 1 & 0 \end{pmatrix}$ instead of merely using the single resolvent local law, Theorem~\ref{local_thmw}.
This considerably simplified the iterative GFT analysis (which comprises large part of the current work)
at the cost of proving more sophisticated multi-resolvent local laws where the off-diagonality of $F$
is exploited. Third, to control  $\Delta_z G^z$ in the microscopic regime  we needed the decorrelation 
estimate~\eqref{truth} in the entire range $|z-z'|\gg n^{-1/2}$. This was done
with DBM methods analogous to Section~\ref{sec:DBM}, but it  needed the essential
independence of $G^z$ and $G^{z'}$  for any $|z-z'|\gg n^{-1/2}$.  In turn, such independence
required a local law for $\langle G^z G^{z'}\rangle$ with optimal $|z-z'|$-decay which needed a 
new strategy in~\cite{Gumbel}, the {\it characteristic flow method}. Finally, while in the current paper we estimate
pieces of Girko's formula either in expectation or variance sense; to identify Gumbel distribution we needed
general test functions in~\cite{Gumbel}. It is fair to say that the approach in the current paper is more 
elementary, heavily relying on the robust GFT proof strategy, while~\cite{Gumbel} is more sophisticated
using additional ideas that are specific to this particular problem.  We also stress that several key results
from the current paper, such as the local law Theorem~\ref{local_thmw} and the DBM analysis in Section \ref{sec:DBM}, are directly 
used in~\cite{Gumbel}.

\section{Step 1. Ginibre ensemble: Small $\eta$ integral over $[0,n^{-1+\epsilon}]$}\label{sec:step1}

In this section, we consider the Ginibre ensemble and aim to replace the small spectral parameter 
$\eta\in[0,n^{-1+\epsilon}]$ of the resolvent in $I_{0}^{\eta_0}(f)$
with a large level $\wt \eta$ slightly below $n^{-3/4}$.
Recall that throughout the paper $f= f_1^-$ or $f=f_2^+$ given in (\ref{function1})-(\ref{function2}).
\begin{lemma}\label{lemma_step1}
 	Fix $\tau>0$. Set $\eta_0=n^{-1+\epsilon}$ and $\wt \eta=n^{-3/4-\alpha}$ with sufficiently small $\epsilon,\alpha$ and $\alpha >\epsilon>\tau/2$. Then we have 
	    \begin{align}\label{step_1_goal}
	\E^{\mathrm{Gin}}\Big|	I_{0}^{\eta_0}(f)-\int_{\C} \Delta f(z) \int^{\eta_0}_{0} \sum_{i}\frac{\eta}{(\lambda^z_i)^2+\wt \eta^2} \dd \eta \dd^2 z\Big|^2=O\Big((\log n)^C n^{-4(\alpha-\epsilon)}\Big).
	\end{align}
\end{lemma}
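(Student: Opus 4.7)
After performing the inner $\eta$-integral explicitly, the quantity to control is, up to the overall factor $-1/(4\pi)$, the integral $\int\Delta f(z) D(z)\,\dd^2 z$ with
\[
D(z):=\sum_i\Big[\tfrac{1}{2}\log\!\big(1+\tfrac{\eta_0^2}{(\lambda^z_i)^2}\big)-\tfrac{\eta_0^2}{2((\lambda^z_i)^2+\wt\eta^2)}\Big].
\]
The plan is to split $D=D^{\mathrm{lg}}+D^{\mathrm{sm}}$ according to whether $|\lambda^z_i|>10\wt\eta$ or $|\lambda^z_i|\le 10\wt\eta$, and to estimate the two contributions by completely different mechanisms: the large part by deterministic rigidity bounds, and the small part by the joint tail bound of Proposition~\ref{lemma_new_1} together with the single-point tail \eqref{tail1}.

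For the large part, since $\wt\eta\gg\eta_0$ in the regime $\alpha>\epsilon$, Taylor expansion cancels the leading $\eta_0^2/(\lambda^z_i)^2$ in both summands and yields the pointwise estimate $|D_i^{\mathrm{lg}}(z)|\lesssim \eta_0^2\wt\eta^2/(\lambda^z_i)^4$. Eigenvalue rigidity from Corollary~\ref{cor:rigidity}, together with the density asymptotics \eqref{rho_E}, allows a deterministic bound on $\sum_{i:\,|\lambda^z_i|>10\wt\eta}(\lambda^z_i)^{-4}$ on a very-high-probability event. Since $f$ is supported where $\delta:=|z|^2-1\gtrsim\sqrt{\log n/n}$, the gap $\Delta\sim\delta^{3/2}\gtrsim(\log n/n)^{3/4}$ is the dominant scale near zero, and a straightforward integration against $n\rho^z(E)$ near the edge gives $\sum_i(\lambda^z_i)^{-4}\lesssim n/\Delta^{8/3}\lesssim n^{3}/(\log n)^{2}$. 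Combined with $\|\Delta f\|_1\lesssim n^{1/2}(\log n)^{1/2}$ from \eqref{deltaf}, this yields $\E^{\mathrm{Gin}}|\!\int\!\Delta f\,D^{\mathrm{lg}}|^2\lesssim (\log n)^{-3}\,n^{-4(\alpha-\epsilon)}$, exactly matching the required target.

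For the small part, $D^{\mathrm{sm}}(z)$ is nonzero only on the atypical event $|\lambda^z_1|\le 10\wt\eta$, and the sum is dominated by its $i=1$ term, on which $|D_1^{\mathrm{sm}}(z)|\lesssim 1+\log_+(\eta_0/|\lambda^z_1|)$. I would expand the second moment as a double integral $\iint\Delta f(z_1)\Delta f(z_2)\,\E^{\mathrm{Gin}}[D^{\mathrm{sm}}(z_1)D^{\mathrm{sm}}(z_2)]\,\dd^2 z_1\dd^2 z_2$ and split the $(z_1,z_2)$ domain into the short-range region $|z_1-z_2|\le n^{-\gamma}$ and the long-range region $|z_1-z_2|>n^{-\gamma}$ for a sufficiently small $\gamma\in(0,\alpha-\epsilon)$. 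In the short-range region, Cauchy--Schwarz reduces the estimate to single-point moments of $|D^{\mathrm{sm}}(z)|^2$, with an additional saving of a factor $n^{-\gamma}$ from the reduced $z_2$-volume. In the long-range region I would apply the joint decorrelation bound of Proposition~\ref{lemma_new_1}, which essentially factorizes $\E^{\mathrm{Gin}}[D^{\mathrm{sm}}(z_1)D^{\mathrm{sm}}(z_2)]$ into the product of marginals up to a negligible $n^{-D}$ error. In both cases, the single-point tail \eqref{tail1} delivers the combination of the exponential factor $e^{-n\delta^2/2}\sim n^{-1/2}$ for $z\in\Omega_1$ and the level-repulsion $y^2$-factor, so that $\E^{\mathrm{Gin}}|D^{\mathrm{sm}}(z)|$ and $\E^{\mathrm{Gin}}|D^{\mathrm{sm}}(z)|^2$ are much smaller than their naive bounds. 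The main obstacle is exactly this small-eigenvalue contribution: the three ingredients---the joint $z_1,z_2$-decorrelation, the exponential tail, and the level repulsion---must all combine optimally to beat the $\|\Delta f\|_1^2\sim n\log n$ loss, and the choice $\wt\eta=n^{-3/4-\alpha}$ with $\alpha>\epsilon>\tau/2$ is tuned precisely so that the factor $(\eta_0/\wt\eta)^2=n^{-1/2+2(\epsilon+\alpha)}$ multiplying the joint tail bound delivers the sharp $n^{-4(\alpha-\epsilon)}$ rate.
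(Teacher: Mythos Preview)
Your decomposition into $D^{\mathrm{lg}}+D^{\mathrm{sm}}$ and your treatment of the small part are essentially the paper's strategy: split the double $z$-integral into short and long range, use a volume gain in the short range, and invoke Proposition~\ref{lemma_new_1} plus the single-$z$ tail~\eqref{tail1} in the long range. That part is fine, up to the minor point that there can be up to $n^\xi$ eigenvalues below $\wt\eta$ (not just $i=1$), and the ultra-small $\lambda_1^z$ regime needs the separate argument~\eqref{tiny_int}.

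The genuine gap is in your large-part estimate. You claim that rigidity (Corollary~\ref{cor:rigidity}) together with the density asymptotics~\eqref{rho_E} yields the deterministic bound $\sum_{|\lambda_i^z|>10\wt\eta}(\lambda_i^z)^{-4}\lesssim n/\Delta^{8/3}$ on a very-high-probability event. This is not correct. For $z$ on the support of $f$ we have $\Delta\sim n^{-3/4}(\log n)^{3/4}$, while rigidity localizes $\lambda_i^z$ only to within $n^{-3/4+\xi}$ of its quantile for any fixed $\xi>0$. Since $n^\xi\gg(\log n)^{3/4}$, rigidity does \emph{not} exclude $O_\prec(1)$ eigenvalues sitting anywhere in $[10\wt\eta,\Delta/2]$. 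Writing the sum via~\eqref{rigidity3} as an integral against the counting function gives
\[
\sum_{|\lambda_i^z|>10\wt\eta}(\lambda_i^z)^{-4}\lesssim n\int_{\Delta/2}^c E^{-4}\rho^z(E)\,\dd E \;+\; n^{\xi}\,\wt\eta^{-4}
\;\sim\; \frac{n^3}{(\log n)^2}\;+\;n^{3+4\alpha+\xi},
\]
and the second term dominates. Carrying it through, $|D^{\mathrm{lg}}(z)|\lesssim \eta_0^2\wt\eta^2\cdot n^{3+4\alpha+\xi}=n^{-1/2+2\epsilon+2\alpha+\xi}$, and after multiplying by $\|\Delta f\|_1\sim n^{1/2}$ this is $n^{2\epsilon+2\alpha+\xi}\gg 1$, so the argument fails.

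The paper closes this gap not by a deterministic rigidity bound but by a \emph{random} bound followed by a moment estimate. Since $\wt\eta^2/\lambda_i^4\lesssim\wt\eta^2/(\lambda_i^2+\wt\eta^2)^2$ for $|\lambda_i|\ge\wt\eta$, one has
\[
|D^{\mathrm{lg}}(z)|\lesssim \eta_0^2\sum_i\Big(\frac{\wt\eta}{\lambda_i^2+\wt\eta^2}\Big)^2\le \eta_0^2\Big(\sum_i\frac{\wt\eta}{\lambda_i^2+\wt\eta^2}\Big)^2=(n\eta_0)^2\big(\Im\langle G^z(\ii\wt\eta)\rangle\big)^2,
\]
and then $\E\big[(\Im\langle G^z(\ii\wt\eta)\rangle)^4\big]=O_\prec\big((\sqrt n\,\wt\eta)^4\big)$ from~\eqref{moment_resolvent} (i.e.\ Proposition~\ref{lemma2}) yields $(n\eta_0)^4\|\Delta f\|_1^2\,n^{-1-4\alpha}\lesssim(\log n)^C n^{-4(\alpha-\epsilon)}$. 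This moment input---itself a consequence of the tail bound~\eqref{tail1}---is the missing ingredient; a purely rigidity-based density integral cannot replace it at this scale.
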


Before we give the proof, we recall from~\cite{maxRe} the following precise tail bound estimate 
for the smallest eigenvalue (in modulus) of $H^{z}$ from (\ref{initial}), with $X$ being the complex Ginibre ensemble.
\begin{proposition}[Proposition 2.7 \cite{maxRe}]\label{prop:tail}
	Fix\footnote{Here we use a different convention compared to~\cite{CES20,CES22a,CES22b}, i.e. we now define $\delta$ so that $\delta>0$ for $|z|>1$.} $\delta:=|z|^2-1$ with $n^{-1/2}\ll\delta\ll 1$  and let  $\lambda_1^z$ be the smallest singular value of $X-z$, 
	where $X$ is a  complex Ginibre matrix.
	Then there exists a constant $C>0$, independent of $n$ and $\delta$, such that for any $y\le C/(n\delta^2)$
	we have the following lower tail bound 
	\begin{equation}\label{tail}
		\mathbf{P}^{\mathrm{Gin}}\left(\lambda_1^z\le y\delta^{3/2}\right)\lesssim y^2 (n\delta^2)^{4/3} e^{-n\delta^2(1+\OO(\delta))/2}.
	\end{equation}
\end{proposition}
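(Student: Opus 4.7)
My plan is to exploit the exact integrability of the complex Ginibre ensemble: the squared singular values $\mu_i:=(\lambda_i^z)^2$ of $X-z$ form a determinantal point process on $[0,\infty)$ whose correlation kernel $\wt K_n^z(\mu,\mu')$ admits an explicit double contour-integral representation (the joint eigenvalue density involves a hypergeometric-function factor from the nonzero shift $z$, but the resulting kernel is still tractable). The two main steps are: first, perform a steepest-descent analysis of $\wt K_n^z(\mu,\mu)$ near $\mu=0$, deep inside the gap $[0,\Delta^2/4]\sim[0,\delta^3]$ of the limiting $\mu$-density; second, convert back to the singular-value variable $\lambda=\sqrt{\mu}$ and conclude by a first-moment bound.

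In the first step the phase function of the contour integral has a dominant saddle whose critical-point equation coincides with \eqref{m_function} evaluated at spectral parameter $w=0$. Its action at this saddle equals $n\delta^2(1+O(\delta))/2$, reproducing the large-deviation cost of placing an eigenvalue at the origin while the edge of the support sits at $\Delta/2\sim\delta^{3/2}$. The Gaussian correction around the saddle, combined with the Airy-type crossover produced by the saddle approaching the spectral edge as $\delta\to 0$, contributes a prefactor of order $n^{4/3}\delta^{-1/3}$, and by continuity the same bound persists uniformly for $0\le\mu\lesssim y^2\delta^3$ provided $y(n\delta^2)\le C$, so that
\[
\wt K_n^z(\mu,\mu) \;\lesssim\; n^{4/3}\,\delta^{-1/3}\,e^{-n\delta^2(1+O(\delta))/2}.
\]

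In the second step the one-point density of positive singular values near zero reads $\rho_\lambda(\lambda)=2\lambda\,\wt K_n^z(\lambda^2,\lambda^2)$, where the linear factor $2\lambda$ is exactly the level repulsion between $\lambda_1^z$ and its mirror image $-\lambda_1^z$ imposed by the block structure of $H^z$. Since the event $\{\lambda_1^z\le y\delta^{3/2}\}$ forces at least one singular value into $[0,y\delta^{3/2}]$, Markov's inequality applied to the point count yields
\[
\P^{\mathrm{Gin}}\!\left(\lambda_1^z\le y\delta^{3/2}\right) \;\le\; \int_0^{y\delta^{3/2}}\rho_\lambda(\lambda)\,d\lambda \;\lesssim\; (y\delta^{3/2})^2\cdot n^{4/3}\delta^{-1/3}\,e^{-n\delta^2(1+O(\delta))/2} \;=\; y^2\,(n\delta^2)^{4/3}\,e^{-n\delta^2(1+O(\delta))/2},
\]
which is exactly the claimed bound.

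The main technical obstacle will be the uniform steepest-descent analysis in the regime $n^{-1/2}\ll\delta\ll 1$: as $\delta\to 0$ the relevant saddle of the phase function merges with the spectral edge at $\mu=\Delta^2/4$, and one must deform the contours carefully enough to capture the Airy-type fractional-power correction $(n\delta^2)^{1/3}$ rather than merely the Gaussian prefactor that would be valid at fixed $\delta$. A secondary subtlety is verifying the $O(\delta)$ correction inside the exponent, which requires expanding the rate function one order beyond its leading term and is what keeps the bound useful down to $n\delta^2\sim 1$. Finally, the upper restriction $y\le C/(n\delta^2)$ ensures we remain inside the large-deviation zone where the single first-moment estimate used here is sharp.
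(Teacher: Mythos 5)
The proposition is stated in this paper without proof; it is imported verbatim from~\cite{maxRe} (which in turn builds on the exact finite-$n$ formulas of~\cite{CES20,CES22a}). So the comparison is really between your plan and the argument in those references.

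Your overall strategy is the right one and matches what those papers do at a structural level: obtain an exact finite-$n$ expression for the one-point density of singular values of $X-z$, perform a saddle-point analysis of that expression deep inside the gap $[0,\Delta/2]$ (with $\Delta\sim\delta^{3/2}$), and then conclude by the first-moment (Markov) bound $\P(\lambda_1^z\le t)\le\E\#\{i:\lambda_i^z\le t\}=\int_0^t\rho_\lambda(\lambda)\,\dd\lambda$. The factor of $\lambda$ in $\rho_\lambda$ is correctly identified as the source of the $y^2$ level-repulsion factor (equivalently, the Jacobian $\dd\mu=2\lambda\,\dd\lambda$ together with boundedness of the $\mu$-density at the hard wall). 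The dimensional bookkeeping $(y\delta^{3/2})^2\cdot n^{4/3}\delta^{-1/3}=y^2(n\delta^2)^{4/3}$ is also correct.

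There is, however, a genuine difference of route, and a genuine gap. On the route: for the first-moment bound one needs only the one-point density, not a full determinantal kernel, and~\cite{maxRe,CES20,CES22a} do \emph{not} start from a Brezin--Hikami-type external-source kernel with fully coalescing source (which is notoriously awkward to manipulate in the degenerate limit $A=-zI$). They instead derive a low-dimensional contour-integral formula for the averaged resolvent trace of $H^z$ via a supersymmetry identity, which is the clean exact input. Your determinantal framing is plausible in principle but would force you to first take the degenerate-source limit in the biorthogonal ensemble, a nontrivial preliminary you do not address; it also buys you more than you need.

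The real gap is the asymptotic analysis itself. You attribute the prefactor $n^{4/3}\delta^{-1/3}$ to ``the Gaussian correction around the saddle, combined with an Airy-type crossover produced by the saddle approaching the spectral edge.'' This is the right intuition but it is not a proof, and in the regime $n^{-1/2}\ll\delta\ll1$ it is exactly where the difficulty is concentrated: the quadratic term in the phase expansion at the saddle has a small (δ-dependent) coefficient, so reading off the correct fractional power, and controlling it \emph{uniformly} over the full range of $\delta$ (including the threshold region $n\delta^2=O(\log n)$, which is precisely where the current paper needs the bound), requires a genuine three-term Taylor expansion of the phase and a deliberate contour deformation. The same applies to the $O(\delta)$ correction in the exponent, which you mention only in passing but which is indispensable for the result to be usable in the sequel. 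Without carrying out this uniform steepest-descent analysis, the proposal is a plan, not a proof: it correctly identifies the shape of the answer but leaves the essential technical core unproved.
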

Recall the functions $f^{-}_1$ and $f^{+}_2$ given in (\ref{function1})-(\ref{function2}). 
For any $z \in \mathrm{supp}(f^{-}_1) \cup \mathrm{supp}(f^{+}_2)$, we have 
\begin{align}\label{delta_regime}
 \sqrt{\frac{\log n}{n}} (1-o(1)) \leq \delta=|z|^2-1 \lesssim \frac{n^{\tau}}{\sqrt{n}},
\end{align} which implies that  $e^{-n\delta^2/2} \lesssim n^{-1/2}$. Fixing any small $\epsilon'>\tau/2$,  from Proposition \ref{prop:tail} we have
\begin{align}\label{tail_bound}
	\mathbf{P}^{\mathrm{Gin}}\left(\lambda_1^z\le \eta \right)\lesssim n \eta^2, \quad\quad \mbox{for any}\;\;
	 \eta\leq n^{-3/4-\epsilon'} \;\; \mbox{and} \;\;  z \in \mathrm{supp}(f^{-}_1) \cup \mathrm{supp}(f^{+}_2).
\end{align} 
Furthermore, by (\ref{tail_bound}), we have the following estimates for the resolvent (see Proposition \ref{lemma2} below for the precise statement)
\begin{align}\label{moment_resolvent}
	\E^{\mathrm{Gin}} \Big[\big( \Im \<G^{z}(\ii \eta)\> \big)^k \Big]=O_\prec\Big((\sqrt{n}\eta)^k+\frac{n^{-1}}{(n\eta)^{k-2}}\Big), \qquad \forall k\geq 1.
\end{align}
This proposition will be formulated and proven directly for general i.i.d. matrices in the next section  relying only on the tail bound for $\lambda_1^z$ 
and rigidity of eigenvalues near the origin. Its proof does not use  any other  comparison
with Ginibre ensemble, hence the argument is not circular.

 We remark that for $z\in \mathrm{supp}(f^{+}_2)$, the above estimates should be smaller 
than for  $z\in \mathrm{supp}(f^{-}_1)$ 
since the support of $f^+_2$ is even farther away from from  the unit disk, see
(\ref{function2}).
 Nevertheless our proof will not rely on this improvement at all. Hence we will not distiguish between $f_1^-$ and $f_2^+$ and the following estimates are valid for any $f=f_1^-$ or $f_2^+$.

Now we are ready to prove Lemma \ref{lemma_step1}.  
\begin{proof}[Proof of Lemma \ref{lemma_step1}]
	Recall that the eigenvalues of $H^{z}$ in (\ref{initial}) are given by $\{\pm \lambda_j^z\}_{j \in \llbracket 1,n \rrbracket}$ 
	with non-decreasingly labelled $\lambda_j^z \in \R^+$ for $j \in \llbracket 1,n \rrbracket$.  
 Using similar arguments as in the proof of \cite[Lemma 3.4]{maxRe} (see also \cite[Eq. (5.34)-(5.35)]{AEK18a}), there exists a suffciently large $l>0$ such that the tiny $\eta$-integral over $[0,n^{-l})$ is negligible in the sense of any finite moment, \ie 
\begin{align}\label{tiny_int}
	\int_{0}^{n^{-l}}\Im \Tr G^{z}(\ii \eta) \dd \eta =\frac{1}{2} \Big( \sum_{|\lambda_i| \lesssim n^{-l}} +\sum_{|\lambda_i| \gtrsim n^{-l}} \Big) \log \Big(1+\frac{n^{-2l}}{(\lambda^z_j)^2}\Big)=O(n^{-10}),
\end{align}
where $O(n^{-10})$ is an irrelevant error term small enough for our proof. Note that the above also holds true for any i.i.d. matrix $X$ satisfying Assumption \ref{ass:mainass} with a sufficiently large $l>0$ depending on $\alpha,\beta$ in \eqref{assumption_b}.  Combining (\ref{tiny_int}) with (\ref{deltaf}), we have 
$$I_{0}^{\eta_0}(f)=\int_{\C} \Delta f(z)  \int_{n^{-l}}^{\eta_0}\Im \Tr G^{z}(\ii \eta) \dd \eta \dd^2 z+O(n^{-8}),$$
where the last error term holds true in the finite moment sense.  In the spectral decomposition of  $\Im \<G^{z}(\ii \eta)\>$, we then separate the eigenvalues into two parts with the cut-off level at $\wt \eta=n^{-3/4-\alpha}$,
\begin{align}\label{small+big}
	I_{0}^{\eta_0}(f)
	&=\int_{\C} \Delta f(z)  \int_{n^{-l}}^{\eta_0} \Big( \sum_{\lambda^z_j \leq \wt \eta} +\sum_{\lambda_j^z \geq \wt \eta}\Big)\frac{\eta}{(\lambda^z_j)^2+\eta^2} \dd \eta \dd^2 z+O(n^{-8})\nonumber\\
	&=:I_{small}+I_{big}+O(n^{-8}).
\end{align}

We first estimate the second moment of $I_{small}$ in (\ref{small+big}) by splitting the $z$-integrals into two parts, \ie
\begin{align}\label{E_1+E_2}
	\E^{\mathrm{Gin}}|I_{small}|^2=&\Big( \iint_{|z-z'|\leq n^{-\gamma}} +\iint_{|z-z'|\geq n^{-\gamma}} \Big) \Delta f(z)\Delta f(z') \times\nonumber\\
	&\qquad \qquad\qquad\E^{\mathrm{Gin}} \Big[ \int_{n^{-l}}^{\eta_0} \sum_{\lambda^{z}_j \leq \wt \eta}  \frac{\eta \dd \eta }{(\lambda^{z}_j)^2+\eta^2} \int_{n^{-l}}^{\eta_0} \sum_{\lambda^{z'}_j \leq \wt \eta} \frac{\eta' \dd \eta'}{(\lambda^{z'}_j)^2+\eta'^2} \Big]\dd^2 z \dd^2 z'\nonumber\\
	=:&E_{|z-z'|\leq n^{-\gamma}}+E_{|z-z'|\geq n^{-\gamma}},
\end{align}
with a small $\gamma>0$ to be chosen. For the regime $|z-z'|\leq n^{-\gamma}$ in (\ref{E_1+E_2}), using the Cauchy-Schwarz inequality we obtain
\begin{align}\label{eq_step1}
	E_{|z-z'|\leq n^{-\gamma}} \lesssim  \eta_0 \int |\Delta f(z')| \dd^2 z' \int_{|z-z'| \leq n^{-\gamma}} |\Delta f(z)|  \int_{n^{-l}}^{\eta_0}	\E^{\mathrm{Gin}}\Big[	\Big(\sum_{\lambda^z_j \in [0,\wt \eta]}\frac{\eta}{(\lambda^z_j)^2+\eta^2} \Big)^2 \Big]\dd \eta \dd^2 z.
\end{align}
We divide the interval $[0,\wt \eta]$ into triadic partitions $\bigcup_{k=0}^{O(\log n)} [3^{k-1}\eta, 3^{k}\eta]$ for any $n^{-l} \leq \eta \leq \eta_0$.  With a slight abuse of notation, 
we define the first interval for  $k=0$ to be $[3^{k-1}\eta, 3^{k}\eta]: = [0, \eta]$. 
From the rigidity estimates of eigenvalues in Corollary \ref{cor:rigidity}, fixing a small $\alpha>0$,  for any  $-Cn^{-1/2} \leq |z|-1\le c$, we have
\begin{align}\label{rigidity0}
	\#\big\{ j\in \llbracket 1, n \rrbracket: \lambda_j^z \leq n^{-3/4-\alpha} \big\} \leq n^{\xi},
\end{align}
for any small $\xi>0$, with a very high probability.  Thus using (\ref{rigidity0}) with $0<\xi<\epsilon$ we obtain that
\begin{align}\label{dy}
	\E^{\mathrm{Gin}}\Big[\Big(\sum_{\lambda^z_j \leq \wt \eta}&\frac{\eta}{(\lambda^z_j)^2+\eta^2} \Big)^2\Big]
	\lesssim \log n \sum_{k=0}^{O(\log n)}\E^{\mathrm{Gin}}
	\Big[ \sum_{\lambda^z_j \in [3^{k-1}\eta, 3^{k}\eta] }\frac{\eta^2}{((\lambda^z_j)^2+\eta^2)^2}\Big]\nonumber\\
	\lesssim& n^{\xi} \log n \sum_{k=0}^{O(\log n)} \frac{\eta^2}{(3^{k-1}\eta)^4} \P^{\mathrm{Gin}} (\lambda^z_1 \leq 3^{k}\eta)	
	\lesssim  (\log n)^C n^{1+\epsilon},
\end{align}
where we used the tail bound in (\ref{tail_bound}). Plugging (\ref{dy}) in (\ref{eq_step1}) in combination with the 
norm bounds of $\Delta f$ in (\ref{deltaf}), we gain a small factor $n^{-\gamma}$ from the volume of the $z$-integration, \ie 
\begin{align}\label{large_agg}
	E_{|z-z'|\leq n^{-\gamma}} \lesssim (\log n)^C n^{1-\gamma}  n^{1+\epsilon} \eta_0^2 = (\log n)^C n^{-\gamma+3\epsilon},
\end{align}
which is negligible choosing $\gamma>3\epsilon$.

\bigskip

We next consider the complementary regime $|z-z'|\geq n^{-\gamma}$ in (\ref{E_1+E_2}), where
we will  gain a small factor from the $|z-z'|$-decorrelation effect using the following proposition
whose proof is deferred to Section~\ref{sec:DBM}.
\begin{proposition}\label{lemma_new_1}
Fix any sufficiently small $\gamma,\alpha,\tau>0$, then for any $z,z'$ with $-n^{-1/2+\tau}\le |z|^2-1\le n^{-1/2+\tau}$, $|z-z'|\ge n^{-\gamma}$ it holds
	\begin{align}\label{lambdatail}
		\P^{\mathrm{Gin}}\Big( |\lambda_1^z| \leq n^{-3/4-\alpha}, ~ |\lambda_1^{z'}| \leq n^{-3/4-\alpha}\Big) 
		\lesssim \Big[\P^{\mathrm{Gin}}\big( |\lambda_1^z| \leq 10n^{-3/4-\alpha}\big)\Big]^2+ n^{-100}.
	\end{align}	
\end{proposition}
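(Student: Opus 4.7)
The plan is to establish the decorrelation~\eqref{lambdatail} via a Dyson Brownian motion (DBM) coupling argument with weakly correlated driving noises. I would introduce the matrix Ornstein--Uhlenbeck flow $dX_t = -\tfrac{1}{2}X_t\,dt + dB_t$, with $B_t$ a complex matrix-valued Brownian motion normalized so that the Ginibre measure is invariant; then $X_t$ is Ginibre for every $t\ge 0$ and the joint law of $(\lambda_1^z(t), \lambda_1^{z'}(t))$ is independent of $t$. The role of the flow is not to change the distribution but to provide a stochastic representation in which the decorrelation can be read off from singular-vector overlaps.

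Along this flow the low-lying eigenvalues $\{\lambda_i^z(t)\}$ of $H_t^z$ satisfy coupled DBM-type SDEs whose driving martingales $M_i^z, M_j^{z'}$ have quadratic covariation
\[
\frac{d}{dt}\langle M_i^z, M_j^{z'}\rangle_t \;\sim\; \mathrm{Re}\bigl(\langle \mathbf{u}_i^z, \mathbf{u}_j^{z'}\rangle\,\overline{\langle \mathbf{v}_i^z, \mathbf{v}_j^{z'}\rangle}\bigr),
\]
with $\mathbf{w}_{\pm i}^z = (\mathbf{u}_i^z, \pm\mathbf{v}_i^z)$ the orthonormal eigenbasis of $H^z$. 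The central input is an overlap bound of the schematic form
\[
|\langle \mathbf{u}_1^z, \mathbf{u}_1^{z'}\rangle|^2 + |\langle \mathbf{v}_1^z, \mathbf{v}_1^{z'}\rangle|^2 \;\prec\; \frac{1}{n|z-z'|^2},
\]
valid for $|z|,|z'|\approx 1$, which gives a polynomial decay $\lesssim n^{-1+2\gamma+\xi}$ on top of the trivial delocalization of Corollary~\ref{cor:eigenvector}. Armed with this, one compares the coupled DBM against an auxiliary decoupled system $(\widetilde\lambda_1^z(t), \widetilde\lambda_1^{z'}(t))$ whose driving Brownian motions are made independent but whose marginal laws are preserved; the small covariation is absorbed by a Gronwall-type estimate controlling the discrepancy. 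Choosing the DBM running time $t$ large enough that the short-range/long-range decomposition equilibrates the local statistics, but so small that the eigenvalues drift only by $o(n^{-3/4-\alpha})$ (this drift is precisely what the safety factor $10$ on the right-hand side of~\eqref{lambdatail} is meant to absorb), and invoking the rotational invariance of the Ginibre measure to identify both marginals, one concludes that the joint probability factorizes up to an $n^{-D}$ additive error, as claimed.

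The hardest step will be the overlap bound at the cusp $|z|\approx 1$, where $\rho^z(0)\sim(|z|^2-1)_+^{1/2}$ degenerates. The naive delocalization from Corollary~\ref{cor:eigenvector} only yields an overlap of order $n^{-1}$ with no $|z-z'|$-decay, which is too weak. To extract the factor $|z-z'|^{-2}$ I would prove a two-resolvent local law for $\langle \Im G^z(w)\,\Im G^{z'}(w')\rangle$ at spectral parameters slightly below the natural fluctuation scale $n^{-3/4}$, and read the overlap out of it via a spectral decomposition restricted to the low-lying eigenvalues. The governing linearized matrix Dyson equation (derived from~\eqref{dyson1}) has a stability operator whose inverse exhibits the explicit $|z-z'|^{-2}$ singularity, analogously to~\cite[Theorem 5.2]{CES19}, but now at the edge/cusp rather than in the bulk, so its invertibility analysis is substantially more delicate. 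Once the overlap bound is in hand, the rest is a by now standard DBM short-range/long-range decomposition, combined with the exponential factor in the Ginibre tail estimate~\eqref{tail} of Proposition~\ref{prop:tail} to ensure that~\eqref{lambdatail} captures both the level-repulsion and the atypical-localization smallness mechanisms simultaneously.
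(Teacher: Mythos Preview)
Your proposal is correct and follows the same strategy as the paper's Section~\ref{sec:DBM}: exploit the stationarity of Ginibre under the OU flow, run the correlated singular-value DBM~\eqref{eq:lamDBM} with covariation~\eqref{eq:DBMcor} given by singular-vector overlaps, bound these overlaps via a two-resolvent local law, and compare to two fully independent Ginibre-driven DBMs (the $\mu_i^{(l)}$ processes in~\eqref{eq:muDBM}) so that the joint event factorizes up to $n^{-D}$; the factor $10$ indeed absorbs the DBM displacement $n^{-3/4-\omega}$ exactly as you say.

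Two places where the paper's execution differs from your sketch, both worth knowing. First, running the OU flow directly on $X$ produces an extra drift $-2\lambda_i^{z_l}\mathrm{Re}\langle\mathbf{u}_i^{z_l},\mathbf{v}_i^{z_l}\rangle\,\dd t$ in the singular-value DBM~\eqref{OUDBM}, which requires controlling the \emph{same-$z$} $(\mathbf{u},\mathbf{v})$-overlap; the paper avoids this by running the drift-free flow $\dd X_t=n^{-1/2}\dd B_t$ for the DBM analysis (Theorem~\ref{theo:ind}) and then matching to the OU-stationary Ginibre by a deterministic time/initial-data rescaling (the constant $c=c(t_1)$ in~\eqref{eq:s1}--\eqref{eq:s2}). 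Second, you aim for the sharp overlap bound $\prec (n|z-z'|^2)^{-1}$ at the cusp, but the argument only needs the soft version $|\langle\mathbf{u}_i^{z_1},\mathbf{u}_j^{z_2}\rangle|^2+|\langle\mathbf{v}_i^{z_1},\mathbf{v}_j^{z_2}\rangle|^2\prec n^{-\omega_E}$ for some small $\omega_E>0$ when $|z_1-z_2|\ge n^{-\gamma}$ (see~\eqref{eq:evbtim}), which follows directly from the two-resolvent local law~\cite[Theorem~5.2]{CES19} without a new cusp-specific stability analysis. The subsequent DBM comparison in the cusp regime invokes the energy method of~\cite{CEKS19} rather than the homogenization/short-range--long-range machinery you mention, but this is a choice of which Hermitian cusp-universality black box to plug in.
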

We comment that the last error term $n^{-100}$ is irrelevant but enough to proceed with our proof. 
The same irrelevant error term will also be used in Proposition~\ref{prop1}--\ref{lemma2} later
 and more explanations can be found below these propositions.

 Then, using (\ref{lambdatail}) together with the tail bound (\ref{tail_bound}) for a single $z$, the rigidity estimate in (\ref{rigidity0}), and the $L^1$-norm bound from~\eqref{deltaf}, we have
\begin{align}\label{small_agg}
	E_{|z-z'|\geq n^{-\gamma}} \lesssim &\iint_{|z-z'|\geq n^{-\gamma}} |\Delta f(z)||\Delta f(z')| \Big(\int_{n^{-l}}^{\eta_0} \frac{\dd \eta}{\eta}\Big)^2 n^{2\xi}  \P^{\mathrm{Gin}}\Big( |\lambda_1^z| \leq \wt \eta, ~ |\lambda_1^{z'}| \leq \wt \eta\Big) \dd^2 z \dd^2 z' \nonumber\\
	\lesssim& (\log n)^C n^{1+2\xi}\Big(\P^{\mathrm{Gin}}\big( |\lambda_1^z| \leq \wt \eta\big)\Big)^2 \lesssim (\log n)^C
	 n^{-4\alpha+2\xi},
\end{align}
for any small $\xi>0$. Therefore, combining (\ref{large_agg}) and (\ref{small_agg}), we conclude that
\begin{align}\label{step_1_small}
	\E^{\mathrm{Gin}}|I_{small}|^2=O\Big((\log n)^C \big(n^{-4\alpha+2\xi}+n^{-\gamma+3\epsilon}\big) \Big).
\end{align}

Next we estimate $I_{big}$ in (\ref{small+big}). By \eqref{step_1_small}, to prove \eqref{lemma_step1} it is sufficient to compute
\begin{align}\label{step_1_split}
	\E^{\mathrm{Gin}}\Big|	I_{big}-\int_{\C} \Delta f(z)& \int^{\eta_0}_{0} \sum_{i}\frac{\eta}{(\lambda^z_i)^2+\wt \eta^2} \dd \eta \dd^2 z\Big|^2 \leq \E^{\mathrm{Gin}}\Big| \int_{\C} \Delta f(z) \int_{0}^{n^{-l}} \sum_{i}\frac{\eta}{(\lambda^z_i)^2+\wt\eta^2} \dd \eta \dd^2 z\Big|^2
	\nonumber\\
	&+\E^{\mathrm{Gin}}\Big|\int_{\C} \Delta f(z) \int^{\eta_0}_{n^{-l}} 	\Big(\sum_{|\lambda_i^{z}| \leq \wt \eta} \frac{\eta}{(\lambda^z_i)^2+\wt \eta^2} \Big) \dd \eta \dd^2 z\Big|^2 \nonumber\\
	&+\E^{\mathrm{Gin}}\Big|\int_{\C} \Delta f(z) \int^{\eta_0}_{n^{-l}}  \sum_{|\lambda_i^{z}| \geq \wt \eta}  \frac{ \eta(\wt\eta^2-\eta^2)}{((\lambda^z_i)^2+\wt\eta^2)((\lambda^z_i)^2+\eta^2)} \dd \eta \dd^2 z \Big|^2,
\end{align}
with a sufficiently large $l>0$ chosen as in (\ref{tiny_int}) and  $\wt \eta=n^{-3/4-\alpha}$. Notice that the first part in (\ref{step_1_split}) with the tiny $\eta$ integral over $[0,n^{-l})$ is
 negligible for sufficiently large $l$. In addition, the second part in (\ref{step_1_split}) from small eigenvalues $|\lambda_i^{z}| \leq \wt \eta$ can be estimated using the tail bound in (\ref{tail_bound}) with an even better error term $O\big((\log n)^C n^{-\frac{1}{2}+4\epsilon+2\alpha}\big)$ since $\wt\eta=n^{-3/4-\alpha}$ is already large. So we only focus on the third part, i.e. we now consider regime $|\lambda_i^{z}| \geq \wt \eta$ in (\ref{step_1_split}). Recalling that $\eta_0=n^{-1+\epsilon} \ll \wt \eta=n^{-3/4-\alpha}$ with sufficiently small $\epsilon,\alpha$, we have
\begin{align}
	&\E^{\mathrm{Gin}}\Big|\int_{\C} \Delta f(z) \int^{\eta_0}_{n^{-l}}  \sum_{|\lambda_i^{z}| \geq \wt \eta}  \frac{ \eta(\wt\eta^2-\eta^2)}{((\lambda^z_i)^2+\wt\eta^2)((\lambda^z_i)^2+\eta^2)} \dd \eta \dd^2 z \Big|^2\nonumber\\
	\lesssim & (n\eta_0)^4  \iint |\Delta f(z)||\Delta f(z')|   \E^{\mathrm{Gin}}\Big[\big(\Im \< G^{z}(\ii \wt \eta) \> \big)^2\big(\Im \< G^{z'}(\ii \wt \eta)\>\big)^2\Big]\dd^2 z\dd^2 z' \nonumber\\ 
	\lesssim& (n\eta_0)^4 \|\Delta f\|_1^2   \E^{\mathrm{Gin}}\Big[\big(\Im \< G^{z}(\ii \wt \eta)\>\big)^4\Big] \lesssim (\log n)^C n^{-4\alpha+4\epsilon},
\end{align}
where in the last line we used the Cauchy-Schwarz inequality, the norm bound~\eqref{deltaf}, and
 the estimate for the resolvent in (\ref{moment_resolvent}) for $k=4$. Thus from (\ref{step_1_split}) we obtain that
\begin{align}\label{step_1_big}
	\E^{\mathrm{Gin}}\Big|	I_{big}-\int_{\C} \Delta f(z) \int^{\eta_0}_{0} \sum_{i}\frac{\eta}{(\lambda^z_i)^2+\wt\eta^2} \dd \eta \dd^2 z\Big|^2=O\big((\log n)^C n^{-4\alpha+4\epsilon}\big).
\end{align}
Combining (\ref{step_1_small}) and (\ref{step_1_big}), we thus conclude
 $$\E^{\mathrm{Gin}}\Big|	I-\int_{\C} \Delta f(z) \int^{\eta_0}_{0} \sum_{i}\frac{\eta}{(\lambda^z_i)^2+\wt\eta^2} \dd \eta \dd^2 z\Big|^2=O\Big((\log n)^C \big(n^{-4\alpha+2\xi}+n^{-\gamma+3\epsilon}+n^{-4\alpha+4\epsilon}\big) \Big).$$
 Choosing sufficiently small $\xi,\gamma,\alpha$ with $\xi<\epsilon$, $\gamma>4\alpha$, and $\alpha>\epsilon$, we have finished the proof of Lemma~\ref{lemma_step1}.
\end{proof}

Next, we will use Lemma \ref{lemma_step1} to show the following.
\begin{lemma}\label{lemma_step12}
	For the Ginibre ensemble	 we have
    \begin{align}
	  	&\E^{\mathrm{Gin}} \Big[I_{0}^{\eta_0}(f)\Big]=\frac{n\eta_0^2}{2\wt \eta}\int_{\C} \Delta f(z) \E^{\mathrm{Gin}} \Big[\<G^{z}(\ii \wt\eta)\> \Big]\dd^2 z+O(n^{-\kappa}),\label{eq_step11}\\
	&\V^{\mathrm{Gin}} \Big[I_{0}^{\eta_0}(f)\Big]\lesssim \V^{\mathrm{Gin}} \Big[\frac{n\eta_0^2}{2\wt \eta}\int_{\C} \Delta f(z) \<G^{z}(\ii \wt\eta)\> \Big]\dd^2 z+O(n^{-\kappa})=O(n^{-\kappa}),\label{eq_step12}
\end{align} 
recalling that $\eta_0=n^{-1+\epsilon}$, $\wt\eta=n^{-3/4 -\alpha}$,
 with any sufficiently small $\epsilon,\alpha$ and $\alpha>\epsilon$.  Here $\kappa>0$ is
 a small positive number, depending on $\alpha, \epsilon$, whose precise value is irrelevant.
\end{lemma}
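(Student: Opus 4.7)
The plan is to deduce the lemma directly from the $L^2$-approximation in Lemma \ref{lemma_step1} together with the $|z_1-z_2|$-decorrelation estimate of Proposition \ref{lemma_new_2}. First, evaluating the inner $\eta$-integral via
\[
\int_0^{\eta_0}\sum_i \frac{\eta}{(\lambda_i^z)^2+\wt\eta^2}\,\dd\eta = \frac{\eta_0^2}{2\wt\eta}\Im\Tr G^z(\ii\wt\eta)
\]
identifies the approximating quantity in Lemma \ref{lemma_step1} (up to the normalization constants from Girko's formula absorbed into $I_0^{\eta_0}$) with $Q(f) := \frac{n\eta_0^2}{2\wt\eta}\int_\C \Delta f(z)\<G^z(\ii\wt\eta)\>\,\dd^2 z$, so that Lemma \ref{lemma_step1} reads $\|I_0^{\eta_0}(f)-Q(f)\|_2 = O((\log n)^{C/2}n^{-2(\alpha-\epsilon)})$. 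The expectation statement \eqref{eq_step11} follows from Jensen, $|\E^{\mathrm{Gin}}(I_0^{\eta_0}(f)-Q(f))|\le \|I_0^{\eta_0}(f)-Q(f)\|_2$, while the variance statement \eqref{eq_step12} reduces, via the $L^2$-triangle inequality
\[
\V^{\mathrm{Gin}}[I_0^{\eta_0}(f)]^{1/2} \le \V^{\mathrm{Gin}}[Q(f)]^{1/2}+2\|I_0^{\eta_0}(f)-Q(f)\|_2,
\]
to showing $\V^{\mathrm{Gin}}[Q(f)] = O(n^{-\kappa})$ for some $\kappa>0$.

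To bound $\V^{\mathrm{Gin}}[Q(f)]$ I would expand it as the double covariance integral
\[
\V^{\mathrm{Gin}}[Q(f)] = \Big(\frac{n\eta_0^2}{2\wt\eta}\Big)^2\!\iint \Delta f(z_1)\overline{\Delta f(z_2)}\,\Cov^{\mathrm{Gin}}(\<G^{z_1}(\ii\wt\eta)\>,\<G^{z_2}(\ii\wt\eta)\>)\,\dd^2 z_1\dd^2 z_2,
\]
and split the domain into a near regime $|z_1-z_2|\le n^{-\gamma}$ and a far regime $|z_1-z_2|\ge n^{-\gamma}$, with $\gamma>0$ small. On the near regime, the trivial Cauchy-Schwarz bound $|\Cov|\le [\V(\<G^{z_1}\>)\V(\<G^{z_2}\>)]^{1/2}$ together with the single-point variance bound obtainable from Proposition \ref{lemma2} (i.e.\ the $k=2$ moment estimate combined with the known leading order $\E\Im\<G^z(\ii\wt\eta)\>\sim n^{-1/2-\alpha}$) suffices, the necessary smallness coming from the shrunken integration volume which contributes an $n^{-2\gamma}$ factor against $\|\Delta f\|_\infty\|\Delta f\|_1\lesssim n^{3/2}(\log n)^{3/2}$. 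On the far regime, Proposition \ref{lemma_new_2} delivers the essential extra decorrelation factor $n^{-\gamma'}$ over Cauchy-Schwarz, which combined with $\|\Delta f\|_1^2\lesssim n\log n$ and the prefactor $(n\eta_0^2/2\wt\eta)^2=n^{-1/2+4\epsilon+2\alpha}/4$ yields a net $O(n^{-\kappa})$ contribution.

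The main obstacle, and the reason Proposition \ref{lemma_new_2} is indispensable, is that the $\|\Delta f\|_1^2\sim n\log n$ loss stemming from the highly anisotropic annular support of $f$ cannot be absorbed by the single-point variance bound alone: without the $n^{-\gamma'}$ decorrelation gain in the far regime, the Cauchy-Schwarz bound would leave a residual factor of order $n^{4\epsilon+2\alpha}$ that fails to be $o(1)$. Choosing $\alpha>\epsilon>0$ and $\gamma,\gamma'>0$ sufficiently small in the right order (so that, e.g., $4\epsilon<\gamma'$ and $1/2+4\epsilon+2\alpha<2\gamma$), the near and far contributions combine to give $O(n^{-\kappa})$ for a suitable $\kappa=\kappa(\alpha,\epsilon,\gamma,\gamma')>0$, completing the proof of both \eqref{eq_step11} and \eqref{eq_step12}.
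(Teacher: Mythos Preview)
Your approach is essentially the paper's: reduce to bounding $\V^{\mathrm{Gin}}[Q(f)]$ via Lemma~\ref{lemma_step1}, then split the double covariance integral into near and far regimes in $|z_1-z_2|$, using Proposition~\ref{lemma_new_2} on the far regime. However, your near-regime bound has a quantitative gap. Bounding $\int_{|z_1-z_2|\le n^{-\gamma}}\dd^2 z_1$ by the full ball area $\pi n^{-2\gamma}$ and then using $\|\Delta f\|_\infty\|\Delta f\|_1\sim n^{3/2}(\log n)^{3/2}$ gives a near-regime contribution of order $(\log n)^{C}n^{1/2+4\epsilon-2\gamma}$, which forces $\gamma>1/4+2\epsilon$. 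This contradicts your own claim that $\gamma$ is ``sufficiently small'' and, more to the point, Proposition~\ref{lemma_new_2} is stated only for small $\gamma$, so you cannot simultaneously invoke it on the far regime with $\gamma\approx 1/4$.

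The paper's remedy (see \eqref{V_1}) is to exploit that $\mathrm{supp}\,\Delta f$ is a thin annulus of width $l_n\sim n^{-1/2}(\log n)^{-1/2}$: for fixed $z'$, the set $\{z\in\mathrm{supp}\,\Delta f:|z-z'|\le n^{-\gamma}\}$ has area $\lesssim l_n n^{-\gamma}$ rather than $n^{-2\gamma}$, so
\[
\int_{|z-z'|\le n^{-\gamma}}|\Delta f(z)|\,\dd^2 z \;\lesssim\; \|\Delta f\|_\infty\, l_n n^{-\gamma}\;\sim\; n^{1/2}(\log n)^{1/2}n^{-\gamma}.
\]
Combined with the outer $\|\Delta f\|_1$ factor and the $k=2$ moment bound from Proposition~\ref{lemma2}, this yields a near-regime contribution $(\log n)^C n^{-\gamma+4\epsilon}$, so only $\gamma>4\epsilon$ is needed and Proposition~\ref{lemma_new_2} applies with genuinely small $\gamma$. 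With this single correction your argument goes through exactly as in the paper.
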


\begin{proof}[Proof of Lemma \ref{lemma_step12}]
	The expectation estimate in (\ref{eq_step11}) follows  from (\ref{step_1_goal}) by a direct computation. Similarly, the first inequality in the variance estimate (\ref{eq_step12}) follows from (\ref{step_1_goal}) and Cauchy-Schwarz inequality. So we focus on proving the second estimate in (\ref{eq_step12}). It then suffices to study 
\begin{align}\label{V_1+V_2}
\V^{\mathrm{Gin}} \Big[\frac{n\eta_0^2}{2\wt \eta}\int_{\C} \Delta f(z) \<G^{z}(\ii \wt\eta)\> \Big]&\dd^2 z	=\frac{n^2\eta_0^4}{4\wt\eta^2} \Big( \iint_{|z-z'|\leq n^{-\gamma}} + \iint_{|z-z'|\geq n^{-\gamma}} \Big)  \Delta f(z)  \Delta f(z')  \times \nonumber\\
	&  \Big( 	\E^{\mathrm{Gin}} \Big[\<G^{z}(\ii \wt\eta)\> \<G^{z'}(\ii \wt\eta)\>\Big] -\E^{\mathrm{Gin}} \Big[\<G^{z}(\ii \wt\eta)\>\Big] \E^{\mathrm{Gin}} \Big[\<G^{z'}(\ii \wt\eta)\>\Big] \Big)  \dd^2 z \dd^2 z'\nonumber\\
	 =:&V_{|z-z'|\leq n^{-\gamma}}+V_{|z-z'|\geq n^{-\gamma}},
\end{align}
with a small $\gamma >0$ to be chosen. For the regime $|z-z'|\leq n^{-\gamma}$ in (\ref{V_1+V_2}),  we gain a little from the volume factor, \ie
\begin{align}\label{V_1}
	V_{|z-z'|\leq n^{-\gamma}}\lesssim& \frac{n^2\eta_0^4}{4\wt\eta^2} \int |\Delta f(z')| \dd^2 z' \int_{|z-z'|\leq n^{-\gamma}} |\Delta f(z)|
	\E^{\mathrm{Gin}} \Big[\<G^{z}(\ii \wt\eta)\>^2\Big] \dd^2 z\lesssim (\log n)^C n^{-\gamma+4\epsilon},
\end{align}
which is negligible choosing $\gamma>4\epsilon$, where we used the Cauchy-Schwarz inequality and the estimate for the resolvent in (\ref{moment_resolvent}) for $k=2$.

Next we estimate $V_{|z-z'|\geq n^{-\gamma}}$ in (\ref{V_1+V_2}). In this regime the additional smallness comes from the $|z-z'|$-decorrelation effect state in the following proposition.	

	\begin{proposition}\label{lemma_new_2}
	For any small $\gamma, \tau>0$ there exists $\widehat{\omega}>0$ such that for any small $\alpha>0$, and for $\wt\eta=n^{-3/4-\alpha}$, $-n^{-1/2+\tau}\le |z|^2-1\le n^{-1/2+\tau}$, $|z-z'| \geq n^{-\gamma}$ it holds 	
		\begin{align}
		\label{eq:almostlast}
			\Bigg|\E \Big[\<G^{z}(\ii \wt\eta)\> \<G^{z'}(\ii \wt\eta)\>\Big] -\E \Big[\<G^{z}(\ii \wt\eta)\>\Big] \E \Big[\<G^{z'}(\ii \wt\eta)\>\Big] \Bigg|\lesssim n^{-1/2-\alpha-\widehat{\omega}/2}.
		\end{align}
	\end{proposition}
	 We will use this proposition only for Ginibre matrices (i.e. with $\E$ being replaced by $\E^{\mathrm{Gin}}$), however we stated it for general i.i.d matrices since the proof in this more general case is completely analogous. The proof of Proposition~\ref{lemma_new_2} is presented in Appendix~\ref{app:decres}. Then we have 
	\begin{align}\label{V_2}
		V_{|z-z'|\geq n^{-\gamma}} \lesssim \frac{n^2\eta_0^4}{4\wt\eta^2} \|\Delta f\|^2_1 \times n^{-1/2-\alpha-\widehat{\omega}/2} \lesssim
		 (\log n)^C n^{-\alpha+4\epsilon+2\alpha-\widehat{\omega}/2},
	\end{align}
which is negligible for $\epsilon,\alpha\le \widehat{\omega}/100$. Putting (\ref{V_1}) and (\ref{V_2}) together, from (\ref{V_1+V_2}) we obtain
\begin{align}\label{small_v}
\V^{\mathrm{Gin}} \Big[\frac{n\eta_0^2}{2\wt \eta}\int_{\C} \Delta f(z) \<G^{z}(\ii \wt\eta)\> \Big]&\dd^2 z=O\Big((\log n)^C \big(n^{-\gamma+4\epsilon} +n^{4\epsilon+\alpha-\widehat{\omega}/2}\big)\Big).
\end{align}
Choosing sufficiently small $\epsilon,\alpha,\gamma$ with $\epsilon,\alpha\le \widehat{\omega}/100$ and $\gamma>6\epsilon$, we have proved (\ref{eq_step12}) and thus finished the proof of Lemma \ref{lemma_step12}.
\end{proof}

\section{Step 2. i.i.d. ensemble: Integral over  the small $\eta$ regime $[0,n^{-1+\epsilon}]$}\label{sec:step2}

In this section, we aim to extend the Ginibre estimate for the small $\eta$-integral $I_0^{\eta_0}$ in Step 1. to the i.i.d. case
 and prove the analogue of Lemma \ref{lemma_step1},  but only in the first absolute moment sense.
\begin{lemma}\label{lemma_step2}
 Fix $\tau>0$.  Set $\eta_0=n^{-1+\epsilon}$ and $\wt \eta=n^{-3/4-\alpha}$ with sufficiently small $\epsilon,\alpha>0$ and $\alpha >\epsilon > \tau/2$. Then we have 
	\begin{align}\label{step_2_goal}
		\E\Big|	I_{0}^{\eta_0}(f)-\int_{\C} \Delta f(z) \int^{\eta_0}_{0} \sum_{i}\frac{\eta}{(\lambda^z_i)^2+\wt\eta^2} \dd \eta \dd^2 z\Big|=O\Big((\log n)^C n^{-2(\alpha-\epsilon)}\Big).
	\end{align}
\end{lemma}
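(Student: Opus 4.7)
The plan is to follow the structure of Lemma~\ref{lemma_step1} but to work with the first absolute moment rather than the second. After discarding the contribution of the tiny regime $\eta\in[0,n^{-l}]$, which is $O(n^{-8})$ by~\eqref{tiny_int} (whose proof extends to general i.i.d.\ matrices satisfying Assumption~\ref{ass:mainass}), we decompose
\begin{equation*}
I_0^{\eta_0}(f)\;=\;I_{small}+I_{big}+O(n^{-8})
\end{equation*}
exactly as in~\eqref{small+big}, cutting the spectral sum at $\wt\eta=n^{-3/4-\alpha}$.

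For the small-eigenvalue piece $I_{small}$, we estimate $\E|I_{small}|$ directly, rather than $\E|I_{small}|^2$ as in the Ginibre case~\eqref{E_1+E_2}. The main benefit of working in first moment is that it bypasses the two-point decorrelation Proposition~\ref{lemma_new_1} (available only for Ginibre): by Fubini and positivity,
\begin{equation*}
\E|I_{small}|\;\leq\;\int |\Delta f(z)|\int_{n^{-l}}^{\eta_0}\E\Bigl[\sum_{\lambda_j^z\le\wt\eta}\frac{\eta}{(\lambda_j^z)^2+\eta^2}\Bigr]\dd\eta\,\dd^2z,
\end{equation*}
and the inner expectation is bounded via the triadic decomposition $[0,\wt\eta]=\bigcup_k[3^{k-1}\eta,3^k\eta]$ combined with the rigidity estimate~\eqref{rigidity0} and a \emph{single-$z$} tail bound for $\lambda_1^z$. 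The key input will be the i.i.d.\ extension of~\eqref{tail_bound}, namely Proposition~\ref{prop1}, which preserves both the level-repulsion factor $y^2$ and the exponential factor $\ee^{-n\delta^2/2}$. Together with $\|\Delta f\|_1\lesssim n^{1/2}\sqrt{\log n}$ and $\int_0^{\eta_0}\eta\,\dd\eta\lesssim\eta_0^2$, this yields $\E|I_{small}|\lesssim(\log n)^C n^{-1/2+2\epsilon+\xi}$, comfortably smaller than the target $n^{-2(\alpha-\epsilon)}$ for small $\alpha$.

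For the large-eigenvalue piece $I_{big}$, we essentially re-run the Ginibre computation from~\eqref{step_1_split}--\eqref{step_1_big}, exploiting the identity
\begin{equation*}
\frac{\eta}{\lambda^2+\eta^2}-\frac{\eta}{\lambda^2+\wt\eta^2}\;=\;\frac{\eta(\wt\eta^2-\eta^2)}{(\lambda^2+\eta^2)(\lambda^2+\wt\eta^2)}
\end{equation*}
together with $\eta\le\eta_0\ll\wt\eta$, which reduces the error to a polynomial in $\Im\<\gz(\ii\wt\eta)\>$. The Cauchy--Schwarz argument of~\eqref{step_1_big} uses only the fourth-moment bound $\E[(\Im\<\gz(\ii\wt\eta)\>)^4]\lesssim n^{-1-4\alpha}$, and since Proposition~\ref{lemma2} is already formulated for general i.i.d.\ matrices, the entire computation carries over to the i.i.d.\ setting and produces $\E|I_{big}-(\mathrm{target})|^2\lesssim(\log n)^C n^{-4(\alpha-\epsilon)}$. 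Jensen's inequality then gives the desired first-moment bound of order $(\log n)^C n^{-2(\alpha-\epsilon)}$, which combined with the $I_{small}$ bound yields~\eqref{step_2_goal}.

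The main obstacle is Proposition~\ref{prop1}: extending~\eqref{tail_bound} from the Ginibre to the general i.i.d.\ setting while \emph{preserving the exponential factor} $\ee^{-n\delta^2/2}$. This cannot be done by standard resolvent comparison since the target probability is exponentially small; it requires the new technical GFT plus Gronwall scheme in the spirit of~\cite{EX22}, tracking how the tail probability of $\lambda_1^z$ evolves along the Ornstein--Uhlenbeck interpolation between the i.i.d.\ ensemble and Ginibre. Once this tail bound is available, all remaining steps are controlled adaptations of the proof of Lemma~\ref{lemma_step1}.
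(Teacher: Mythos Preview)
Your proposal is correct and matches the paper's approach exactly. The paper itself omits the details, stating only that given the i.i.d.\ tail bound~\eqref{tail_bound_iid} (from Propositions~\ref{prop1} and~\ref{lemma2}) the proof is ``similar to that of Lemma~\ref{lemma_step1}''; you have accurately reconstructed what that entails, including the key observation that the first-moment argument for $I_{small}$ needs only the single-$z$ tail bound and thereby bypasses the decorrelation Proposition~\ref{lemma_new_1}.
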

We remark that for i.i.d. matrices, it is enough to estimate the first absolute moment in (\ref{step_2_goal}) instead of the the second moment. This simplifies our proof since estimating the second moment would require
 an analogue of Proposition \ref{lemma_new_1} for i.i.d. cases which would require much more effort. To prove Lemma \ref{lemma_step2}, we first state the following lower tail estimate for the smallest singular value of $X-z$ for generic i.i.d. matrices $X$.  
\begin{proposition}\label{prop1} 
	Consider general complex i.i.d. $X$ and let  $\lambda_1^z$ be the smallest singular value of $X-z$.
	 Fix any small $\epsilon_1, \epsilon_2,\tau>0$ with $\epsilon_2\geq \tau/2$. For any $E$ with $n^{-1+\epsilon_1}\leq E 
	\leq n^{-3/4-\epsilon_2}$, and any $\delta =|z|^2-1$ with $n^{-1/2} \ll \delta \leq n^{-1/2+\tau}$, we have
	\begin{align}\label{tau_result}
		\P\big(|\lambda_1^z| \leq E \big) \lesssim n^{3/2} E^2 e^{-n \delta^2/2}+n^{-100}.
	\end{align}
\end{proposition}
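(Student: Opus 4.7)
The plan is to transfer Proposition~\ref{prop:tail} from Ginibre to general complex i.i.d. matrices via a Green function comparison (GFT) along the Ornstein--Uhlenbeck flow, combined with a Gronwall argument in the spirit of~\cite{EX22}. Let $X_t$ solve $\dd X_t = -\tfrac12 X_t\,\dd t + \frac{1}{\sqrt n}\dd B_t$ with $X_0$ satisfying Assumption~\ref{ass:mainass} and $B_t$ an independent complex Brownian matrix; then the first two moments of the entries of $X_t$ coincide with those of $X_0$, and for $T\gtrsim \log n$ the law of $X_T$ agrees with a Ginibre matrix plus an exponentially small deterministic shift, up to total variation distance $n^{-100}$. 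It thus suffices to control the drift of $\P(|\lambda_1^z|\le E)$ uniformly along $t\in[0,T]$.

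To access $\P(|\lambda_1^z|\le E)$ through a smooth observable amenable to It\^o calculus, I would regularize as follows. Pick $\Phi_E\in C_c^\infty(\R;[0,1])$ with $\Phi_E\equiv 1$ on $[-E,E]$, $\mathrm{supp}\,\Phi_E\subset[-2E,2E]$, and $\|\Phi_E^{(k)}\|_\infty\lesssim E^{-k}$. Using the symmetry $\lambda_{-i}^z=-\lambda_i^z$, the event $\{|\lambda_1^z|\le E\}$ forces $\sum_i\Phi_E(\lambda_i^z)\ge 2$, so Markov applied to the level-repulsion statistic gives
\[
\P\big(|\lambda_1^z|\le E\big)\le \tfrac12\,\E\bigg[\sum_{i\ne j}\Phi_E(\lambda_i^z)\Phi_E(\lambda_j^z)\bigg].
\]
Via Helffer--Sj\"ostrand, the right-hand side becomes a double contour integral of $\<G^z(w_1)\>\<G^z(w_2)\>$ (plus an index-coincidence correction) with spectral parameters $\Im w_j\gtrsim E$. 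This representation automatically displays both small factors: the $E^2$ level-repulsion factor from the two $\Phi_E$-integrations against a smooth resolvent, and the exponential $e^{-n\delta^2/2}$ factor via the in-gap density estimate $\rho^z(\ii E)\sim E e^{-\Theta(n\delta^2)}$ derived from~\eqref{rho_E} by solving~\eqref{m_function} on the imaginary axis in the gap regime $|z|>1$.

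The heart of the proof is to bound $\partial_t \E[F_E(X_t)]$ for this smoothed observable. By It\^o's formula, $\partial_t\E[F_E]$ decomposes into a sum of multi-resolvent traces at spectral parameters $\Im w\gtrsim E\ge n^{-1+\epsilon_1}$, weighted by cumulants of the non-Gaussian part of the entries; the Gaussian covariance is absorbed by the drift, so only cumulants of order $\ge 3$ contribute. The local law Theorem~\ref{local_thmw} together with the moment bound~\eqref{moment_resolvent} gives each additional resolvent factor a gain of $1/(nE)=n^{-\epsilon_1}$, while the in-gap density estimate preserves the factor $e^{-n\delta^2/2}$ in every term. Iterating the cumulant expansion finitely many times (bootstrap depending only on $\epsilon_1$) and tracking the leading deterministic contributions yields
\[
\big|\partial_t \E[F_E(X_t)]\big|\lesssim n^{-\epsilon_1}\E[F_E(X_t)]+n^{-200}
\]
uniformly in $t\in[0,T]$. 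Gronwall over $[0,T]$ transfers the problem to the Ginibre side, where Proposition~\ref{prop:tail} supplies $\E^{\mathrm{Gin}}[F_E]\lesssim n^{3/2}E^2 e^{-n\delta^2/2}$, which delivers~\eqref{tau_result}.

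The main obstacle is propagating both smallness mechanisms---the level-repulsion $E^2$ and the exponential $e^{-n\delta^2/2}$---\emph{simultaneously} through the cumulant expansion rather than losing a polynomial factor at each step. Concretely, every resolvent factor arising in the expansion must be estimated via its in-gap bound $\|\Im G^z(\ii E)\|\lesssim E\delta^{-1/3}$ (preserving the exponential decay), and the two $\Phi_E$-smoothings must survive across all index-coincidence corrections (preserving $E^2$). The additive $n^{-100}$ error in~\eqref{tau_result} absorbs the cumulative stochastic-domination losses, consistent with the $n^{-200}$ tolerance in the It\^o derivative estimate above.
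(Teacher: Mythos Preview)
Your proposal contains a fundamental error in identifying where the exponential factor $e^{-n\delta^2/2}$ comes from. You write that ``the exponential $e^{-n\delta^2/2}$ factor [arises] via the in-gap density estimate $\rho^z(\ii E)\sim E e^{-\Theta(n\delta^2)}$ derived from~\eqref{rho}''. This is false: equation~\eqref{rho} gives $\rho^z(\ii\eta)\sim \eta/(|1-|z|^2|+\eta^{2/3})$ for $|z|>1$, a purely algebraic expression with no exponential factor. The self-consistent density $\rho^z$ solves the cubic equation~\eqref{m_function} and cannot produce exponential decay in $n\delta^2$. The factor $e^{-n\delta^2/2}$ is a \emph{rare-event probability} --- the chance that $\lambda_1^z$ lands well inside the spectral gap --- and in the Ginibre case it is obtained in Proposition~\ref{prop:tail} from explicit integral formulas, not from the density of states. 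Consequently, your Helffer--Sj\"ostrand representation of $\E\big[\sum_{i\ne j}\Phi_E(\lambda_i)\Phi_E(\lambda_j)\big]$ as a double resolvent integral will only see the local-law size $\langle\Im G^z(\ii E)\rangle\sim \sqrt n E$, and your claimed Gronwall inequality $|\partial_t\E[F_E]|\lesssim n^{-\epsilon_1}\E[F_E]+n^{-200}$ cannot hold: the cumulant terms produce resolvent products that do not factor through $F_E$ and carry no exponential smallness.

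The paper's mechanism is different and essential. One works with the observable $\E\big[F(\X_k)\big]$ where $\X_k=\Tr\chi_{E_k}\!\star\theta_\eta(H^z)$ and $F$ is a fixed smooth cutoff supported on $[1/9,\infty)$. The crucial point is that every term in the cumulant expansion of $\partial_t\E[F(\X_k)]$ contains a derivative $F^{(j)}(\X_k)$, and $F^{(j)}$ is supported on $\{\X_k\in[1/9,2/9]\}$, an event which forces $\lambda_1^z\in[E_{k-1},E_{k+1}]$. Hence $\E|F^{(j)}(\X_k)|\lesssim \E[F(\X_{k+2})]$, i.e.\ the rare-event indicator is \emph{automatically} carried along. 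This yields $|\partial_t\E[F(\X_k)]|\prec \Psi^4\,\E[F(\X_{k+2})]+\mathcal{E}_0$ with a \emph{shifted} index, which after integration and insertion of the Ginibre bound~\eqref{Gaussian} gives $\E[F(\X_k)]\prec \Psi^4+n^{3/2}E^2e^{-n\delta^2/2}+\mathcal{E}_0$. Feeding this back into the derivative bound at index $k-2$ and iterating $O(\epsilon_1^{-1})$ times pushes the $\Psi^4$ term below $\mathcal{E}_0$. Your quadratic observable $\sum_{i\ne j}\Phi_E(\lambda_i)\Phi_E(\lambda_j)$ lacks this indicator structure: differentiating it along the flow produces terms like $\Phi_E'(\lambda_i)\Phi_E(\lambda_j)$ that are not supported on a small-probability event and do not close back on $F_E$.
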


As a corollary of Proposition \ref{prop1}, we have the following estimates for the resolvent. The proofs of Proposition \ref{prop1} and \ref{lemma2} are postponed to Appendix \ref{sec:proof_tail}.
\begin{proposition}\label{lemma2} Fix any small $\epsilon_1, \epsilon_2,\tau>0$ with $\epsilon_2\geq \tau/2$.
	For any $n^{-1+\epsilon_1} \leq \eta \leq n^{-3/4-\epsilon_2}$ and any $n^{-1/2} \ll \delta =|z|^2-1\leq n^{-1/2+\tau}$, we have
	\begin{align}\label{key2}
		\E\big[ \big(\Im \<G^z(\ii \eta)\> \big)^{k} \big] =O_\prec\Big( (\sqrt{n}\eta)^k
		+\frac{n^{-1/2}}{(n\eta)^{k-2}}e^{-n \delta^2/2}+n^{-100}\Big), \qquad  \forall k\geq 1.
	\end{align}
\end{proposition}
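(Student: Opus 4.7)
The plan is to use the spectral decomposition
\[
\Im\langle G^z(\ii\eta)\rangle = \frac{1}{n}\sum_{j=1}^n \frac{\eta}{(\lambda_j^z)^2+\eta^2}
\]
and separate the delicate contribution of the smallest singular value $T_1:=\frac{1}{n}\cdot\frac{\eta}{(\lambda_1^z)^2+\eta^2}$ from the bulk $T_2:=\frac{1}{n}\sum_{j\geq 2}\frac{\eta}{(\lambda_j^z)^2+\eta^2}$. The bulk is handled by rigidity combined with a monotonicity-based reduction to a larger spectral scale where the local law is effective, while $T_1$ is handled by a dyadic decomposition of the location of $\lambda_1^z$ together with the tail bound in Proposition~\ref{prop1}.

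For the bulk $T_2$, Corollary~\ref{cor:rigidity} places each $\lambda_j^z$, $j\geq 2$, within a constant factor of $\gamma_j^z$ on an event of probability $1-n^{-D}$. To convert this into the desired bound, I would use on the high-probability event $\{\lambda_1^z\geq c\Delta\}$ (discussed in the last paragraph) the pointwise inequality $\eta/(\lambda^2+\eta^2)\leq (2\eta/\eta')\cdot \eta'/(\lambda^2+\eta'^2)$, valid for $\lambda\geq \eta'$, at the scale $\eta':=c\Delta$. This collapses the entire $\Im\langle G^z(\ii\eta)\rangle$ into $(2\eta/c\Delta)\cdot\Im\langle G^z(\ii c\Delta)\rangle$, and at this enlarged scale Theorem~\ref{local_thmw} becomes effective: $\Im\langle M^z(\ii c\Delta)\rangle\sim\delta^{1/2}$ dominates (or matches up to logs) the local-law error $1/(nc\Delta)\sim 1/(n\delta^{3/2})$, so that $\Im\langle G^z(\ii c\Delta)\rangle\prec\delta^{1/2}$ and hence $\Im\langle G^z(\ii\eta)\rangle\prec \eta/\delta\lesssim\sqrt{n}\eta$ on the good event, contributing $(\sqrt{n}\eta)^k$ to the $k$th moment.

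For the atypical contribution from small $\lambda_1^z$, set $E_0:=n^{-3/4-\epsilon_2}$ and decompose $\{\lambda_1^z\leq E_0\}$ into dyadic shells $\mathcal{B}_m:=\{2^{-m-1}E_0\leq \lambda_1^z\leq 2^{-m}E_0\}$ for $m\geq 0$. On $\mathcal{B}_m$ one has $T_1\leq C\min\{(n\eta)^{-1},\,4^{m+1}\eta/(nE_0^2)\}$, while Proposition~\ref{prop1} gives $\P(\mathcal{B}_m)\lesssim 4^{-m}n^{3/2}E_0^2 e^{-n\delta^2/2}+n^{-100}$. Raising $T_1$ to the $k$th power on each $\mathcal{B}_m$ and summing the two geometric series over $m$ below and above the crossover $M:=\log_2(E_0/\eta)$ collapses, after an algebraic cancellation in which the $E_0$-dependence disappears, to exactly the second target term $\frac{n^{-1/2}}{(n\eta)^{k-2}}e^{-n\delta^2/2}$ (with an extra logarithmic factor absorbed by the $\prec$ notation for $k=1$).

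The main obstacle is the intermediate regime $\{E_0<\lambda_1^z<c\Delta\}$, where neither Proposition~\ref{prop1} (valid only for $E\leq n^{-3/4-\epsilon_2}=E_0$) nor the good-event analysis of the second paragraph directly applies. Here I would invoke the rigidity bound \eqref{rigidity} at $j=1$, namely $|\lambda_1^z-\gamma_1^z|\prec \Delta^{1/9}/n^{2/3}$; writing $\delta=n^{-1/2+\tau'}$ with $\tau'>0$, the ratio of this fluctuation scale to $\gamma_1^z\sim\Delta/2\sim\delta^{3/2}$ behaves like $n^{\xi-4\tau'/3}$, so choosing $\xi$ strictly below $4\tau'/3$ in the $\prec$ definition forces $\lambda_1^z\geq c\Delta$ with probability $1-n^{-D}$ for any $D$. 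The intermediate event therefore has probability absorbed into the $n^{-100}$ slack, and combining the three pieces (good event for $T_2$, dyadic--tail analysis for $T_1$, and rigidity failure) gives the stated bound.
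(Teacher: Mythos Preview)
Your argument has a genuine gap in the handling of the intermediate regime $\{E_0<\lambda_1^z<c\Delta\}$. You assume $\delta=n^{-1/2+\tau'}$ with a fixed $\tau'>0$ so that you can choose the $\xi$ in the rigidity bound below $4\tau'/3$. But the hypothesis only asks $n^{-1/2}\ll\delta$, and in the paper's main application (see~\eqref{delta_regime}) one has $\delta\sim\sqrt{\log n/n}$, i.e.\ $\tau'=0$ effectively. In that regime $\Delta\sim n^{-3/4}(\log n)^{3/4}$, while the rigidity fluctuation scale $n^{\xi}\max\{n^{-3/4},\Delta^{1/9}n^{-2/3}\}\sim n^{\xi-3/4}$ exceeds $\Delta$ for any fixed $\xi>0$; so rigidity at $j=1$ cannot force $\lambda_1^z\ge c\Delta$ (nor even $\lambda_1^z\ge E_0$) with probability $1-n^{-D}$. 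The same obstruction hits your implicit treatment of $T_2$: the separation $T_1$ versus $T_2$ requires $\lambda_j^z\ge c\Delta$ for all $j\ge 2$ with very high probability, and this too fails in the logarithmic regime. On the dyadic shells $\mathcal B_m$ you therefore control only $T_1^k$, not $(T_1+T_2)^k$.

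The paper avoids both problems by a different split. Instead of cutting at $c\Delta$ (which is only logarithmically above $n^{-3/4}$), it introduces an auxiliary level $\widetilde\eta=n^{-3/4-\alpha}$ with a free parameter $\alpha$ chosen small depending on $k$, and splits the spectral sum according to $\lambda_i^z\lessgtr\widetilde\eta$. For $\lambda_i^z\ge\widetilde\eta$ one rescales to level $\widetilde\eta$ and applies the local law there, paying only $n^{2\alpha k}$ which is absorbed into the $O_\prec$ by the choice of $\alpha$. For $\lambda_i^z\le\widetilde\eta$ one does not separate $j=1$ from $j\ge 2$; instead one uses the counting rigidity~\eqref{rigidity0} to bound the total number of such eigenvalues by $n^\xi$, and then runs the triadic-shell argument on the whole small-eigenvalue sum, using $\mathbf P(\lambda_1^z\le 3^l\eta)$ from Proposition~\ref{prop1} for the probability that a given shell is nonempty. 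This removes the intermediate regime altogether and keeps all estimates uniform over the full range $n^{-1/2}\ll\delta\le n^{-1/2+\tau}$.
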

 We remark that the above expectation estimates improve significantly over the 
a priori bounds from the local law, especially at the lower level $\eta=n^{-1+\epsilon_1}$. 
 Notice that for the Ginibre ensemble, the tail bound (\ref{tau_result}) directly follows from  
 Proposition \ref{prop:tail}~(without the irrelavent error term $n^{-100}$). We then use a 
 GFT and Gronwall argument similar to~\cite{EX22} to extend the Ginibre estimate to i.i.d. matrices. 
 In contrast to Proposition \ref{prop:tail},  we have an irrelevant error term $n^{-100}$ in (\ref{tau_result})
  for the i.i.d. cases. This is simply because our GFT method in \cite{EX22} yields polynomially 
  small error terms with a large power which cannot be incorporated into the exponential factor
$e^{-n\delta^2/2}$ when $\delta \gg n^{-1/2}(\log n)^{1/2}$. Nevertheless this weaker version is 
enough for us, because in the proof we only use the upper bound $e^{-n\delta^2/2} \lesssim n^{-1/2}$ 
(from (\ref{delta_regime})) and the irrelevant $n^{-100}$ can be absorbed into it. More precisely, from 
Proposition~\ref{prop1} and \ref{lemma2}, for any $n^{-1+\epsilon_1}\leq \eta\leq n^{-3/4-\epsilon_2}$ with $\epsilon_2\geq \tau/2$ and $z \in \mathrm{supp}(f^{-}_1) \cup \mathrm{supp}(f^{+}_2)$, 
we have  
\begin{align}\label{tail_bound_iid}
		\mathbf{P}\left(\lambda_1^z\le \eta \right)\lesssim n \eta^2,\qquad 
	 \E \Big[\big( \Im \<G^{z}(\ii \eta)\> \big)^k \Big]=O_\prec \Big((\sqrt{n}\eta)^k+\frac{n^{-1}}{(n\eta)^{k-2}}+n^{-100}\Big).
\end{align}
Again, all the estimates corresponding to $z \in \mathrm{supp}(f^{+}_2)$ are in principle smaller using (\ref{function2}) and (\ref{tau_result}) owing to the exponential factor, but our proof will not rely on this fact.

Given with (\ref{tail_bound_iid}) for i.i.d. cases, the proof of Lemma \ref{lemma_step2} is similar to that of Lemma \ref{lemma_step1} so we omit it for brevity.  In other words, Lemma \ref{lemma_step2} states that  also for the i.i.d. case we have 
\begin{align}
\label{eq_some_step2}
	&I_{0}^{\eta_0}(f)=\frac{n\eta_0^2}{2\wt\eta}\int_{\C} \Delta f(z) \<G^{z}(\ii \wt\eta)\> \dd^2 z+\mathcal{E}, \qquad \E|\mathcal{E}|=O(n^{-\kappa})
\end{align}
with some small $\kappa>0$. Note that using (\ref{deltaf}) and (\ref{key2}), we obtain an a priori bound for $\E|I_{0}^{\eta_0}(f)|$:
\begin{align}\label{naive_bound}
	\E|I_{0}^{\eta_0}(f)| =O_\prec((\log n)^{1/2} n^{2} \eta_0^2), \quad \qquad \eta_0=n^{-1+\epsilon},
\end{align}
which cannot be neglected. The formula (\ref{eq_some_step2}) expresses $I_{0}^{\eta_0}(f)$,
the contribution of the small $\eta$ regime, in terms of resolvents at a larger $\wt \eta$ level
for which we can perform standard iterative
 GFTs between the Ginibre and the i.i.d. ensembles. 
 We will need this only 
  for spectral parameter $\wt\eta=n^{-3/4-\alpha}$ but 
  in the next proposition we state and prove it for any $\eta\ge n^{-1+\epsilon}$
  since the essence of the iterative proof is the same ($\eta= n^{-1+\epsilon}$ requires $1/\epsilon$ iteration
  but this is not a major complication compared with four iterations needed for $\wt\eta= n^{-3/4-\alpha}$).
   We remark that the GFT analysis here is much easier  than the one required in Proposition~\ref{gft} below which will have essential new elements and where we will give all details. The proof of the following proposition is then postponed to Appendix~\ref{sec:proof_tail}.

\begin{proposition}\label{prop2_old}  Fix any small $\epsilon_1, \epsilon_2>0$. For any $1- Cn^{-1/2}\le |z|, |z'| \leq 1+c$ and $n^{-1+\epsilon_1}\leq\eta 
	\leq n^{-3/4-\epsilon_2}$, we have
	\begin{align}
		&\Big|	\E\big[ \<G^{z}(\ii \eta)\big]-	\E^{\mathrm{Gin}}\big[ \<G^{z}(\ii \eta)\big]  \Big|
		\prec\frac{1}{n^{5/2}\eta^2}+\frac{1}{n^5\eta^5}+n^{-1},\label{difference}\\
		\Big|	\Cov\big[ \<G^{z}(\ii \eta)&,\<G^{z'}(\ii \eta)\big]-	\Cov^{\mathrm{Gin}}\big[ \<G^{z}(\ii \eta),\<G^{z'}(\ii \eta)\big]  \Big|
		\prec \frac{1}{n^{7/2}\eta^3}+\frac{1}{n^6\eta^6}+n^{-1}.\label{difference_var}
	\end{align}
\end{proposition}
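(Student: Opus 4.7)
My plan is to prove both estimates via the standard Green Function Comparison (GFT) method based on the Ornstein--Uhlenbeck interpolation
\begin{equation*}
X_t = e^{-t/2} X_0 + \sqrt{1-e^{-t}}\, Y, \qquad t\in[0,\infty),
\end{equation*}
where $X_0=X$ is the i.i.d. matrix and $Y$ is an independent Ginibre matrix, so that $X_t$ interpolates between $X$ and a Ginibre matrix (in distribution) as $t$ runs from $0$ to $\infty$. The corresponding Hermitized matrix $H^z_t$ and resolvent $G^z_t(\ii\eta)$ are constructed as in~\eqref{initial}. Writing $\phi_t(z):=\E\langle G^z_t(\ii\eta)\rangle$, the goal is to bound $|\phi_0(z)-\phi_\infty(z)|\le\int_0^\infty |\partial_t\phi_t(z)|\,\dd t$. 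A direct differentiation together with a cumulant expansion in the entries $x_{ab}(t)$ produces a sum of terms indexed by cumulants of order $p\ge 3$ (the $p=1,2$ contributions cancel since the first two cumulants of $x_{ab}(t)$ match those of Ginibre), each carrying a factor $\kappa_p/n^{p/2}$ and a product of Green function matrix elements obtained from $p$ derivatives of $G_t^z$.

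In the leading $p=3$ case the a priori entrywise local law~\eqref{entrywisew}, the average local law~\eqref{averagew}, the rigidity~\eqref{rigidity} and the tail/moment bound~\eqref{tail_bound_iid} for $\Im\langle G^z\rangle$ show that a typical unrestricted third order term is of size $n^2\cdot n^{-3/2}\cdot (n\eta)^{-2}\cdot n^{-1} = n^{-5/2}\eta^{-2}$, matching the first term on the right hand side of~\eqref{difference}. The remaining third order terms with coincident summation indices and all $p\ge 4$ terms are smaller on the scale $\eta\ge n^{-3/4-\epsilon_2}$; on the scale $\eta\sim n^{-1+\epsilon_1}$ the $p=4$ contribution yields the $n^{-5}\eta^{-5}$ term. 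At this point the straightforward single-step expansion already gives~\eqref{difference} for $\eta$ well above $n^{-3/4}$, but not down to $\eta=n^{-1+\epsilon_1}$.

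To reach the small spectral parameter one iterates: any remaining term that does not yet meet the target bound is expanded once more via cumulant expansion in a fresh entry, gaining a factor $(n\eta)^{-1}=n^{-\epsilon_1}$ from each step as in~\cite{maxRe, SX22, SX22+}. After $1/\epsilon_1$ steps the residual error drops below the $n^{-1}$ threshold appearing in~\eqref{difference}, which is precisely the last term there. The $\eta$-integral over $t$ is harmless since the pre-factor from $\partial_t X_t$ and the local-law bound for $G_t^z$ are both uniformly controlled for $t\ge 0$ (the Dyson-type equation for $\phi_t$ is stable under the interpolation as $\|X_t\|$ is bounded with very high probability). Throughout, the required local laws along the flow follow from the fact that $M^z$ is unchanged by the OU interpolation and from the standard continuity of~\eqref{entrywisew}--\eqref{averagew} in the underlying distribution.

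For the covariance statement~\eqref{difference_var} the strategy is identical applied to the product observable $\langle G_t^z\rangle\langle G_t^{z'}\rangle - \E\langle G_t^z\rangle\E\langle G_t^{z'}\rangle$. Differentiating in $t$ now produces cumulant terms where derivatives can hit either $\langle G^z\rangle$ or $\langle G^{z'}\rangle$; each differentiation of a trace of $G^z$ costs one more factor of $(n\eta)^{-1}$ via the local law, giving one additional factor of $(n\eta)^{-1}$ relative to~\eqref{difference} and hence the bounds $n^{-7/2}\eta^{-3}$ and $n^{-6}\eta^{-6}$ respectively. The argument again terminates after $1/\epsilon_1$ iterations, producing the residual $n^{-1}$. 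The main technical obstacle is purely bookkeeping: one must carefully separate the unmatched, partially matched and fully matched summation index structures of each iterated cumulant term so as to preserve the gain $(n\eta)^{-1}$ at every step; this is routine for the present proposition because, in contrast to Proposition~\ref{gft}, one only compares single-resolvent observables against $\Delta f(z)$-free targets, so no delicate cancellation against $\int\Delta f$ is needed and the $|z-z'|$-decorrelation mechanism is not required here.
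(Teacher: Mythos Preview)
Your overall strategy --- Ornstein--Uhlenbeck interpolation, differentiation in $t$, cumulant expansion cancelling orders $1$ and $2$, and iterative expansion as in~\cite{maxRe} --- is exactly the paper's approach. However, your bookkeeping for the third order terms is wrong, and the mechanism producing each error term in~\eqref{difference}--\eqref{difference_var} is misidentified.

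The claim that a ``typical unrestricted third order term'' is of size $n^2\cdot n^{-3/2}\cdot (n\eta)^{-2}\cdot n^{-1}=n^{-5/2}\eta^{-2}$ does not hold. With \emph{distinct} indices $u\ne a\ne\underline{B}$ the third order contribution to $\partial_t\E\langle G\rangle$ has the form $n^{-5/2}\sum_{u,a,B}\E[\,\text{four off-diagonal }\widehat G\text{ factors}\,]$; the naive local law gives only $\sqrt{n}\,\Psi^4$ with $\Psi=(n\eta)^{-1}$, which blows up at $\eta=n^{-1+\epsilon_1}$, and no combination of Ward identity and the moment bound~\eqref{tail_bound_iid} reduces this to $n^{-5/2}\eta^{-2}$. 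What actually controls these distinct-index terms is their \emph{unmatched} structure: both $a$ and $B$ appear an odd number of times as row/column indices, so \cite[Proposition~4.5]{maxRe} (equivalently Proposition~\ref{prop:unmatched} here) gives $|\E[P^o_d]|=O_\prec(n^{-3/2})$, and together with the $\sqrt{n}$ prefactor this is the source of the $n^{-1}$ term in~\eqref{difference}--\eqref{difference_var}. The $n^{-5/2}\eta^{-2}$ (resp.\ $n^{-7/2}\eta^{-3}$) term arises instead from the \emph{index-coincidence} part $a=\underline{B}$: one summation is lost, and the surviving product keeps at least two (resp.\ three) genuinely off-diagonal $\widehat G$ factors, yielding $n^{-1/2}\Psi^2$ (resp.\ $n^{-1/2}\Psi^3$). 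The $\Psi^5$ and $\Psi^6$ terms are the straightforward fourth-order contributions. So the correct argument splits the third order sum into unmatched distinct-index terms (handled by the black-box bound from~\cite{maxRe}, which already encapsulates the ``$1/\epsilon_1$ iterations'' you mention) and coincident-index terms (handled by direct counting); your description conflates these two pieces.
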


Using Lemma \ref{lemma_step2}, Proposition \ref{prop2_old}, and Lemma \ref{lemma_step12} for the Ginibre ensemble, we obtain the following:
\begin{lemma}\label{lemma_step2_2}
	For  i.i.d. matrices and $\eta_0= n^{-1+\epsilon}$ with a sufficiently small $\epsilon>0$, we have
		\begin{align}\label{eq_step21}
		&\E\Big[I_{0}^{\eta_0}(f)\Big]=\E^{\mathrm{Gin}}\Big[I_{0}^{\eta_0}(f)\Big]+O(n^{-\kappa}),\qquad \quad \E\Big|I^{\eta_0}_{0}(f)-\E\Big[I^{\eta_0}_{0}(f)\Big]\Big|=O(n^{-\kappa}),
	\end{align}
	for some small $\kappa>0$.
\end{lemma}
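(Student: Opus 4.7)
The plan is to decompose
$$I_{0}^{\eta_0}(f) = J + \mathcal{E}, \qquad J := \frac{n\eta_0^2}{2\wt\eta}\int_{\C} \Delta f(z) \<G^{z}(\ii \wt\eta)\> \dd^2 z,$$
where by Lemma~\ref{lemma_step2} we have $\E|\mathcal{E}| = O(n^{-\kappa_0})$ for some small $\kappa_0>0$, and the analogous decomposition for the Ginibre ensemble with $\E^{\mathrm{Gin}}|\mathcal{E}^{\mathrm{Gin}}| = O(n^{-\kappa_0})$ follows from Lemma~\ref{lemma_step1}. Having reduced the analysis to resolvents at the enhanced spectral parameter $\wt\eta = n^{-3/4-\alpha}$, the two remaining ingredients are the single-resolvent and covariance GFT comparisons from Proposition~\ref{prop2_old} and the Ginibre variance bound from Lemma~\ref{lemma_step12}.

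For the expectation estimate in~\eqref{eq_step21}, I would write
$$\E[I_{0}^{\eta_0}(f)] - \E^{\mathrm{Gin}}[I_{0}^{\eta_0}(f)] = (\E - \E^{\mathrm{Gin}})[J] + \E[\mathcal{E}] - \E^{\mathrm{Gin}}[\mathcal{E}^{\mathrm{Gin}}].$$
The last two terms are $O(n^{-\kappa_0})$ by the $L^1$ control on $\mathcal{E}$. The first term is bounded by
$$\frac{n\eta_0^2}{2\wt\eta}\|\Delta f\|_1 \cdot \sup_{z\in\mathrm{supp}(\Delta f)}\bigl|(\E - \E^{\mathrm{Gin}})[\<G^{z}(\ii \wt\eta)\>]\bigr|,$$
and Proposition~\ref{prop2_old} controls the supremum by $O_\prec(n^{-5/2}\wt\eta^{-2} + n^{-5}\wt\eta^{-5} + n^{-1})$. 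Inserting $\eta_0 = n^{-1+\epsilon}$, $\wt\eta = n^{-3/4-\alpha}$ and $\|\Delta f\|_1 \lesssim n^{1/2}(\log n)^{1/2}$, the prefactor becomes $n^{1/4+2\epsilon+\alpha}(\log n)^{1/2}$ while the dominant GFT error is $n^{-1+2\alpha}$, yielding an overall bound $O(n^{-3/4+2\epsilon+3\alpha}(\log n)^{1/2}) = O(n^{-\kappa})$ for sufficiently small $\epsilon,\alpha$.

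For the concentration bound in~\eqref{eq_step21}, I would apply the triangle inequality and Cauchy--Schwarz:
$$\E\bigl|I_{0}^{\eta_0}(f) - \E[I_{0}^{\eta_0}(f)]\bigr| \le \sqrt{\V[J]} + 2\E|\mathcal{E}|,$$
the second term being $O(n^{-\kappa_0})$. For $\V[J]$ I expand as a double $z$-integral of $\Cov(\<G^{z}(\ii\wt\eta)\>, \<G^{z'}(\ii\wt\eta)\>)$ and use the covariance GFT~\eqref{difference_var} to get
$$\V[J] \le \V^{\mathrm{Gin}}[J] + \Big(\frac{n\eta_0^2}{2\wt\eta}\Big)^{2}\|\Delta f\|_1^{2} \cdot O_\prec\Big(\frac{1}{n^{7/2}\wt\eta^{3}}+\frac{1}{n^6\wt\eta^{6}}+n^{-1}\Big).$$
With $(n\eta_0^2/\wt\eta)^{2}\|\Delta f\|_1^{2} \lesssim n^{1/2+4\epsilon+2\alpha}\log n$, the GFT contribution is $O(n^{-\kappa})$ (the dominant piece $n^{-1}$ in the covariance error gives $n^{-1/2+4\epsilon+2\alpha}\log n$). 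Finally $\V^{\mathrm{Gin}}[J] = O(n^{-\kappa})$ by~\eqref{eq_step12}, so $\sqrt{\V[J]} = O(n^{-\kappa/2})$, completing both estimates after taking $\kappa$ to be the minimum of the exponents.

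The argument is essentially bookkeeping of $n$-powers, with no conceptual obstacle: the choice $\wt\eta = n^{-3/4-\alpha}$ is carefully tuned so that the GFT errors of Proposition~\ref{prop2_old} are small enough to be absorbed against the volume factors $\|\Delta f\|_1 \lesssim n^{1/2}\sqrt{\log n}$ and $\|\Delta f\|_1^{2} \lesssim n\log n$ produced by Girko's formula for our anisotropic annular test function, while the prefactor $n\eta_0^2/\wt\eta = n^{-1/4+2\epsilon+\alpha}$ retains enough smallness to beat the $n^{1/2}$ loss. Only a few iteration steps of the GFT in Proposition~\ref{prop2_old} are needed here since $1/(n\wt\eta)$ is polynomially small; the truly delicate $1/\epsilon$-step iteration will be required only for $I_{\eta_0}^{T}$ in Step~3.
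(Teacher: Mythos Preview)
Your proposal is correct and follows essentially the same approach as the paper: decompose via Lemma~\ref{lemma_step2}, apply the single-resolvent and covariance GFT comparisons from Proposition~\ref{prop2_old} at the enhanced level $\wt\eta=n^{-3/4-\alpha}$, and close with the Ginibre variance bound~\eqref{eq_step12}. Your $n$-power bookkeeping matches the paper's (the paper records the same errors $O_\prec(n^{-3/4+2\epsilon+3\alpha})$ and $O_\prec(n^{-1/2+4\epsilon+2\alpha})$), and the only cosmetic difference is that the paper routes through~\eqref{eq_step11} rather than invoking Lemma~\ref{lemma_step1} directly for the Ginibre $\mathcal{E}$.
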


\begin{proof}[Proof of Lemma \ref{lemma_step2_2}]

	Using (\ref{eq_some_step2}), (\ref{difference}) and the bound for the $L^1$--norm of $\Delta f$ in (\ref{deltaf}), recalling
	that $\eta_0=n^{-1+\epsilon}$ and $\wt\eta= n^{-3/4-\alpha}$ with some sufficiently small $\epsilon,\alpha$ and $\alpha>\epsilon$, we obtain
	\begin{align}\label{eq_step2_ex}
		\E[I_{0}^{\eta_0}(f)]=&\frac{n\eta_0^2}{2\wt\eta}\int_{\C} \Delta f(z) \E\big[\<G^{z}(\ii \wt\eta)\>\big] \dd^2 z+O(n^{-\kappa})\nonumber\\
		=&\frac{n\eta_0^2}{2\wt\eta}\int_{\C} \Delta f(z) \E^{\mathrm{Gin}}\big[\<G^{z}(\ii \wt\eta)\>\big] \dd^2 z+O(n^{-\kappa})+O_\prec(n^{-3/4+2\epsilon+3\alpha})\nonumber\\
		=&\E^{\mathrm{Gin}}[I_{0}^{\eta_0}(f)]+O(n^{-\kappa}),
	\end{align}
where in the last line we also used (\ref{eq_step11}) for the Ginibre ensemble and that $\kappa$, $\epsilon$ and $\alpha$ are sufficiently small.
 This proves the first estimate in (\ref{eq_step21}). For the second estimate in~\eqref{eq_step21}, using (\ref{eq_some_step2}) and the Cauchy-Schwarz inequality, we have
	\begin{align}\label{step_2_some}
		\E\Big|I^{\eta_0}_{0}(f)-\E[I^{\eta_0}_{0}(f)]\Big|=&\frac{n\eta_0^2}{2\wt\eta} \E\Big|(1-\E)\int_{\C} \Delta f(z) \<G^{z}(\ii \wt\eta)\>\dd^2 z\Big|+O(n^{-\kappa})\nonumber\\
		\leq & \sqrt{ \frac{n^2\eta_0^4}{4\wt\eta^2} \V \Big[\int_{\C} \Delta f(z) \<G^{z}(\ii \wt\eta)\>\dd^2 z \Big]}+O(n^{-\kappa}).
	\end{align}
Using the $L^1$ norm of $\Delta f$ in (\ref{deltaf}) and the GFT variance estimate (\ref{difference_var}), recalling that $\eta_0=n^{-1+\epsilon}$ and $\wt\eta=n^{-3/4-\alpha}$ with sufficiently small $\epsilon,\alpha$ and $\alpha>\epsilon$, we have 
\begin{align}\label{ff_gft}
	\frac{n^2\eta_0^4}{4\wt\eta^2}\V \Big[\int_{\C} \Delta f(z) \<G^{z}(\ii \wt\eta)\>\dd^2 z \Big] =&\frac{n^2\eta_0^4}{4\wt\eta^2}	\V^{\mathrm{Gin}} \Big[\int_{\C} \Delta f(z) \<G^{z}(\ii \wt\eta)\>\dd^2 z \Big]+O_\prec(n^{-1/2+4\epsilon+2\alpha})\nonumber\\
	=&O(n^{-\kappa})+O_\prec(n^{-1/2+4\epsilon+2\alpha}),
\end{align}
where we used (\ref{eq_step12}) in the last line. Plugging (\ref{ff_gft}) in (\ref{step_2_some}), we finished the proof of (\ref{eq_step21}).
\end{proof}

\section{Step 3. i.i.d. ensemble: Large $\eta$ integral over $[n^{-1+\epsilon},T]$}
\label{sec:GFT}

In this section we focus on the large $\eta$-integral $I_{\eta_0}^{T}(f)$ in (\ref{expectation_eq}) with $\eta_0=n^{-1+\epsilon}$, and we prove the GFTs for the expectation and variance, respectively.

\begin{proposition}\label{gft}
	 Fix $\tau,\epsilon>0$ with $\epsilon>\tau/2$ and let $\eta_0=n^{-1+\epsilon}$. Then we have
	\begin{align}
		\Big|	\big(\E-\E^{\mathrm{Gin}}\big)   \int_{\C} \Delta f(z)  \int_{\eta_0}^T \Im \Tr G^{z}(\ii \eta) \dd \eta \Big|&=O_\prec(n^{-1/2-\epsilon}),\label{exp_gft}\\
		\Big|	\big(\V-\V^{\mathrm{Gin}}\big)   \int_{\C} \Delta f(z)  \int_{\eta_0}^T \Im \Tr G^{z}(\ii \eta) \dd \eta \Big|&=O_\prec(n^{-\epsilon/4}).\label{variance_gft}
	\end{align}
\end{proposition}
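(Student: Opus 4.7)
The proof of Proposition~\ref{gft} will proceed via an Ornstein--Uhlenbeck interpolation between the given i.i.d.\ matrix $X$ and the complex Ginibre ensemble. Writing $\mathcal{F}(X)$ for the integral $\int_{\C}\Delta f(z)\int_{\eta_0}^T\Im\Tr G^z(\ii\eta)\,\dd\eta\,\dd^2 z$, I would differentiate $\E[\mathcal{F}(X_t)]$ (and the analogous covariance-type expression) along the flow. Because the OU dynamics preserves the first two cumulants of the matrix entries, Ito's formula combined with the cumulant expansion reduces the flow-derivative to sums involving third- and fourth-order cumulants of $\sqrt{n}\chi$ times derivatives of resolvent entries $G^z_{ab}$. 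The target is to show that these derivatives integrate to $O_\prec(n^{-1/2-\epsilon})$ in expectation and $O_\prec(n^{-\epsilon/4})$ in variance. The baseline difficulty is that the $L^1$ and $L^\infty$ bounds~\eqref{deltaf} force us to beat the prefactor $\|\Delta f\|_1\sim n^{1/2}(\log n)^{1/2}$ coming from the $z$-integration.

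\textbf{Expectation estimate \eqref{exp_gft}.} For the third-order terms with pairwise distinct summation indices I would perform an \emph{iterative} cumulant/resolvent expansion along the lines of~\cite{SX22,SX22+}, using the unmatched-index bookkeeping of Definition~\ref{def:unmatch_form}. Each expansion step gains a factor of order $1/(n\eta_0)=n^{-\epsilon}$, so roughly $1/\epsilon$ iterations are required to accumulate the precision $n^{-1/2-\epsilon}$ (in contrast to the four steps used in Step~2). The key bookkeeping is that at every iteration a finite list of remaining leading deterministic terms, each a function of $M^z$ only, is generated; I would compute them explicitly and verify, via integration by parts in $z$ and the radial symmetry $f(z)=f(|z|)$, that each such leading term integrates to zero (up to subleading errors) against $\Delta f$. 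The restricted third-order terms with coinciding indices and all fourth-order terms are then treated separately: again the deterministic leading parts are extracted and shown to vanish against $\Delta f$ after $z$-integration, and the remaining random errors are bounded directly by the local law Theorem~\ref{local_thmw} together with the resolvent moment bounds~\eqref{key2}.

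\textbf{Variance estimate \eqref{variance_gft}.} The GFT for the covariance is a two-point problem: differentiating along the OU flow produces mixed expressions in the resolvents $G^{z_1}$ and $G^{z_2}$ at two different shift parameters, which must be integrated against $\Delta f(z_1)\Delta f(z_2)$. Following the pattern already used in Lemma~\ref{lemma_step12}, I would split the $(z_1,z_2)$-domain at the scale $|z_1-z_2|=n^{-\gamma}$. On the near-diagonal piece the Cauchy--Schwarz estimate together with the volume factor $n^{-\gamma}$ suffices. On the complementary off-diagonal piece one exploits the $|z_1-z_2|$-decorrelation effect for the multi-resolvent quantities produced by the cumulant expansion, via the improved stability of the self-consistent equation governing products of the form $\langle G^{z_1}A G^{z_2}B\rangle$ when $|z_1-z_2|\gtrsim n^{-\gamma}$ (analogous to~\eqref{covv} and to Lemma~\ref{lemma4}). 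Combining the volume gain and the decorrelation gain yields the required $n^{-\epsilon/4}$ bound.

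\textbf{Main obstacle.} The decisive difficulty is carrying out the $1/\epsilon$-many iterations at the minimal cutoff $\eta_0=n^{-1+\epsilon}$: each step gains only the modest factor $n^{-\epsilon}$, so a single unrecognised or uncancelled deterministic term would ruin the asymptotics, and every iterative step generates many new terms that must be individually checked. On top of this, for the variance estimate the explicit deterministic cancellation against $\Delta f$ must be made compatible with the $|z_1-z_2|$-decorrelation improvements of the subleading errors. This dual requirement -- iterative explicit extraction of deterministic parts via unmatched indices combined with an extra gain from the $|z-z'|$-decorrelation mechanism -- is precisely the content of the forthcoming Proposition~\ref{key_lemma}, and will form the technical backbone of the proof.
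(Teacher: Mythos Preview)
Your proposal is correct and follows essentially the same approach as the paper: OU interpolation, cumulant expansion along the flow, iterative unmatched-index expansions (Proposition~\ref{prop:unmatched}) to extract deterministic leading terms that vanish against $\Delta f$ by the radial symmetry~\eqref{pq_m_int_0}, and the $|z_1-z_2|$-decorrelation gain (Lemma~\ref{lemma4}, Lemma~\ref{lemma5}) for the coincident-index and fourth-order pieces, all packaged into Proposition~\ref{key_lemma}. One minor remark: the vanishing of the leading deterministic terms comes from the angular integration identity~\eqref{pq_m_int_0} (since $\mathfrak{m}^z=zu^z$ with $u^z$ radial) rather than from integration by parts in $z$; and the $|z_1-z_2|$-splitting is carried out term-by-term inside the proof of Proposition~\ref{key_lemma} rather than globally at the level of the full covariance, but this is only an organisational difference.
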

 A similar GFT statement was proved in \cite[Proposition 3.8]{maxRe} for the purpose
of  estimating the rightmost eigenvalue, 
where the corresponding functions $f$ contributed an extra factor $\int |\Delta f| \sim n^{1/4}$ and the lower limit of  the $\eta$-integrals was slightly below the intermediate level $n^{-7/8}$. 
In the present paper we lose more in $\int |\Delta f| \sim n^{1/2}$ from (\ref{deltaf}) 
 and the error term
 in the local law is very bad for our small $\eta_0=n^{-1+\epsilon}$; we therefore need to develop a much finer  GFT analysis.
 
Before we enter the details, we explain the new ingredients in our proof compared to \cite{maxRe}.
Focusing only on the more involved   (\ref{variance_gft}), the variance difference is
given explicitly in terms of the third and higher order cumulants of the matrix entries  in 
(\ref{deri_var})--\eqref{pq_order_terms} below.  Similar terms emerged in  \cite{maxRe}, but 
our new estimate on them (given in Proposition \ref{key_lemma} below) involves the following novelties:
\begin{itemize}
	\item[(i)] \emph{Third order terms in
	(\ref{pq_order_terms}) with distinct indices $a \neq \ud{B}$}: these are 
	unmatched terms (see Definition~\ref{def:unmatch_form} in Appendix~\ref{appendix_A}) 
	and were bounded by $O_\prec(n^{-1})$ in \cite{maxRe}. 	
Now we need to identify the leading error term $O_\prec(n^{-1})$
	precisely which vanishes against $\Delta f(z)$ after $z$-integrations and we 
	hence improve the error $O(n^{-1})$ slightly to $O(n^{-1-\epsilon})$ in order 
	to compensate the loss in $\int |\Delta f| \sim n^{1/2}$; see \eg (\ref{third_aB_goal_0}) below.
	
	\item[(ii)] \emph{Third order terms in~\eqref{pq_order_terms} with index coincidence $a=\ud{B}$}: 
	 these matched terms were estimated trivially in~\cite{maxRe} using the local law on 
	the intermediate $\eta$-level $n^{-7/8}$. 
	Now on a much smaller $\eta$-level slightly above $n^{-1}$, we exploit an
	additional cumulant expansion together with Proposition \ref{lemma2} to obtain improved error estimates; again 
	we find the explicit leading terms will vanish.
Most importantly, with a better stability factor~(\ref{stable_0}) for $|z-z'|\geq n^{-\gamma}$
 in some self-consistent equation, we gain an extra smallness from the $|z-z'|$-decorrelation 
 effect as in Lemma \ref{lemma4}.

	\item[(iii)] \emph{Higher order terms in~\eqref{pq_order_terms}}:
	 while estimated trivially in~\cite{maxRe} by the local law, 
	here we again need to gain the $|z-z'|$-decorrelation effect (for the fourth order) 
	and use iterative cumulant expansions
	 (for the fifth order). Since we gain more from higher order cumulants, this part is somewhat easier than (i)-(ii).
		
\end{itemize}

We now introduce some notations which we will use throughout this section.
 The same notations have been used in \cite{maxRe}; for reader's convenience we recall them here.

\begin{notation}
	\label{not:index} 
	We use lower case
	letters to denote the indices taking values in $\llbracket 1, n \rrbracket$ and upper case letters to denote
	the indices taking values in $\llbracket n+1, 2n \rrbracket$. We also use calligraphic letters 
	$\mathfrak{u}, \mathfrak{v}$ to denote the indices ranging fully from $1$ to $2n$.
	
	For any index $\mathfrak{v} \in \llbracket 1, 2n \rrbracket$, the conjugate of $\mathfrak{v}$, denoted 
	by $\mathrm{conj}(\mathfrak{v})\in \llbracket 1, 2n \rrbracket$, is defined by the relation $|\mathrm{conj}(\mathfrak{v})-\mathfrak{v}|=n$. In particular, for an index $ a \in \llbracket 1, n \rrbracket$, 
	we define its index conjugate $\mathrm{conj}(a)=\bar a:=a+n$,
	and  for an index $ B \in \llbracket n+1, 2n \rrbracket$ we define its index conjugate $\mathrm{conj}(B)=\ud B:=B-n$. 
	With a slight abuse of terminology,  we say that two indices {\it coincide} if either they are equal or one is equal to 
	the conjugate of the other one. For instance, we say $a\in \llbracket 1, n \rrbracket$ coincides with the 
	index $B\in \llbracket n+1, 2n \rrbracket$ if $a=\ud{B}$ (or equivalently $B=\bar a$). 
	We also say that a collection of indices are {\it distinct} if there is no index coincidence among them (in the sense explained above). 
	
	Moreover, we often use generic letters  $x$ and $y$ to denote the row and the column
	 index of a Green function entry $G_{xy}$. 
	 In this context the lower case letters $x,y$ do not indicate
	 that they take values in $\llbracket 1, n \rrbracket$; later we will assign actual
	summation indices, \eg $a,B$ or their index conjugates to them. The assignment is
	 denoted by the symbol $\equiv$, for example $x_i \equiv a$, $y_i \equiv \ud{B}$ means that 
	the generic Green function entry $G_{xy}$ is replaced with the actual 
	$G_{a\ud{B}}$.

	\bigskip

\end{notation}

We will prove Proposition \ref{gft} via a continuous interpolating flow. 
Though we present the proof for $X$ being a complex-valued matrix for simplicity, the same result holds for the real case; see Remark \ref{handwave_real}.
Given the initial ensemble $H^{z}$ 
in (\ref{initial}), we consider the following Ornstein-Uhlenbeck matrix flow
\begin{align}\label{flow}
	\dd H^z_t=-\frac{1}{2} (H^z_t+Z)\dd t+\frac{1}{\sqrt{n}} \dd \mathcal{B}_t, \quad Z:=\begin{pmatrix}
		0  &  zI  \\
		\overline{z}I   & 0
	\end{pmatrix}, \quad 
	\mathcal{B}_t:=\begin{pmatrix}
		0  &  B_t  \\
		B^*_t   & 0
	\end{pmatrix}
\end{align}
 with initial condition $H^z_{t=0}:= H^z$, 
where $B_t$ is an $n \times n$ matrix with i.i.d. standard complex valued Brownian motion entries.
The matrix flow $H_t^z$ interpolates between the initial matrix $H^{z}$ in (\ref{initial}) at $t=0$ 
and a Hermitized matrix as in (\ref{initial}) with $X$ being replaced with an independent complex Ginibre ensemble
at $t=\infty$.

The Green function of the time dependent matrix $H^{z}_t$, denoted by $G^{z}_t$, satisfies the following time-dependent local law~(\cf Theorem \ref{local_thmw} with $w=\ii \eta$): for any $t \in \R^+$, any $||z|-1| \leq c$ and $n^{-1+\epsilon_1} \leq \eta\leq n^{-3/4-\epsilon_2}$, 
\begin{align}\label{localg}
	\sup_{t\ge 0}\max_{1\leq \mathfrak{v},\mathfrak{u} \leq 2n} \Big\{\big| \big(G^{z}_t(\ii \eta) \big)_{\mathfrak{uv}}-\big(M^{z}(\ii \eta) \big)_{\mathfrak{uv}} \big| \Big\} \prec\Psi:=\frac{1}{n\eta}+ \sqrt{\frac{\rho}{n\eta}}.
\end{align}
holds uniformly, since the flow in (\ref{flow}) is stochastically H\"{o}lder continuous in time as discussed above \cite[Eq. (4.3)]{maxRe}. 
Here $M^{z}=M^{z}(\ii \eta)$ is the deterministic $(2n)\times (2n)$ block-constant
matrix from (\ref{Mmatrix}) on the imaginary axis, \ie
\begin{align}\label{Mmatrix_0}
	M^{z}=\begin{pmatrix}
		m^{z}(\ii \eta)  &  \mathfrak{m}^{z}(\ii \eta)  \\
		\overline{\mathfrak{m}^{z}}(\ii \eta)   & m^{z}(\ii \eta)
	\end{pmatrix}, \qquad \mathfrak{m}^{z}(\ii \eta):=zu^z(\ii \eta),\qquad u^z(\ii \eta)=\frac{\Im {m}^z(\ii \eta)}{\eta+\Im {m}^z (\ii \eta)}.
\end{align}
From (\ref{m_function}), $m^{z}(\ii \eta)$ is pure imaginary, $u^{z}(\ii \eta)$ is real, and both functions are radial functions depending on $|z|$. In our main proof we only need the above estimates for
 $z \in \mathrm{supp}(f^{-}_1) \cup \mathrm{supp}(f^{+}_2)$ from (\ref{function1})-(\ref{function2}) 
and at the level $\eta_0=n^{-1+\epsilon}$ and thus $\Psi=n^{-\epsilon}$. From (\ref{rho}), we have the following upper bounds
\begin{align}\label{localm}
	|m^{z}(\ii \eta)| \lesssim \sqrt{n}\eta, \qquad  |u^{z}(\ii \eta) |\lesssim 1, \qquad |\mathfrak{m}^{z}(\ii \eta)| \lesssim 1.
\end{align}
Note that the two diagonal blocks in (\ref{Mmatrix_0}) are small, while the two off-diagonal blocks are typically order one. Without specific mentioning, all the estimates in this section hold true 
uniformly for any $t\ge 0$, $\eta=\eta_0=n^{-1+\epsilon}$, and for any  $z \in \mathrm{supp}(f^{-}_1) \cup \mathrm{supp}(f^{+}_2)$. For notational simplicity we often drop the dependence on the parameters $t$, $\eta_0$ and $z$.

\begin{proof}[Proof of Proposition \ref{gft}]
	We present only the proof of the
	more involved variance estimate in (\ref{variance_gft}). 
	The proof of (\ref{exp_gft}) is much easier with an even better error term $O_\prec(n^{-1/2-\epsilon})$ due to less overestimate of $\int |\Delta f|$ in (\ref{deltaf}) for the expectation, so we omit its proof for brevity. 
As a starting point we will rely  on \cite[Section 5]{maxRe}.

	Recall the matrix interpolating flow in $H^z_t$ (\ref{flow}) with complex-valued $X$ and its resolvent $G_t^z$. 
	As in \cite[Section 5.2]{maxRe} we introduce  the following short-hand notations, $j=1,2$
	\begin{align}\label{F}
		\wh{ \F_t^{z_j}} :=\F_t^{z_j}-\E[\F_t^{z_j}]=-\ii \int_{\eta_0}^T \Big( \Tr G_t^{z_j}(\ii \eta)
		-\E\big[ \Tr G_t^{z_j}(\ii \eta)\big]\Big) \dd \eta \prec 1,
	\end{align}
	where the last estimate follows from the local law in (\ref{localg}).
We aim to prove that
\begin{align}\label{derivative_estimate}
	\Big|\int_{\C} \int_{\C} \Delta f(z_1){ \Delta f(z_2) }  \frac{\dd	\E\big[ \wh{ \F_t^{z_1}} \wh{\F_t^{z_2}} \big]}{\dd t}\dd^2 z_1 \dd^2 z_2\Big|=O_\prec(n^{-\epsilon/4}).
\end{align}
Once we proved (\ref{derivative_estimate}), integrating it over $t \in [0, t_0]$ with $t_0:=800\log n$ we obtain 
\begin{align}\label{0tot_0}
	\Big| \int_{\C} \int_{\C} \Delta f(z_1){ \Delta f(z_2) }  \Big(\E\big[ \wh{ \F_0^{z_1}} \wh{\F_0^{z_2}} \big]-\E\big[ \wh{ \F_{t_0}^{z_1}} \wh{\F_{t_0}^{z_2}} \big] \Big)\dd^2 z_1 \dd^2 z_2 \Big| =\OO_{\prec}\big((\log n)n^{-\epsilon/4}\big). 
\end{align}
Note that $H_t^{z}$ in (\ref{flow}) is given as in (\ref{initial}) with $X$ being replaced with the time dependent matrix
$$X_t \stackrel{{\rm d}}{=} e^{-\frac{t}{2}} X+\sqrt{1-e^{-t}} \mathrm{Gin} (\mathbb{C}), 
\qquad t\ge 0,$$
where $X_{\infty} \stackrel{{\rm d}}{=}  \mathrm{Gin} (\mathbb{C})$ is the complex 
Ginibre ensemble which is independent of $X$. Then we have
\begin{align}\label{approxxxx}
	\|G^{z}_{t_0}(\ii \eta)-G^{z}_{\infty}(\ii \eta)\| \leq \|G^{z}_{t_0}\| \|G^{z}_{\infty}\| \|X_{t_0}-X_{\infty}\| 
	\prec n^{-398},
\end{align}
where we used that $\|G^z(\ii \eta)\| \leq \eta^{-1}\le n$ and that $|x_{ij}| \prec n^{-1/2}$ from the moment assumption in \eqref{eq:hmb}. Using the $L^1$ bound of $\Delta f$ in (\ref{deltaf}) and that $T=n^{100}$, we have
\begin{align}\label{t_0toinf}
	\Big| \int_{\C} \int_{\C} \Delta f(z_1){ \Delta f(z_2) }  \Big(\E^{\mathrm{Gin}}\big[ \wh{ \F^{z_1}} \wh{\F^{z_2}} \big]-\E\big[ \wh{ \F_{t_0}^{z_1}} \wh{\F_{t_0}^{z_2}} \big] \Big)\dd^2 z_1 \dd^2 z_2 \Big|=O_\prec(n^{-195}).
\end{align}	
Combining (\ref{0tot_0}) with (\ref{t_0toinf}) we hence finished the proof of Proposition \ref{gft}.
 
In the rest of proof, we focus on proving the key estimate (\ref{derivative_estimate}). Set 
 \begin{align}\label{Wmatrix}
 	W = W_t:=H^z_t+Z=\begin{pmatrix}
 		0  &  X_t  \\
 		X_t^*   & 0
 	\end{pmatrix}.
 \end{align}
 Then  $W_t$ satisfies the usual matrix OU flow:
 $$
 \dd H^z_t = \dd W_t=-\frac{1}{2}W_t\dd t+\frac{1}{\sqrt{n}} \dd \mathcal{B}_t.
 $$ 
	Applying Ito's formula to $\wh{ \F_t^{z_1}} \wh{\F_t^{z_2}}$ in (\ref{derivative_estimate})
	and performing the cumulant expansions on the expectation, we observe the precise cancellations of the second order terms with $p+q+1=2$ and obtain that (see also \cite[Eq (5.16)]{maxRe})
\begin{align}\label{deri_var}
	\frac{\dd	\E\big[ \wh{ \F_t^{z_1}} \wh{\F_t^{z_2}} \big]}{\dd t}=-\frac{1}{2}\sum_{p+q+1=3}^{K_0} \frac{1}{p!q!} \Big( \mathcal{K}^{z_1,z_2}_{p+1,q}+{\wt{\mathcal{K}}}^{z_1,z_2}_{q,p+1} \Big)+O_\prec(n^{-\frac{K_0}{2}+2})
\end{align}
where we define for simplicity
\begin{align}\label{pq_order_terms}
   \mathcal{K}^{z_1,z_2}_{p+1,q}:=&  \frac{c^{(p+1,q)}}{n^{\frac{p+q+1}{2}}}  \sum_{a,B} \E \left[  \frac{\partial^{p+q+1} \wh{\F_t^{z_1}} \wh{\F_t^{z_2}}}{\partial w_{aB}^{p+1} \partial \overline{w_{aB}}^{q} }\right],
\end{align}
and ${\wt{\mathcal{K}}}^{z_1,z_2}_{q,p+1}$ is the same with $p+1$ and $q$ interchanged, with $c^{(p,q)}$ the $(p,q)$-cumulants of the normalized complex-valued i.i.d. entries $\sqrt{n} w_{aB}$~(we omit their dependence on $t$) that are uniformly bounded from (\ref{eq:hmb}). Here we
truncate the cumulant expansions at a sufficiently large $K_0$-th order, say $K_0=100$, using the local law in
(\ref{localg}) and the finite moment condition in \eqref{eq:hmb}. 
To compute each $\mathcal{K}^{z_1,z_2}_{p+1,q}$ in (\ref{pq_order_terms}), we recall the following differentiation rules from~\cite[Eqs. (4.13), (5.8)]{maxRe} for any $1\leq \mathfrak{u},\mathfrak{v} \leq 2n$
	\begin{align}\label{rule_12}
		\frac{\partial G^{z}_{\mathfrak{uv}}}{\partial w_{aB}}
		=-G^{z}_{\mathfrak{u}a}G^{z}_{B\mathfrak{v}}, \qquad \quad
		\frac{\partial \wh{\F_t^{z}}}{\partial w_{aB}}=-\gz_{Ba}(\ii \eta_0)+\gz_{Ba}(\ii T)=-\gz_{Ba}(\ii \eta_0)+\OO(n^{-100}),
	\end{align}
where the latter rule can be proved using the former one and that 
$(G^2)(\ii \eta)=-\ii \frac{\dd G(\ii \eta)}{\dd \eta}$ and the deterministic norm bound $\|G(\ii T)\|\leq T^{-1}$ with $T=n^{100}$. A similar differentiation rule for $\partial/\partial \overline{w_{aB}} $ holds with $a$ and $B$ interchagned.

Using the differentiation rules in (\ref{rule_12}), each term $\mathcal{K}^{z_1,z_2}_{p+1,q}$ 
	in (\ref{pq_order_terms}) is a linear combination of products of $p+q+1$ Green function entries
	(either $\ga(\ii \eta_0)$ or $\gb(\ii \eta_0)$ up to an error $\OO_\prec(n^{-100})$) with  possible factors
	 $\wh{\F_t^{z_1}}$ or $\wh{\F_t^{z_2}}$ in front, \ie these are expressions of the following general form
	\begin{align}\label{some_form_F}
		\frac{1}{n^{\frac{p+q+1}{2}}}  \sum_{a,B}  \E \Big[  (\wh{\F_t^{z^{(0)}}})^{\alpha_0} \prod_{i=1}^{p+q+1} G^{z^{(i)}}_{x_i,y_i} (\ii \eta_0)\Big],
	\end{align}
with $\alpha_0=0,1$, where $z^{(i)}$ stands for either $z_1$ or $z_2$, and $x_i, y_i$ denote generic row and column indices of $G^{z^{(i)}}$, respectively, to which we assign actual
summation indices $a,B$ based on (\ref{pq_order_terms})--(\ref{rule_12}). The specific assignments in (\ref{some_form_F}) all have the
 following properties: 
\begin{align}\label{pq_number}
	\#\{x_i \equiv a\}=\#\{y_i \equiv B\}=q, \qquad \#\{ x_i \equiv B\}=\#\{ y_i \equiv a\}=p+1.
\end{align}
From the local law in (\ref{localg}), (\ref{localm}), 
and that that $|\F_t^{z}| \prec 1$ from~\eqref{F}, we have the following a priori bound
	\begin{align}\label{naive_g_two}
		|\mathcal{K}^{z_1,z_2}_{p+1,q}|=O_{\prec}\big(n^{-\frac{p+q-3}{2}}(\Psi^{p+q+1}+n^{-1})\big),\qquad \Psi=n^{-\epsilon},
	\end{align}
where the error term $n^{-1}$  corresponds to the cases with an index coincidence $a=\ud{B}$. In particular for $6 \leq p+q+1\leq K_0$, using (\ref{naive_g_two}) and the $L^1$ norm of $\Delta f$ in (\ref{deltaf}) we 
 already have  the direct upper bound
	\begin{align}\label{var5}
		\int_{\C} \int_{\C} |\Delta f(z_1)|| \Delta f(z_2)| \sum_{p+q+1 = 6}^{K_0} \Big|\mathcal{K}^{z_1,z_2}_{p+1,q}  \Big| \dd^2 z_1 \dd^2 z_2=O_\prec\big((\log n) n^{-6\epsilon}\big),
	\end{align}
	so these higher order cumulant terms need no further refined estimates. 
	
Precise estimates on the remaining terms $\mathcal{K}^{z_1,z_2}_{p+1,q}$ in (\ref{pq_order_terms}) with
 $3\leq p+q+1\leq 5$ 
are more delicate. We need to find the leading order deterministic terms to $\mathcal{K}^{z_1,z_2}$ and
show that while they are not negligible, their $(z_1, z_2)$-integrals against $\Delta f(z_1){ \Delta f(z_2) }$ are vanishing.
This shows that the final contribution of $\mathcal{K}^{z_1,z_2}$ is smaller than it naively looks like. 
The following lemma states this fact precisely:  
\begin{proposition}\label{key_lemma}
	There exist bounded  deterministic functions depending on $p, q$,
	 denoted by  $\mathcal{M}_{p+1,q}(z_1,z_2)$,  
	satisfying the following integral condition
	\begin{align}\label{delta_int}
		\int_{\C} \int_{\C} \Delta f(z_1){ \Delta f(z_2) }\Big( \mathcal{M}_{p+1,q}(z_1,z_2) \Big) \dd^2 z_1 \dd^2 z_2=0,
	\end{align}
	 such that
\begin{align}\label{third_fourth_result}
	\iint  |\Delta f(z_1)|  | \Delta f(z_2)|  \sum_{p+q+1=3}^5 \Big| \mathcal{K}^{z_1,z_2}_{p+1,q}-\mathcal{M}_{p+1,q}(z_1,z_2)  \Big|\dd^2 z_1 \dd^2 z_2 =O_\prec(n^{-\epsilon/4}),
\end{align}
\end{proposition}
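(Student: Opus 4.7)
My plan is to handle the three cumulant orders $p+q+1\in\{3,4,5\}$ separately, since each requires a different refinement over the trivial local law bound \eqref{naive_g_two}. The common theme will be to use the local law in the form $G^z = M^z + (G^z - M^z)$ and to expand each product $\prod_i G^{z^{(i)}}_{x_iy_i}(\ii\eta_0)$ in \eqref{some_form_F} into a sum of a deterministic leading piece (obtained by replacing each $G^{z^{(i)}}$ by $M^{z^{(i)}}$) plus fluctuating remainders. The deterministic piece will furnish the candidate $\mathcal{M}_{p+1,q}(z_1,z_2)$, and I will need to check the integral condition \eqref{delta_int} separately. The crucial point is that $M^z(\ii\eta_0)$ is block-constant with entries depending only on $|z|$, hence each $\mathcal{M}_{p+1,q}(z_1,z_2)$ that emerges will be a product of a function of $|z_1|$ and of $|z_2|$ (times possibly $z_i$ or $\bar z_i$), and \eqref{delta_int} will follow by integration by parts twice in each variable, using that $\Delta f$ is the Laplacian of a radial function supported away from the origin.

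For the third order $p+q+1=3$ with distinct indices $a\neq\underline{B}$ I would use the iterative cumulant expansion / unmatched-index mechanism from \cite{SX22, SX22+} (as referenced in the introduction) applied to the six index configurations counted by \eqref{pq_number}. The leading $O(n^{-1})$ deterministic contribution is exactly what the naive substitution $G\to M$ gives after summing the unmatched index against the off-diagonal block of $M^z$; this is my $\mathcal{M}_{p+1,q}$. Every further expansion step produces either another factor $\Psi = n^{-\epsilon}$ or an extra off-diagonal $M$ entry, both of which improve the bound, so two iterations suffice to reach $O(n^{-1-\epsilon})$ for the fluctuation. Combined with the $L^1$-bound $\|\Delta f\|_1 \lesssim n^{1/2}(\log n)^{1/2}$ from \eqref{deltaf}, this contributes $O_\prec(n^{-\epsilon/2})$ to \eqref{third_fourth_result}.

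For the third order with a coincidence $a=\underline{B}$ and for all fourth order terms, the a priori bound \eqref{naive_g_two} is by itself $O(n^{-1})$ or $O(n^{-2+4\epsilon})$, which after multiplication by $\|\Delta f\|_1^2\lesssim n\log n$ is insufficient. Here I would bring in the $|z_1-z_2|$-decorrelation effect: I split the $(z_1,z_2)$ integration at the scale $n^{-\gamma}$ for some small $\gamma>0$. In the regime $|z_1-z_2|\le n^{-\gamma}$ the volume shrinks by $n^{-2\gamma}$, which beats the $n\log n$ from $\|\Delta f\|_1^2$. In the regime $|z_1-z_2|\ge n^{-\gamma}$ I would apply the forthcoming Lemma~\ref{lemma4}, which uses the improved stability factor of the self-consistent equation \eqref{stable_0} for products like $\langle G^{z_1}AG^{z_2}B\rangle$ to extract a smallness $n^{-\gamma'}$ from each such two-resolvent chain that appears after the differentiation rule \eqref{rule_12}. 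The deterministic piece $\mathcal{M}$ here is again obtained from $G\to M$-substitution, and the integral condition \eqref{delta_int} holds for the same reason as above: the block-constant structure of $M^z$ makes the leading term factorize into radial contributions.

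For $p+q+1=5$, the naive bound \eqref{naive_g_two} already gives roughly $n^{-1-4\epsilon}(1+n^{-1}\cdot n^{4\epsilon})$ per term, and $\|\Delta f\|_1^2\lesssim n\log n$ then gives $O_\prec(n^{-4\epsilon}\log n)$, already of the claimed order $n^{-\epsilon/4}$; so here I can take $\mathcal{M}_{p+1,q}\equiv 0$ except for one or two explicit terms which still admit a $G\to M$ substitution and the associated radial vanishing. The main obstacle of the whole proof will be the third-order matched case $a=\underline{B}$: here the cumulant expansion with respect to one remaining entry $w_{aB}$ produces a product of resolvents of $H^{z_1}$ and $H^{z_2}$ in the same expectation, and I will need to control the resulting two-$z$ chains at the atypically small scale $\eta_0=n^{-1+\epsilon}$ using the stability factor in \eqref{stable_0} together with Proposition \ref{lemma2}, while simultaneously keeping track of the explicit deterministic leading term that must vanish against $\Delta f(z_1)\Delta f(z_2)$ after integration. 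All of this must be done uniformly in $t\ge 0$ along the Ornstein-Uhlenbeck flow \eqref{flow}, which is why I rely on the time-uniform local law \eqref{localg}.
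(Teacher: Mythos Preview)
Your overall decomposition matches the paper's: split by cumulant order, use the unmatched-index iterative expansion for third-order terms with $a\neq\underline{B}$, use the $|z_1-z_2|$-splitting together with the stability factor \eqref{stable_0} (via Lemma~\ref{lemma4}) for the third-order coincidence $a=\underline{B}$ and for the matched fourth-order term $\mathcal{K}_{2,2}$, and handle fifth order essentially by the naive bound. That skeleton is correct. However, two of your load-bearing mechanisms are wrong and would cause the argument to collapse.

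\medskip

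\textbf{The integral condition \eqref{delta_int}.} You claim it follows by ``integration by parts twice'' and that the leading pieces ``factorize into radial contributions''. Neither is true. The off-diagonal block of $M^z$ is $\m^z=z\,u^z(\ii\eta_0)$ where only $u^z$ is radial; the factor $z$ is not. The deterministic leading terms that actually emerge are of the form $(\m^{z_l})^{p_l}(\overline{\m^{z_l}})^{q_l}=z_l^{p_l}\bar z_l^{q_l}(u^{z_l})^{p_l+q_l}$ with $p_l\neq q_l$, and \eqref{delta_int} holds because $\Delta f$ is radial and the \emph{angular} integral $\int_0^{2\pi}e^{\ii(p_l-q_l)\theta}\dd\theta$ vanishes --- this is exactly \eqref{pq_m_int_0}. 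Integration by parts would give $\int f\,\Delta(\m^p\overline{\m}^q)$, which has no reason to vanish. More importantly, you must \emph{verify} that each leading term has $p_l\neq q_l$ for some $l$; this is a combinatorial fact inherited from the index-count \eqref{pq_number} (e.g.\ for $p+q+1=3$ every term has three $a$'s and three $B$'s distributed so that either the $\m$ or $\overline{\m}$ count is unbalanced). Without tracking this you cannot conclude \eqref{delta_int}.

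\medskip

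\textbf{The leading term for $a\neq\underline{B}$.} Your plan to obtain $\mathcal{M}$ by ``naive substitution $G\to M$'' fails here: for $a\neq\underline{B}$ one has $M^z_{aB}=0$, so the substitution produces identically zero, not an $O(n^{-1})$ term. In the paper the genuine leading term, e.g.\ $\frac{C}{n}(\m^{z_1})^2(\overline{\m^{z_2}})^4$ in \eqref{third_aB_goal_0}, arises only \emph{after} applying the identity \eqref{identity_0} and cumulant-expanding: it comes from the third-order cumulant contribution with the fresh index coinciding with $\underline{B}$ (see \eqref{expand_result}), not from replacing $G$ by $M$ at the outset. Everything else in that expansion is an unmatched term of degree $\ge 4$ or with $\ge 3$ summation indices, for which the \emph{improved} bound $|\E[P_d^o]|=O_\prec(n^{-3/2-\epsilon})$ (Proposition~\ref{prop:unmatched}) is essential; the basic $O_\prec(n^{-3/2})$ bound does not suffice, and ``two iterations'' is not an accurate description of what is required to reach either bound.

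\medskip

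Two smaller corrections: in the region $|z_1-z_2|\le n^{-\gamma}$ the volume gain against $\iint|\Delta f(z_1)||\Delta f(z_2)|$ is $n^{-\gamma}$, not $n^{-2\gamma}$ (the support of $\Delta f$ is a thin annulus of width $l_n\ll n^{-\gamma}$, so the constrained $z_2$-integral picks up area $\sim n^{-\gamma}l_n$, giving back $\|\Delta f\|_\infty\cdot n^{-\gamma}l_n\sim n^{-\gamma}\|\Delta f\|_1$). And this gain by itself does not ``beat $n\log n$''; it must be combined with the base estimate $|\mathcal{K}-\mathcal{M}|=O_\prec(n^{-1})$ from \eqref{estimate5} in that region, while in the far region you use the sharper $O_\prec(n^{-1-\epsilon+\gamma})$ from \eqref{estimate6}. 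Choosing $\gamma=\epsilon/2$ balances the two.
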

Using (\ref{var5})-(\ref{third_fourth_result}), we conclude (\ref{derivative_estimate}) from (\ref{deri_var}) and thus finish the proof of Proposition \ref{gft}.

\end{proof}

 Next we prove Proposition \ref{key_lemma} in the following four subsections by estimating the third to fifth order terms $\mathcal{K}^{z_1,z_2}_{p+1,q}$ given in~\eqref{pq_order_terms} for $p+q+1=3,4,5$ respectively. The most involved one is the third order terms and we split the discussion into two cases: restricted summations with distinct indices $a\neq \ud{B}$ and with index coincidence $a=\ud{B}$, denoted by $\mathcal{K}^{z_1,z_2}_{p+1,q} \Big|_{a \neq \ud{B}}$ and $\mathcal{K}^{z_1,z_2}_{p+1,q} \Big|_{a=\ud{B}}$ respectively. The fourth and fifth order terms can be estimated similarly and more easily since we gain more from higher order cumulants. The proofs of some technical lemmas will be deferred to the Appendix~\ref{appendix_A} and \ref{sec:some_lemma}, but
 we explain the main ideas behind them.

\subsection{Third order terms with $p+q+1=3$ and with distinct indices $a\neq \ud{B}$
 in~\eqref{third_fourth_result}}\label{subsec:part1} 
By direct computations using (\ref{rule_12}), the third order terms $\mathcal{K}^{z_1,z_2}_{p+1,q}$ given in (\ref{pq_order_terms}) are linear combinations of the following terms (plus analogous terms 
when interchanging $z_1$ with $z_2$, or interchanging $a$ with $B$)
\begin{align}\label{third}
	&\frac{\sqrt{n}}{n^{2}}\sum_{a,B}\E \Big[ \wh{ \F_t^{z_2}} G^{z_1}_{aa} G^{z_1}_{BB}G^{z_1}_{aB} \Big], \qquad \frac{\sqrt{n}}{n^{2}} \sum_{a,B} \E \Big[ \ga_{aB} \gb_{aa}\gb_{BB}\Big],\nonumber\\
	&\frac{\sqrt{n}}{n^{2}} \sum_{a,B}\E \Big[ \wh{ \F_t^{z_2}} G^{z_1}_{aB} G^{z_1}_{aB}G^{z_1}_{aB} \Big], \quad 
	\frac{\sqrt{n}}{n^{2}} \sum_{a,B} \E \Big[ \ga_{aB}\gb_{aB} \gb_{aB} \Big], \quad 
	\frac{\sqrt{n}}{n^{2}} \sum_{a,B} \E \Big[ \ga_{aB} \gb_{Ba} \gb_{Ba} \Big],
\end{align}
where  $G^{z_1} =G^{z_1}_{t}(\ii \eta_0)$, $G^{z_2} =G^{z_2}_{t}(\ii \eta_0)$ with $t\in \R^+$, $\eta_0=n^{-1+\epsilon}$ and $z_1,z_2 \in \mathrm{supp}(f^{-}_1) \cup \mathrm{supp}(f^{+}_2)$ from (\ref{function1})-(\ref{function2}).

We first split the above summations over $a$ and $B$ into two parts: the restricted summation with distinct 
indices $a\neq \ud{B}$ and the remaining summation with the index coincidence $a=\ud{B}$. Note that the latter
yields off-diagonal resolvent terms that are large, typically order one, see~\eqref{localg}--\eqref{localm}. 
 For example, the last term in (\ref{third}) can be split into two parts, \ie
\begin{align}\label{split_third}
	\frac{\sqrt{n}}{n^{2}}\sum_{a, B}\E \Big[ \ga_{aB} &\gb_{Ba} \gb_{Ba} \Big]=\frac{\sqrt{n}}{n^{2}} \sum_{a \neq \ud{B}}\E \Big[ \wh{\ga_{aB}} \wh{\gb_{Ba}} \wh{\gb_{Ba}} \Big]+\frac{\sqrt{n}}{n^{2}} \sum_{a}\E \Big[ \ga_{a \bar a} \gb_{\bar a a} \gb_{ \bar a a} \Big],
\end{align}
where we define the shifted Green function by
\begin{align}\label{shifted}
	\wh{\gz}=\wh{\gz}(\ii \eta):=\gz(\ii \eta)-M^{z}(\ii \eta), 
\end{align}
with $M^{z}=M^{z}(\ii \eta)$ given in (\ref{Mmatrix_0})-(\ref{localm}). 
In particular for $a \neq \ud{B}$, $\gz_{aB}=\wh{\gz_{aB}}=O_\prec(\Psi)$ 
with $\Psi=n^{-\epsilon}$ from the local law in (\ref{localg}). This subsection 
is devoted to estimating the third order terms in (\ref{third}) with restricted 
summations $a \neq \ud{B}$, \eg the first part in (\ref{split_third}). The remaining 
summations with the index coincidence $a=\ud B$, \eg the second part in (\ref{split_third})
 will be estimated in the next subsection using a different approach.

Note that both the index $a$
 and $B$
 are 
assigned odd number of times as a row/column index of Green function entries in
 the first part in (\ref{split_third}), \ie in
 the third order terms with
the restricted summations  $a \neq \ud{B}$ for the products of Green function entries in (\ref{third}).
  We will call these terms \emph{unmatched} (without the additional factor $\sqrt{n}$) 
 with \emph{unmatched indices} $a$ and $B$.  More generally, 
 an unmatched term, denoted by $P^o_d$ for any degree $d\in \N$,
  is an averaged product of $d$ shifted Green function entries with unmatched indices and with a
   possible $\wh \F$ prefactor. The concept of unmatched terms
    were defined informally in \cite[Section 5]{maxRe}. 
    For completeness we also give their formal definition in Appendix~\ref{appendix_A} (see  Definition \ref{def:unmatch_form}). Here we just show a few examples: 
\begin{align}
	&d=3: \qquad \frac{1}{n^{2}}\sum_{a\neq \ud{B}}\E \Big[  \wh{G^{z_1}_{aB}} \wh{G^{z_2}_{aB}} \wh{G^{z_2}_{aB}} \Big], \qquad \qquad \frac{1}{n^{2}}\sum_{a\neq \ud{B}}\E \Big[ \wh{ \F_t^{z_2}} \wh{G^{z_1}_{aa}} \wh{G^{z_1}_{BB}} \wh{G^{z_1}_{aB}} \Big], \label{threeG}\\
	&d=4: \qquad \frac{1}{n^{2}}\sum_{a\neq \ud{B}}\E \Big[  \wh{G^{z_1}_{aB}} \wh{G^{z_2}_{aB}} \wh{G^{z_2}_{aB}} \wh{G^{z_2}_{Ba}}\Big], \qquad \frac{1}{n^{2}}\sum_{a\neq \ud{B}}\E \Big[ \wh{ \F_t^{z_2}} \wh{G^{z_1}_{aa}} \wh{G^{z_1}_{BB}} \wh{G^{z_1}_{aB}} \wh{G^{z_1}_{aB}}  \Big]\label{fourG}
\end{align}
are all unmatched terms with unmatched indices $a$ and $B$.  For example,  
 the second line (\ref{fourG}) stems from the fourth order terms $\mathcal{K}^{z_1,z_2}_{p+1,q}$ with
  $q\neq 2$ that will be estimated similarly later. We also give some examples of matched terms: 
$$\frac{1}{n}\sum_{a}\E \Big[ \wh{ \F_t^{z_2}} \wh{G^{z_1}_{a \bar a}} \wh{G^{z_1}_{a \bar a}} \wh{G^{z_1}_{a \bar a}}  \Big], \qquad \frac{1}{n^{2}}\sum_{a\neq \ud{B}}\E \Big[  \wh{G^{z_1}_{aB}} \wh{G^{z_1}_{Ba}} \wh{G^{z_2}_{aB}} \wh{G^{z_2}_{Ba}} \Big],$$
which will be estimated differently in the next two subsections, corresponding to the third order terms with the index coincidence $a=\ud B$ and the fourth order term $\mathcal{K}^{z_1,z_2}_{2,2}$, respectively.

The naive estimate for any unmatched term of degree $d$, denoted by $P^o_d$ is $O_\prec(\Psi^d)=O_\prec(n^{-d\epsilon})$ 
using simply (\ref{localg}).
However, it 
  can be improved significantly  to
  \begin{align}\label{P_d_o_before}
	|\E[P^o_d]|=O_\prec(n^{-3/2}), \qquad \qquad d\geq 1,
\end{align}
  by performing an iterative cumulant expansions on the unmatched indices. 
  This was shown in 
   \cite[Proposition 4.5]{maxRe} if  the $\mathcal{F}$ factors were not present.
 In  Proposition \ref{prop:unmatched} in the Appendix~\ref{appendix_A} we give the proof for the general case
 extending the argument from \cite{maxRe}.  
 
   Hence, using~\eqref{P_d_o_before}, 
   the third order terms in (\ref{third}) with $a \neq \ud{B}$, \eg the first part in (\ref{split_third}) can be bounded by
\begin{align}\label{old}
	\Big|\frac{\sqrt{n}}{n^{2}} \sum_{a \neq \ud{B}}\E \Big[ \wh{\ga_{aB}} \wh{\gb_{Ba}} \wh{\gb_{Ba}} \Big] \Big|=O_\prec(n^{-1}).
\end{align}
Combining with the $L^{1}$ norm of $\Delta f$, we have
\begin{align}\label{third_before}
	\int_{\C} \int_{\C} |\Delta f(z_1)|| \Delta f(z_2)| \left|\sum_{p+q+1 =3} \mathcal{K}^{z_1,z_2}_{p+1,q}  \Big|_{a\neq \ud{B}} \right|\dd^2 z_1 \dd^2 z_2=O_\prec(1).
\end{align}
However the above estimate is barely not enough to prove (\ref{third_fourth_result}) for $p+q+1=3$. To gain a little improvement, we will show a more refined estimate than (\ref{P_d_o_before})
 in Proposition \ref{prop:unmatched} in Appendix~\ref{appendix_A} \ie
\begin{align}\label{P_d_o_improve}
	|\E[P^o_d]|=O_\prec(n^{-3/2-\epsilon}), \qquad \qquad d\geq 4.
\end{align}
Compared to \cite{maxRe} the real novelty in the current proof is to use iterative expansions and the improved estimate in~(\ref{P_d_o_improve}) 
to identify the leading deterministic terms that contribute $O_\prec(1)$ in (\ref{third_before}) before taking the absolute values inside the integral 
and show that they vanish after $z$-integrations. The error term has degree at least four
and hence can be estimated by the improved bound (\ref{P_d_o_improve}).
  For instance, the first part in (\ref{split_third}) can be bounded by, \cf \eqref{old}
	\begin{align}\label{third_aB_goal_0}
	\frac{\sqrt{n}}{n^{2}} \sum_{a \neq \ud{B}}\E \Big[ \wh{\ga_{aB}} \wh{\gb_{Ba}} \wh{\gb_{Ba}} \Big]=\frac{C}{n}  \big(\m^{z_1}\big)^2 (\overline{\m^{z_2}})^4 +O_\prec(n^{-1-\epsilon}),
\end{align}
for some numerical constant $C\in \R$. Since $\m^{z}=zu^{z}$ from (\ref{Mmatrix_0}) and 
both $f(z)$ and $u^{z}$ are radial functions in $|z|$,  then we have
\begin{align}\label{pq_m_int_0}
	\int_{\C} \Delta f(z) \big(\m^{z}\big)^p  (\overline{\m^{z}})^q \dd^2 z=0, \qquad \mbox{ unless $p=q$}.
\end{align}
Thus the leading term in (\ref{third_aB_goal_0}) of size $n^{-1}$ satisfies the integral condition in (\ref{delta_int}). In general, we have the following lemma for all the third order terms with $a\neq \ud{B}$. The proof details are found in Appendix~\ref{appendix_A}.

\begin{lemma}\label{lemma_third_order}
	There exists bounded deterministic functions, denoted by $\mathcal{M}^{(1)}_{p+1,q}(z_1,z_2)$ with $p+q+1=3$ satisfying the integral condition in (\ref{delta_int}) such that
	\begin{align}\label{result_third}
		\iint |\Delta f(z_1)|| \Delta f(z_2)| \left|\sum_{p+q+1=3} \Big(\mathcal{K}^{z_1,z_2}_{p+1,q}\Big|_{a\neq \ud{B}}  -\mathcal{M}^{(1)}_{p+1,q}(z_1,z_2) \Big)\right| \dd^2 z_1 \dd^2 z_2 =O_\prec(n^{-\epsilon}).
	\end{align}
\end{lemma}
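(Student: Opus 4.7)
The plan is to handle the five summands listed in \eqref{third} by a common strategy after first restricting to $a\neq \ud{B}$. I would begin by centering every Green function entry via $G = \wh G + M$ with $M = M^{z_i}(\ii\eta_0)$ from \eqref{Mmatrix_0}, so that each triple product expands into a sum of terms in which some $k\in\{0,1,2,3\}$ factors remain as shifted $\wh G$ entries while the other $3-k$ factors are replaced by the corresponding block of $M$. Recalling from \eqref{localm} that the diagonal block $m^{z}(\ii\eta_0) = O(\sqrt{n}\eta_0)$ is small while the off-diagonal block $\m^{z}(\ii\eta_0)$ is $O(1)$, only the replacements that turn off-diagonal Green function entries $G_{aB}$ or $G_{Ba}$ into $\m^{z_i}$ or $\overline{\m^{z_i}}$ can contribute leading-order terms; the diagonal-block replacements are already suppressed by additional factors $n^{-1/2+\epsilon}$.

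Keeping in mind the overall prefactor $\sqrt{n}/n^2$ in~\eqref{third} and the $L^1$-loss $\|\Delta f\|_1^2 \lesssim n$ from~\eqref{deltaf}, I would then sort the resulting pieces by the number $d$ of surviving $\wh G$ entries (counting $\wh{\F}_t^{z_2}$ as part of the unmatched data) and treat each piece as an averaged unmatched product $P^o_d$ in the sense of Definition~\ref{def:unmatch_form}. For every piece with $d\ge 4$ the refined estimate~\eqref{P_d_o_improve} yields $|\E[P^o_d]| = O_\prec(n^{-3/2-\epsilon})$; combined with the extra $\sqrt{n}$ prefactor and the double $L^1$-loss from $|\Delta f(z_1)||\Delta f(z_2)|$ this contributes $O_\prec(n^{-\epsilon})$, which is admissible for~\eqref{result_third}.

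The delicate remaining contributions are those with $d\le 3$, for which the generic bound~\eqref{P_d_o_before} only gives $O_\prec(n^{-1})$ per term, translating to $O_\prec(1)$ after the $(z_1,z_2)$-integration, so they cannot simply be discarded. For these I would iterate the cumulant expansion on the single summation index $a$ (or $B$) a bounded number of times, at each step peeling off either a further centered contribution which eventually enters the $d\ge 4$ regime and is absorbed via~\eqref{P_d_o_improve}, or a fully deterministic contribution expressible as a polynomial in $m^{z_i}, u^{z_i}, z_i, \bar z_i$. Collecting all such deterministic pieces defines $\mathcal{M}^{(1)}_{p+1,q}(z_1,z_2)$. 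By construction every monomial has the form $\prod_{i=1,2} (\m^{z_i})^{p_i}(\overline{\m^{z_i}})^{q_i}$ times a purely radial factor in $|z_1|,|z_2|$, and since $\m^{z} = z u^{z}$ with $u^{z}$ radial, each monomial factors as $z_i^{p_i}\bar z_i^{q_i}$ times a radial function. Because $f$ is itself radial (see \eqref{function1}--\eqref{function2}), passing to polar coordinates kills any monomial with $p_i\ne q_i$ for some $i$, which is precisely \eqref{pq_m_int_0} applied termwise, yielding the integral identity \eqref{delta_int}. The prototype calculation~\eqref{third_aB_goal_0} illustrates this: the leading $n^{-1}(\m^{z_1})^2(\overline{\m^{z_2}})^4$ integrates to zero against $\Delta f(z_1)\Delta f(z_2)$.

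The main obstacle is the systematic bookkeeping in the presence of the $\wh{\F}_t^{z_2}$ prefactor, because expanding it via~\eqref{rule_12} generates further Green function entries at a second spectral parameter $z_2$ which can re-pair with the original $z_1$-entries, producing both new matched structures (handled in the next subsection) and new unmatched structures whose $\wh{\F}$-variant requires the extension of Proposition~\ref{prop:unmatched} proved in Appendix~\ref{appendix_A}. Once this combinatorial accounting is carried out and the vanishing of the polynomial piece is exploited to cancel the sole $O_\prec(n^{-1})$ contribution, combining it with the $d\ge 4$ estimate above yields~\eqref{result_third}.
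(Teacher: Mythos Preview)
Your overall strategy---treat the $a\neq\ud B$ restrictions of \eqref{third} as unmatched terms, expand via cumulants, separate deterministic leading pieces that vanish by \eqref{pq_m_int_0} from remainders controlled through the refined estimate \eqref{P_d_o_improve}---is the paper's strategy. Two points in your sketch, however, are imprecise in ways that matter.

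First, the initial $G=\wh G+M$ centering is a red herring for $a\neq\ud B$: the off-diagonal $M$-blocks vanish for non-conjugate index pairs, so $G_{aB}=\wh{G_{aB}}$ exactly, and the diagonal substitutions only produce small $m^z=O(\sqrt n\,\eta_0)$ factors. No $\m^z$ arises at this stage; in particular the deterministic monomial $(\m^{z_1})^2(\overline{\m^{z_2}})^4$ in \eqref{third_aB_goal_0} does \emph{not} come from a $G=\wh G+M$ substitution. It appears one step into the cumulant expansion \eqref{example_expand}: apply \eqref{identity_0} to one $\wh G$ factor, and in the resulting third-order cumulant terms $H^{(2)}_{q,p+1}$ force the fresh index to coincide, $j=\ud B$ (see \eqref{expand_result} and \eqref{general_third}). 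This index-coincidence mechanism inside the third-order cumulant, not centering, is what produces the polynomial in $\m,\overline\m$.

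Second, you rely only on the $d\ge 4$ branch of \eqref{P_d_o_improve} to absorb the iterated pieces, saying the expansion ``eventually enters the $d\ge 4$ regime''. But the leading second-order expansion terms in \eqref{aaexpl} (the set $\mathcal A^o_d$ in Lemma~\ref{lemma:expand}) \emph{preserve} the degree $d=3$ while increasing the number of summation indices to $l=3$; these are handled not by the $d\ge 4$ case but by the $l\ge 3$ case of \eqref{P_d_o_improve}, exactly as in \eqref{leading}. Without invoking this second branch of the improved bound, the degree-preserving contributions are only $O_\prec(n^{-3/2})$ and the argument stalls at the $O_\prec(1)$ level of \eqref{third_before}.
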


\subsection{Third order terms with index coincidence $a=\ud{B}$  in~\eqref{third_fourth_result}}\label{subsec:part2}
In this subsection, we study the third order terms computed in (\ref{third}) with the index coincidence $a=\ud{B}$. More precisely, they are the following matched terms 
(plus their versions interchanging $z_1$ with $z_2$, or interchanging $a$ with $\overline a$)
\begin{align}
	&\frac{1}{n^{3/2}}\sum_{a}\E \Big[ \wh{ \F_t^{z_2}} G^{z_1}_{aa} G^{z_1}_{\bar a \bar a}G^{z_1}_{a \bar a} \Big], \qquad \frac{1}{n^{3/2}} \sum_{a} \E \Big[ \ga_{a\bar a} \gb_{aa}\gb_{\bar a\bar a}\Big],\label{third_diagonal_line1}\\
	&\frac{1}{n^{3/2}} \sum_{a}\E \Big[ \wh{ \F_t^{z_2}} G^{z_1}_{a \bar a} G^{z_1}_{a \bar a}G^{z_1}_{a \bar a} \Big], \quad 
	\frac{1}{n^{3/2}} \sum_{a} \E \Big[ \ga_{a\bar a}\gb_{a\bar a} \gb_{a\bar a} \Big], \quad 
	\frac{1}{n^{3/2}} \sum_{a} \E \Big[ \ga_{a\bar a} \gb_{\bar a a} \gb_{\bar a a} \Big]\label{third_diagonal_line2}.
\end{align}

To estimate the terms in (\ref{third_diagonal_line1}), we state the following lemma
asserting that the diagonal elements of $\Im G^z$ are essentially bounded by their average $\langle \Im G^z\rangle$.
Its fairly routine  proof relies on the complete delocalization of the eigenvectors in (\ref{eigenvector}); the details are deferred to 
 Appendix~\ref{sec:some_lemma}.
\begin{lemma}\label{lemma:aaBB}
	Fix small $\epsilon_1,\epsilon_2>0$. For any $-Cn^{-1/2} \leq |z|-1 \leq c$ and any $n^{-1+\epsilon_1} \leq \eta \leq n^{-3/4-\epsilon_2}$, the following estimates
	\begin{align}\label{aaBB}
		\Im G^z_{aa}(\ii \eta)=O_\prec\big(\Im \<G(\ii \eta)\>+\eta\big), \qquad \Im G^z_{\bar a \bar a}(\ii \eta)=O_\prec\big(\Im \<G(\ii \eta)\>+\eta\big). 
	\end{align}
hold true for any $a \in \llbracket 1,n \rrbracket$.
\end{lemma}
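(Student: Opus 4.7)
The plan is to use the spectral decomposition of the resolvent together with the delocalization estimate from Corollary~\ref{cor:eigenvector}. Using the eigenvectors $\mathbf{w}^z_{\pm i}=(\mathbf{u}^z_i,\pm\mathbf{v}^z_i)$ of $H^z$ with eigenvalues $\lambda^z_{\pm i}=\pm\lambda^z_i$, for any index $a\in\llbracket 1,n\rrbracket$ the block structure of $H^z$ gives
\begin{equation*}
\Im G^z_{aa}(\ii\eta)\;=\;\sum_{i=-n,\,i\neq 0}^{n}\frac{\eta\,|(\mathbf{w}^z_i)_a|^2}{(\lambda^z_i)^2+\eta^2}\;=\;2\sum_{i=1}^{n}\frac{\eta\,|(\mathbf{u}^z_i)_a|^2}{(\lambda^z_i)^2+\eta^2},
\end{equation*}
and analogously for $\Im G^z_{\bar a\bar a}(\ii\eta)$ with $\mathbf{v}^z_i$ in place of $\mathbf{u}^z_i$. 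The key point is that the right-hand side of the inequality to be proved splits naturally into the contributions of low-lying eigenvalues (captured by $\Im\langle G^z\rangle$) and of bulk eigenvalues (captured by $\eta$), so the strategy is to split the sum at $|i|=cn$, with $c>0$ the small constant appearing in Corollary~\ref{cor:eigenvector}.

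For the small-$i$ regime $1\le i\le cn$, the complete delocalization estimate \eqref{eigenvector} applied with $\mathbf{x}=\mathbf{e}_a$ yields $|(\mathbf{u}^z_i)_a|^2\prec n^{-1}$, so this part of the sum is bounded by
\begin{equation*}
\frac{C}{n}\sum_{1\le i\le cn}\frac{\eta}{(\lambda^z_i)^2+\eta^2}\;\lesssim\;\Im\langle G^z(\ii\eta)\rangle,
\end{equation*}
up to a factor $n^\xi$ absorbed in $\prec$. For the complementary bulk regime $i>cn$, rigidity (Corollary~\ref{cor:rigidity}) together with the description of $\rho^z$ in \eqref{rho_E} shows that $|\lambda^z_i|\gtrsim 1$ with very high probability, so $\eta/((\lambda^z_i)^2+\eta^2)\lesssim\eta$, and using the orthonormality bound $\sum_i|(\mathbf{u}^z_i)_a|^2\le 1$ one gets the trivial estimate
\begin{equation*}
\sum_{i>cn}\frac{\eta\,|(\mathbf{u}^z_i)_a|^2}{(\lambda^z_i)^2+\eta^2}\;\lesssim\;\eta\sum_{i>cn}|(\mathbf{u}^z_i)_a|^2\;\le\;\eta.
\end{equation*}
Combining the two contributions gives the claimed bound for $\Im G^z_{aa}(\ii\eta)$, and the same argument with $\mathbf{v}^z_i$ yields the analogous bound for $\Im G^z_{\bar a\bar a}(\ii\eta)$.

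The only subtle point — and the main (minor) obstacle — is that Corollary~\ref{cor:eigenvector} provides delocalization only for $|i|\le cn$, which is why the splitting at $|i|=cn$ is forced upon us and why the $O_\prec(\eta)$ additive term in \eqref{aaBB} is unavoidable in this approach. This term is harmless for the intended application, since in the regime $\eta\le n^{-3/4-\epsilon_2}$ and $|z|$ near the edge, the deterministic bound $\Im\langle M^z(\ii\eta)\rangle\gtrsim\eta$ (from \eqref{rho}) makes $\eta$ a lower-order correction to $\Im\langle G^z\rangle$ anyway. Finally, one needs to verify that the rigidity statement used in the bulk regime is valid uniformly in $z$ with $-Cn^{-1/2}\le |z|-1\le c$ and for the stated range of $\eta$, which follows directly from Corollary~\ref{cor:rigidity} and the shape \eqref{rho_E} of the self-consistent density.
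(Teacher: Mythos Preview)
Your proof is correct and follows essentially the same approach as the paper: spectral decomposition, delocalization (Corollary~\ref{cor:eigenvector}) for the low-lying eigenvectors to bound their contribution by $\Im\langle G^z\rangle$, and the orthonormality bound $\sum_i|(\mathbf{u}^z_i)_a|^2\le 1$ for the remaining eigenvalues to produce the $O_\prec(\eta)$ term. The only cosmetic difference is the location of the split: the paper cuts according to eigenvalue size at $|\lambda^z_j|=n^{-\zeta}$ (picking up a harmless $n^{2\zeta}$ factor absorbed into $\prec$), whereas you cut by index at $|i|=cn$ and then use monotonicity together with rigidity at $i\approx cn$ to get $|\lambda^z_i|\gtrsim 1$ for $i>cn$; both are equally valid.
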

Therefore, using (\ref{aaBB}) the first term in (\ref{third_diagonal_line1}) is bounded by
\begin{align}
	\Big|\frac{1}{n^{3/2}}\sum_{a}\E \Big[ \wh{ \F_t^{z_2}} G^{z_1}_{aa} G^{z_1}_{\bar a \bar a}G^{z_1}_{a \bar a} \Big]\Big| \prec \frac{1}{\sqrt{n}}
	 \E [|	\<G^{z_1}\>|^2]=O_{\prec}(n^{-3/2+2\epsilon}),
\end{align}
where we also used \eqref{F}, \eqref{Gvv} and Proposition \ref{lemma2} for $k=2$. Using the $L^{1}$ norm of $\Delta f$ in (\ref{deltaf}), we~have
\begin{align}\label{estimate2}
	\frac{1}{n^{3/2}} \iint |\Delta f(z_1) \Delta f(z_2)| \Big|\sum_{a}\E \left[ \wh{ \F_t^{z_2}} G^{z_1}_{aa} G^{z_1}_{\bar a \bar a}G^{z_1}_{a \bar a} \Big] \right| \dd^2 z_1 \dd^2 z_1=O_{\prec}(n^{-1/2+2\epsilon}).
\end{align}
A similar bound applies to the second term in (\ref{third_diagonal_line1}). Note that in these
two terms we had two diagonal resolvent elements which are small.

In the remaining terms in (\ref{third_diagonal_line2}) all factors are off-diagonal Green function entries that are large, so we need to
use further cumulant expansion and again identify the leading term plus 
the improved expectation estimates of resolvents from Proposition \ref{lemma2}.  The result is summarized in
the following lemma which will be proven 
 in Appendix~\ref{sec:some_lemma}. Notice that the estimates below are much better than the naive ones directly obtained from the local law in~(\ref{localg})--\eqref{localm}.

\begin{lemma}\label{lemma4}
	For any $z_1,z_2 \in \mathrm{supp}(f^{-}_1) \cup \mathrm{supp}(f^{+}_2)$ and $\eta=n^{-1+\epsilon}$, we have
	\begin{align}
			&\frac{1}{n} \sum_{a} \E \Big[\wh{\F^{z_2}_t} \ga_{a \bar a} \ga_{a \bar a}  \ga_{a \bar a} \Big]=\OO_\prec(n^{-1/2}),\quad \frac{1}{n} \sum_{a} \E \Big[ \ga_{a \bar a} \gb_{ a \bar a} \gb_{ a \bar a} \Big]=\m^{z_1} ({\m^{z_2}})^2 +\OO_\prec(n^{-1/2-\epsilon}),\label{estimate4}\\
		&\frac{1}{n} \sum_{a} \E \Big[ \ga_{a \bar a} \gb_{\bar a a} \gb_{\bar a a} \Big]=\m^{z_1} (\overline{\m^{z_2}})^2 +\OO_\prec(n^{-1/2}).\label{estimate5}
	\end{align} 
Further if $|z_1-z_2| \geq n^{-\gamma}$ for any $\gamma\in (0, \frac{1}{2})$, the above error terms $\OO_\prec(n^{-1/2})$ can be improved to
	\begin{align}
		&\frac{1}{n} \sum_{a} \E \Big[\wh{\F^{z_2}_t} \ga_{a \bar a} \ga_{a \bar a}  \ga_{a \bar a} \Big]=
		\OO_\prec\Big( \frac{n^{-1/2-\epsilon}}{|z_1-z_2|}\Big) \le
		\OO_\prec(n^{-1/2-\epsilon+\gamma}), \label{estimate7}\\
		& \frac{1}{n} \sum_{a} \E \Big[ \ga_{a \bar a} \gb_{\bar a a} \gb_{\bar a a} \Big]= 
		\m^{z_1} (\overline{\m^{z_2}})^2 +\OO_\prec\Big( \frac{n^{-1/2-\epsilon}}{|z_1-z_2|}\Big) 
		=
		 \m^{z_1} (\overline{\m^{z_2}})^2 +\OO_\prec(n^{-1/2-\epsilon+\gamma}). \label{estimate6}
	\end{align}
The above estimates in (\ref{estimate4})-(\ref{estimate6}) also hold true with $a$ interchanged with $\bar a$.
\end{lemma}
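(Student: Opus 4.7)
The plan is to perform a cumulant expansion of one off-diagonal Green function entry in each three-body sum, identify the resulting deterministic leading term, and bound the fluctuation remainder via Proposition~\ref{lemma2} together with the local law of Theorem~\ref{local_thmw}. Concretely, using the resolvent identity
\begin{equation*}
\ii\eta\, G^{z_1}_{a\bar a} \;=\; \sum_{B'} w_{aB'}\, G^{z_1}_{B'\bar a} \;-\; z_1\, G^{z_1}_{\bar a\bar a},
\end{equation*}
I would substitute for one factor of $\ga_{a\bar a}$ (or $\ga_{\bar a a}$) in each term and apply the standard complex cumulant expansion to the $w_{aB'}$ factor, using the differentiation rule~\eqref{rule_12}. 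The second-order (Gaussian) contribution reproduces the scalar MDE relation $\mathfrak{m}^{z}=zu^z$ up to fluctuations; explicitly, summing over $B'$ and using the averaged local law $\langle \ga\rangle\approx m^{z_1}$, the $w$-expansion generates precisely the deterministic leading terms appearing on the right-hand sides of \eqref{estimate4}--\eqref{estimate5}. Higher-order cumulants contribute $\OO_\prec(n^{-1/2-\epsilon})$ by the finite moment assumption~\eqref{eq:hmb}.

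For the $\wh{\F^{z_2}_t}$-centered term in~\eqref{estimate4}, the putative leading deterministic part $(\ma)^3\,\E[\wh{\F^{z_2}_t}]$ vanishes because $\wh{\F^{z_2}_t}$ is centered. What remains are correlations between $\wh{\F^{z_2}_t}$ and the fluctuations $\wh{\ga_{a\bar a}} = \ga_{a\bar a}-\ma$; these I would estimate by Cauchy--Schwarz combined with the moment bounds $\E[(\Im\langle\ga\rangle)^k]$ from Proposition~\ref{lemma2}, yielding the $\OO_\prec(n^{-1/2})$ bound. For the other two displays in~\eqref{estimate4}--\eqref{estimate5}, all three resolvent entries get replaced by their deterministic $M$-counterparts, and the explicit products $\ma (\mb)^2$ and $\ma (\overline{\mb})^2$ emerge directly from the quadratic terms of the cumulant expansion, the difference being whether the two $\gb$ factors are taken as $\gb_{a\bar a}$ or $\gb_{\bar a a}$.

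For the improved bounds~\eqref{estimate7}--\eqref{estimate6} under $|z_1-z_2|\ge n^{-\gamma}$, I would rely on the $|z_1-z_2|$-decorrelation mechanism through a self-consistent equation for two-resolvent averages of the form $\langle \ga A\,\gb\rangle$. The associated stability operator $1-\mathfrak{m}^{z_1}\overline{\mathfrak{m}^{z_2}}\,\mathcal{S}[\,\cdot\,]$ is invertible with norm bounded by a factor proportional to $|z_1-z_2|^{-1}$ on the relevant subspace (this is the improved stability factor referred to earlier in the paper). After cumulant expansion the leading error terms can be written as such a two-resolvent average times a small prefactor; inverting the stability operator produces the factor $|z_1-z_2|^{-1}$, which combined with $\Im\langle G^z\rangle\prec n^{-\epsilon}$ from~\eqref{key2} gives the stated $\OO_\prec\bigl(n^{-1/2-\epsilon}/|z_1-z_2|\bigr)$ bound.

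The main obstacle will be the proliferation of terms in the cumulant expansion at the critical scale $\eta_0=n^{-1+\epsilon}$, where the naive single-resolvent gain is only $\Psi=n^{-\epsilon}$ per expansion step. To reach the target precision $n^{-1/2-\epsilon}$, one must either iterate the expansion several times or identify further deterministic cancellations against the MDE~\eqref{dyson1}; careful tracking of which resolvents appear together inside the stability equation is required so that the loss $|z_1-z_2|^{-1}$ in the improved regime is fully compensated by small factors $\Im\langle G^z\rangle$ coming from Proposition~\ref{lemma2}. The $a\leftrightarrow\bar a$ symmetry claimed at the end of the lemma then follows by an identical argument after relabelling indices.
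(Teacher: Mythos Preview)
Your approach is essentially the same as the paper's, with one packaging difference worth noting. The paper does not work from the raw identity $\ii\eta\,G^{z_1}_{a\bar a}=\sum_{B'}w_{aB'}G^{z_1}_{B'\bar a}-z_1 G^{z_1}_{\bar a\bar a}$ (which has the tiny factor $\ii\eta$ on the left and would force you to solve a $2\times 2$ block system by hand), but instead uses the already-inverted form \eqref{identity_0}, $\wh{G^z}=-M^z\,\underline{WG^z}+\langle\wh{G^z}\rangle M^zG^z$, applied to $\wh{G^{z_1}_{a\bar a}}$; this is algebraically equivalent to your plan after the $2\times2$ inversion but saves a layer of bookkeeping. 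After the cumulant expansion the paper isolates the single critical remainder explicitly: it is the two-body sum $\tfrac{\m^{z_1}}{n^2}\sum_{a,j}\E[G^{z_2}_{\bar a j}G^{z_1}_{j\bar a}]$, arising from $\partial_{w_{j\bar a}}\wh{\F^{z_2}_t}$ in the first case and from the index pairing $a\leftrightarrow a$ in~\eqref{estimate5}; this is bounded by Cauchy--Schwarz, the Ward identity, and Proposition~\ref{lemma2} with $k=1$, giving $\OO_\prec(n^{-1/2})$. The improved bounds \eqref{estimate7}--\eqref{estimate6} are obtained exactly as you say: one further expansion of this same two-body sum produces a scalar self-consistent relation with stability factor $1-\m^{z_1}\overline{\m^{z_2}}$, and \eqref{stable_0} gives $|1-\m^{z_1}\overline{\m^{z_2}}|\gtrsim|z_1-z_2|$. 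The distinction between the $\OO_\prec(n^{-1/2})$ and $\OO_\prec(n^{-1/2-\epsilon})$ errors in \eqref{estimate4} comes precisely from whether the expanded row index can pair with an identical column index (yielding the two-body term above) or only with a conjugate one (yielding an extra small $m^z$ factor); your proposal touches on this but does not spell it out.
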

 
Notice that (\ref{estimate4})-(\ref{estimate5})  
are already enough (with $\OO_\prec(n^{-1/2-\epsilon})$) or just barely not enough 
(with $\OO_\prec(n^{-1/2})$) 
to prove (\ref{third_fourth_result}) for $p+q+1=3$. 
Moreover, in cases when (\ref{estimate4})-(\ref{estimate5})
gives only $\OO_\prec(n^{-1/2})$
we gain a little extra smallness from the $|z-z'|$-decorrelation effect in the improved estimates \eqref{estimate7}--\eqref{estimate6}.
This gain  will rely on a certain self-consistent equation for the resolvent products with 
a stability factor $1-\m^{z_1}\overline{\m^{z_2}}$. Simple calculus using (\ref{Mmatrix_0}) and (\ref{rho}) shows that, for any $z_1,z_2 \in \mathrm{supp}(f^{-}_1) \cup \mathrm{supp}(f^{+}_2)$ and $|z_1-z_2| \geq n^{-\gamma}$ with $\gamma\in (0,1/2)$,
\begin{align}\label{stable_0}
|1-\m^{z_1} \overline{\m^{z_2}}| \gtrsim |1-z_1 \overline{z_2}| \gtrsim |z_1-z_2| \geq n^{-\gamma}
\end{align} 
\ie the self-consistent equation  is rather stable for $z_1$ and $z_2$ being far away.

\begin{remark}\label{remark:aa}
Note that the estimates for the third order terms with $a=\ud{B}$ given in~\eqref{estimate2}--\eqref{estimate5}
are slightly depending on how many (small) diagonal and (large) off-diagonal elements it contains. 
First,  recall that diagonal resolvent elements $G_{aa}$ or $G_{\bar a \bar a}$ are generically
smaller than the off-diagonal ones $G_{a \bar a}$ and $G_{\bar a a}$ since the corresponding leading
deterministic term for  $G_{aa}$ is $m\sim \eta/(|z|^2-1)\lesssim n^{-1/2+\epsilon}$, while for
$G_{a \bar a}$ we have $\m \sim 1$. This explains the lack of the deterministic leading term in~\eqref{estimate2}.

Second, notice that the error term  in the second estimate in~\eqref{estimate4} is slightly better than 
in \eqref{estimate5} due to the location of the $a$ and $\bar a$ indices: the key point is 
that  once a row index can only be paired with conjugated column indices (instead of the identical ones),
 then we
pick up a small diagonal leading term $m$ along the cumulant expansion, see~\eqref{aa}--\eqref{aaexpl} 
in Appendix~\ref{appendix_A}. 
 For example,
 in the second estimate in~\eqref{estimate4}
 the row index $a$ of the first resolvent is always paired with the column indices $\bar a$
 of the other two, while in~\eqref{estimate5} no such pairing is possible irrespective of
  which index we try to expand. However, this gain may disappear when a factor $\wh{\F}$
 is present (compare the errors in the first and second estimates in~\eqref{estimate4}),  since  in this case
 along the  cumulant expansion one has to differentiate $\wh{\F}$ as well and we may not
 pick up a small diagonal $G$ term (the leading deterministic term vanishes when $\wh{\F}$ is present
 since $\E \wh{\F}=0$). 

 Finally, in the estimates~\eqref{estimate7}-\eqref{estimate6} we remedy these problems 
 and still obtain the additional $n^{-\epsilon}$
 factor, but using the additional $|z_1-z_2|$-decorrelation effect, i.e  these bounds hold 
 only for  $|z_1-z_2|\ge n^{-\gamma}$ and at the irrelevant expense of $n^{\gamma}$.

\end{remark}

Now we are ready to estimate the terms in (\ref{third_diagonal_line2}). Choosing $\gamma=\epsilon/2$, using (\ref{estimate5}) for the regime $|z_1-z_2| \leq n^{-\gamma}$ and (\ref{estimate6}) for the remaining part $|z_1-z_2| \geq n^{-\gamma}$ together with (\ref{deltaf}), the last term in (\ref{third_diagonal_line2}) can be bounded by
\begin{align}\label{diagonal_goal}
	& \iint |\Delta f(z_1)||\Delta f(z_2) |  \left|\frac{1}{n^{3/2}} \sum_{a} \E \Big[ \ga_{a \bar a}\gb_{\bar a a} \gb_{ \bar a a} \Big] -\frac{1}{\sqrt{n}}\m^{z_1} (\overline{\m^{z_2}})^2 \right|\dd^2 z_1 \dd^2 z_1
	=O_\prec(n^{-\epsilon/4}),
\end{align}
where the deterministic function $\m^{z_1} (\overline{\m^{z_2}})^2$ satisfies the integral condition (\ref{delta_int}) using (\ref{pq_m_int_0}). 
The remaining two terms in (\ref{third_diagonal_line2}) can be estimated similarly as in (\ref{diagonal_goal}) using instead (\ref{estimate4}) and (\ref{estimate7}).
Hence, from the estimates as in (\ref{estimate2}) and (\ref{diagonal_goal}), there exists deterministic bounded functions, denoted by $\mathcal{M}^{(2)}_{p+1,q}(z_1,z_2)$ with $p+q+1=3$ satisfying the integral condition in (\ref{delta_int}) such that
\begin{align}\label{result_aa}
	&\iint |\Delta f(z_1)| |\Delta f(z_2)| \left|\sum_{p+q+1=3} \Big( \mathcal{K}^{z_1,z_2}_{p+1,q} \Big|_{a=\ud{B}} -\mathcal{M}^{(2)}_{p+1,q}(z_1,z_2)  \Big)\right| \dd^2 z_1 \dd^2 z_2=O_\prec(n^{-\epsilon/4}).
\end{align}

\subsection{Fourth order terms with $p+q+1=4$  in~\eqref{third_fourth_result}}\label{subsec:part3}

We will first look at the fourth order term with $p=1,q=2$, \ie $\K^{z_1,z_2}_{2,2}$ given in (\ref{pq_order_terms}).  By direct computations, $\K^{z_1,z_2}_{2,2}$ is a linear combination of the following matched terms (plus their versions when interchanging $z_1$ with $z_2$ or interchanging $a$ with $B$)
\begin{align}
	&\frac{1}{n^2} \sum_{a,B}\E\Big[ \wh{\F^{z_2}_t} (\ga_{BB}\ga_{aa})^2\Big] , \qquad \frac{1}{n^2} \sum_{a, B}\E\Big[ \ga_{aa} \ga_{BB} \gb_{aa} \gb_{BB}\Big],\label{fourth_line1}\\
	&\frac{1}{n^2} \sum_{a, B}\E\Big[ \wh{ \F^{z_2}_t} \ga_{aa}\ga_{BB}\ga_{aB}\ga_{Ba}\Big], \quad 	\frac{1}{n^2} \sum_{a, B}\E\Big[ \ga_{aB} \gb_{Ba} \gb_{aa} \gb_{BB}\Big], \label{fourth_line2}\\
	&\frac{1}{n^2} \sum_{a, B}\E\Big[ \ga_{aB} \ga_{aB} \gb_{Ba} \gb_{Ba}\Big]\label{fourth_line3}.
\end{align}
whose naive sizes are given by $O_\prec(n^{-4\epsilon})$ from the local law in (\ref{localg}).

Using \eqref{F}, (\ref{aaBB}) and Proposition \ref{lemma2} with $k=4$, the first term in (\ref{fourth_line1}) is bounded by
\begin{align}\label{naive_2}
	\Big|\frac{1}{n^2} \sum_{a,B}\E\Big[ \wh{\F^{z_2}_t} (\ga_{BB}\ga_{aa})^2\Big]\Big| \prec \E\Big[\Big( \<G^{z_1}(\ii \eta)\>+\eta \Big)^4\Big]= \OO_\prec(n^{-1-2\epsilon}).
\end{align}
The same upper bound applies to the second term in (\ref{fourth_line1}) reducing it to the first
by using the Cauchy-Schwarz inequality. Moreover, using \eqref{F},  (\ref{aaBB}), the {\it Ward identity}
\begin{align}\label{ward}
	\sum_{\mathfrak{v}=1}^{2n}|G^z_{\mathfrak{u}\mathfrak{v}}|^2=\frac{\Im G^z_{\mathfrak{u}\mathfrak{u}}}{\eta}, \qquad \quad\mathfrak{u}\in \llbracket 1,2n \rrbracket
\end{align}
and Proposition \ref{lemma2} with $k=2$, the first term in (\ref{fourth_line2}) can be bounded by
\begin{align}\label{naive3}
	\Big| \frac{1}{n^2} \sum_{a, B}\E\Big[ \wh{ \F^{z_2}_t} \ga_{aa}\ga_{BB}\ga_{aB}\ga_{Ba}\Big] \Big| \prec \frac{1}{n\eta}\E\Big[\Big( \<G^{z_1}(\ii \eta)\>+\eta \Big)^3\Big]=\OO_\prec(n^{-1-\epsilon}),
\end{align}
and a similar upper bound also applies to the second term in (\ref{fourth_line2}).

 The most delicate is the last term in (\ref{fourth_line3}). As explained in Remark \ref{remark:aa}, the terms in (\ref{naive_2}) and (\ref{naive3}) are smaller than (\ref{fourth_line3}) because the leading term of $G_{aa}$ (or $G_{BB}$) is $m^{z} \sim \sqrt{n}\eta$ is small, while the leading term of $G_{aB}$ with $a=\ud{B}$ is $\m^{z} \sim 1$.
A precise estimate for the last term in (\ref{fourth_line3}) is stated below which will be proved in Appendix~\ref{sec:some_lemma} 
 using again that a little extra smallness is gained from the $|z-z'|$-decorrelation effect, 
 as in Lemma~\ref{lemma4}.

\begin{lemma}\label{lemma5}
	For any $z_1,z_2 \in \mathrm{supp}(f^{-}_1) \cup \mathrm{supp}(f^{+}_2)$ and $\eta=n^{-1+\epsilon}$, we have
	\begin{align}
		&\Big|\frac{1}{n^2} \sum_{a, B}\E\Big[ \ga_{aB} \ga_{aB} \gb_{Ba} \gb_{Ba}\Big]\Big| =\OO_\prec(n^{-1}).\label{estimate_1}
	\end{align}
Further, 
 if $|z_1-z_2|\ge n^{-\gamma}$ for some $\gamma\in (0, \frac{1}{2})$, then we have
\begin{align}
	&\frac{1}{n^2} \sum_{a, B}\E\Big[ \ga_{aB} \ga_{aB} \gb_{Ba} \gb_{Ba}\Big] 
	= \frac{1}{n} (\m^{z_1})^2 (\overline{\m^{z_2}})^2+\OO_\prec(n^{-1-\epsilon+\gamma}),\label{estimate_2}
\end{align} 
that identifies the leading term in the above estimate when $\gamma$ is small.

\end{lemma}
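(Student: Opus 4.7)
I would decompose the sum according to the index coincidence structure. Writing $\ga_{aB} = (M^{z_1})_{aB} + \wh{\ga}_{aB}$ with $(M^{z_1})_{aB} = \m^{z_1}\one\{a = \ud B\}$ (and analogously for $\gb_{Ba}$), expanding the product yields sixteen terms. The purely deterministic contribution is supported on $a=\ud B$ and equals
\begin{align*}
\frac{1}{n^2}\sum_{a=\ud B}(\m^{z_1})^2(\overline{\m^{z_2}})^2 = \frac{1}{n}(\m^{z_1})^2(\overline{\m^{z_2}})^2,
\end{align*}
which is precisely the leading term in~(\ref{estimate_2}). Each of the other fifteen terms carries at least one fluctuation $\wh{\ga}$ or $\wh{\gb}$ of size $\OO_\prec(\Psi) = \OO_\prec(n^{-\epsilon})$; those with three or more surviving $M$-factors force $a=\ud B$ and collapse the double sum to a single index, which together with Lemma~\ref{lemma:aaBB} and Proposition~\ref{lemma2} immediately yields $\OO_\prec(n^{-1-\epsilon})$. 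Terms with exactly two fluctuations can be processed by one application of the resolvent identity combined with Ward's identity $\sum_\mathfrak{v}|\gz_{\mathfrak{u}\mathfrak{v}}|^2 = \Im \gz_{\mathfrak{u}\mathfrak{u}}/\eta$ and the moment bounds of Proposition~\ref{lemma2}.

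The critical remaining piece is the fully fluctuating sum
\begin{align*}
T^{(4)} := \frac{1}{n^2}\sum_{a,B}\E\bigl[\wh{\ga}_{aB}\wh{\ga}_{aB}\wh{\gb}_{Ba}\wh{\gb}_{Ba}\bigr],
\end{align*}
where the crude local law bound $\OO_\prec(n^{-4\epsilon})$ is far from adequate. Following the strategy of Lemma~\ref{lemma4}, I would apply the resolvent identity $\ii\eta\,\ga_{aB} = \sum_b X_{ab}\ga_{\bar b B} - z_1\ga_{\bar a B}$ to one $\ga_{aB}$ factor and perform a cumulant expansion on the $X_{ab}$ entries via $\E[X_{ij}F] = n^{-1}\E[\partial F/\partial \overline{X_{ij}}] + (\text{higher cumulants})$. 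The second-order Gaussian contractions that preserve the $(aB)$--$(Ba)$ pairing reconstruct the original structure and produce a term proportional to $\m^{z_1}\overline{\m^{z_2}}\,T^{(4)}$, giving a self-consistent equation
\begin{align*}
\bigl(1 - \m^{z_1}\overline{\m^{z_2}}\bigr)T^{(4)} = \mathcal{R},
\end{align*}
whose remainder $\mathcal{R}$ collects higher cumulant contributions, Ward-bounded inner sums, and residual unmatched contractions handled by Proposition~\ref{prop:unmatched}. Combining $M$-replacements with Proposition~\ref{lemma2} for moments of $\Im\<\gz\>$ gives $|\mathcal{R}| = \OO_\prec(n^{-1})$, which together with $|1-\m^{z_1}\overline{\m^{z_2}}|\gtrsim 1$ produces the general bound (\ref{estimate_1}). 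For the improved claim (\ref{estimate_2}), I would push the expansion one step further, invoking Proposition~\ref{prop2_old} to split covariance structures such as $\E[\wh{\ga}\wh{\gb}\cdots]$ into essentially independent pieces; this refines $\mathcal{R}$ to $\OO_\prec(n^{-1-\epsilon})$ on $|z_1-z_2|\ge n^{-\gamma}$, and the stability gain $|1-\m^{z_1}\overline{\m^{z_2}}|^{-1}\lesssim n^{\gamma}$ from (\ref{stable_0}) then yields $T^{(4)} = \OO_\prec(n^{-1-\epsilon+\gamma})$.

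\textbf{Main obstacle.} The principal difficulty is deriving and closing the self-consistent equation for $T^{(4)}$: one must identify exactly which cumulant contractions reproduce $\m^{z_1}\overline{\m^{z_2}}\,T^{(4)}$, and show that all other contractions fit into $\mathcal{R}$ with the required rate. This is likely to require simultaneously controlling an auxiliary system of related degree-four sums with mixed $\ga/\gb$ indices (analogous to the interplay between (\ref{estimate4}) and (\ref{estimate5})), since individual expansion steps produce new quantities that feed back into the original. The index bookkeeping, iterated use of Propositions~\ref{prop:unmatched} and~\ref{lemma2}, and the careful extraction of the stability factor $1 - \m^{z_1}\overline{\m^{z_2}}$ constitute the technical heart; the decorrelation improvement itself is an automatic consequence of (\ref{stable_0}) once the remainder has been refined to $\OO_\prec(n^{-1-\epsilon})$.
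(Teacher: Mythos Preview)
Your proposal contains a genuine gap in the argument for \eqref{estimate_1}. You assert that $|1-\m^{z_1}\overline{\m^{z_2}}|\gtrsim 1$ holds in general, and use this to invert the self-consistent equation and obtain the $\OO_\prec(n^{-1})$ bound. But this lower bound is false: in the regime $z_1,z_2\in\mathrm{supp}(f_1^-)\cup\mathrm{supp}(f_2^+)$ one has $u^{z}(\ii\eta)=\Im m^z/(\eta+\Im m^z)\approx 1$ and hence $\m^{z}\approx z$, so $\m^{z_1}\overline{\m^{z_2}}\approx z_1\overline{z_2}$. When $z_1=z_2$ with $|z_1|\approx 1$ this is essentially $1$, and the stability factor $1-\m^{z_1}\overline{\m^{z_2}}$ can be as small as $|z_1-z_2|$ (this is precisely the content of \eqref{stable_0}). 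Your self-consistent route therefore cannot produce \eqref{estimate_1} without the separation hypothesis $|z_1-z_2|\ge n^{-\gamma}$.

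The paper circumvents this by \emph{not} using self-consistency for \eqref{estimate_1}. It peels off only one $M$-factor, writing the sum as the deterministic leading term plus $\frac{1}{n^2}\sum_{a,B}\E[\wh{\ga_{aB}}\ga_{aB}\gb_{Ba}\gb_{Ba}]$, and then applies the identity \eqref{identity_0} once to $\wh{\ga_{aB}}$ followed by a cumulant expansion. The dominant second-order contribution (the $T_3$ term) replaces one $(a,a)$ pair by a fresh index $j$, producing $\frac{2\m^{z_1}\overline{\m^{z_2}}}{n^3}\sum_{a,B,j}\E[\ga_{aB}\gb_{Bj}\gb_{Ba}\ga_{jB}]$; this new object is then bounded \emph{directly} by $\OO_\prec(n^{-1})$ via Cauchy--Schwarz, the Ward identity, \eqref{aaBB}, and Proposition~\ref{lemma2} with $k=2$, without any stability inversion. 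Only for the refinement \eqref{estimate_2} does the paper expand this new object a second time to obtain a genuine self-consistent relation (the fresh index $j$ becomes $j'$ while the structure is preserved), and only there is the stability bound \eqref{stable_0} invoked --- which is available precisely because of the hypothesis $|z_1-z_2|\ge n^{-\gamma}$. Your invocation of Proposition~\ref{prop2_old} for the refinement is also off target: that proposition is a GFT comparison and plays no role here; the entire $|z_1-z_2|$-gain in \eqref{estimate_2} comes from the stability factor alone.
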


Note that from (\ref{pq_m_int_0}) the leading deterministic term in (\ref{estimate_2}) satisfies 
the integral condition in \eqref{delta_int}. Therefore, we conclude from Lemma \ref{lemma5} with $\gamma=\epsilon/2$ and (\ref{deltaf}) that
	\begin{align}\label{result_fourth}
		&\iint |\Delta f(z_1)|| \Delta f(z_2)| \Big| \mathcal{K}^{z_1,z_2}_{2,2}-\frac{C}{n}(\m^{z_1})^2 (\overline{\m^{z_2}})^2  \Big| \dd^2 z_1 \dd^2 z_2=O_\prec(n^{-\epsilon/4}),
	\end{align}
for some numerical constant $C \in \R$.

All the other fourth order terms $\mathcal{K}^{z_1,z_2}_{p+1,q}$ with $p+q+1=4$, $q\neq 2$ can be 
estimated similarly as the third order terms in Section \ref{subsec:part1}-\ref{subsec:part2}. Since we gain an additional $n^{-1/2}$ prefactor from the fourth order cumulants, we only sketch the proof that is much easier. For $p+q+1=4,~q\neq 2$, using (\ref{pq_number}) and 
Definition \ref{def:unmatch_form} in Appendix~\ref{appendix_A}, the restricted summations with $a \neq \ud{B}$, similarly 
to (\ref{fourG}) are also unmatched terms which can be bounded by $O_\prec(n^{-3/2})$ using (\ref{P_d_o_before}). 
The remaining summations with $a=\ud {B}$ yield some deterministic functions, \ie products of $\m^{z_1}$ and 
$\m^{z_2}$ up to an error term $O_\prec(n^{-1-\epsilon})$ using the local law (\ref{localg}) trivially. Notice that these fourth order 
terms satisfy the index assignment condition (\ref{pq_number}) with $p+q+1=4$, $q\neq 2$. Setting $a=\ud{B}$, 
this implies that at least one $\m^{z}$ factor (from $\gz_{aB}$) cannot be paired with $\overline{\m^{z}}$
 (from $\gz_{Ba}$). Hence these deterministic functions, denoted by $\mathcal{M}_{p+1,q}(z_1,z_2)$ with $p+q+1=4,q\neq 2$ 
 vanish after the $z$-integrations using (\ref{pq_m_int_0}). Therefore using the $L^1$ norm of $\Delta f$ 
 in (\ref{deltaf}), we obtain that
\begin{align}\label{result_four_other}
	\iint |\Delta f(z_1)||\Delta f(z_2)| \left|\sum_{p+q+1=4,q\neq 2} \Big(\mathcal{K}^{z_1,z_2}_{p+1,q}  -\mathcal{M}_{p+1,q}(z_1,z_2) \Big) \right| \dd^2 z_1 \dd^2 z_2 =O_\prec(n^{-\epsilon}),
\end{align}
with $\mathcal{M}_{p+1,q}(z_1,z_2)$ satisfying the integral condition in (\ref{delta_int}).

\subsection{Fifth order terms with $p+q+1=5$  in~\eqref{third_fourth_result}}
Recall that the fifth order terms $\mathcal{K}^{z_1,z_2}_{p+1,q} $ in (\ref{pq_order_terms}) are linear combinations
 of terms in (\ref{some_form_F}) with $p+q+1=5$ and satisfying the assignment condition (\ref{pq_number}). Since
both  indices $a$ and $B$ are assigned five times as the row/column index of Green function entries, 
 the restricted summations with $a\neq \ud{B}$ are unmatched 
terms with an additional factor $n^{-1/2}$ gaining from the fifth order cumulants. Using (\ref{P_d_o_before}),
 the restricted summations with $a\neq \ud{B}$ can be bounded by $O_\prec(n^{-2})$. In addition, the 
 remaining summations with $a=\ud{B}$ can be bounded by $O_\prec(n^{-3/2})$ using the local law in
  \eqref{localg}--\eqref{localm} naively. Therefore, using the $L^1$ bound for $\Delta f$ in (\ref{deltaf}), 
we have the simple estimate
\begin{align}\label{fifth_estimate}
	\iint |\Delta f(z_1)| |\Delta f(z_2)| \Big| \sum_{p+q+1=5} \mathcal{K}^{z_1,z_2}_{p+1,q} \Big|  \dd^2 z_1 \dd^2 z_2 =O_\prec(n^{-1/2}).
\end{align}

\bigskip

To sum up  the above four subsections, we have obtained the precise estimates for 
$\mathcal{K}^{z_1,z_2}_{p+1,q}$ in (\ref{third_fourth_result}) with $p+q+1=3,4,5$ respectively, 
\ie (\ref{result_third}) for the third order terms with $a\neq \ud{B}$, (\ref{result_aa}) for the 
third order terms with $a=\ud{B}$, (\ref{result_fourth})-(\ref{result_four_other}) for the fourth 
order terms, as well as (\ref{fifth_estimate}) for the fifth order terms. Hence we have 
concluded the proof of Proposition \ref{key_lemma}.

\section{Weakly correlated Dyson Brownian motions at the cusp}
\label{sec:DBM}

Consider the matrix flow
\begin{equation}
\label{eq:diffmat}
\mathrm{d}X_t=\frac{\mathrm{d} B_t}{\sqrt{n}}, \qquad X_0=X_{\mathrm{in}},
\end{equation}
for some i.i.d. matrix $X_{\mathrm{in}}$ with complex entries as initial condition.
 Here $B_t\in C^{n\times n}$ is a \emph{matrix valued standard complex Brownian 
motion}, i.e. $(B_t)_{ij}$ are a family of i.i.d. standard complex Brownian motions.  
It is easy to see that
\[
X_t\stackrel{\mathrm{d}}{=}X_0+\sqrt{t} U,
\]
with $U$ being a complex Ginibre matrix independent of $X_0$.
We denote the singular values of $X_t-z$ by $\lambda_i^z(t)$, indexed in increasing order,
 and let $\mathbf{u}_i^z(t), \mathbf{v}_i^z(t)$  be
the corresponding left and right singular vectors normalized so that $\lVert \mathbf{u}_i^z(t)\rVert^2=
\lVert \mathbf{v}_i^z(t)\rVert^2=1/2$.

 
 The singular values $\lambda_i^{z_l}(t)$ of $X_t-z_l$, for $l=1,2$, are the solution of the following \emph{Dyson Brownian motion (DBM)} (see~\cite[Appendix B]{CES19}:
\begin{equation}
\label{eq:lamDBM}
\mathrm{d}\lambda_i^{z_l}(t)=\frac{\mathrm{d} b_i^{z_l}(t)}{\sqrt{2n}}+\frac{1}{2n}\sum_{j\ne i}\frac{1}{\lambda_i^{z_l}(t)-\lambda_j^{z_l}(t)}\, \mathrm{d}t.
\end{equation}
Here $\lambda_{-i}^{z_l}=-\lambda_i^{z_l}$, for $i\in [[1,n]]$, and a similar symmetry holds for the driving Brownian 
motions, i.e. $b_{-i}^{z_l}(t)=-b_{i}^{z_l}(t)$, which ensures  that $\lambda_{-i}^{z_l}(t)=-\lambda_i^{z_l}(t)$ 
holds not only initially but at any later time as well. The driving martingales in \eqref{eq:lamDBM} are Brownian motions only for fixed $z_l$, but not jointly for different $z_1,z_2$; more precisely, their correlation is given by
\begin{equation}
\label{eq:DBMcor}
\mathrm{d}\big[b_i^{z_1}(t), b_j^{z_2}(s)\big]=4\mathrm{Re}\big[\langle \mathbf{u}_i^{z_1}, \mathbf{u}_j^{z_2}\rangle\langle  \mathbf{v}_j^{z_2}, \mathbf{v}_i^{z_1}\rangle \big]\, \mathrm{d} t.
\end{equation}
Note that in this section we use $z_1,z_2$ instead of $z,z'$ as in some previous sections of this paper. 
We made this choice so that the notation is the same as in~\cite{CES19,real_CLT},
 to which we often refer within this section.

 \begin{remark}
In \eqref{eq:diffmat} we evolve the initial condition $X_0=X_{\mathrm{in}}$ via the Brownian motion flow.
 Another possible choice would have been to choose again the Ornstein-Uhlenbeck flow (OU) as in~\eqref{flow}:
\begin{equation}
\mathrm{d}X_t=\frac{\mathrm{d} B_t}{\sqrt{n}}-\frac{1}{2}X_t\mathrm{d}t, \qquad X_0=X_{\mathrm{in}},
\end{equation}
that is often used in DBM analysis with the advantage that the first and second moments of $X_t$
are unchanged. In particular, the self consistent density of states of the Hermitization $H^z$ remains invariant. 
However, this choice would have implied the DBM flow
\begin{equation}\label{OUDBM}
\mathrm{d}\lambda_i^{z_l}(t)=\frac{\mathrm{d} b_i^{z_l}(t)}{\sqrt{2n}}+\frac{1}{2n}\sum_{j\ne i}\frac{1}{\lambda_i^{z_l}(t)-\lambda_j^{z_l}(t)}\, \mathrm{d}t-
2\lambda_i^{z_l}(t)\mathrm{Re}[\langle\mathbf{u}_i^{z_l}(t),\mathbf{v}_i^{z_l}(t) \rangle]\, \mathrm{d}t.
\end{equation}
 This flow is harder to analyze than \eqref{eq:lamDBM}, which is obtained from~\eqref{eq:diffmat},
  since the additional last term in~\eqref{OUDBM}
 requires information about the singular vector overlap
   $\langle\mathbf{u}_i^z(t),\mathbf{v}_i^z(t) \rangle$ as well. 
   For the same reason in~\cite[Sections 6--8]{CEKS19} 
 we chose an evolution as in \eqref{eq:diffmat} rather than the OU-like flow~\cite[Eq. (3.2)]{CEKS19}, which would have produced a DBM-like flow depending on eigenvectors as well.  The price for this convenience
 is that we had to analyse how the self-consistent density of the singular values evolve.
 \end{remark}

 Next, consider two independent complex Ginibre matrices $X^{(l)}$, and denote by $X_t^{(l)}$ their evolution under the flow
\begin{equation}
\label{eq:Gindiff}
\mathrm{d}X_t^{(l)}=\frac{\mathrm{d}B_t^{(l)}}{\sqrt{n}},\qquad X_0^{(l)}=X^{(l)},
\end{equation}
with $B_t^{(l)}$ two independent matrix valued complex Brownian motions (defined similarly to $B_t$ in \eqref{eq:diffmat}). Let $\mu^{(l)}(t)$ be the singular values of $X_t^{(l)}-z_l$, then $\mu^{(l)}(t)$ evolve as 
\begin{equation}
\label{eq:muDBM}
\mathrm{d}\mu_i^{(l)}(t)=\frac{\mathrm{d} \beta_i^{(l)}(t)}{\sqrt{2n}}+\frac{1}{2n}\sum_{j\ne i}\frac{1}{\mu_i^{(l)}(t)-\mu_j^{(l)}(t)}\, \mathrm{d}t.
\end{equation}
 In particular,  the family $\{ \beta_i^{(l)}:\, i\in [[1,n]],\, l=1,2\}$ is a $2n$-dimensional   standard Brownian motion 
and $\beta_{-i}^{(l)}=-\beta_i^{(l)}$. 

Here we consider only the case when $X$ has complex entries, 
the proof in the real case follows similar steps but it is  technically more involved, since the product of
 singular vector overlaps 
 $\langle \mathbf{u}_i^z,\mathbf{u}_j^{\overline{z}}\rangle \langle \mathbf{v}_j^{\overline{z}},\mathbf{v}_i^z\rangle$
  influences the dynamics in \eqref{eq:lamDBM} in a non--trivial way even for fixed $z$. For the sake of brevity and
   clarity of the presentation we will not say more here about the real case, but the proof of Theorem~\ref{theo:ind}
    below is completely analogous once all the references to~\cite{CES19} are replaced with the 
    corresponding version in~\cite{real_CLT}. 

The main result of this section is  the following theorem, asserting that the small singular values of $X_t-z_1$,
and $X_t-z_2$,
are very close to two independent processes 
 if $t$ is somewhat large. In particular, it shows the 
asymptotical independence of  these singular values, which will then
 readily imply Proposition~\ref{lemma_new_1}
 and Proposition~\ref{lemma_new_2}.
\begin{theorem}
\label{theo:ind}
Fix $\tau,C>0$, pick $z_1,z_2\in\mathbb{C}$ such that $-Cn^{-1/2+\tau}\le |z_l|^2-1\le C n^{-1/2+\tau}$, and let $\lambda_i^{z_l}(t)$, $\mu_i^{(l)}(t)$ be the solutions of \eqref{eq:lamDBM} and \eqref{eq:muDBM}, respectively. Furthermore, for any small $\omega_E\ge 10 \omega_c\ge 10 \omega_1>0$, assume that 
\begin{equation}\label{ovass}
|\langle \mathbf{u}_i^{z_1}(t),\mathbf{u}_j^{z_2}(t)\rangle|^2+|\langle \mathbf{v}_i^{z_1}(t),\mathbf{v}_j^{z_2}(t)\rangle|^2\le n^{-\omega_E}, \qquad 
\mbox{ $|i|,|j|\le n^{\omega_c}$, \quad $0\le t\le t_1$},
\end{equation}
 with $t_1:=n^{-1/2+\omega_1}$. 
 Then there exist $\omega,\widehat{\omega}>0$, with $\widehat{\omega}\le\omega/10\le \omega_1/100$, such that
\begin{equation}
\label{eq:mainb}
\big|\lambda_i^{z_l}(t_1)-\mu_i^{(l)}(t_1)\big|\le n^{-3/4-\omega}, \qquad |i|\le n^{\widehat{\omega}}, \, l\in [2],
\end{equation}
with very high probability (in the joint probability space of $\lambda_i^{z_1},\lambda_i^{z_2}$).
\end{theorem}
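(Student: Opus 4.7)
The plan is to couple the singular value processes $\lambda_i^{z_l}(t)$ with the independent reference processes $\mu_i^{(l)}(t)$ by exploiting the overlap bound \eqref{ovass} to show that the driving Brownian motions $b_i^{z_1}$ and $b_j^{z_2}$ are effectively independent on the relevant index range. Concretely, using the covariance \eqref{eq:DBMcor}, I would perform a Gram--Schmidt/Cholesky decomposition in the Brownian motion space to write
\[
b_i^{z_l}(t)=\widetilde\beta_i^{(l)}(t)+r_i^{(l)}(t), \qquad |i|\le n^{\omega_c},
\]
where $\{\widetilde\beta_i^{(l)}\}_{l=1,2,\,|i|\le n^{\omega_c}}$ are genuinely independent standard Brownian motions that we couple with the $\beta_i^{(l)}$ in~\eqref{eq:muDBM}, and $r_i^{(l)}(t)$ is a residual martingale whose quadratic variation inherits the smallness from \eqref{ovass}, namely $\mathrm{d}\langle r_i^{(l)}, r_j^{(l')}\rangle_t \lesssim n^{-\omega_E}\,\mathrm{d}t$ (with a much smaller off-diagonal correlation). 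Outside the cusp window $|i|\le n^{\omega_c}$ one can extend $\widetilde\beta_i^{(l)}$ arbitrarily; the long-range tails of the DBM are controlled by rigidity and will not affect the local statistics.

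Next, following the strategy of \cite[Sections 6--8]{CES19} (adapted to the cubic cusp singularity displayed in \eqref{rho_E}), I would introduce a short-range cutoff at scale $K=n^{\omega_K}$, with $\omega_c\ll \omega_K\ll \omega_1$, which freezes interactions between eigenvalues separated by more than $K$ indices. The key observation is that the contribution to $\lambda_i^{z_l}$ from the long-range part of the interaction in \eqref{eq:lamDBM} is a deterministic drift, determined to leading order by the free convolution of the initial density with the semicircle of variance $t$. By the rigidity estimate \eqref{rigidity} and the stability of the MDE \eqref{dyson1} under this flow, this drift agrees for both DBMs up to an error much smaller than $n^{-3/4-\omega}$. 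Hence the problem reduces to comparing two short-range DBMs driven by $\{\widetilde\beta_i^{(l)}\}\cup\{r_i^{(l)}\}$ versus $\{\beta_i^{(l)}\}$, which are identical in law apart from the small remainder $r_i^{(l)}$.

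The heart of the argument is then the homogenization/relaxation estimate for the short-range DBM at the cusp, where the relaxation time is $t^\ast\sim n^{-1/2}$ precisely matching the cubic cusp scaling in \eqref{rho_E}. Following the discrete heat-kernel approach of~\cite[Lemma 7.2]{CES19}, the difference $\lambda_i^{z_l}(t_1)-\mu_i^{(l)}(t_1)$ is represented as the action of a discrete parabolic semigroup on the initial discrepancy plus a stochastic convolution driven by $r_i^{(l)}$. The initial discrepancy is eliminated by a brief burn-in phase (shorter than $t_1$) using the regularity of the Stieltjes transform at the cusp and the local law Theorem~\ref{local_thmw} with rigidity Corollary~\ref{cor:rigidity}. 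The stochastic convolution contributes at most
\[
\sqrt{\,n^{-\omega_E}\, t_1\,}\cdot\frac{1}{\sqrt{n}\,\rho_{\mathrm{cusp}}} \lesssim n^{-3/4-(\omega_E-\omega_1)/2+\xi},
\]
which is bounded by $n^{-3/4-\omega}$ after choosing $\omega:=\omega_E/4-\omega_1/2>0$, using the hierarchy $\omega_E\ge 10\omega_c\ge 10\omega_1$.

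The main obstacle is Step 3, the homogenization at the cusp. Unlike the bulk or the regular square-root edge, the cubic cusp makes the associated discrete parabolic operator highly inhomogeneous: its heat kernel decays only like $t^{-1/3}$ in the cusp scaling, and the local density $\rho^z(E)$ is much smaller than in the bulk, so every perturbation is amplified. One must track precisely how the overlap bound \eqref{ovass} at the singular-vector level transfers to the eigenvalue level through~\eqref{eq:DBMcor}, and then propagate that bound through the short-range dynamics without losing powers of $n$ beyond $n^{\omega_1/100}$. This is exactly where the assumption that $\omega_E$ dominates both $\omega_c$ and $\omega_1$ by a factor of $10$ is used: it leaves enough room to absorb all $n^{\xi}$ losses from the local law and rigidity while still producing the cusp-scale gain $\widehat{\omega}\le \omega/10$.
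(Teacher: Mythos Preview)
Your overall strategy---couple the driving Brownian motions so that the residual correlation is controlled by the overlap bound~\eqref{ovass}, then show the small stochastic perturbation does not affect the low-lying singular values over the time scale $t_1$---is the same as the paper's, and your identification of the cusp relaxation as the crux is correct. However, the technical machinery you propose differs from the paper's in two respects.

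First, the paper does not set up the coupling via a Gram--Schmidt/Cholesky decomposition of the Brownian motions. Instead it uses the continuous interpolation process $r_i^{(l)}(t,\alpha)$ of~\cite{LSY12,BEYY16} (as in~\cite[Section~7]{CES19}), writing $\lambda_i^{z_l}(t_1)-\mu_i^{(l)}(t_1)=\int_0^1\partial_\alpha r_i^{(l)}(t_1,\alpha)\,\dd\alpha$, so that the object to control is the $\alpha$-derivative of a single DBM. The driving noise in the $\partial_\alpha r$ equation then carries the residual term $\dd b_i^{z_l}-\dd\beta_i^{(l)}$, whose quadratic variation is bounded by~\eqref{ovass} exactly as in~\cite[Eqs.~(7.92)--(7.95)]{CES19}. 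Your Gram--Schmidt route would likely also work, but the interpolation framework plugs directly into the existing DBM analysis.

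Second, and more importantly, you propose to control the short-range DBM at the cusp via the \emph{homogenization/heat-kernel} approach of~\cite[Lemma~7.2]{CES19}. The paper explicitly does \emph{not} do this: it points out that~\cite{CES19} is a bulk result, and instead invokes the local ergodicity of the DBM at the cusp proven in~\cite[Proposition~7.1]{CEKS19} by \emph{energy methods}. Your own discussion of the obstacle (inhomogeneous parabolic operator, $t^{-1/3}$ heat-kernel decay, amplification by the small cusp density) is exactly why the heat-kernel route has not been carried out at the cusp; the energy method in~\cite{CEKS19} sidesteps these issues. So the step you flag as ``the main obstacle'' is not merely hard but would require a genuinely new homogenization theory at the cusp, whereas the paper gets it for free from~\cite{CEKS19}. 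The remaining inputs---rigidity along the flow and shape analysis of the evolved density $\rho_t^{z_l}$---are handled by adapting~\cite[Sections~4 and~6]{CEKS19}, which the paper cites directly.
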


\begin{remark}
We stated this result for $-Cn^{-1/2+\tau}\le |z_l|^2-1\le C n^{-1/2+\tau}$ since the analysis in~\cite[Sections 6--8]{CEKS19} is performed in an analogous regime; however a similar proof also holds for $-Cn^{-c}\le |z_l |^2-1\le C n^{-c}$, for some small fixed $c>0$. We omit these details for brevity. The same remark applies to Propostion~\ref{lemma_new_1}.
\end{remark}

We now conclude the proof of Proposition~\ref{lemma_new_1} and then present the proof of Theorem~\ref{theo:ind}.

\begin{proof}[Proof of Proposition~\ref{lemma_new_1}]
For consistency of notation within this section we use $z_1=z$, $z_2=z'$, with $z,z'$ from the statement of Proposition~\ref{lemma_new_1}.

We now consider the OU flow
\begin{equation}
\mathrm{d}\widehat{X}_t=\frac{\mathrm{d} \widehat{B}_t}{\sqrt{n}} -\frac{1}{2}\widehat{X}_t \mathrm{d} t,
 \qquad \widehat X_0=X,
\end{equation}
with initial condition $X$ being the Ginibre matrix for which we want to prove \eqref{lambdatail}, and $\widehat{B}_t$ 
being a standard complex matrix valued Brownian motion defined as
 $B_t$ in \eqref{eq:diffmat}. It is easy to see that
\[
\widehat{X}_t\stackrel{\mathrm{d}}{=}e^{-t/2}X+\sqrt{1-e^{-t}}U,
\]
with $U$ being a complex Ginibre matrix independent of $X$. In particular, note that if $\widehat{X}_0=X$ 
is a Ginibre matrix so is $\widehat{X}_t$ for any $t\ge0$. Next, we define $\check{X}_{t_1}:=e^{-t_1/2}X$, with
a fixed  $t_1$ obtained  in Theorem~\ref{theo:ind}, and obtain
\begin{equation}
\widehat{X}_{t_1}\stackrel{\mathrm{d}}{=}\check{X}_{t_1}+\sqrt{ct_1}U,
\end{equation}
with $c=c(t_1)=(1-e^{-t_1})/t_1=1+O(t_1)$. Then, considering the flow \eqref{eq:diffmat} with initial condition $X_0=\check{X}_{t_1}$, we get
\begin{equation}
\widehat{X}_{t_1}\stackrel{\mathrm{d}}{=}X_{ct_1}.
\end{equation}
In particular, since $\widehat{X}_{t_1}$ is distributed as a Ginibre matrix we conclude that
\begin{equation}
\label{eq:indistr}
\big(\lambda_i^{z_1},\lambda_i^{z_2}\big)_{i\in [[1,n]]}\stackrel{\mathrm{d}}{=}\big(\lambda_i^{z_1}(ct_1),\lambda_i^{z_2}(ct_1)\big)_{i\in [[1,n]]},
\end{equation}
with $\lambda_i^{z_l}$ and  $\lambda_i^{z_l}(ct_1)$ being the singular values 
of $X-z_l$ and $X_{ct_1}-z_l$, respectively, where $X_{ct_1}$ is the flow given by (\ref{eq:diffmat}) with the initial Ginibre condition at time $ct_1$. We remark that \eqref{eq:indistr} holds only 
at the precise time $ct_1$. In particular, by \eqref{eq:indistr} it follows that
\begin{equation}
\label{eq:s1}
\P^{\mathrm{Gin}}\Big( |\lambda_1^{z_1}| \leq n^{-3/4-\alpha}, ~ |\lambda_1^{z_2}| \leq n^{-3/4-\alpha}\Big)=\P^{\mathrm{Gin}}\Big( |\lambda_1^{z_1}(ct_1)| \leq n^{-3/4-\alpha}, ~ |\lambda_1^{z_2}(ct_1)| \leq n^{-3/4-\alpha}\Big).
\end{equation}

We now apply Theorem~\ref{theo:ind} to show the asymptotical independence of the singular values 
$\lambda_i^{z_1}(ct_1)$, $\lambda_i^{z_2}(ct_1)$ for small indices. We thus start showing that the assumptions of Theorem~\ref{theo:ind} are fulfilled. By~\cite[Theorem 5.2]{CES19}, for any $|z_1|+|z_2|\le C$, we have,
\begin{equation}
\label{eq:evbtim}
|\langle \mathbf{u}_i^{z_1}(t),\mathbf{u}_j^{z_2}(t)\rangle|^2+|\langle \mathbf{v}_i^{z_1}(t),\mathbf{v}_j^{z_2}(t)\rangle|^2\prec n^{-\omega_E},  \qquad \quad |i|,|j|\le n^{\omega_c},
\end{equation}
for some small fixed $\omega_E\ge 10\omega_c>0$, simultaneously in $t\in [0, t_1]$.
  The proof of this bound is exactly the same as~\cite[Lemma 7.9]{CES19}, since $|z_1-z_2|\ge n^{-\gamma}$ and the local law in~\cite[Theorem 5.2]{CES19} holds uniformly in $|z_1|+|z_2|\le C$, for some $C>0$, in particular it holds also in the cusp regime. The fact that \eqref{eq:evbtim} holds simultaneously in $t$ follows by a standard grid argument.

Finally, by applying \eqref{eq:mainb} in the first and last inequality 
 (\eqref{eq:mainb} also holds at time $ct_1$ instead of $t_1$  since $c\approx 1$),  we conclude
\begin{equation}
\label{eq:s2}
\begin{split}
&\P^{\mathrm{Gin}}\Big( |\lambda_1^{z_1}(ct_1)| \leq n^{-3/4-\alpha}, ~ |\lambda_1^{z_2}(ct_1)| \leq n^{-3/4-\alpha}\Big) \\
&\quad\le \P^{\mathrm{Gin}}\Big( |\mu_1^{(1)}(ct_1)| \leq n^{-3/4-\alpha}+n^{-3/4-\omega}, ~|\mu_1^{(2)}(ct_1)| \leq n^{-3/4-\alpha}+n^{-3/4-\omega} \Big) +n^{-100}\\
&\quad= \P^{\mathrm{Gin}}\Big( |\mu_1^{(1)}(ct_1)| \leq n^{-3/4-\alpha}+n^{-3/4-\omega}\Big)^2+n^{-100} \\
&\quad \le \P^{\mathrm{Gin}}\Big( |\lambda_1^z(ct_1)| \leq n^{-3/4-\alpha}+2n^{-3/4-\omega}\Big)^2+n^{-100} \\
&\quad = \P^{\mathrm{Gin}}\Big( |\lambda_1^z| \leq n^{-3/4-\alpha}+2n^{-3/4-\omega}\Big)^2+n^{-100},
\end{split}
\end{equation}
where the irrelevant error term $n^{-100}$ comes from the fact
that both~(\ref{eq:evbtim}) and~(\ref{eq:mainb}) hold true with a very high probability, 
say larger than $1-n^{-100}$. The first equality in \eqref{eq:s2} follows from $\mu_i^{(1)}(t_1)$, $\mu_i^{(2)}(t_1)$ 
 being fully independent, the second equality follows from \eqref{eq:indistr}. This bound, together 
with \eqref{eq:s1} and  choosing $\alpha<\omega$ 
concludes the proof of Proposition~\ref{lemma_new_1}.
\end{proof}

\begin{proof}[Proof of Theorem~\ref{theo:ind}]

The proof of this theorem is fairly similar to the proof of~\cite[Lemmas 7.6--7.7]{CES19} (which are proven using~\cite[Proposition 7.14]{CES19}) but in the cusp ($|z|\approx 1$) instead of the bulk regime ($|z|<1$). For the proof of~\cite[Proposition 7.14]{CES19} we relied on the homogenization theory of the Dyson Brownian motion (DBM) developed in~\cite{BEYY16,LSY12,B21} (see also~\cite{CL19} for its adaptation to singular values) for the analysis of a single DBM in the Hermitian setting. The main novelty in \cite[Proposition 7.14]{CES19} was to extend this idea to analyze 
several weakly dependent DBMs of the form~\eqref{eq:lamDBM}  driven by correlated  Brownian motions
with correlation given in \eqref{eq:DBMcor}. As was already mentioned above, the analysis in~\cite{CES19} was in the bulk regime,  we now explain how to extend this approach to weakly correlated DBMs in the cusp regime. 
 Instead of using homogenization theory, as in the bulk regime \cite{CES19}, we now
  rely on the strong local ergodicity of the DBM in the cusp regime of usual Hermitian matrices
   (i.e. no $z$-dependence), which was proven in~\cite[Proposition 7.1]{CEKS19} using energy methods. In particular, 
   the adaptation of the analysis in~\cite{CEKS19} to the weakly correlated case follows similar steps
    to the adaptation of the Hermitian homogenization theory to~\cite[Section 7]{CES19}.
     To avoid tedious uninformative computations and keep the presentation short, we only explain the 
     main steps of the proof pointing out the minor differences compared to~\cite{CEKS19,CES19}.

The high probability bound in \eqref{eq:mainb} is proven following several steps that we now explain:

\begin{enumerate}

\item[(1)] In order to prove bounds like \eqref{eq:mainb} for standard Hermitian DBMs,
one uses the coupling method that was first introduced in \cite{BEYY16} and  later
in a more convenient continuous interpolation form in~\cite{LSY12}. In this approach, one
 studies the interpolating process
\begin{equation}
\label{eq:rDBM}
\mathrm{d}r_i^{(l)}(t,\alpha)=\alpha \frac{\mathrm{d} b_i^{z_l}(t)}{\sqrt{2n}}+(1-\alpha)\frac{\mathrm{d} \beta_i^{(l)}(t)}{\sqrt{2n}}+\frac{1}{2n}\sum_{j\ne i}\frac{1}{r_i^{(l)}(t,\alpha)-r_j^{(l)}(t,\alpha)}\, \mathrm{d}t,
\end{equation}
for any $\alpha\in [0,1]$ 
with initial conditions
 $r^{(l)}(0, \alpha):=\alpha \lambda_i^{z_l}(0)+(1-\alpha)\mu_i^{(l)}(0)$. 
 Note that $r^{(l)}(t,0)=\mu^{(l)}(t)$ and $r^{(l)}(t,1)=\lambda^{z_l}(t)$ for any
  $t\ge0$, so $r^{(l)}(t, \alpha)$ indeed interpolates between the $\mu^{(l)}$ and $\lambda^{z_l}$
  processes.
  In particular, the key observation is that 
\[
 \lambda_i^{z_l}(t)-\mu_i^{(l)}(t)=\int_0^1 \partial_\alpha r_i^{(l)}(t,\alpha)\,\mathrm{d}\alpha,
\]
hence an high probability bound on $| \partial_\alpha r_i^{(l)}(t,\alpha)|$ corresponds to a bound on $| \lambda_i^{z_l}(t)-\mu_i^{(l)}(t)|$. We  remark that this interpolating process $r_i^{(l)}(t,\alpha)$
   was denoted by $z_i(t,\alpha)$ in~\cite[Eq. (6.2)]{CEKS19}, here we do 
   not use this notation to avoid any confusion with the $z$-dependence of $\lambda_i^z(t)$.

In standard Hermitian DBM analysis the driving Brownian motions in \eqref{eq:rDBM} are exactly coupled,
 i.e. we have $b^{z_l}_i =\beta^{(l)}_i$. In this case differentiating~\eqref{eq:rDBM} in $\alpha$ 
yields a differential equation for $\partial_\alpha r_i^{(l)}$ without stochastic term; this is the conventional
situation for using the coupling method. 
In the current case, however, we are interested in the correlation of singular values for different $z_1,z_2$'s, hence the driving  Brownian motions $b^{z_1}_i$ and $b^{z_2}_i$
 in \eqref{eq:lamDBM} have a non--trivial correlation as in \eqref{eq:DBMcor}.
In particular, they cannot be exactly equal to two independent Brownian motions $\beta^{(1)}_i$ and $\beta^{(2)}_i$.
So we choose $b^{z_l}_i$ and $\beta^{(l)}_i$ to be close
but not identical, yielding an additional stochastic term in the DBM for $\partial_\alpha r_i^{(l)}$
whose estimate is explained in $(4)$ below.

\medskip

\item[(2)] A fundamental input in the analysis of \eqref{eq:rDBM} is an a priori bound on the distance of  the particles $r_i^{(l)}(t,\alpha)$ from their \emph{quantiles} (the so-called \emph{rigidity estimates}). 
The proof of rigidity estimates requires two steps: (i) shape analysis of the deterministic
 density approximating the particles $r_i^{(l)}(t,\alpha)$, (ii) rigidity at time $t=0$ is 
 preserved along the flows \eqref{eq:lamDBM}, \eqref{eq:muDBM}.

\smallskip

\begin{itemize}

\item[(i)] Let $\rho^{z_l}$, with $l=1,2$, be the density of states defined in \eqref{eq:scdos} and denote by $\rho_t^{z_l}$ be its evolution along the flow in~\cite[Eqs. (2.5)--(2.6)]{LSY12}, i.e. $\rho_t^{z_l}$ is limiting density of the $r_i^{(l)}(t,\alpha)$. Note that $\rho_t^{z_l}$ does not depend on $\alpha$ since $\lambda_i^{z_l}$ and $\mu_i^{(l)}$ have the same limiting deterministic density. The analysis of the shape of $\rho_t$ is analogous (actually much easier since $\rho_t$ is independent of $\alpha$) to~\cite[Section 4]{CEKS19}. The \emph{quantiles} (classical locations) $\gamma_i^{z_l}(t)$ of $\rho_t^{z_l}$ are defined implicitly by
\begin{equation}
\int_0^{\gamma_i^{z_l}(t)}\rho_t^{z_l}(x)\,\mathrm{d}x=\frac{i}{n},
\end{equation}
for $i\in [[1,n]]$ and $\gamma_{-i}^{z_l}(t)=-\gamma_i^{z_l}(t)$ (this reflects the symmetry of the spectrum of $H^{z_l}$).

\smallskip

\item[(ii)] We now briefly explain how the rigidity bound from \eqref{rigidity} is propagated along the flow. 
Since rigidity is a high probability bound which holds for a fixed $z_l$,
 we can follow verbatim the analysis in~\cite[Section 6]{CEKS19} proving the optimal rigidity bound
\begin{equation}
\label{eq:flowrigid}
\big|r_i^{(l)}(t,\alpha)-\gamma_i^{z_l}(t)\big|\prec \frac{1}{n^{3/4}|i|^{1/4}},
\end{equation}
for indices sufficiently close to zero, $|i|\le n^{\omega_\ell}$, for some small fixed $\omega_\ell\ge 10\omega_1$,
with very high probability for any fixed $t\ge 0$ and $\alpha\in [0,1]$. A weaker bound (i.e. as in \eqref{eq:flowrigid} but without $i$-dependence) also hold for $|i|\le n^{1-\delta}$ (see~\cite[Eq. (6.101)]{CEKS19}, for some small fixed $\delta>0$.  We remark that by a simple grid argument, together with an elementary H\"older continuity, the bound in \eqref{eq:flowrigid} also holds simultaneously in $z$, $t$, and $\alpha$.

\end{itemize}

\medskip

\item[(3)] The key input in the analysis of weakly correlated DBMs in~\cite{CES19} was to show that the correlation of the driving Brownian motions in \eqref{eq:lamDBM}, which is given \eqref{eq:DBMcor}, is small for indices close to zero. This is now ensured by the assumption
\begin{equation}\label{uu}
|\langle \mathbf{u}_i^{z_1}(t),\mathbf{u}_j^{z_2}(t)\rangle|^2+|\langle \mathbf{v}_i^{z_1}(t),\mathbf{v}_j^{z_2}(t)\rangle|^2\prec n^{-\omega_E}, \qquad\quad |i|,|j|\le n^{\omega_c}.
\end{equation}
We used exactly the same assumption in~\cite{CES19} (cf.~\cite[Lemma 7.9]{CES19}), but the main difference is that
now we know~\eqref{uu} in the cusp regime, i.e. when $|z_l|\ge 1-\tau$, for some small fixed $\tau>0$.
\medskip

\item[(4)] Given all these inputs, the proof of \eqref{eq:mainb} is analogous to~\cite[Section 7]{CEKS19}. 
The only difference is that in the current case the driving Brownian motions $\mathrm{d}b_i^{z_l}$ and
$\mathrm{d}\beta_i^{(l)}$ (for $l=1,2$) are not exactly coupled but this complication has been already handled 
in~\cite[Section 7.2.1]{CES19}. We thus need to estimate the additional term
 $\mathrm{d}b_i^{z_l}-\mathrm{d}\beta_i^{(l)}$ along the flow; this additional bound is completely analogous to the estimate of $\mathrm{d}\xi_{1,i}$ in~\cite[Eqs. (7.92)--(7.95)]{CES19}.
 \end{enumerate}
\end{proof}

\appendix

\section{Proof of Theorem~\ref{local_thmw}}\label{app:local_law}

The proof of Theorem~\ref{local_thmw} is a combination of the cusp local law for diagonally deformed Wigner type 
matrices~\cite[Theorem 2.5]{EKS20} and the local law for $H^z$ on the imaginary axis $\Re w=0$
given in~\cite[Theorem 5.2]{AEK19b}. 
We first use this argument to show Theorem~\ref{local_thmw} for any $\eta\ge n^\xi\eta_f$
where $\eta_f=\eta_f(E,z)$ is the local eigenvalue
spacing (or \emph{fluctuation scale}) near the energy $E$ and $\xi>0$ is an arbitrary small.
If $E\in \mbox{supp}(\rho^z)$, then the fluctuation scale is defined by
\begin{equation}\label{fluscale}
   \int_{-\eta_f}^{\eta_f} \rho^z(E +x)\,\mathrm{d}x =\frac{1}{2n}.
\end{equation}
In particular if $1-\tau \le |z|\le 1$, then 
$$\eta_f=\min\{(1-|z|^2)^{-1/2} n^{-1},E^{-1/3}n^{-1}\}.$$
For $z$ outside of the support of the circular law,  i.e. in the  regime $ 1 < |z| \leq 1+\tau$, we 
have a symmetric gap of size $\Delta\sim (|z|^2-1)_+^{3/2}$ around  the origin
in the self-consistent density of states $\rho^z$. For $E$ within this gap we define
(see~\cite[Eq. (2.7)]{EKS20} or~\cite[Eq. (5.2)]{AEK19b}),
$$\eta_f= \min\{n^{-3/4}, \Delta^{1/9} n^{-2/3} \},
$$
which, in fact,  is the fluctuation scale at the internal edge of $\rho$ at $E=\Delta/2$ via the definition~\eqref{fluscale}.

Then we extend the range from $\eta\geq n^\xi \eta_f$ down to $\eta=n^{-1}$~(even to any $\eta>0$) exactly 
as in~\cite[Proposition 1]{CES21} which was given for a smaller regime $||z|-1| \lesssim n^{-1/2}$ and $w$
 lying on the imaginary axis. This argument can be easily extended to our regime $| |z|-1 |\leq c$ 
 and $w$ in a small neighbourhood of the imaginary axis
with minor modifications. 
The proof is standard, relying on the monotonicity of 
$ \eta\to \eta \langle   \mathbf{x},  (G^{z}(E+\ii \eta)  \mathbf{x}\rangle$, so we omit it for brevity. 

The rest of this section explains how to combine~\cite{EKS20} and~\cite{AEK19b} to prove 
Theorem~\ref{local_thmw} for  $\eta\ge n^\xi\eta_f$. While all ingredients 
are present in these papers, unfortunately neither result can be directly cited
since~\cite{EKS20} analyses the cusp local law for general $\Re w$
but only for a model with flatness, while~\cite{AEK19b} handles exactly
our model $H^z$, but  restricted only to $\Re w=0$, exactly in the 
middle of the small gap in the density of states. 

Therefore, the argument primarily follows the proof of~\cite[Theorem 5.2]{AEK19b} 
from~\cite[Sections 4 and 5.1]{AEK19b} that itself heavily relies on~\cite{EKS20}.
Fortunately, the main formulas in \cite[Sections 4 and 5.1]{AEK19b}  are written 
in a canonical way using the density $\rho^z$ from~\eqref{rho_E} as a control parameter
and not relying on evaluating it on the imaginary axis. 
We need to recalculate them only when actual estimates are used.
A major simplification is that we consider the i.i.d. situation, where the 
self-energy operator~\eqref{S} is particularly simple, while~\cite{AEK19b} (as well as~\cite{EKS20})
is written in the much more complicated setup when the variances $s_{ij}:=\E |x_{ij}|^2$ depend on $i, j$.
A large part of~\cite{AEK19b} is devoted to meticulous estimates of the solution to~\eqref{dyson}
and its derived quantities -- in our case all these are given  explicitly using~\eqref{Mmatrix}.

For any fixed $z$ and $w$
we first recall the \emph{linear stability operator} $\mathcal{B} $ acting on $2n\times 2n$ matrices and given by
$$
\mathcal{B}[T] = T - M^z(w) \mathcal{S}[T] M^z(w),
$$
see~\cite[Eq. 2.10]{AEK19b} with $M^z(w)$ defined in~\eqref{Mmatrix}-\eqref{m_function} and 
$\mathcal{S}$ is from in~\eqref{S}.
  From now on we often omit the superscript $z$ and the argument $w$ 
  and write, \eg,
   $M=M^{z}(w)$, $G=G^{z}(w)$, $\rho=\rho^z(w)$, etc. for notational simplicity. 
    Recall that the stability operator expresses the leading linear relation between the quantity $G-M$ we
   are interested in and the key ``renormalized" fluctuating object $D=WG+\s[G]G$ that we can compute:
   $\mathcal{B}[G-M] \approx MD$, see later. 
   
    In our case, the stability operator as well as its adjoint
(with respect to the standard Hilbert-Schmidt scalar product on matrices) leaves the four dimensional
space of block constant matrices 
invariant and it acts trivially as the identity on the remaining $4n^2-4$ dimensional space
of block-traceless matrices. On this four dimensional space  $\mathcal{B} $ has a further eigenvalue 1
with multiplicity two and it has two nontrivial, potentially small eigenvalues $\beta$ and $\beta_*$
with algebraic and geometric multiplicity one. Explicit 
calculation gives 
\begin{equation}\label{betas}
	\beta_*= 1 + m^2 - |z|^2 u^2, \qquad \beta = 1 - m^2 - |z|^2 u^2,
\end{equation}
and the left and right eigenvectors are given by 
$$
\mathcal{B}[B_*] =\beta_* B_*, \quad \mathcal{B}[B] =\beta B, \quad  
\mathcal{B}^*[\wh B_*] =\overline{\beta_*} \wh B_*, \quad \mathcal{B}^*[\wh B] =\overline{\beta} \wh B
$$   
with
\begin{equation}\label{Bs}
	B_* = ME_-M=(m^2-|z|^2u^2)E_-, \quad  B=M^2=\begin{pmatrix} m^2 + |z|^2u^2  & -2zum \\ -2\bar z um & m^2 + |z|^2u^2
	\end{pmatrix}, \quad \wh B_* = E_-, \quad \wh B =I,
\end{equation}
where we used the notation from~\cite[Prop. 3.1]{AEK19b}.

Now we comment on the two nontrivial small eigenvalues.
On the one hand, the small eigenvalue $\beta$ stems from the cusp regime as discussed in~\cite[Sec. 3.1]{EKS20}. 
 Since $B$~in (\ref{Bs}) has identical 
diagonal terms \ie $\s[B]=\<B\>$, the current $\mathcal{B}$ acts exactly in the same way on $B$
as in  \cite{EKS20}, see also the explanation below~\eqref{rho_E}. In particular,
 $\beta$ in \eqref{betas} satisfies the same scaling relation as in~\cite[Eq. (3.7c)]{EKS20}, \ie for small $||z|-1|$ and $|w|$, we have
\begin{align}\label{betabeta}
	|\beta| =|\beta(w)| \sim \frac{\eta}{\rho}+\rho(\rho+|\sigma|), \qquad w=E+\ii \eta
\end{align}
with the parameter $\sigma$ given as in~\cite[Eq. (3.5a)]{EKS20}: 
\begin{align}\label{sigma}
	\sigma=\sigma(w):=\<(\mathrm{sgn}(\Re U) (\Im U/\rho))^3\>, \qquad 
	U:=\frac{(\Im M)^{-1/2} (\Re M)(\Im M)^{-1/2}+\ii}{|(\Im M)^{-1/2} (\Re M)(\Im M)^{-1/2}+\ii|}.
\end{align}

The quantities $\sigma, U$ also depend on $w$ and $z$ as $\beta, m, \rho, M$ do
but we usually omit this dependence from the notation, we indicate them only when the statement would
otherwise be ambiguous. 

We point out that the key parameter $\sigma$ measures  the distance from the cusp, it is zero exactly at the cusp 
point. In various estimates later this parameter will represent the extra gain or loss specific to the cusp
compared to the bulk regime and the regular edge regime. For example, by~\eqref{betabeta}  
we see that $\beta$ is order one in the bulk, it is order $\rho$ at the regular edge and 
it is order $\rho^2$ at the cusp. Since the smallest eigenvalue  of the stability operator
governs the behavior of $G-M$, we see that the cusp regime is the most difficult as $\beta$ is very small.
This has to be compensated by two steps. First, unlike
in a typical proof of a local law in the bulk or regular edge regime,
 the linear approximation $\mathcal{B}[G-M] \approx -MD$ is not sufficient, we also need 
 to compute the subleading term quadratic in $G-M$. Second, we need to estimate $MD$
 better, at least in the ``bad direction", i.e. tested against the eigenvector of $\mathcal{B}$ corresponding to the smallest eigenvalue.
 In the setup of a general cusp in~\cite{EKS20} this second step was especially complicated
 but in our current setup (as well as in the setup of~\cite{AEK19b}) the extra spectral symmetry 
 of $H^z$ simplifies the proof a lot.

We further discuss several quantitative results for the parameter $\sigma$. Note that both $\Re m(w)$ and $\sigma(w)$ are $1/3$-H\"older continuous;
this follows directly from the fact that $m$ solves a cubic equation, but it
can also be derived from the general shape analysis for $\rho$ from~\cite[Remark 7.3(i)]{AEK18b}
and using that $m= \< M \>$ is the Stieltjes transform of $\rho$ exactly as in the setup of~\cite{AEK18b}.
 We also have
 $\Re m(\ii \eta) \equiv 0$, 
$\sigma(\ii \eta)\equiv 0$ by symmetry, and for $|z|>1$ we
have $|\sigma(\Delta/2)| \sim \Delta^{1/3}$  from~\cite[Eq. (10.15)]{AEK18b},
in particular $\sigma(E)$ increases as $|E|^{1/3}$ away from zero on the real axis.
Moreover, for small $||z|-1|$ and $|w|$, we 
have the relation 
\begin{align}\label{Realsigma}
	 100  |\Re w|\le  |\Re m| \sim |\sigma|,
\end{align}
 that can be checked directly using (\ref{sigma}) and that $m$ is the Stieltjes transform of $\rho$ with an asymptotic behavior  given in~\eqref{rho_E}.

On the other hand, due to the zero-block structure of $H^z$,  
there is another small eigenvalue, $\beta_*$, and from~(\ref{Mmatrix}) and (\ref{m_function}), it is explicitly given by 
$$\beta_*= \frac{w}{w+m}, \quad \mbox{hence} \quad |\beta_*| \sim \frac{\eta+|E|}{\rho+|\sigma|}, 
\qquad w=E+\ii \eta.$$
This bad direction was not present in the flat models of \cite{AEK18b, EKS20}.
In our case it is, however,  harmless since $G-M$ is exactly orthogonal to the associated 
eigenvectors $B_*$, $\widehat{B}_*$ 
 (recall from \eqref{Bs} that $B_*,\widehat{B}_*$ are parallel with $E_-$). 
  Here we used the key symmetry of $H^z$
from its block structure implying
\begin{equation}\label{blocksym}
\<E_-, G\>=0, \qquad \<E_-, M\>=0.
\end{equation}
Note that in~\cite[Eq. (3.2)]{AEK19b} simpler relations for $\beta$ and $\beta_*$ were obtained
 with $E=0$, and $\sigma=0$ for $\Re w=0$.

Next, we follow the proofs in \cite[Sections 4-5]{AEK19b} where $H^{z}$ 
 was considered but only for $\Re w=0$. Since we consider a small neighbourhood of $\Re w=0$ and thus have slightly different scaling relation~(\ref{betabeta}), we need to repeat the proofs with new estimates and minor modifications.  We then arrive at the same cubic relation as in the cusp local law paper \cite{EKS20}; see Lemma \ref{lemma_cubic} below. Finally we perform the same bootstrap as in \cite{EKS20} to prove Theorem \ref{local_thmw} for $\eta\geq n^\xi \eta_f$.  In the following we explain these details.

The main tool is to find an approximate cubic equation for $\< G-M\>$ and then deduce the size of $\< G-M\>$
by solving it. The delicate part of the analysis is to select the right solution out of the three; this 
is done by a continuity argument by reducing $\eta=\Im w$. However, this step has been done in \cite{EKS20},
so here we just need to show that we arrive at the same cubic equation.  Setting $Y:= G-M$, the 
cubic equation for $\< Y\>$ is found 
by analysing a general quadratic matrix equation of the form $\mathcal{B}[Y] - \mathcal{A}[Y, Y] +X=0$.
This is the first part of Lemma A.1 of \cite{AEK19b}.
The structure of this quadratic matrix equation and 
the fact that $\mathcal{B}$ has only one relevant small eigenvalue $\beta$
 also imply that  $Y$ is essentially parallel with $B$, the 
right eigenvector of $\mathcal{B}$ to  $\beta$, i.e. $Y\approx \Theta B$
with  $\Theta:= \<\wh B, Y\>/\<\wh B,B\>$. The second  part of Lemma A.1 of \cite{AEK19b} precisely 
identifies the subleading term of this approximation. 

Now we explain the slight modifications of some key statements in~\cite{AEK19b, EKS20}
whose combination gives the proof of Theorem~\ref{local_thmw}.

Thanks to the symmetry relations~\eqref{blocksym}, 
 we can  use Lemma A.1 of \cite{AEK19b} with the choice $Y=G-M$ and $X=MD$ with $D:=WG+\s[G]G$, and
\begin{align}\label{AB}
	\A[S,T]=\frac{ \s[S] MT+\s[T]M S}{2}, \qquad \B[T]=T-M\s[T]M  
\end{align}
as in the proof of Proposition 4.1 in \cite{AEK19b}.  The analysis is easier in this special i.i.d. case
than in  \cite{AEK19b}
  since we have explicit formulas for all four eigenvectors  in (\ref{Bs}). By direct computations using (\ref{Bs}) we have
\begin{align}\label{BBs}
	\<\wh B,B\>=\<M^2\> \sim 1, \qquad \<\wh B_*,B_*\>=m^2-|z|^2u^2 \sim 1, \qquad \<E_-,B\>=0, \qquad \<E_-,B_*\> \sim 1,
\end{align}
where these relations hold for small $|w|$ and $||z|^2-1|$.
Moreover using (\ref{Bs}) and (\ref{AB}) we have
\begin{align}\label{special}
	\<\wh B, \A[B,B_*]\>=0, \qquad \<E_-,\B^{-1} \Q \A[B,B]\>=0,
\end{align}
where  $\Q$ is the spectral projection of $\B$ onto the all eigenvalues other than  
$\beta,\beta^*$. Since these are well separated from zero,  in particular we have $\|\B^{-1}\Q\| \lesssim 1$.
Using (\ref{Bs}), (\ref{betabeta}), (\ref{Realsigma}), (\ref{BBs}) and (\ref{special}), the $\mu$-coefficients in Lemma A.1 of \cite{AEK19b} can be computed explicitly, \ie
\begin{align}
	\mu_3=&2\< \wh B, \A[B,\B^{-1}\Q \A[B,B]\> \sim \<\A[M^2,M^3]\> \sim 1,\nonumber\\
	\mu_2=&\<I, \A[B,B]\> \sim \<M^3\> \sim \rho+|\sigma|, \nonumber\\
	\mu_1=&-\beta\<M^2\>-2\<\A[M^2,\B^{-1}\Q[MD]]\>,\nonumber\\ 
	\mu_0=&\<\A[\B^{-1}\Q[MD],\B^{-1}\Q[MD]]\>-\<MD\>.
\end{align}
Then  as in \cite[Eq. (4.7-4.8)]{AEK19b}, $\Theta=\<\wh B, G-M\>/\<\wh B,B\>\sim \<Y\>$
 satisfies the following cubic equation:
\begin{align}\label{cubic_eq}
	\mu_3 \Theta^3+\mu_2 \Theta^2-\beta \<M^2\> \Theta=&-\mu_0+\<R_2,D\> \Theta+O\Big(n^{-1/4K}|\Theta|^3+n^{62/K} \big(\|D\|_*^3+|\<R_1,D\>|^{3/2}\big)\Big),\nonumber\\
	=& -\mu_0+O\Big(n^{-1/4K}|\Theta|^3+n^{62/K} \big(\|D\|_*^3+|\<R_1,D\>|^{3/2}+|\<R_2,D\>|^{3/2}\big)\Big), 
\end{align}
for any $K>1$ as long as $\| G-M\|_* \le n^{-30/K}$ 
with  
\begin{align}\label{R1_R2}
	R_1:=M^*(\B^{-1}\Q)^*[E_-], \qquad R_2:=M^*(\B^{-1}\Q)^*\Big[\<(M^*)^2\> M^* +\<(M^*)^3\>\Big].
\end{align}
Here $\| \cdot\|_*$ is a specific norm on random matrices (depending on $K$ and two fixed deterministic
vectors $\mathbf{x}, \mathbf{y}$) introduced in Section 3.1 of \cite{EKS20},
see also \cite[Section 4.1]{AEK19b} whose exact definition is irrelevant for the current explanation. Note that the last line of (\ref{cubic_eq}) is obtained using that $|\<R_2,D\> \Theta| \leq n^{-1/4K}|\Theta|^3+n^{1/8K}|\<R_2,D\>|^{3/2}$ from Young’s 
inequality, and we may absorb $n^{-1/4K}|\Theta|^3$ into the left side of (\ref{cubic_eq}). 

As we mentioned above,
the solution $\Theta\sim \langle G-M\rangle$ of the cubic equation~\eqref{cubic_eq} governs the leading 
order behaviour of the entire matrix $G-M$ as $G-M\approx \Theta B$.
 The following proposition identifies the subleading terms in this approximation and we will comment on them
 after the statement. 
This is the analogue of \cite[Proposition 4.1]{AEK19b} but
with new estimates on the coefficients as given in \cite[Proposition 3.4]{EKS20}
(note also
that the error term became simpler due to~\eqref{special}):
\begin{proposition}\label{prop_cubic}  
There exists a small $\tau_*>0$ such that for any $||z|-1 |\leq \tau_*$ and $|w|\le \tau_*$, the following statement holds. Assuming $\| G-M\|_*+ \|D \|_*\le n^{-30/K}$ for some fixed $K>1$, 	
	we have the expansion
	\begin{align}\label{sub}
		G-M=\Theta B -\B^{-1} \Q[MD]+\Theta^2 \B^{-1} \Q [M \s[B] B]+E,
	\end{align}
	where the error matrix $E$ has the upper bound
	\begin{align}\label{E_star}
		\|E\|_* \lesssim n^{1/16K} \Big( |\Theta|^3 +|\Theta|  \|D\|_*+\|D\|^2_*+|\<R_1,D\>|\Big),
	\end{align}
	and the scalar	$\Theta=\<\wh B, G-M\>/\<\wh B,B\> = \<G-M\>/\<\wh B,B\>$ satisfies the cubic equation:
	\begin{align}\label{cubice}
		\Theta^3+\xi_2\Theta^2+\xi_1 \Theta=\epsilon_*.
	\end{align}
	Here the coefficients satisfy the scaling relations:
	\begin{align}\label{xi_12}
		 |\xi_1 |
		\sim |\beta| \sim
		\frac{\eta}{\rho}+\rho(\rho+|\sigma|), \qquad |\xi_2|
		\sim \rho+|\sigma|,
	\end{align}
	and the error term $\epsilon_*$ is bounded by
	\begin{align}\label{epsilon_star}
		|\epsilon_*| \lesssim n^{62/K}\big( \|D\|^3_*+|\<R_1,D\>|^{3/2}
		+|\<R_2,D\>|^{3/2} +|\<MD\>|+|\<M \s[\B^{-1}\Q[MD]]\B^{-1}\Q[MD]\>|\big),
	\end{align}
	where $R_1$ and $R_2$ given in (\ref{R1_R2}) are deterministic matrices with $\|R_1\|, \|R_2\| \lesssim 1$.  
	\qed
\end{proposition}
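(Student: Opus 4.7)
The plan is to apply the abstract quadratic matrix equation analysis of Lemma A.1 of~\cite{AEK19b}, specialized to our i.i.d.\ block-constant setting in which $\mathcal{S}$, $M$, and the eigenstructure of $\mathcal{B}$ are all explicit. The starting identity $\mathcal{B}[Y]-\mathcal{A}[Y,Y]+MD=0$ with $Y=G-M$ and $X=MD$ follows by combining the resolvent identity $(W-Z-w)G=I$ with the MDE~\eqref{dyson1} and using two crucial simplifications: first, the explicit computation shows that $[M,Z]=0$, so no commutator term survives; second, the symmetry~\eqref{blocksym} implies $\mathcal{S}[Y]=\langle Y\rangle I$ is scalar, hence commutes with $M$, which allows us to cast the quadratic nonlinearity into the symmetric form \eqref{AB}. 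These are precisely the structural hypotheses required by~\cite[Lemma A.1]{AEK19b}.

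Next I would project the quadratic equation onto the four-dimensional block-constant subspace on which $\mathcal{B}$ has the nontrivial eigenstructure~\eqref{Bs}--\eqref{BBs}. The $\widehat B_*$-channel decouples automatically: $\langle \widehat B_*, Y\rangle=\langle E_-,G-M\rangle=0$ by~\eqref{blocksym}, and the forcing $\langle E_-,MD\rangle=\langle R_1,D\rangle$ is exactly the term that appears in the error bound \eqref{epsilon_star}. The $\widehat B$-channel yields the scalar equation for $\Theta=\langle G-M\rangle/\langle\widehat B,B\rangle$. Substituting the leading ansatz $Y\approx \Theta B - \mathcal{B}^{-1}\mathcal{Q}[MD]$ into $\mathcal{A}[Y,Y]$ and iterating once gives the cubic~\eqref{cubic_eq} with $\mu_3,\mu_2,\mu_1,\mu_0$ as written above the statement; the vanishing identities in \eqref{special} are what ensure no $B_*$-contamination survives in the leading $\Theta^2,\Theta^3$ coefficients, so that the equation closes cleanly. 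After dividing through by $\mu_3\sim 1$ we read off $\xi_1=-\beta\langle M^2\rangle/\mu_3$ and $\xi_2=\mu_2/\mu_3$, and collect all remaining pieces into $\epsilon_*$. The expansion~\eqref{sub} is then produced by applying $\mathcal{B}^{-1}\mathcal{Q}$ to the complement-projected equation and substituting $Y\approx\Theta B$ into the nonlinearity: since $\mathcal{S}[B]$ is scalar one obtains exactly the $\Theta^2\mathcal{B}^{-1}\mathcal{Q}[M\mathcal{S}[B]B]$ correction, with all higher-order cross terms absorbed into $E$ via the $\|\cdot\|_*$-bounds of \cite[Lemma A.1]{AEK19b}, yielding~\eqref{E_star}.

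The main obstacle is the scaling verification~\eqref{xi_12}, which is the only place where the analysis genuinely departs from~\cite{AEK19b}, since that paper restricted to $\Re w=0$ while we work in a neighbourhood. The bound $|\xi_1|\sim|\beta|\sim \eta/\rho+\rho(\rho+|\sigma|)$ follows from~\eqref{betabeta} once we verify $|\langle M^2\rangle|\sim 1$, which is direct from~\eqref{Mmatrix}--\eqref{localm}. The more delicate bound is $|\xi_2|\sim|\mu_2|\sim\rho+|\sigma|$: here one computes $\mu_2=\langle I,\mathcal{A}[B,B]\rangle=\langle M^3\rangle$ explicitly via~\eqref{Mmatrix} as a cubic polynomial in $m, zu, \bar zu$, and uses $\Re m\sim |\sigma|$ and $\Im m=\pi\rho$ from~\eqref{Realsigma} together with the shape analysis~\eqref{rho_E} (ultimately from~\cite[Proposition 3.2]{EKS20}) to extract the leading behaviour. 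Once these scalings are established, the error bound~\eqref{epsilon_star} is a routine tracking exercise: the $\|D\|_*^3$ and $|\langle R_1,D\rangle|^{3/2}$ terms come from Young's-inequality control of $\mathcal{B}^{-1}\mathcal{Q}[MD]$ contributions that are bad only in the $E_-$ direction, the $|\langle R_2,D\rangle|^{3/2}$ term collects the subleading linear response tested against the $\widehat B$-direction via~\eqref{R1_R2}, and $\langle MD\rangle$, $\langle M\mathcal{S}[\mathcal{B}^{-1}\mathcal{Q}[MD]]\mathcal{B}^{-1}\mathcal{Q}[MD]\rangle$ are the irreducible pieces that cannot be cast as pure $\|\cdot\|_*$-bounds. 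The proof then concludes by inheriting the bootstrap and continuity-in-$\eta$ selection of the correct cubic root from~\cite[Section 3]{EKS20} verbatim, using the scaling~\eqref{xi_12} as input.
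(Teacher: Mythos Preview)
Your proposal is correct and follows essentially the same route as the paper: apply \cite[Lemma~A.1]{AEK19b} with $Y=G-M$, $X=MD$ and the operators~\eqref{AB}, exploit the explicit block-constant eigendata~\eqref{Bs}--\eqref{BBs} and the vanishing identities~\eqref{special} to obtain the cubic~\eqref{cubic_eq}, then identify $\xi_1,\xi_2$ and verify the scalings~\eqref{xi_12} via~\eqref{betabeta} and~\eqref{Realsigma}. The only minor over-reach is your final sentence: the bootstrap and root selection belong to the subsequent Lemma~\ref{lemma_cubic}, not to this proposition (which ends at establishing~\eqref{sub}--\eqref{epsilon_star}).
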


The subleading terms~\eqref{sub}--\eqref{E_star} are somewhat involved but they follow a simple
power-counting pattern. The key quantity is $X=MD$, the small source term in 
the quadratic matrix equation $\mathcal{B}[Y] - \mathcal{A}[Y, Y] +X=0$,
where we recall that $D=WG+\s[G]G$ is the ``renormalized" version of $WG$.
The added ``counter-term" $\s[G]G$ guarantees that $\E D$ vanishes up to the first and second
order in the cumulant expansion on high moments of $D$. The size of $D$ is measured
in the norm $\| D\|_*$ introduced below~(\ref{R1_R2}).  However, when $D$ is tested against any  bounded deterministic matrix $R$, then
$\langle R, D\rangle$ is one order better than naively expected, i.e. we have $|\langle R, D\rangle|\lesssim \| D\|_*^2$;
this ``fluctuation averaging" is a basic property of the fluctuating term $D$. Furthermore, in the cusp regime
$D$ tested specifically against a block constant 
(almost) off-diagonal matrix $R$ (like $M$) is one more order smaller,  roughly speaking 
$|\langle RD \rangle|\lesssim \| D\|_*^3$. These claims will be formalized in Theorem~\eqref{MD_bound} below.
 Thus every term in~\eqref{epsilon_star} is of order $\| D\|_*^3$
(the last term seems only quadratic in $D$,  but $\s$ contains an effective averaging
so $ \s[\B^{-1}\Q[MD]]\lesssim \| D\|_*^2$). Thus $\Theta$ satisfies a cubic equation~\eqref{cubice} with leading
coefficient 1 up to a precision of order $\| D\|_*^3$. Depending on the size of the other coefficients $\xi_1, \xi_2$
this determines the size of $\Theta$. For example in the bulk regime (or if $\eta\sim 1$) we have $\xi_1\sim 1$
thus $|\Theta|\sim \epsilon_*\sim \| D\|_*^3$ as the cubic and quadratic terms are negligible. 
Exactly at the cusp ($\eta =\sigma=\rho=0$) we have $\xi_1=\xi_2=0$ and thus $\Theta\sim (\epsilon_*)^{1/3}
\sim \| D\|_*$. Later we will perform a bootstrap argument in which we gradually reduce $\eta$
thus the equation changes from the ``bulk" behavior to the ``cusp" behavior (with an intermediate 
stage when the quadratic term $\xi_2\Theta^2$ becomes dominant).

Armed with these intuitions, we can now explain the subleading terms in~\eqref{sub} 
beyond the $\Theta B$ leading term which is of order $|\Theta|$. The first subleading term is of order $\| D\|_*$
but it lies in harmless spectral subspace of 
$\mathcal{B}$.  The second subleading term is of order $|\Theta|^2$. The error term~\eqref{E_star}
is of order $|\Theta|^3  + |\Theta|  \|D\|_*+\|D\|^2_*$, i.e. it is lower order in both small parameters.
It turns out that expanding the solution of $\mathcal{B}[Y] - \mathcal{A}[Y, Y] +X=0$ up to this order
is sufficient for our purposes.

Next, we estimate the error terms (\ref{E_star}) and (\ref{epsilon_star}) involving the 
fluctuation $D=WG+\s[G]G$ by a cumulant expansion on high moments of $D$. Similar estimates
using a sophisticated Feynman diagrammatic expansion appeared first in~\cite[Theorem 4.1]{EKS20} and
they   have been used to prove optimal local laws first  in the bulk
regime for very general random matrices with correlated entries in~\cite{EKS19} 
and later in the edge regime in~\cite{AEKS20}. Now we are in the cusp regime, where
the key point is to gain an additional small factor specific for the cusp.
This was done for general cusps in the context of Wigner-type matrices 
 in \cite[Theorem 3.7]{EKS20} and later specialized for $H^z$ 
 on the imaginary axis, $\Re w=0$, in \cite[Proposition 5.5]{AEK19b}.
In the following theorem 
 we will establish the analogue of \cite[Proposition 5.5]{AEK19b} slightly away from the imaginary axis
by explaining the necessary changes. Introduce the $L^p$ norm for random scalars $Z$ and 
random matrices $Y\in \C^{2n\times 2n}$ as follows:
$$
    \| Z\|_p: = ( \E |Z|^p)^{1/p}, \qquad \| Y\|_p: =\sup_{\mathbf{x}, \mathbf{y}}
    \frac{\| \langle\mathbf{x}, Y\mathbf{y}\rangle\|_p}{\| \mathbf{x}\| \| \mathbf{y}\|},
$$
where the supremum is over all deterministic vectors.
\begin{theorem}\label{MD_bound}
There is a small $\tau_*>0$ and a large constant $C>0$ such that for any $p\ge 1$, $\epsilon>0$,
$\big| |z|-1\big|\le \tau_*$, $|w|\le\tau_*$, for any deterministic vectors $\mathbf{x}, \mathbf{y}\in \C^{2n}$
and matrix $R\in \C^{2n\times 2n}$ we have
\begin{equation}\label{isoD}
\| ( \mathbf{x}, D \mathbf{y})\|_p\le_{\epsilon, p}  n^\epsilon \psi_q'   \big(1+ \| G\|_q\big)^C
 \Big( 1+ \frac{\| G\|_q}{\sqrt{n}}\Big)^{Cp}
\| \mathbf{x}\| \| \mathbf{y}\|,
\end{equation}
\begin{equation}\label{aveD}
\| \langle RD\rangle\|_p \le_{\epsilon, p}  n^\epsilon \big[ \psi_q' \big]^2  \big(1+ \| G\|_q\big)^C
 \Big( 1+ \frac{\| G\|_q}{\sqrt{n}}\Big)^{Cp} \| R\|.
\end{equation}
Moreover, if $R$ is block constant off-diagonal matrix, then we have the improved estimate
\begin{equation}\label{aveDimp}
\| \langle RD\rangle\|_p \le_{\epsilon, p}  n^\epsilon\sigma_q \big[ \psi +\psi_q' \big]^2  \big(1+ \| G\|_q\big)^C
 \Big( 1+ \frac{\| G\|_q}{\sqrt{n}}\Big)^{Cp} \| R\|,
\end{equation}
where we defined the control parameters
$$
 \psi = \sqrt{\frac{\rho}{n\eta}}, \quad \psi_q' = \sqrt{\frac{\| \Im G \|_q}{n\eta}}, 
 \quad \psi_q''= \| G-M\|_q, \quad \sigma_q=\rho+|\sigma|+\psi+\sqrt{\frac{\eta}{\rho}}+\psi'_q+\psi''_q
 $$
 with $q=Cp^2/\epsilon$.
\end{theorem}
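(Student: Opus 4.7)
The plan is to estimate the high moments $\E|\langle \mathbf{x},D\mathbf{y}\rangle|^{2p}$ and $\E|\langle R D\rangle|^{2p}$ by the standard cumulant expansion / Feynman diagrammatic machinery that was developed for general Wigner-type matrices in \cite[Theorem 3.7]{EKS20} and adapted to the $H^z$ setting on the imaginary axis in \cite[Proposition 5.5]{AEK19b}. Recall $D=WG+\mathcal{S}[G]G$ with $W=H^z_t+Z$ from~\eqref{Wmatrix}, whose off-diagonal blocks are the matrix $X_t$ with i.i.d.\ entries; the counter-term $\mathcal{S}[G]G$ is precisely designed so that the first and second order cumulants in the expansion of each $\E[W_{ab}(\cdots)]$ are cancelled, and only cumulants of order $\ge 3$ of the entries of $\sqrt{n}X_t$ contribute. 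Thus, expanding each factor of $W$, one organises $\E|\langle \mathbf{x},D\mathbf{y}\rangle|^{2p}$ as a sum over Feynman diagrams whose vertices correspond to cumulants of order $\ge 3$, whose internal edges are Green function entries evaluated at $w$, and whose boundary vertices are the fixed test vectors $\mathbf{x},\mathbf{y}$.

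Each such diagram is then bounded using the two basic estimates supplied by the local law assumption on $G$: the pointwise bound $|G_{\mathfrak{u}\mathfrak{v}}|\le \|G\|_q$, and the Ward identity $\sum_{\mathfrak{v}}|G_{\mathfrak{u}\mathfrak{v}}|^2=(\Im G)_{\mathfrak{u}\mathfrak{u}}/\eta\lesssim (\psi'_q)^2 n$ (see~\eqref{ward}), which produces one factor of $\psi'_q$ for every internal loop that is summed to the end. Together with the $n^{-(k+1)/2}$ factor coming from a cumulant of order $k\ge 3$, the na\"ive power counting gives at least one factor $\psi'_q$ per diagram in the isotropic case and two factors in the averaged case, yielding~\eqref{isoD} and~\eqref{aveD}. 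The $n^\epsilon$ losses absorb the combinatorial bounds on the number of diagrams and the $q=Cp^2/\epsilon$ quantification is the standard Markov/Young input needed to promote $L^q$-control of $G$ into $L^p$-control of $D$.

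The key novelty, as in~\cite{EKS20,AEK19b}, is the improved bound~\eqref{aveDimp} for block-constant off-diagonal test matrices $R$. The mechanism is that every diagram contributing to $\langle RD\rangle$ terminates in a Green-function loop closed by $R$, and when $R$ has only off-diagonal blocks one is forced, at the first non-trivial order, either to pick up a diagonal block of $M$ (whose size is controlled by $|m|$) or to activate the stability operator in its ``bad'' direction $B$, producing a factor $|\beta|^{1/2}$. By~\eqref{betabeta} and the identity $|m|^2/(\rho+|\sigma|)\sim \eta/(\rho+|\sigma|)+\rho$ obtained from the MDE~\eqref{dyson1}, both of these possibilities are dominated by $\sigma_q=\rho+|\sigma|+\psi+\sqrt{\eta/\rho}+\psi'_q+\psi''_q$. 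The remaining factor $[\psi+\psi'_q]^2$ comes from two Ward-closed loops instead of one, exactly as in the bulk/edge diagrammatic estimates of~\cite{EKS19,AEKS20}.

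The main obstacle relative to \cite[Proposition 5.5]{AEK19b} is to extend the above diagrammatic analysis from $\Re w=0$ to a full neighbourhood $|\Re w|\le \tau'$. On the imaginary axis the function $m^z(\mathrm{i}\eta)$ is purely imaginary, so that the off-diagonal block of $M$ is real; this symmetry was used implicitly in~\cite{AEK19b} to kill a number of otherwise problematic diagrams. Off the imaginary axis we lose this exact cancellation, but by~\eqref{Realsigma} we have the quantitative replacement $|\Re m|\sim|\sigma|$, which inserts an additional factor $|\sigma|$ precisely where the earlier proof needed $\Re m=0$; this is the reason $|\sigma|$ must be included in the definition of $\sigma_q$. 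Verifying this diagram by diagram -- with the bookkeeping of the $(1+\|G\|_q)^C(1+\|G\|_q/\sqrt n)^{Cp}$ factors coming from the repeated H\"older inequality at intermediate steps -- is tedious but fully parallel to the argument of~\cite[Section 5]{AEK19b} once $|\sigma|$ is inserted, and after summation over the finitely many diagram topologies of fixed order the bounds~\eqref{isoD}, \eqref{aveD}, \eqref{aveDimp} follow.
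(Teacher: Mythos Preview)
Your overall strategy is the same as the paper's: \eqref{isoD}--\eqref{aveD} are quoted directly from~\cite[Theorem 4.1]{EKS19} (via~\cite[Eqs.~(3.11a--b)]{EKS20}), and~\eqref{aveDimp} is obtained by following the proof of~\cite[Eq.~(5.5c)]{AEK19b} with one modification accounting for $\Re w\ne 0$, namely adding $|\sigma|$ into $\sigma_q$ because $|\Re m|\sim|\sigma|$ by~\eqref{Realsigma}. So the core idea is right.

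However, two parts of your description of the mechanism are inaccurate and would mislead a reader trying to reconstruct the argument. First, the counter-term $\mathcal{S}[G]G$ does \emph{not} cancel all second-order cumulants; it only removes the self-energy renormalisation. Second-order connections between different copies of $D$ in $\E|\langle RD\rangle|^{2p}$ are very much present and, in fact, dominate the diagrammatic expansion in~\cite{EKS19,EKS20}. Second, your explanation of the gain in~\eqref{aveDimp} via ``activating the stability operator in its bad direction, producing a factor $|\beta|^{1/2}$'' is not what happens. The paper's mechanism is simpler and more localised: in the proof of~\cite[Eq.~(5.5c)]{AEK19b} the almost-off-diagonal structure of $M$ is used at exactly one point, the estimate of $\mathcal{S}[MJK^{(b)}M^*]$ in~\cite[Eq.~(5.32)]{AEK19b}, which on the imaginary axis is $O(\rho)$ because the diagonal block of $M$ has size $|m|=\pi\rho$. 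Off the imaginary axis one decomposes $M=M_d+M_o$ with $\|M_d\|=|m|\lesssim \rho+|\sigma|$ (using~\eqref{Realsigma}), so the same term becomes $O(\rho+|\sigma|)$; propagating this single change through the rest of the~\cite{AEK19b} argument yields the redefined $\sigma_q$. There is no $|\beta|^{1/2}$ in this step, and no need for the identity $|m|^2/(\rho+|\sigma|)\sim \eta/(\rho+|\sigma|)+\rho$ you invoke.
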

The error terms are somewhat complicated, but for simplicity the reader
 can think of $\psi\sim \psi_q'\sim\psi''_q$
and consider $\psi$ as the main parameter in the power counting. Moreover 
the terms containing $\| G\|_q$ can be ignored as they are roughly order one.
Thus the isotropic bound~\eqref{isoD} on $D$ 
is of order $\psi$, while the averaged bound~\eqref{aveD} with a general deterministic matrix
is of order $\psi^2$, in agreement of the general fact that averaged bounds are ``one order better"
than isotropic ones. The key novelty is the improved average bound~\eqref{aveDimp}
for block constant off-diagonal test matrix, which is (roughly) of order $\sigma \psi^2$. 
Recall that $\sigma$ is the cusp-parameter~\eqref{sigma} that is small in the cusp regime.

\begin{proof}
Exactly as in~\cite[Section 5.2]{AEK19b},
the proof of~\eqref{isoD}--\eqref{aveD} is identical to those of~\cite[Eq. (3.11a) and (3.11b)]{EKS20}
which directly follow from~\cite[Theorem 4.1]{EKS19}
since it did not use flatness. 

Considering~\eqref{aveDimp},
the only difference between this result and  \cite[Eq. (5.5c) in Proposition 5.5]{AEK19b} is  that $\sigma_q$ is 
redefined by adding the $|\sigma|$ term, which was zero on the imaginary axis in the setup of~\cite{AEK19b}. 
The proof of~\cite[Eq. (5.5c)]{AEK19b} 
exploited the special almost off-diagonal structure of $M$ on the imaginary axis only at one critical point
where the term $\s[ MJK^{(b)}M^*]$ in~\cite[Eq. (5.32)]{AEK19b} was estimated by $O(\rho)$
(see the explicit comment below~\cite[Eq. (5.32)]{AEK19b}). 
Here $K^{(b)}$ is a bounded diagonal matrix whose precise form is irrelevant and 
$$
    J = \begin{pmatrix} 0 & 1\cr 1 & 0\end{pmatrix}.
$$
In our case we can also decompose $M$ as $M=M_d+M_o$ with
\begin{align}\label{split_M}
	 M_d:=\begin{pmatrix}
		m^{z}(w)  &  0  \\
		0  & m^{z}(w)
	\end{pmatrix}, \quad M_o:=\begin{pmatrix}
		0 &  -zu^z(w)  \\
		-\overline{z}u^z(w)   & 0
	\end{pmatrix}, \qquad \| M_d\|\lesssim \rho +|\sigma|,
\end{align}
where we used~\eqref{Realsigma}, i.e. our $M$ is also essentially off-diagonal since $\rho +|\sigma|\lesssim \tau_*^{1/3}$
 is small in our parameter regime.
 We then easily obtain that $\s[ MJK^{(b)}M^*]= O(\rho+|\sigma|)$.
In other words, we use exactly the same mechanism as in~\cite[Eq. (5.32)]{AEK19b}, just
the off-diagonal part of $M$ is of order $\rho+|\sigma|$ instead of $\rho$. Following this change
along the proof of~\cite[Eq. (5.5c)]{AEK19b}, one easily sees that the consequence
is only in the indicated redefinition of $\sigma_q$. This completes the proof of Theorem~\ref{MD_bound}.
\end{proof}

%
%

Then we use Theorem \ref{MD_bound} and similar arguments as in \cite[Section 5.1]{AEK19b} to estimate the error terms in (\ref{E_star}) and (\ref{epsilon_star}). In particular, using (\ref{split_M})
we obtain an improved bound for 
$$|\<MD\>| \leq |\<M_d D\>|+|\<M_o D\>| \prec \Big(\rho+|\sigma| +\sqrt{\frac{\eta}{\rho}}+\sqrt{\frac{\rho+\Xi}{n\eta}}+\theta\Big)\frac{\rho+\Xi}{n\eta},$$
whenever $|\Im (G-M)| \prec \Xi$ and $|G-M| \prec \theta$ hold for some deterministic control parameters $\Xi, \theta$.
Then we obtain the following cubic relation for $\Theta$ and initial bounds for $G-M$ which are the same as in~\cite[Lemma 3.8]{EKS20}. The proof of Lemma \ref{lemma_cubic} is exactly the same as in \cite[Section 5.1]{AEK19b}, so we omit the details.
\begin{lemma}\label{lemma_cubic}
	Suppose that $|G-M| \prec \Gamma$, $|\Im(G-M)| \prec \Xi$ and $|\Theta| \prec \theta$ for a fixed $w$ such that $\Im w \geq n^{-1+\zeta}$ with a small $\zeta>0$ and assume that these deterministic control parameters satisfy $\Gamma+\Xi+\theta \lesssim N^{-c}$. Then for sufficiently small $\epsilon>0$ we have
	$$|\Theta^3+\xi_2 \Theta^2+\xi_1 \Theta| \prec N^{2\epsilon}\Big(\rho+|\sigma|+\sqrt{\frac{\eta}{\rho}}+\sqrt{\frac{\rho+\Xi}{n\eta}} \Big)\frac{\rho+\Xi}{n\eta}+N^{-\epsilon}\theta^3,$$
	as well as, for any bounded and deterministic $\mathbf{x},\mathbf{y}\in \C^{N}$ and $B\in \C^{N\times N}$,
	$$|\<\mathbf{x},(G-M) \mathbf{y}\>| \prec \theta+\sqrt{\frac{\rho+\Xi}{n\eta}}, \qquad |\<B(G-M)\>| \prec \theta+\frac{\rho+\Xi}{N\eta}.$$
\end{lemma}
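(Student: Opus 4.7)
My plan is to derive Lemma~\ref{lemma_cubic} by plugging the estimates of Theorem~\ref{MD_bound} into the cubic equation \eqref{cubice} and the expansion \eqref{sub} from Proposition~\ref{prop_cubic}, mirroring \cite[Section 5.1]{AEK19b} but adjusted for the slight off-imaginary-axis setup (where $\sigma$ is no longer identically zero).

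First, I will convert the a priori assumptions into the language of the control parameters of Theorem~\ref{MD_bound}. Since $|\Im(G-M)|\prec \Xi$, we have $\|\Im G\|_q\prec \rho+\Xi$, hence the relevant fluctuation scale is
\begin{equation*}
\psi_q'\prec \sqrt{\frac{\rho+\Xi}{n\eta}},\qquad \psi_q''\prec \theta+\Gamma.
\end{equation*}
For each error term in \eqref{epsilon_star}, I would then apply the appropriate estimate from Theorem~\ref{MD_bound}: \eqref{isoD} gives $\|D\|_*^3\prec (\psi_q')^3$; the terms $\langle R_i,D\rangle$ for the deterministic bounded matrices $R_1,R_2$ in~\eqref{R1_R2} are handled by \eqref{aveD} yielding $(\psi_q')^2$, and the fourth-power expression $\langle M\mathcal{S}[\mathcal{B}^{-1}\mathcal{Q}[MD]]\mathcal{B}^{-1}\mathcal{Q}[MD]\rangle$ is bounded by $(\psi_q')^4$ via isotropic bounds together with the implicit averaging in $\mathcal{S}$ (as already noted below \eqref{aveDimp}). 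The most delicate term is $|\langle MD\rangle|$: using the decomposition $M=M_d+M_o$ from~\eqref{split_M}, I would estimate $|\langle M_d D\rangle|$ by \eqref{aveD} with $\|M_d\|\lesssim \rho+|\sigma|$, and $|\langle M_o D\rangle|$ by the improved bound \eqref{aveDimp}, since $M_o$ is block constant and off-diagonal. This yields
\begin{equation*}
|\langle MD\rangle|\prec \Bigl(\rho+|\sigma|+\sqrt{\tfrac{\eta}{\rho}}+\psi+\psi_q'+\psi_q''\Bigr)(\psi_q')^2.
\end{equation*}

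Combining all contributions and absorbing the $\psi_q''=\theta+\Gamma\prec N^{-c}$ portion into $N^{-\epsilon}\theta^3$ after Young's inequality (the only mildly nontrivial absorption step), the desired cubic bound in the statement of Lemma~\ref{lemma_cubic} follows directly, upon choosing $K$ large enough so that $n^{62/K}\le N^\epsilon$ and noticing that $\sqrt{\eta/\rho}$ together with $(\rho+\Xi)/(n\eta)$ dominate $\psi^3\sim (\rho/(n\eta))^{3/2}$ and $(\psi_q')^3$. For the isotropic and averaged $G-M$ bounds, I would insert the a priori $|\Theta|\prec\theta$ into~\eqref{sub}, use Theorem~\ref{MD_bound} to estimate $\mathcal{B}^{-1}\mathcal{Q}[MD]$ tested against bounded vectors/matrices (which yields $\psi_q'$ and $(\psi_q')^2$ respectively, since $\mathcal{B}^{-1}\mathcal{Q}$ is bounded), and control the remainder $E$ by the $\|E\|_*$-bound in \eqref{E_star}, which is of lower order than $\theta+\psi_q'$ and $\theta+(\psi_q')^2$ under the smallness assumption.

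The main obstacle is the careful bookkeeping in $|\langle MD\rangle|$: one must split $M$ into its diagonal and off-diagonal parts so that the cusp-improved bound~\eqref{aveDimp} produces precisely the factor $\rho+|\sigma|+\sqrt{\eta/\rho}+\ldots$ appearing in the statement, and one must verify that the additional $|\sigma|$ term (absent in~\cite{AEK19b} where $\Re w=0$) does not break the bootstrap structure when plugged into the cubic. Everything else is routine power-counting, and no new analytical ideas beyond those already developed in~\cite{EKS20,AEK19b} are required.
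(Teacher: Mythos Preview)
Your proposal is correct and follows essentially the same approach as the paper: the paper also plugs Theorem~\ref{MD_bound} into the error terms of Proposition~\ref{prop_cubic}, singles out the splitting $M=M_d+M_o$ to obtain $|\langle MD\rangle|\prec (\rho+|\sigma|+\sqrt{\eta/\rho}+\sqrt{(\rho+\Xi)/(n\eta)}+\theta)\frac{\rho+\Xi}{n\eta}$ via \eqref{aveD} on $M_d$ and \eqref{aveDimp} on $M_o$, and then refers to \cite[Section 5.1]{AEK19b} for the remaining routine power-counting. The only cosmetic difference is that the paper writes $\theta$ rather than $\theta+\Gamma$ for the $\psi_q''$ contribution, but this is harmless since both are $\lesssim N^{-c}$ and get absorbed the same way.
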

Thus the bootstrap procedure using this cubic relation in Lemma \ref{lemma_cubic} is the same as in \cite[Section 3.3]{EKS20} with auxiliary coefficients~(\cf Eq. (3.7e) of \cite{EKS20})
\begin{align}\label{xi_12_wt}
	\wt\xi_1:=\begin{cases}
		(|\kappa|+\eta)^{1/2} (|\kappa|+\eta+\Delta)^{1/6},\\
		\big(\rho(0)+(|\kappa|+\eta)^{1/3}\big)^{2},
	\end{cases} 
		 \qquad 
		 \wt \xi_2:=\begin{cases}
		 	(|\kappa|+\eta+\Delta)^{1/3},& \qquad \mbox{if~} |z|>1,\\
		 	\rho(0)+(|\kappa|+\eta)^{1/3},& \qquad \mbox{if~} |z|\leq 1,
		 \end{cases}
\end{align}
where $\kappa$ and $\eta$ are from $w=\frac{\Delta}{2}+\kappa+\ii \eta$ as in (\ref{rho_E}) with $\Delta \sim (|z|-1)^{3/2}$ for $|z|>1$ and $\Delta=0$ for $|z|\leq 1$. It is clear that $\xi_1,\xi_2$ in (\ref{xi_12}) and the auxiliary ones $\wt\xi_1,\wt \xi_2$ in (\ref{xi_12_wt}) satisfy the relations in Lemma 3.3 and the assumptions of Lemma 3.10 in \cite{EKS20}, hence we follow \cite[Section 3.3]{EKS20} to finish the proof of Theorem~\ref{local_thmw}.

\section{Proof of Lemma \ref{lemma_third_order}}\label{appendix_A}

The proof of Lemma \ref{lemma_third_order} is based on \cite[Section 4-5]{maxRe} using iterative cumulant expansions via an unmatched index. We assume that the reader is familiar with this idea, but for completeness we will restate necessary definitions and results from \cite{maxRe}, and refer to \cite[Section 4-5]{maxRe} for the detailed proofs. Most statements are essentially the same as in \cite[Section 4]{maxRe}, and we will also clarify our differences and improvements over \cite{maxRe}.

We first recall the definitions of unmatched indices and unmatched terms from 
\cite[Definition 4.3-4.4]{maxRe} with the only  difference that now we also allow a possible 
$\wh{ \F_t^{z_0}}$-factor. For the reader's convenience we recall the notational conventions that, 
for any fixed $l_1,l_2 \in \N$, $\mathcal{I}_{l_1,l_2}$ denotes a set of $l_1$ lower case letters and $l_2$ upper case letters, in general denoted by $v_j~(1\leq j\leq l_1)$ and $V_j~(1\leq j\leq l_2)$ respectively. Each element in $\mathcal{I}_{l_1,l_2}$ will represent a summation index and the font type of each letter indicates the range of the summation for that index: the 
lower case letters $v_j$ run from 1 to $n$, and the upper case letters $V_j$ run from $n+1$ to $2n$.  
We denote the free sum over these $l:=l_1+l_2$ summation indices by $\sum_{\mathcal{I}_{l_1,l_2}}$. We also introduce a partial summation restricted to distinct indices, 
\begin{align}\label{distinct_sum}
	\sum^*_{\mathcal{I}_{l_1,l_2}}:=\sum_{v_1,\cdots v_{l_1}, V_1,\cdots,V_{l_2}} \Big(\prod_{j\neq j'}^{l_1} \delta_{ v_j \neq v_{j'}} \Big)\Big(\prod^{l_2}_{j\neq j'} \delta_{ V_j \neq V_{j'}} \Big) \Big( \prod_{j=1}^{l_1} \prod_{j'=1}^{l_2} \delta_{ v_j \neq \ud{V_{j'}}}  \Big),
\end{align}
\ie
each summation index in $\mathcal{I}_{l_1,l_2}$ is different from all the other indices and their conjugates.

\begin{definition}
	\label{def:unmatch_form}
	Given $l_1, l_2\in \mathbb{N}$ and a collection of lower and upper case summation indices 
	$\mathcal{I}_{l_1,l_2}=\{v_j\}_{j=1}^{l_1} \cup \{V_j\}_{j=1}^{l_2}$,
	we consider a product of $d$ generic shifted Green function entries $ \wh{G^{z_1}_{x_1y_1}} \wh{G^{z_2}_{x_2y_2}} \cdots \wh{G^{z_d}_{x_dy_d}}$ at the level $\eta_0=n^{-1+\epsilon}$ with different $z_i\in \C$
	and assign a summation index $v_j$, $V_j$ or their conjugates $\overline{v_j}, \ud{V_j}$ to each generic index $x_i, y_i$
	(\eg $x_1\equiv v_2, y_1\equiv \ud{V_5}, x_2\equiv \overline{v_3}, y_2 \equiv V_5$, etc.). 
	We also include a possible factor $\wh{ \F_t^{z_0}}$ given in (\ref{F}) with $z_0\in \C$.
	A term of the form 
	\begin{align}\label{form}
	(\wh{ \F_t^{z_0}} )^{\alpha_0}  \frac{1}{n^{l}}  \sum^*_{\mathcal{I}_{l_1,l_2}} 
	 \prod_{i=1}^{d}  \wh{G^{z_i}_{x_iy_i}}(\ii \eta_0),\qquad l=l_1+l_2,\quad\alpha_0=0,1,
	\end{align}
	with a concretely specified assignment is denoted by $P_d$.  The number of shifted Green 
	function factors $d$ is also referred to as the degree of such term. The collection of the
	 terms of the form in (\ref{form}) with degree $d$ is denoted by $\mathcal{P}_{d}$. 
	
	Given a term $P_d \in \mathcal{P}_d$ in (\ref{form}), we say that a lower case
	index $v_j \in \mathcal{I}_{l_1,l_2}$ is {\it matched} if the number of assignments of $v_j$ and
	its conjugate $\overline{v_j}$ to a row index in the product agrees with their number 
	of assignments to a column index, \ie 
	\begin{align}\label{match_condition}
		\#\{i : x_i\equiv {v_j}\}+\#\{i: x_i\equiv \overline{v_j}\}=\#\{i: y_i\equiv{v_j}\}+\#\{i: y_i\equiv\overline{v_j} \}.
	\end{align}
	Otherwise, we say that $v_j$ is an {\it unmatched} index. Similarly, we say that an upper case
	index $V_j \in \mathcal{I}_{l_1,l_2}$ is matched if 
	\begin{align}\label{match_condition_2}
		\#\{i: x_i\equiv {V_j}\}+\#\{i: x_i\equiv \ud{V_j}\}=\#\{i: y_i\equiv{V_j}\}+\#\{i: y_i\equiv\ud{V_j} \}.
	\end{align}
	Otherwise, $V_j$ is an unmatched index.	
	
	If all the summation indices in $\mathcal{I}_{l_1,l_2}$ are matched, then $P_d$ is a {\it matched term}. 
	Otherwise, if there exists at least one unmatched index, $P_d$ is an {\it unmatched term}. 
	If a term $P_d$ is unmatched, we indicate this fact by denoting it 
	by $P_d^o$. The collection of the unmatched terms of the form in (\ref{form})
	with degree $d$ is denoted by $\mathcal{P}_d^o \subset \mathcal{P}_d$. 
\end{definition}

To study the third order terms given by (\ref{third}) in general, compared to the form used
 in~\cite[Definition 4.3~(4.23)]{maxRe}, we not only allow the parameters $z_i \in \C$ of the
  shifted Green function entries in (\ref{form}) have different values, but also allow a possible
   factor $(\wh{\F^{z_0}_t})^{\alpha_0}$ in front of the shifted Green function entries;  see 
   also~\cite[Eq~(5.25)]{maxRe} for a similar form with such generalizations. We have the
    following lemma for these  unmatched terms.
\begin{lemma}\label{lemma:expand}
	Let  $P_d^o\in  \mathcal{P}_d^o$ be a given unmatched term in (\ref{form}),  with a fixed 
	degree $d\in \N$ and a fixed number of summation indices $l \in \N$ and $\eta_0=n^{-1+\epsilon}$. 
	Without loss of generality we assume the index $a$ assigned to $x_1$, \ie $P^o_d=P_d^o(x_1 \equiv a)$,
	 is an unmatched index satisfying 
	\begin{align}\label{choose}
		k^{(r)}_a:=\#\{i:x_i \equiv a, \overline{a}\} > k^{(c)}_a:=\#\{i:y_i \equiv a, \overline{a}\}.
	\end{align}
	Then there exist the following finite (bounded by a constant depending on $d$) subsets 
	\begin{align}\label{subset}
		{\mathcal{A}}^o_{d}\subset \mathcal{P}^o_{d},
		\quad {\mathcal{A}}^o_{>d}\subset  
		\mathcal{P}^o_{d+1}, \quad
		{\mathcal{B}}^o_{\ge d}\subset  \bigcup_{d'\ge d}\mathcal{P}^o_{d'}, \quad
		{\mathcal{C}}^o_{\ge d-2}\subset  \bigcup_{d'\ge d-2}\mathcal{P}^o_{d'}
	\end{align}
	such that we have the bound
	\begin{align}\label{aa}
		\big|\E[P_d^o(x_1 \equiv a)] \big| \lesssim &\sum_{P^o_{d} \in{\mathcal{A}}^o_{d}} \big|\E [ P^o_{d'} ]\big|+	
		\sum_{P^o_{d'} \in {\mathcal{A}}^o_{>d}} \big|\E [ P^o_{d'} ]\big| \nonumber\\
		&+ \frac{1}{\sqrt{n}}\sum_{P^o_{d'} \in {\mathcal{B}}^o_{\ge d}} \big|\E [ P^o_{d'} ]\big| 
		+ \frac{1}{n}\sum_{P^o_{d'} \in {\mathcal{C}}^o_{\ge d-2}} \big|\E [ P^o_{d'} ]\big| +O_\prec(n^{-3/2}),
	\end{align}
	where the number of summation indices in all
	elements of ${\mathcal{A}}^o_{d}$ is increased to $l+1$, and the number of $a/\bar a$-assignments as a row or column index in all
	elements of ${\mathcal{A}}^o_{d}$ is reduced to $k^{(r)}_a-1$ and $k^{(c)}_a-1$, respectively. In particular, 
	if $k^{(c)}_{a}=0$, then ${\mathcal{A}}^o_{d}$ is an empty set. 
	
	Moreover, if we further assume $d\geq 4$ or $l\geq 3$, then the last error term in (\ref{aa})
	can be improved to $O_\prec(n^{-3/2-\epsilon})$.
\end{lemma}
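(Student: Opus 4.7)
The plan is to mirror the iterative cumulant-expansion argument of \cite[Lemma~4.5]{maxRe}, adapted to accommodate (i) distinct spectral parameters $z_i$ among the Green function factors and (ii) a possible prefactor $\wh{\F_t^{z_0}}$. First I would single out one factor $\wh{G^{z_{i_0}}_{x_{i_0}y_{i_0}}}$ with $x_{i_0}\equiv a$ (or $\overline{a}$), which exists by the hypothesis $k_a^{(r)}>k_a^{(c)}$; by the zero-block structure of $H^{z_{i_0}}$, its column index $y_{i_0}$ is then necessarily of the opposite font. The renormalization identity $(H^{z_{i_0}}-\ii\eta_0)G^{z_{i_0}}=-I$, combined with the MDE $-[M^{z_{i_0}}]^{-1}=\ii\eta_0+Z+\langle M^{z_{i_0}}\rangle$, lets me express this single factor as a sum
\[
\wh{G^{z_{i_0}}_{x_{i_0}y_{i_0}}}=-\sum_{\mathfrak{v}} w_{x_{i_0}\mathfrak{v}}\bigl(M^{z_{i_0}}G^{z_{i_0}}\bigr)_{\mathfrak{v}y_{i_0}}+(\text{deterministic and fluctuation-averaged remainders}),
\]
where the remainders are small enough to be absorbed into the final error. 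Inserting this expression into the product (\ref{form}) and taking the expectation is the entry point for a cumulant expansion in the single entry $w_{x_{i_0}\mathfrak{v}}$, truncated at some large order $K$.

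The expansion produces one contribution per cumulant order $p+q\geq 1$, each carrying a prefactor $n^{-(p+q+1)/2}c^{(p,q+1)}$ together with a derivative of the remaining product (including $\wh{\F_t^{z_0}}$) in $w_{x_{i_0}\mathfrak{v}}$, summed over the new index $\mathfrak{v}$. The first-order term ($p+q=1$), of size $1/n$, either resolves a pair of $a/\overline{a}$ assignments via the identity contraction $\delta_{x_{i_0}\mathfrak{v}}$ inherited from the renormalization step, producing a degree-$d$ term with $l+1$ indices (into $\mathcal{A}^o_d$), or differentiates a further Green function factor or $\wh{\F_t^{z_0}}$ and raises the degree to $d+1$ (into $\mathcal{A}^o_{>d}$). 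The second-order cumulant term, with net prefactor $n^{-1/2}$, preserves an unmatched structure by parity and lands in $\mathcal{B}^o_{\ge d}$. The third-order cumulant term, with net prefactor $n^{-1}$, may further collapse two $\wh G$ factors via a Ward-identity-type summation on $\mathfrak{v}$, which is why the output may have degree as low as $d-2$; these are the elements of $\mathcal{C}^o_{\ge d-2}$. Cumulants of order $\ge 4$ produce the residual error $O_\prec(n^{-3/2})$, using $|\wh G|\prec\Psi$ and $|\wh{\F_t^z}|\prec 1$ from Theorem~\ref{local_thmw}.

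The main obstacle, and the reason one cannot directly cite \cite{maxRe}, is the bookkeeping required to check that after each expansion the resulting product is still an unmatched term in the sense of Definition~\ref{def:unmatch_form}, with the announced degree/index statistics: one must verify that the parity of the row/column assignments of every summation index is preserved (or shifts in the stated way) through every differentiation. This is clean for the core $\wh G$ factors because differentiation in $w_{x_{i_0}\mathfrak{v}}$ shifts the row and column counts of a given font symmetrically, but differentiating the scalar $\wh{\F_t^{z_0}}$ injects a fresh Green function factor $G^{z_0}_{\mathfrak{v}x_{i_0}}(\ii\eta_0)$ whose two indices must be classified anew; the distinct $z_i$, by contrast, do not interact with $w_{x_{i_0}\mathfrak{v}}$ at all and therefore leave the combinatorics untouched. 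Finally, the improved bound $O_\prec(n^{-3/2-\epsilon})$ when $d\ge 4$ or $l\ge 3$ follows because the high-order cumulant remainders then contain at least four shifted $\wh G$ factors (when $d\ge 4$) or at least three free summation indices (when $l\ge 3$); in either case one additional application of Theorem~\ref{local_thmw} at scale $\eta_0=n^{-1+\epsilon}$ supplies an extra factor $\Psi=n^{-\epsilon}$, yielding the claimed gain without any new analytic ingredient.
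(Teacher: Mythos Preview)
Your overall strategy---isolate one $\wh G$ factor carrying the unmatched index, expand it via the resolvent/MDE identity, and perform a cumulant expansion in the resulting $W$ entry---is precisely the paper's approach. However, your classification of the cumulant orders contains a genuine bookkeeping error that derails the improvement step.

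The critical $O_\prec(n^{-3/2})$ error does \emph{not} come from the cumulants of order $p+q\ge 4$ as you claim; those already contribute $O_\prec(n^{-3/2-\epsilon})$ because they necessarily contain at least one off-diagonal Green function factor of size $\Psi$. The bottleneck is instead the \emph{third-order} cumulants ($p+q+1=3$, i.e.\ your ``second-order cumulant term'' with net prefactor $n^{-1/2}$) restricted to the event that the fresh index $j$ or $J$ coincides with an existing summation index. That coincidence kills the free sum over the fresh index (losing a factor $n$), turns several $G$ entries into $O(1)$ diagonal-type factors, and produces exactly the $O_\prec(n^{-3/2})$ remainder. You omit this case entirely when you say the $n^{-1/2}$ terms simply ``land in $\mathcal{B}^o_{\ge d}$''. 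Relatedly, the degree reduction to $d-2$ in $\mathcal{C}^o_{\ge d-2}$ does not arise from any ``Ward-identity-type summation'' at the fourth cumulant order; it comes from the \emph{second-order} terms with the same index-coincidence mechanism, where two $G$ factors become diagonal and one extracts their deterministic leading parts.

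Because you misidentify the source of the $n^{-3/2}$ error, your improvement argument under $d\ge 4$ or $l\ge 3$ is aimed at the wrong terms. The mechanisms you name are morally correct but must be applied to the third-order-with-coincidence contributions (the paper's $H^{(1)}_{p+1,q}|_{J=B,\bar a}$ and $H^{(2)}_{q,p+1}|_{j=a,\ud B}$ in (\ref{diagonal_estimate})): when $d\ge 4$, at least one untouched $\wh G$ factor survives the two derivatives, yielding the extra $\Psi$; when $d=3$ and $l\ge 3$, the coincidence of $j$ with one old index still leaves enough distinct indices to force at least one genuinely off-diagonal $G$ entry, again supplying $\Psi$. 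Rewriting your third paragraph to track the index-coincidence sub-cases at the second and third cumulant orders, and then applying your surviving-$\wh G$/off-diagonal argument to those specific terms, would close the gap.
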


The proof of this lemma is postponed to the end of this section. Here we only remark that
the expansion in (\ref{aa}) was already proved in \cite[Lemma 4.8]{maxRe} for unmatched terms in (\ref{form}) without the $\wh{\F}$ factor; including this factor will be an easy exercise.  
The real novelty of Lemma \ref{lemma:expand} is the $n^{-\epsilon}$
improvement on this error term 
for $d\geq 4$ or $l\geq 3$. 	This improvement is essential for our entire proof to balance the
factor $\int |\Delta f| \sim n^{1/2}$ and it is one of the novelties of the current
GFT proof compared with the one
given  in~\cite{maxRe}.

We next briefly recall the following statements from \cite[Proposition 4.5]{maxRe} the origin of these unmatched terms in \eqref{aa} and their features and improvements compared to the initial term.
\begin{enumerate}
	\item The set ${\mathcal{A}}^o_{d}$ contains four types of unmatched terms (if exist) of degree $d$
	obtained by index replacements, \ie  
	\begin{align}\label{aaexpl}
		&m^{z_1}  \sum_{i: y_i \equiv a} m^{z_i} \E\big[P^o_d(x_1, y_i \rightarrow J)\big] 
		+m^{z_1}  \sum_{i: y_i \equiv \bar a} \mathfrak{m}^{z_i} \E\big[P^o_d(x_1, y_i \rightarrow J)\big]\nonumber\\
		&+ \mathfrak{m}^{z_1} \sum_{i:  y_i \equiv \bar a} m^{z_i}\E\big[ P ^o_d(x_1, y_i \rightarrow j)\big]
		+\mathfrak{m}^{z_1} \sum_{i: y_i \equiv a} \overline{\mathfrak{m}^{z_i}}\E\big[ P ^o_d(x_1, y_i \rightarrow j)\big],
	\end{align}	
	where both $j$ and $J$ are fresh (averaged) summation indices, although the 
	only important fact is that the number of $a/\bar a$-indices 
	is reduced by two (\ie one from the row and one from the column) 
	compared with the initial term $P_d^o(x_1 \equiv a)$.  
	For a concrete example of the above index replacement see \eqref{leading} below. 
	Once the number of $a/\bar a$-indices has been reduced to one, 
	the corresponding set ${\mathcal{A}}^o_{d}$ is then empty.

	\item The set ${\mathcal{A}}^o_{>d}$ corresponds to all the other second 
	order terms except (\ref{aaexpl}) with higher degrees, 
	\eg from the first two lines of~\eqref{example_expand} below; their degree
	is increased by one compared to the original term.
	
	\item The set ${\mathcal{B}}^o_{\ge d}$ comes from the third order cumulant expansion,
	indicated by the additional $1/\sqrt{n}$
	prefactor (see~\eqref{example_expand_term} below with $p+q+1=3$). 
	The degree remains at least $d$ and we gained $1/\sqrt{n}$ from the third order cumulants.
	
	\item The set ${\mathcal{C}}^o_{\ge d-2}$ coming with a prefactor $1/n$ has two very different sources. 
	On the one hand, it comes from the fourth order cumulant expansion
	carrying an extra $1/n$ and the degree remains at least $d$. 
	On the other hand, in the second order cumulant expansion 
	the fresh index $J$ or $j$ may coincide with an old index~(which yields an extra $1/n$ from the restricted summation)
	 creating a diagonal term. 
	The degree may be reduced by two from these diagonal elements; 
	see \eg (\ref{leading}) below with $J=B$ or $j=\ud{B}$.
\end{enumerate}

Note that all terms in the rhs. of~\eqref{aa} remain unmatched with improvements shown as above; this key
feature  allows us to iterate this estimate exactly in the same way as 
in~\cite[Section 4]{maxRe}. We thus proved the following.
\begin{proposition}\label{prop:unmatched}
	Given an unmatched term $P^{o}_d$ of the form in (\ref{form}) with a fixed degree $d\in \N$ and a fixed number of summation indices $l \in \N$ and $\eta_0=n^{-1+\epsilon}$. Then we have
	\begin{align}\label{unmatch_estimate}
	|\E[P^o_d]| =\OO_\prec(n^{-3/2}).
	\end{align}
	Moreover, if we further assume $d\geq 4$ or $l\geq 3$, then the estimate can be improved to
	\begin{align}\label{unmatch_estimate_improve}
		|\E[P^o_d]| =\OO_\prec(n^{-3/2-\epsilon}).
	\end{align}		
\end{proposition}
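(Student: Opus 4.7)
The plan is to iterate Lemma~\ref{lemma:expand} on the unmatched indices of $P^o_d$ until the accumulated small prefactors dominate. For the first bound $|\E[P^o_d]|=\OO_\prec(n^{-3/2})$, one essentially follows the strategy of \cite[Proposition~4.5]{maxRe}: at each iteration step, select an unmatched index $a$ with maximal assignment multiplicity and apply Lemma~\ref{lemma:expand} to expand $\E[P^o_d]$ as a finite linear combination of (i) terms in $\mathcal{A}^o_d$ with the same degree but reduced $a/\bar a$-multiplicity and one more summation index, (ii) terms in $\mathcal{A}^o_{>d}$ of higher degree, (iii) terms in $\mathcal{B}^o_{\ge d}$ carrying a gained prefactor $n^{-1/2}$, and (iv) terms in $\mathcal{C}^o_{\ge d-2}$ carrying a gained prefactor $n^{-1}$, plus an explicit residual of size $n^{-3/2}$. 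The iteration closes in finitely many steps because a suitable complexity measure, for instance the lexicographic pair of the total $a/\bar a$-multiplicity and the degree, strictly decreases in branch~(i) while branches~(ii)--(iv) each contribute a gain factor. Combining the accumulated gains with the a priori local-law bound $\OO_\prec(\Psi^d)=\OO_\prec(n^{-d\epsilon})$ on each leaf yields $\OO_\prec(n^{-3/2})$. The only modification compared with \cite{maxRe} is the possible prefactor $\wh{\F^{z_0}_t}$, which by~\eqref{F} is $\OO_\prec(1)$ and whose differentiation in the cumulant expansion produces, via~\eqref{rule_12}, one further Green function entry $G^{z_0}$ that is simply absorbed into the product without altering the combinatorics.

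For the improved bound $\OO_\prec(n^{-3/2-\epsilon})$ under the hypothesis $d\ge 4$ or $l\ge 3$, the idea is to feed the strengthened residual of Lemma~\ref{lemma:expand} into the same iteration. The key point is to verify that the hypothesis $d\ge 4$ or $l\ge 3$ propagates through the iteration tree. In branch~(i) the number of summation indices increases to $l+1$, so $l\ge 3$ is preserved once it holds; in branch~(ii) the degree increases to $d+1\ge 5$; in branch~(iii) the degree remains $\ge d$, so $d\ge 4$ is preserved when it initially holds. Hence along these three branches the improved residual $\OO_\prec(n^{-3/2-\epsilon})$ is available at every iteration step. The only branch that can violate the hypothesis is~(iv), where the degree may drop to $d-2$; however this branch already carries a prefactor $n^{-1}$, and applying the first part of Proposition~\ref{prop:unmatched} (now established) to the resulting sub-terms bounds their contribution by $n^{-1}\cdot \OO_\prec(n^{-3/2})=\OO_\prec(n^{-5/2})$, which is comfortably within the target.

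The main technical obstacle is to ensure that the number of leaves in the iteration tree does not grow so fast as to dilute the $n^{-\epsilon}$ improvement. Since only branch~(i) increases the number of summation indices, and the iteration depth is controlled by the monovariant above, the total number of sub-terms produced is bounded by a constant depending only on the initial $(d,l)$. The resulting multiplicative overhead is at most $(\log n)^C$ and hence negligible against the $n^{-\epsilon}$ gain. The one piece of bookkeeping requiring care concerns the fresh indices $j, J$ introduced in the replacement formula~\eqref{aaexpl}, which may coincide with pre-existing summation indices and so occasionally funnel a term into branch~(iv); this possibility is already accounted for by the $n^{-1}$ prefactor of that branch, and is handled by the argument just described.
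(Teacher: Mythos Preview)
Your proposal is correct and follows the same iterative strategy as the paper, which simply iterates Lemma~\ref{lemma:expand} exactly as in \cite[Section~4]{maxRe}. There is one small gap in your propagation argument: for branches~(ii) and~(iii) you only verify preservation of $d\ge 4$, not of $l\ge 3$, so the case where the initial hypothesis is $l\ge 3$ with $d<4$ is not fully covered. For branch~(iii) this is immaterial since the $n^{-1/2}$ prefactor combined with the already-established basic bound~\eqref{unmatch_estimate} gives $\OO_\prec(n^{-2})$, so branch~(iii) should simply be treated as terminal, just as you do for branch~(iv). For branch~(ii), one observes from the explicit expansion~\eqref{case1} in the proof of Lemma~\ref{lemma:expand} that every second-order term carries a fresh summation index $J$, $j$, or the implicit averaging index in $\langle\wh{G^{z_1}}\rangle$, so $l$ in fact increases to $l+1$ in branch~(ii) as well and the hypothesis $l\ge 3$ propagates. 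With this minor clarification your argument goes through.
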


Armed with Proposition \ref{prop:unmatched}, we are ready to obtain the improved estimate in Lemma \ref{lemma_third_order}, i.e. to extract the deterministic leading term and a small error for the third order terms in~(\ref{third}) with $a\neq \ud{B}$.

\begin{proof}[Proof of Lemma \ref{lemma_third_order}]
	
We first consider the last third order term in (\ref{third}) with restricted summations $a\neq \ud{B}$ as an example, \ie
\begin{align}\label{third_aB_goal}
	\frac{\sqrt{n}}{n^{2}} \sum_{a \neq \ud{B}}\E \Big[ {\ga_{aB}} {\gb_{Ba}} {\gb_{Ba}} \Big]=\frac{\sqrt{n}}{n^{2}} \sum_{a \neq \ud{B}}\E \Big[ \wh{\ga_{aB}} \wh{\gb_{Ba}} \wh{\gb_{Ba}} \Big].
\end{align}
All the other third order terms in (\ref{third}) with $a\neq \ud{B}$ can be handled similarly and we only sketch the proofs for the other terms later in (\ref{general_third}).

From Definition~\ref{def:unmatch_form}, since the summation index $a$~(or $B$) are 
assigned three times as the row/column index of the shifted Green function entries, 
the term in (\ref{third_aB_goal}) is an unmatched term in $\mathcal{P}_3^o$ with $l=2$ and with a prefactor $\sqrt{n}$. 
Next we will use iterative expansions as in (\ref{aa}) and the improved bound in~(\ref{unmatch_estimate_improve}) 
to show the refined estimate of the unmatched term in (\ref{third_aB_goal}) (omitting the prefactor $\sqrt{n}$), \ie
\begin{align}\label{p_3_o0}
	\frac{1}{n^{2}} \sum_{a \neq \ud{B}}\E \Big[ \wh{\ga_{aB}} \wh{\gb_{Ba}}
	 \wh{\gb_{Ba}} \Big]=\frac{C}{n^{3/2}} \big(\m^{z_1}\big)^2 (\overline{\m^{z_2}})^4 +O_\prec(n^{-3/2-\epsilon}),
\end{align}
for some numerical constant $C\in \R$.

Recall the following identity for the shifted Green function \cite[Eq (5.2)]{CES19}:
\begin{align}\label{identity_0}
	\wh{\gz}=-M^{z} \ud{ WG^{z}}+\<\wh \gz\> M^z G^z,
\end{align}
with $\wh{\gz}=\gz-M^{z}$ and
\begin{align}\label{Mmatrix_00}
	 W:=\begin{pmatrix}
		0  &  X  \\
		X^*   & 0
	\end{pmatrix}, \qquad 
M^{z}=\begin{pmatrix}
		m^{z}  &  \mathfrak{m}^{z}  \\
		\overline{\mathfrak{m}^{z}}   & m^{z}
	\end{pmatrix}.
\end{align}
This formula needs some explanation. First, the normalized trace of the resolvent can be expressed in different
ways:
\begin{align}\label{normal_trace}
	\<{\gz}\> =\frac{1}{n} \sum_{v=1}^{n} {\gz_{vv}}=\frac{1}{n} \sum_{V=n+1}^{2n} {\gz_{VV}}
	=\frac{1}{2n} \sum_{\mathfrak{v}=1}^{2n} {\gz_{\mathfrak{vv}}},
\end{align}
which follows from the spectral symmetry induced by the $2 \times 2$ block matrix structure in (\ref{initial}). 
Note that the same relation holds true for $\langle \wh{\gz}\rangle $ since the
diagonal entries of $M^z$ in \eqref{Mmatrix_00} are the same.
Second, we need to recall the underline notation $\ud{ WG^{z}}$. 
For a function $f(W)$ of the random matrix 
$W$, we define
\begin{align}\label{ud_W}
	\ud{ Wf(W)}:= Wf(W)-\wt \E \wt W (\partial_{\wt W} f)(W),
\end{align}
where  $\wt W$ is independent of $W$ defined as in (\ref{Mmatrix_00}) with $X$ being replaced 
with a complex Ginibre ensemble. Here $\partial_{\widetilde{W}}$ denoted the directional derivative
in the direction $\widetilde{W}$, the expectation in \eqref{ud_W} is with respect to this matrix.
In particular we have $\ud{ WG^{z}} = WG^{z} + \< G^z \>G^{z}$, where we used \eqref{normal_trace}.
The underline is  a simple renormalization: it is designed such that for Gaussian matrices
we have exactly $\E \ud{ Wf(W)}=0$ and for general matrices $\E \ud{ Wf(W)}$ is given by the third
and higher order cumulants, i.e. it effectively removes the second order cumulants.

Applying the identity in (\ref{identity_0}) on the first Green function factor $\wh {\ga_{aB}}$ in (\ref{p_3_o0})
and performing cumulant expansion formula on the resulting $\ud{W \gz}$ given in (\ref{ud_W}), we have
\begin{align}\label{example_expand}
	\frac{1}{n^{2}} \sum_{a \neq \ud B} \E \Big[ \wh{\ga_{aB}} \wh{\gb_{Ba}} \wh{\gb_{Ba}}\Big]=&-\frac{m^{z_1}}{n^{3}}\sum_{a \neq \ud B} \sum_{J} 
	\E \Big[\frac{\partial (\wh{\gb_{B a}} \wh{\gb_{Ba}})}{\partial w_{Ja}} \ga_{J B} \Big]
	+\frac{m^{z_1}}{n^{2}}\sum_{a \neq \ud B} \E \Big[\ga_{a B} \wh{\gb_{B a}} \wh{\gb_{B a}} \<\wh \ga \> \Big]\nonumber\\
	&-\frac{\mathfrak{m}^{z_1}}{n^{3}}\sum_{a \neq \ud B} \sum_{j} 
	\E \Big[\frac{\partial(\wh{\gb_{B a}} \wh{\gb_{ B a}}) }{\partial w_{j\bar a}} \ga_{j B} \Big]
	+\frac{\mathfrak{m}^{z_1}}{n^{2}}\sum_{a \neq \ud B}
	\E \Big[\ga_{\bar a B} \wh{\gb_{B a}} \wh{\gb_{B a}} \<\wh \ga \> \Big]\nonumber\\
	&+\sum_{p+q+1 = 3}^4 \Big(H^{(1)}_{p+1,q}+H^{(2)}_{q,p+1}\Big)+O_\prec(n^{-3/2-\epsilon}),
\end{align}
where $H^{(1)}_{p+1,q}$ and $H^{(2)}_{q,p+1}$ are the third and fourth order terms given by
\begin{align}\label{example_expand_term}
	H^{(1)}_{p+1,q}:=&-\frac{m^{z_1}}{n^2}  \frac{c^{(p+1,q)} }{p!q!n^{\frac{p+q+1}{2}}} 
	\Big( \sum_{a \neq \ud B}  \sum_{J} 
	\E \Big[\frac{\partial^{p+q} (\wh{\gb_{B a}} \wh{\gb_{B a}}  \ga_{JB})}{\partial w^{p}_{aJ} \partial w^q_{Ja}}  \Big] \Big),\nonumber\\
	H^{(2)}_{q,p+1}:=&-\frac{\mathfrak{m}^{z_1}}{n^2}  \frac{ c^{(q,p+1)} }{p!q!n^{\frac{p+q+1}{2}}} 
	\Big( \sum_{a \neq \ud B}  \sum_{j} 
	\E \Big[\frac{\partial^{p+q} (\wh{\gb_{B a}} \wh{\gb_{B a}}  \ga_{j B})}{\partial w^{p}_{\bar aj} \partial w^q_{j \bar a}}  \Big] \Big),
\end{align}
with $c^{(p,q)}$ being the $(p,q)$-th cumulants of the normalized i.i.d. entries $\sqrt{n}w_{aB}$. 
The last error term in \eqref{example_expand}
is from the fifth order cumulants using the local law in (\ref{localg}) and the moment condition
 in (\ref{eq:hmb}), since they contain at least one off-diagonal Green function 
 factor that contributes $O_\prec(n^{-\epsilon})$.

Recalling Statement (1) below Lemma \ref{lemma:expand}, the leading second order terms of degree three from the first two lines of (\ref{example_expand}) are obtained by index replacements as in (\ref{aaexpl}), \ie
\begin{align}\label{leading}
	\frac{m^{z_1}m^{z_2}}{n^{3}} \sum_{a \neq \ud B \neq \ud J} \E \Big[ \wh{\ga_{JB}} \wh{\gb_{BJ}} \wh{\gb_{Ba}}\Big]+\frac{\m^{z_1}\overline{\m^{z_2}}}{n^{3}} \sum_{a \neq \ud B \neq j} \E \Big[ \wh{\ga_{jB}} \wh{\gb_{Bj}} \wh{\gb_{Ba}}\Big]=O_\prec(n^{-3/2-\epsilon}),
\end{align}
where the last estimate follows directly from (\ref{unmatch_estimate_improve}) since the number of summation indices $l=3$. In addition, from Statement (2) and (4) below Lemma \ref{lemma:expand}, all the other second order terms with higher degrees
and the cases with an index coincidence, \ie $J=\bar a$ or $B$, $j=a$ or $\ud{B}$ are given by
\begin{align}
	\sum_{P^o_{d'} \in {\mathcal{A}}^o_{>3}} \E [ P^o_{d'} ] +\frac{1}{n}\sum_{P^o_{d'} \in {\mathcal{C}}^o_{\ge 1}} 
	\E [ P^o_{d'} ]=O_\prec(n^{-3/2-\epsilon}),
\end{align}
with ${\mathcal{A}}^o_{>3}\subset  
\mathcal{P}^o_{4}$ and ${\mathcal{C}}^o_{\ge 1}\subset  \bigcup_{d'\ge 1}\mathcal{P}^o_{d'}$, where the last estimate follows from (\ref{unmatch_estimate_improve}) with degree $d=4$. Moreover, from Statement (4) below Lemma \ref{lemma:expand}, the fourth order terms in (\ref{example_expand}) are given by
\begin{align}\label{fourth_terms}
	&\sum_{p+q+1= 4} \left(H^{(1)}_{p+1,q}+ H^{(2)}_{q,p+1}\right)=\frac{1}{n}\sum_{P^o_{d'} \in {\mathcal{C}}^o_{\ge 3}} \E [ P^o_{d'} ]+O_{\prec}(n^{-2})=O_{\prec}(n^{-2}),
\end{align}
with ${\mathcal{C}}^o_{\ge 3}\subset  \bigcup_{d'\ge d}\mathcal{P}^o_{d'}$, where the last estimate follows from (\ref{unmatch_estimate}).

Next we focus on the most critical
third order terms in (\ref{example_expand_term}) with $p+q+1=3$. 
Recalling \cite[Remark 4.6]{maxRe} or Statement (3) below
Lemma \ref{lemma:expand}, the error term of size $O_\prec(n^{-3/2})$ in~(\ref{aa})
indeed comes from the third order terms with an index coincidence, \ie $J=B$ or 
$\bar a$, $j=a$ or $\ud B$. More precisely, we have
\begin{align}\label{third_terms}
	\sum_{p+q+1=3} \left(H^{(1)}_{p+1,q}+ H^{(2)}_{q,p+1}\right)=&\frac{1}{\sqrt{n}}
	\sum_{P^o_{d'} \in {\mathcal{B}}^o_{\ge 3}} \E [ P^o_{d'} ]+\sum_{p+q+1=3} H^{(1)}_{p+1,q}
	\Big|_{J=B,\bar a}+ \sum_{p+q+1=3} H^{(2)}_{q,p+1}\Big|_{j=a,\ud B}\nonumber\\
	=&\sum_{p+q+1=3} H^{(1)}_{p+1,q}\Big|_{J=B,\bar a}+\sum_{p+q+1=3} H^{(2)}_{q,p+1}\Big|_{j=a,\ud B}
	+O_\prec(n^{-2}),
\end{align}
where ${\mathcal{B}}^o_{\ge 3}\subset  \bigcup_{d'\ge 3}\mathcal{P}^o_{d'}$ and the last error term $O_\prec(n^{-2})$ follows from (\ref{unmatch_estimate}). 

Using that $m^{z} \lesssim \sqrt{n}\eta$ with $\eta=n^{-1+\epsilon}$, it is easy to check 
that $H^{(1)}_{p+1,q}$ in (\ref{example_expand_term}) with $J=B,\overline{a}$ can be bounded by
$$\sum_{p+q+1=3} H^{(1)}_{p+1,q}\Big|_{J=B,\bar a}=O_\prec(n^{-2+\epsilon}).$$
Moreover, by direct computations, $H^{(2)}_{q,p+1}$ in (\ref{example_expand_term}) with $j=a$ contain at least three off-diagonal Green function entries and thus can be bounded by
$$\sum_{p+q+1=3} H^{(2)}_{q,p+1}\Big|_{j=a} =O_\prec\big( n^{-3/2}\Psi^3\big) =O_\prec\big(n^{-3/2-3\epsilon}\big).$$
Similarly, for $j=\ud{B}$, the resulting terms from $H^{(2)}_{q,p+1}\Big|_{j=\ud{B}}$ with $(p,q)=(2,0)$ and $(1,1)$ contain at least two off-diagonal Green function entries and thus can be bounded by $O_\prec(n^{-3/2-2\epsilon})$.
For the last remaining case with $(p,q)=(0,2)$ and $j=\ud{B}$, by direct computations, we have 
\begin{align}\label{expand_result}
	H^{(2)}_{2,1}\Big|_{j=\ud{B}}=&-\frac{\mathfrak{m}^{z_1}}{n^{7/2}}  \sum_{a \neq \ud B} \sum_{j}
	\E \Big[\frac{\partial^{2} (\wh{\gb_{B a}} \wh{\gb_{B a}}  \ga_{j B})}{ \partial w^2_{j \bar a}} \delta_{j=\ud{B}} \Big]\nonumber\\
	=&-\frac{\mathfrak{m}^{z_1}}{n^{7/2}} \sum_{a \neq \ud B} \E \Big[\gb_{B \ud B} \gb_{\bar a a }\gb_{B \ud B} \gb_{\bar a a } \ga_{\ud B B}\Big]+O_\prec(n^{-3/2-\epsilon})\nonumber\\
	=&-\frac{1}{n^{3/2}} \big(\m^{z_1}\big)^2 (\overline{\m^{z_2}})^4  +O_\prec(n^{-3/2-\epsilon}),
\end{align}
where we also used the local law trivially. To sum up, we have obtained from (\ref{example_expand}) the improved estimate in (\ref{p_3_o0}). Thus the last third order term in (\ref{third}) satisfies
\begin{align}\label{p_3_o0_result}
	\frac{\sqrt{n}}{n^{2}} \sum_{a \neq \ud{B}}\E \Big[ {\ga_{aB}} {\gb_{Ba}} {\gb_{Ba}} \Big]=\frac{C}{n} \big(\m^{z_1}\big)^2 (\overline{\m^{z_2}})^4 +O_\prec(n^{-1-\epsilon}).
\end{align}
Notice that $\m^{z}=zu^{z}$ from (\ref{Mmatrix_0}) and both $f(z)$ and $u^{z}$ are radial functions in $|z|$,  then
\begin{align}\label{pq_m_int}
	\int_{\C} \Delta f(z) \big(\m^{z}\big)^p  (\overline{\m^{z}})^q \dd^2 z=0, \qquad \mbox{ unless $p=q$}.
\end{align}
Thus the leading deterministic term in (\ref{p_3_o0_result}) satisfies the integral condition in (\ref{delta_int}).

The other third order terms in (\ref{third}) with restricted summations $a\neq \ud{B}$ can be estimated similarly. 
More precisely, applying (\ref{identity_0}) to the $\ga_{aB}$ factor in each term in (\ref{third}) with 
$a\neq \ud{B}$~(omitting the $n^{1/2}$ prefactor) and performing cumulant expansions as in (\ref{example_expand}),
 the corresponding third order expansion terms with an index coincidence $j=\ud{B}$ that contribute 
 $O_\prec(n^{-3/2})$ as in (\ref{expand_result}) are given by, in general,
\begin{align}\label{general_third}
	\sum_{p+q+1=3} H^{(2)}_{q,p+1}\Big|_{j=\ud{B}}=&-\sum_{p+q+1=3}  \frac{\mathfrak{m}^{z_1}}{n^{7/2}} \sum_{a \neq \ud B}  \sum_{j} 
	\E \Big[\frac{\partial^{2}\big( (\wh{\F_t^{z_0}})^{\alpha_0}\wh{G^{z}_{x_2 y_2}} \wh{G^{z'}_{x_3y_3}}  \ga_{j B}\big)}{\partial w^{p}_{\bar aj} \partial w^q_{j \bar a}} \delta_{j=\ud B} \Big]\nonumber\\
	=&\mbox{M-terms}(z_1,z_2)+O_\prec(n^{-3/2-\epsilon})
\end{align}
where $\alpha_0=0$ or $1$, $z_0,z,z'$ stands for either $z_1$ or $z_2$ and $(x_2,y_2,x_3,y_3)$ 
is an assignment of two $a$'s and two $B$'s. The last line follows trivially from the local law in (\ref{localg}) 
and $\mbox{M-terms}(z_1,z_2)$ is a linear combination of products of $\m^{z_1}$ and $\m^{z_2}$ 
(multiplied by $n^{-3/2}$) that vanishes after the $z$-integrations using (\ref{pq_m_int}).

In summary, we have the following improved estimate for all the third order terms with $a \neq \ud{B}$,
\begin{align}\label{third_off_goal}
	\sum_{p+q+1=3}\mathcal{K}^{z_1,z_2}_{p+1,q}\Big|_{a \neq \ud B}=\mathcal{M}^{(1)}_{p+1,q}(z_1,z_2)+O_\prec(n^{-1-\epsilon}),
\end{align}
where $\mathcal{M}^{(1)}_{p+1,q}(z_1,z_2)$ is a linear combination of products of $\m^{z_1}$ and $\m^{z_2}$
 (multiplied by $n^{-1}$) satisfying the integral condition in (\ref{delta_int}). 
 Therefore using the $L^1$ norm in (\ref{deltaf}), we have proved Lemma \ref{lemma_third_order}.
\end{proof}

We now finish this section with the proof of Lemma \ref{lemma:expand}.

\begin{proof}[Proof of Lemma \ref{lemma:expand}]
	The expansion in (\ref{aa}) with the indicated $O_\prec(n^{-3/2})$ error was already proved 
	in~\cite[Lemma 4.8]{maxRe} for unmatched terms in (\ref{form}) without the $\wh \F$ prefactor, \ie $\alpha_0=0$.
	 As explained below~\cite[Eq (5.25)]{maxRe}, the same result (\ref{aa}) still hold true with a possible 
	 $\wh \F$ factor in front and we now briefly explain the minor modifications needed in the proof.
	  Starting from the cumulant expansions in \cite[Eq (4.54)-(4.55)]{maxRe}, we get extra
	   expansion terms from acting partial derivatives on $\wh \F$ using the latter differentiation rule 
	   in (\ref{rule_12}) for $\wh \F$. For these extra terms, the numbers of index assignments to the
	    row/column of Green function entries remain the same as the standard cases without $\wh \F$, 
	    thus they are still unmatched terms. 
	Moreover,  these extra terms have higher degrees than the initial term since taking partial 
	derivatives of $\wh \F$ typically yields an extra Green function factor from (\ref{rule_12}). 
	In this way we can prove (\ref{aa}) with the $\wh\F$ prefactor and for brevity we omit the details.

Next we focus on improving the last error term $\OO_\prec(n^{-3/2})$ in (\ref{aa}) slightly 
to $O_\prec(n^{-3/2-\epsilon})$ for $d\geq 4$ or $l\geq 3$. To present the proof in full generality, we always include
	the $\wh {\F^{z_0}}$ factor in front. 
	Let $P_d^o(x_1 \equiv a)$ be a given term in $\mathcal{P}_d^o$ 
	with an unmatched index $a$ satisfying (\ref{choose}) and without loss of generality 
	$x_1 \equiv a$, $y_1 \not\equiv a,\bar a$. Using the identity in (\ref{identity_0})
	on the first Green function factor $\wh{\gz_{ay_1}}$ and performing the cumulant expansions 
	similarly as in (\ref{example_expand}), we have
	\begin{align}\label{case1}
		\E[P_d^o(x_1 \equiv a)]=&
		-\frac{m^{z_1}}{n^{l+1}}\sum^{*}_{\mathcal{I}_{l_1,l_2}} \sum_{J} 
		\E \Big[\frac{\partial \big(\wh {\F_t^{z_0}} \prod_{i=2}^{d} \wh{G^{z_i}_{x_iy_i}}\big)}{\partial w_{Ja}} G^{z_1}_{J y_1} \Big]+\frac{m^{z_1}}{n^{l}}\sum^{*}_{\mathcal{I}_{l_1,l_2}}
		\E \Big[\wh {\F_t^{z_0}} G^{z_1}_{a y_1} \prod_{i=2}^{d} \wh{G^{z_i}_{x_iy_i}} \<\wh \ga \> \Big]\nonumber\\
		&-\frac{\mathfrak{m}^{z_1}}{n^{l+1}}\sum^{*}_{\mathcal{I}_{l_1,l_2}}\sum_{j} 
		\E \Big[\frac{\partial\big(\wh {\F_t^{z_0}} \prod_{i=2}^{d} \wh{ G^{z_i}_{x_iy_i}}\big)}{\partial w_{j\bar a}} G^{z_1}_{j y_1} \Big]+\frac{\mathfrak{m}^{z_1}}{n^{l}}\sum^{*}_{\mathcal{I}_{l_1,l_2}} \E \Big[\wh {\F_t^{z_0}} G^{z_1}_{\bar a y_1}
		\prod_{i=2}^{d} \wh{G^{z_i}_{x_iy_i}} \<\wh \ga \> \Big]
		\nonumber\\
		&+\sum_{p+q+1=3}^{4} \Big(H^{(1)}_{p+1,q}+H^{(2)}_{q,p+1}\Big)+O_\prec(n^{-3/2-\epsilon}),
	\end{align}
	where $\sum^*_{\mathcal{I}}$ is the restricted summation defined in (\ref{distinct_sum}), 
	and the last error term is from the fifth order cumulants using the local law in (\ref{localg}) 
	and the moment condition in (\ref{eq:hmb}). The third and fourth order terms
	$H^{(1)}_{p+1,q}$ and $H^{(2)}_{q,p+1}$ with $p+q+1=3,4$ are given by
	\begin{align}\label{higher}
		H^{(1)}_{p+1,q}:=&- \frac{m^{z_1} c^{(p+1,q)} }{p!q! n^{\frac{p+q+1}{2}+l}}  \sum^{*}_{\mathcal{I}_{l_1,l_2}} \sum_{J} 
		\E \Big[\frac{\partial^{p+q} \big(\wh {\F_t^{z_0}} \prod_{i=2}^{d} \wh{G^{z_i}_{x_iy_i}}G^{z_1}_{J y_1}\big)}{\partial w^{p}_{aJ} \partial w^q_{Ja}}  \Big],\nonumber\\
		H^{(2)}_{q,p+1}:=	&- \frac{\mathfrak{m}^{z_1}c^{(q,p+1)} }{p!q! n^{\frac{p+q+1}{2}+l}}  \sum^{*}_{\mathcal{I}_{l_1,l_2}} 
		\sum_{j} \E \Big[\frac{\partial^{p+q} 
			\big(\wh {\F_t^{z_0}} \prod_{i=2}^{d} \wh{G^{z_i}_{x_iy_i}}G^{z_1}_{j y_1}\big)}{\partial w^{p}_{\bar aj} \partial w^q_{j \bar a}}  \Big].
	\end{align}
We refer to Statement (1)-(4) below Lemma \ref{lemma:expand} for a brief discussion on the second, third and fourth order terms in (\ref{case1}).

	Recall from \cite[Remark 4.6]{maxRe}, that the critical error term $\OO_\prec(n^{-3/2})$ in (\ref{aa}) 
	comes only  from the third order terms $H^{(1)}_{p+1,q}$ and $H^{(2)}_{q,p+1}~(p+q+1=3)$ with an index coincidence, 
	\ie when $J$ or $j$ coincides with an original summation index in $\mathcal{I}_{l_1,l_2}$ or 
	its index conjugation; all other error terms were already smaller in \cite{maxRe}. 
	For a concrete example see (\ref{expand_result}) with an index coincidence $j=\ud{B}$.
	It then suffices to estimate, for $p+q+1=3$,
	\begin{align}\label{diagonal_estimate}
		H^{(2)}_{q,p+1}\Big|_{j \in \mathcal{I}_{l_1,l_2}}:=\frac{1}{n^{l}}  \sum^{*}_{\mathcal{I}_{l_1,l_2}} \frac{1}{n^{3/2}}
		\sum_{j} \E \Big[\frac{\partial^{2} \big(\wh {\F_t^{z_0}}
			\prod_{i=2}^{d} \wh{G^{z_i}_{x_iy_i}}G^{z_1}_{j y_1}\big)}{\partial w^{p}_{\bar aj} \partial w^q_{j \bar a}}  \Big( 	\sum_{k=1}^{l_{1}}\delta_{j=v_k}+\sum_{k=1}^{l_{2}} \delta_{j=\ud{V_k}} \Big)\Big],
	\end{align}
	and the other part $H^{(1)}_{p+1,q}$ in (\ref{higher}) restricted to $J \in \mathcal{I}_{l_1,l_2}$ can be estimated similarly.
	
	If we assume $d\geq 4$, then at least one $\wh G$ factor survives after taking derivatives 
	$\partial^2/\partial w^{p}_{\bar aj} \partial w^q_{j \bar a}$ with $p+q+1=3$, so (\ref{diagonal_estimate}) can be bounded by
	$O_\prec(n^{-3/2-\epsilon})$ since $|\wh{G_{xy}}| \prec \Psi=n^{-\epsilon}$. Otherwise if $d=3$, then there is only 
	one scenario when no $\wh{G}$ survives after $\partial^2/\partial w^{p}_{\bar aj} \partial w^q_{j \bar a}$, \ie 
	when one partial derivative acts on $\wh{G^{z_2}_{x_2y_2}}$ and another one acts on $\wh{G^{z_3}_{x_3y_3}}$.
	If we further assume $l=l_1+l_2\geq 3$, then the resulting products of Green function entries with the index coincidence $j=v_k~(1\leq k\leq l_1)$ or $j=\ud{V_k}~(1\leq k\leq l_2)$ contain at least one off-diagonal Green function factor yielding an additional $O_\prec(n^{-\epsilon})$ 
	from the local law. Hence  (\ref{diagonal_estimate}) can be bounded by $O_\prec(n^{-3/2-\epsilon})$ and
	this finishes the proof of Lemma \ref{lemma:expand}. 
\end{proof}

\section{Proof of Lemma \ref{lemma:aaBB}-\ref{lemma4} and Lemma \ref{lemma5}}\label{sec:some_lemma}
In this section we prove some technical lemmas used in the Sections \ref{subsec:part2}--\ref{subsec:part3}. 
\begin{proof}[Proof of Lemma \ref{lemma:aaBB}]
	Recall that the eigenvalues of $H^{z}$ are $\{\lambda^z_{\pm i}\}_{i \in \llbracket 1,n \rrbracket}$ with 
	$\lambda^z_{-i}=-\lambda^z_{i}$, and the corresponding normalized eigenvectors of 
	$\lambda^z_{\pm i}$ are given by $\mathbf{w}^z_{\pm i}=(\mathbf{u}^z_{i}, \pm \mathbf{v}^z_{i})$.
	Using spectral decomposition, we have
	\begin{align}
		\Im G^z_{aa}(\ii \eta)=\Im\Big( \sum_{j=1}^{n} \frac{|\<\mathbf{e}_{a}, \mathbf{u}^z_{i}\>|^2}{\lambda^z_{j}-\ii \eta}
		+\sum_{j=1}^{n} \frac{|\<\mathbf{e}_{a},\mathbf{u}^z_{j}\>|^2}{-\lambda^z_{j}-\ii \eta} \Big)
		=2\Big( \sum_{\lambda^z_j\leq n^{-\zeta}}+
		\sum_{\lambda^z_j\geq n^{-\zeta}}\Big)\frac{ \eta |\<\mathbf{e}_{a}, \mathbf{u}^z_{j}\>|^2}{(\lambda^z_{j})^2+\eta^2},
	\end{align}
	with any small $\zeta>0$. Recall that $\mathbf{w}^z_{i}$ denotes the normalized eigenvector of $\lambda^z_i$. By the delocalization of eigenvectors in Corollary~\ref{cor:eigenvector} we have $|\<\mathbf{w}^z_{i}, \mathbf{x}\>|^2 \prec n^{-1}$, for eigenvectors corresponding to $|\lambda^z_i|\leq n^{-\zeta}$, and for any deterministic unit vector $\mathbf{x}\in \C^{n}$.
		We thus have
	\begin{align}\label{temp1}
		\sum_{\lambda_j^z\leq n^{-\zeta}}\frac{\eta |\<\mathbf{e}_{a}, \mathbf{u}^z_{j}\>|^2}{(\lambda^z_{j})^2+\eta^2} 
		\prec\frac{\eta}{n}\sum_{\lambda^z_j\leq n^{-\zeta}}\frac{1}{(\lambda^z_{j})^2+\eta^2}  \leq \Im \<\gz(\ii \eta)\>.
	\end{align}
	For the remaining eigenvalues $\lambda^z_j\geq n^{-\zeta}$, using that $\sum_j |\<\mathbf{e}_{a}, \mathbf{u}^z_{j}\>|^2 \le \| \mathbf{e}_{a}\|^2=1$, 
	we have
	$$\sum_{\lambda^z_j\geq n^{-\zeta}}\frac{ \eta|\<\mathbf{e}_{a}, \mathbf{u}^z_{j}\>|^2}{(\lambda^z_{j})^2+\eta^2} \leq 
	n^{2\zeta}\eta\sum_j |\<\mathbf{e}_{a}, \mathbf{u}^z_{j}\>|^2 \le n^{2\zeta}\eta,$$
	with $\zeta>0$ an arbitrary small number. Combining this 
	with (\ref{temp1}), we conclude the proof of Lemma~\ref{lemma:aaBB}.
\end{proof}

\begin{proof}[Proof of Lemma \ref{lemma4}]
	
	We start with the first estimate in (\ref{estimate4}). Note that by $\wh{\ga_{a \bar a}} =\ga_{a \bar a}  - \m^{z_1} $ 
	and $|\m^{z_1}|\lesssim 1$, we have 
	\begin{align}\label{right_side}
		\Big|\frac{1}{n} \sum_{a}\E \big[ \wh{\F_t^{z_2}} G^{z_1}_{a \bar a } G^{z_1}_{  a \bar a} 
		 G^{z_1}_{  a \bar a}  \big] \Big| \lesssim & 
		 \Big|\frac{1}{n} \sum_{a}\E \big[ \wh{\F_t^{z_2}} \wh{G^{z_1}_{a \bar a }}\big]\Big|+\Big|\frac{1}{n}
		 \sum_{a}\E \big[ \wh{\F_t^{z_2}} \wh{G^{z_1}_{a \bar a }} \wh{G^{z_1}_{  a \bar a}}  \big]\Big|\nonumber\\
		&+\Big|\frac{1}{n} \sum_{a}\E \big[ \wh{\F_t^{z_2}} \wh{G^{z_1}_{a \bar a }} 
		\wh{G^{z_1}_{  a \bar a}} \wh{G^{z_1}_{  a \bar a}}\big]\Big|.
	\end{align}
	For the first term on the right side of (\ref{right_side}), applying the identity in
	 (\ref{identity_0}) to $\wh{G^{z_1}_{a \bar a }}$ and performing cumulant expansions, we obtain
	\begin{align}\label{expand_3}	
		\frac{1}{n} \sum_{a}\E \big[ \wh{\F_t^{z_2}} \wh{G^{z_1}_{a \bar a }}\big]=&-\frac{m^{z_1}}{n^{2}}\sum_{a,J} \E \Big[\frac{\partial \wh{\F_t^{z_2}} }{\partial w_{Ja}} G^{z_1}_{J \bar a} \Big]+\frac{m^{z_1}}{n}\sum_{a}\E \Big[\wh{\F_t^{z_2}} G^{z_1}_{a \bar a} \<\wh{G^{z_1}} \>  \Big]\nonumber\\
		&-\frac{\m^{z_1}}{n^2}\sum_{a,j} \E \Big[\frac{\partial \wh{\F_t^{z_2}}}{\partial w_{j \bar a}} 
		G^{z_1}_{j \bar a} \Big]
		+\frac{\m^{z_1}}{n}\sum_{a}\E \Big[ \wh{\F_t^{z_2}} G^{z_1}_{\bar a \bar a} \<\wh{G^{z_1}} \>  \Big]
		+\OO_\prec(n^{-1/2-\epsilon})\nonumber\\
		= & \frac{\m^{z_1}}{n^{2}}\sum_{a,j} \E \Big[  G^{z_2}_{\bar a j}  G^{z_1}_{j \bar a} \Big]
		+ \OO_\prec(n^{-1/2-\epsilon}+n \eta^2),
	\end{align}	
	where we truncated the cumulant expansions at the second order with the error term $O_\prec(n^{-1/2-\epsilon})$
	from the local law in (\ref{localg}). In the last line we used \eqref{rule_12} to compute the 
	derivatives of $\wh\F$, and the error term $O_\prec(n \eta^2)$ was obtained from 
	Proposition \ref{lemma2} with $k=2$ and that 
	$m^{z} \sim \sqrt{n}\eta$. Furthermore, using the Cauchy-Schwarz inequality, 
	the Ward identity and Proposition \ref{lemma2} with $k=1$, we have  
	\begin{align}\label{bad}
		\Big|\frac{\m^{z_1}}{n^{2}}\sum_{a,j} \E \Big[  G^{z_2}_{\bar a j}  G^{z_1}_{j \bar a} \Big] \Big|
		 \prec \frac{\E[\Im \<G^{z_1}+ G^{z_2}\>]}{n \eta} =\OO_\prec(n^{-1/2}).
	\end{align}
	Thus from (\ref{expand_3}), the first term on the right side of (\ref{right_side}) is bounded by
	\begin{align}\label{expand_30}
		\Big|\frac{1}{n} \sum_{a}\E \big[ \wh{\F_t^{z_2}} \wh{G^{z_1}_{a \bar a }}\big]\Big|=\OO_\prec(n^{-1/2}).
	\end{align}
	The second term in (\ref{right_side}) with two $\wh G$ factors can be handled similarly 
	with an even better error term,
	since we have more $\wh G$ factors and we always pick up additional small 
	diagonal $\gz_{\bar a\bar a}$ for the leading terms in the expansion~(see~(\ref{aaexpl})) 
	due to the $a/\bar a$ assignments. 
	More precisely, we have as in (\ref{expand_3}), \ie
		\begin{align}\label{expand_301}
		\frac{1}{n} \sum_{a}\E \big[ \wh{\F_t^{z_2}} \wh{G^{z_1}_{a \bar a }} \wh{G^{z_1}_{a \bar a }}\big]
		=&-\frac{\m^{z_1}}{n^2}\sum_{a,j} \E \Big[\frac{\partial \wh{\F_t^{z_2}} \wh{G^{z_1}_{a \bar a }}}{\partial w_{j \bar a}} G^{z_1}_{j \bar a} \Big]
		+ \OO_\prec(n^{-1/2-\epsilon}+n \eta^2)\nonumber\\
		= & \frac{\m^{z_1}}{n^{2}}\sum_{a,j} \E \Big[G^{z_2}_{\bar a j} \wh{G^{z_1}_{a \bar a }} G^{z_1}_{j \bar a} \Big]+ \frac{\m^{z_1}}{n^{2}}\sum_{a,j} \E\Big[\wh{\F_t^{z_2}} G^{z_1}_{ a j} G^{z_1}_{ \bar a \bar a}  G^{z_1}_{j \bar a} \Big]\nonumber\\
		&+\OO_\prec(n^{-1/2-\epsilon}+n \eta^2),
	\end{align}	
where the first two terms are from acting $\partial/\partial w_{j\bar a}$ on $\wh{\F_t^{z_2}}$ and $\wh{G^{z_1}_{a \bar a }}$ respectively. Using that $|\wh{G^{z_1}_{a \bar a }}|\prec n^{-\epsilon}$, $G^{z_1}_{ \bar a \bar a}=m^{z_1}+O_\prec(n^{-\epsilon})$ and $m^{z_1} \sim \sqrt{n}\eta$, we obtain
		\begin{align}\label{expand_31}
				\Big|\frac{1}{n} \sum_{a}\E \big[ \wh{\F_t^{z_2}} \wh{G^{z_1}_{a \bar a }} \wh{G^{z_1}_{a \bar a }}\big]\Big|	=\OO_\prec(n^{-1/2-\epsilon}+n \eta^2).
		\end{align}
Similarly, the same upper bound applies to the last term in (\ref{right_side}) with three $\wh G$ factors, \ie
	\begin{align}\label{expand_32}
		\Big|\frac{1}{n} \sum_{a}\E \big[ \wh{\F_t^{z_2}} \wh{G^{z_1}_{a \bar a }} \wh{G^{z_1}_{  a \bar a}} \wh{G^{z_1}_{  a \bar a}}\big]\Big|=\OO_\prec(n^{-1/2-\epsilon}).
	\end{align}
	Hence using (\ref{expand_30}) and (\ref{expand_31})-(\ref{expand_32}), we have proved the first estimate in (\ref{estimate4}).
	
	We next obtain a slightly better estimate than (\ref{bad}) for $|z_1-z_2| \geq n^{-\gamma}$. We continue to expand the left side of (\ref{bad}) by applying (\ref{identity_0}) to $\wh{\ga_{j \bar a}}$ and performing cumulant expansions as in (\ref{expand_3}), \ie
	\begin{align}\label{self_g1g2_1}
		\frac{1}{n^{2}}\sum_{a,j} \E \Big[ G^{z_1}_{j \bar a} G^{z_2}_{\bar a j}   \Big]=&\frac{1}{n^{2}}\sum_{a,j} \E \Big[  \wh{G^{z_1}_{j \bar a }}  G^{z_2}_{ \bar a j} \Big]+\OO_\prec(n^{-1})\nonumber\\
		=&\frac{\m^{z_1} \overline{\m^{z_2}}}{n^{2}}\sum_{a,j'}  \E \Big[ G^{z_1}_{j' \bar a} G^{z_2}_{\bar a j'}  \Big]+\OO_\prec(n^{-1/2-\epsilon}+n\eta^2),
	\end{align}
	where we have replaced one pair of the index $j$ with a fresh index $j'$ as in (\ref{aaexpl}) for the leading term, 
	which can be moved to the left side with a combined stability factor $1-\m^{z_1} \overline{\m^{z_2}}$.
	Thus using the lower bound of $1-\m^{z_1} \overline{\m^{z_2}}$ in (\ref{stable_0}) 
	we conclude from (\ref{self_g1g2_1}) that, 
	\begin{align}\label{improve_zz}
		\Big|\frac{1}{n^{2}}\sum_{a,j} \E \Big[ G^{z_1}_{j \bar a} G^{z_2}_{\bar a j}   \Big]\Big|
		=\OO_\prec( n^{-1/2-\epsilon+\gamma}), \qquad |z_1-z_2|\geq n^{-\gamma}.
	\end{align}
	Hence combining with (\ref{expand_3}), (\ref{expand_31})-(\ref{expand_32}) 
	and (\ref{right_side}), we have proved (\ref{estimate7}).
	
	We next prove the second estimate in (\ref{estimate4}) similarly.  
	Note that by $\wh{\ga_{a \bar a}} =\ga_{a \bar a}  - \m^{z_1} $
	and $|\m^{z_1}|\lesssim 1$ we have
	\begin{align}\label{some_term}
		\frac{1}{n} \sum_{a} \E \Big[ \ga_{a \bar a} \gb_{ a \bar a} &\gb_{ a \bar a} \Big]
		= \m^{z_1} ({\m^{z_2}})^2 + \frac{(\mb)^2}{n} \sum_{a} \E \Big[ \wh{\ga_{a \bar a}} \Big]
		+\frac{2\ma\mb}{n} \sum_{a} \E \Big[ \wh{\gb_{ a \bar a}} \Big]\nonumber\\
		&+ \frac{\mb}{n} \sum_{a} \E \Big[\wh{\ga_{a \bar a}} \wh{\gb_{a \bar a}}\Big] 
		+\frac{\ma}{n} \sum_{a} \E \Big[\big(\wh{\gb_{a \bar a}}\big)^2\Big]
		+\frac{1}{n} \sum_{a} \E \Big[ \wh{\ga_{a \bar a}} \wh{\gb_{ a \bar a}} \wh{\gb_{ a \bar a}} \Big].
	\end{align}
	For the terms above with only one $\wh \gz$ factor, we perform similar cumulant expansions
	 as in (\ref{expand_3}) without the $\wh \F$ factor. Recall that the leading expansion term that 
	 contribute $O_\prec(n^{-1/2})$ in (\ref{bad}) is indeed from acting $\partial/\partial w_{j\bar a}$
	  on $ \wh{\F_t^{z_2}}$ and all other expansion terms are much smaller, \ie bounded by 
	  $O_\prec(n^{-1/2-\epsilon}+n\eta^2)$ as in (\ref{expand_3}). Hence we obtain a better 
	  estimate, \ie for $z=z_1$ or $z_2$, 
	$$\Big|\sum_{a} \E \big[ \wh{\gz_{a \bar a}} \big]\Big|=O_\prec(n^{-1/2-\epsilon}+n\eta^2).
	$$ 
	The other terms in (\ref{some_term}) with two or more $\wh \gz$ factors are similar;
	we can obtain similar estimates as in (\ref{expand_301})-(\ref{expand_32}) without the $\wh \F$ factor. 
	Thus we conclude from (\ref{some_term}) that
	\begin{align}\label{expand_5}
		\frac{1}{n} \sum_{a} \E \Big[ \ga_{a \bar a} \gb_{ a \bar a} \gb_{ a \bar a} \Big]	
		=&\m^{z_1} ({\m^{z_2}})^2 +\OO_\prec(n^{-1/2-\epsilon}+n\eta^2).
	\end{align}

	Finally we prove the estimate in (\ref{estimate5}). Using that $\wh{\gz}=\gz -M^z$, we have
	\begin{align}\label{expand_10}
		\frac{1}{n} \sum_{a}\E \big[ G^{z_1}_{a \bar a } G^{z_2}_{ \bar a a}  G^{z_2}_{ \bar a a}  \big]
		=\frac{\m^{z_1}}{n} \sum_{a}\E \big[  G^{z_2}_{ \bar a a} G^{z_2}_{ \bar a a} \big]
		+\frac{1}{n} \sum_{a}\E \big[ \wh{G^{z_1}_{a \bar a }} G^{z_2}_{ \bar a a} G^{z_2}_{ \bar a a} \big].
	\end{align}
The first term on the right side of (\ref{expand_10}) can be estimated similarly as in
 (\ref{some_term})-(\ref{expand_5})~(with a similar $a/\bar a$ assignment though for three $G$'s), \ie
	\begin{align}\label{expand_100}
		\frac{\m^{z_1}}{n} \sum_{a}\E \big[  G^{z_2}_{ \bar a a} G^{z_2}_{ \bar a a} \big]
		=\m^{z_1} \big(\overline{\m^{z_2}}\big)^2+O_\prec(n^{-1/2-\epsilon}+n\eta^2).
	\end{align}
	The second term in (\ref{expand_10}) is more critical since the first row index $a$ in 
	$\wh{G^{z_1}_{a \bar a }}$ will be paired with the column index $a$ in the other two 
	$G^{z_2}_{ \bar a a}$ factors so that we cannot pick up a small $m^z$ factor in 
	the expansion~(see (\ref{aaexpl})).  Applying (\ref{identity_0}) to $\wh{G^{z_1}_{a \bar a }}$ 
	and performing cumulant expansions as in (\ref{expand_3}), we obtain 
	\begin{align}\label{expand_1}
		\frac{1}{n} \sum_{a}\E \big[ \wh{G^{z_1}_{a \bar a }} G^{z_2}_{ \bar a a} G^{z_2}_{ \bar a a} \big]
		=&-\frac{m^{z_1}}{n^{2}}\sum_{a,J} 
		\E \Big[\frac{\partial (G^{z_2}_{ \bar a a} G^{z_2}_{ \bar a a}) }{\partial w_{Ja}} G^{z_1}_{J \bar a} \Big]
		+\frac{m^{z_1}}{n}\sum_{a}\E \Big[G^{z_1}_{a \bar a} G^{z_2}_{ \bar a a} G^{z_2}_{ \bar a a} 
		\<\wh{G^{z_1}} \>  \Big]\nonumber\\
		&-\frac{\m^{z_1}}{n^2}\sum_{a,j} 
		\E \Big[\frac{\partial (G^{z_2}_{ \bar a a} G^{z_2}_{ \bar a a})}{\partial w_{j \bar a}} G^{z_1}_{j \bar a} \Big]
		+\frac{\m^{z_1}}{n}\sum_{a}\E \Big[ G^{z_1}_{\bar a \bar a} G^{z_2}_{ \bar a a} 
		G^{z_2}_{ \bar a a}\<\wh{G^{z_1}} \>  \Big]
		+\OO_\prec(n^{-1/2-\epsilon})\nonumber\\
		= &\frac{2\m^{z_1}\overline{\m^{z_2}}}{n^{2}}\sum_{a,j} \E \Big[  G^{z_2}_{\bar a j}  
		 G^{z_2}_{\bar a a} G^{z_1}_{j \bar a} \Big]+ \OO_\prec(n^{-1/2-\epsilon}+n \eta^2),
	\end{align}	
	where we replaced one pair of the index $a$~(one from the row and one from the column) 
	with a fresh index $j$ as in (\ref{aaexpl}) for the leading term. Furthermore, using the Cauchy-Schwarz
	 inequality, the Ward identity and Proposition \ref{lemma2} for $k=1$, we have
	\begin{align}
		\Big|\frac{2\m^{z_1}\overline{\m^{z_2}}}{n^{2}}\sum_{a,j} \E \Big[  G^{z_2}_{\bar a j}  
		 G^{z_2}_{\bar a a} G^{z_1}_{j \bar a} \Big]\Big|\prec\frac{1}{n^2} \sum_{a,j} \
		 E \Big[  \big|G^{z_2}_{\bar a j}  G^{z_1}_{j \bar a}\big| \Big]
		 \prec \frac{\E[\Im \<G^{z_1}+G^{z_2}\>]}{n \eta} =\OO_\prec(n^{-1/2}).
	\end{align}
	Combining this with (\ref{expand_10})-(\ref{expand_1}), we have proved (\ref{estimate5}).
	 If we further assume $|z_1-z_2| \geq n^{-\gamma}$, then
	$$\Big|\frac{2\m^{z_1}\overline{\m^{z_2}}}{n^{2}}\sum_{a,j} \E \Big[  G^{z_2}_{\bar a j}   
	G^{z_2}_{\bar a a} G^{z_1}_{j \bar a} \Big]\Big| \lesssim \Big|\frac{2\m^{z_1}(\overline{\m^{z_2}})^2}{n^{2}}
	\sum_{a,j} \E \Big[  G^{z_2}_{\bar a j}    G^{z_1}_{j \bar a} \Big]\Big|+O_\prec(n^{-1/2-\epsilon})
	=O_\prec(n^{-1/2-\epsilon+\gamma}),$$
	where we also used the local law~(\ref{localg}) and the improved bound in (\ref{improve_zz}). 
	This proves the last estimate in (\ref{estimate6}) for $|z_1-z_2| \geq n^{-\gamma}$. 
\end{proof}

\begin{remark}\label{handwave_real}
	The slightly better error term in the second estimate in~\eqref{estimate4}, resulting from $a/\bar a$ 
	assignments in Remark~\ref{remark:aa}, is only valid for the matrix $X$ being complex-valued. 
	For the real cases, we will not distinguish between the row and column of a Green function due
	 to a slightly different differentiation rule from~(\ref{rule_12}). Instead using the real version of 
	 differentiation rule we get additional terms \eg $G^{z} G^{\bar z}$. For these extra terms we need 
	 to gain a little extra smallness from restricting $z$ away from the real axis, similarly
	  to~(\ref{self_g1g2_1})-(\ref{improve_zz}) for $z_1=z$ and $z_2=\bar z$.	
\end{remark}

\begin{proof}[Proof of Lemma \ref{lemma5}]

	We start with the estimate in (\ref{estimate_1}). Note that
	\begin{align}\label{T_term}
		\frac{1}{n^2} \sum_{a, B}\E\Big[ \ga_{aB} \ga_{aB} \gb_{Ba} \gb_{Ba}\Big]
		=&\frac{1}{n^2} \sum_{a, B}\E\Big[ \wh{\ga_{aB}} \ga_{aB} \gb_{Ba} \gb_{Ba}\Big]
		+\frac{\m^{z_1}}{n^2} \sum_{a}\E\Big[  \ga_{a \bar a} \gb_{\bar a a} \gb_{\bar a a}\Big]\nonumber\\
		=&\frac{1}{n^2} \sum_{a, B}\E\Big[ \wh{\ga_{aB}} \ga_{aB} \gb_{Ba} \gb_{Ba}\Big]
		+\frac{1}{n} (\m^{z_1})^2 (\overline{\m^{z_2}})^2+\OO_\prec(n^{-1-\epsilon}),
	\end{align}
	where we used that $\wh{\gz}=\gz-M^z$ and the local law in (\ref{localg}).
	 Applying the identity in (\ref{identity_0}) to $\wh{\ga_{aB}}$ in (\ref{T_term}) 
	 and performing cumulant expansions, we have 
	\begin{align}\label{term2}
		\frac{1}{n^2} \sum_{a, B}\E\Big[ \wh{\ga_{aB}} \ga_{aB} &\gb_{Ba} \gb_{Ba}\Big]
		=-\frac{m^{z_1}}{n^{3}}\sum_{a, B}\sum_{J} 
		\E \Big[\frac{\partial (\ga_{aB} \gb_{Ba} \gb_{Ba})}{\partial w_{Ja}} \ga_{J B} \Big]\nonumber\\
		&+\frac{m^{z_1}}{n^{2}}\sum_{a, B}\E \Big[\ga_{aB} \ga_{aB} \gb_{Ba} \gb_{Ba} \<\wh \ga \> \Big]
		-\frac{\m^{z_1}}{n^{3}}\sum_{a, B} \sum_{j} 
		\E \Big[\frac{\partial (\ga_{aB} \gb_{Ba} \gb_{Ba})}{\partial w_{j \bar a}} \ga_{j B} \Big]\nonumber\\
		&+\frac{\m^{z_1}}{n^{2}}\sum_{a, B} \E \Big[\ga_{\bar a B} \ga_{aB} \gb_{Ba} \gb_{Ba} \<\wh \ga \> \Big]
		+O_\prec(n^{-3/2}+n^{-1-\epsilon})\nonumber\\
		=:&T_1+T_2+T_3+T_4+O_\prec(n^{-3/2}+n^{-1-\epsilon}), 
	\end{align} 
	where 
	the error term $O_\prec(n^{-3/2})$ is from the unmatched third order terms using Proposition \ref{prop:unmatched}, and the other error term $O_\prec(n^{-1-\epsilon})$ is from the higher order terms using the local law naively.

	By direct computations, using that $m^{z} \sim n^{1/2} \eta$ and $\m^{z} \sim 1$, $T_1$, $T_2$ and $T_4$
	 can be bounded using the Cauchy-Schwarz inequality, the Ward identity and Proposition \ref{lemma2} 
	 with $k=1,2$, \ie 
	\begin{align}
		|T_1|+|T_2| \prec n^{1/2} \eta \frac{\E[\Im \<G^{z_1}\>]}{(n\eta)^4} =\OO_\prec(n^{-1-2\epsilon}), 
		\qquad |T_4| \prec \frac{\E[(\Im \<G^{z_1}\>)^2]}{(n\eta)^3}  =\OO_\prec( n^{-1-\epsilon}).
	\end{align}
    We then focus on the third term $T_3$, \ie by a direct computation using (\ref{rule_12}) and (\ref{localg}),
    \begin{align}\label{two_line}
    	T_3=&\frac{2\m^{z_1}\overline{\m^{z_2}}}{n^{3}}\sum_{a, B} \sum_{j} \E \Big[\ga_{aB} \gb_{Bj}\gb_{Ba} 
	 \ga_{j B}\Big]\nonumber\\
    	&+\frac{\m^{z_1}}{n^{3}}\sum_{a, B} \sum_{j} \E \Big[2\ga_{aB} \gb_{Bj}
	 \big(\gb_{\bar a a}-\overline{\m^{z_2}}\big) \gb_{Ba}  \ga_{j B}+\ga_{aj} \ga_{\bar a B}\gb_{Ba} \gb_{Ba}  \ga_{j B}\Big].
    \end{align}
    Estimating one $G$ factor trivially with (\ref{localg}) and using the Cauchy-Schwarz inequality, the
     Ward identity and (\ref{aaBB}), all subleading terms in the last line of~(\ref{two_line}) can be bounded by
      $\OO_\prec(n^{-1-\epsilon})$. Thus we have
	\begin{align}
		T_3
		=& \frac{2\m^{z_1} \overline{\m^{z_2}}}{n^{3}}\sum_{a, B} \sum_{j} \E \Big[ \ga_{aB} \gb_{Bj} 
		\gb_{Ba}  \ga_{j B}\Big]+ \OO_\prec(n^{-1-\epsilon}).
	\end{align}
	Compared to the initial term in (\ref{T_term}), we have replaced one pair of the index
	 $a$~(one from the row and one from the column) with a fresh index $j$ as in (\ref{aaexpl}).
	  Hence we conclude from (\ref{term2}) that
	\begin{align}\label{T}
		\frac{1}{n^2} \sum_{a, B}\E\Big[ \wh{\ga_{aB}} \ga_{aB} \gb_{Ba} \gb_{Ba}\Big]
		=\frac{2\m^{z_1} \overline{\m^{z_2}}}{n^{3}}\sum_{a, B} \sum_{j} 
		\E \Big[ \ga_{aB} \gb_{Bj} \gb_{Ba}  \ga_{j B}\Big]+\OO_\prec(n^{-1-\epsilon}).
	\end{align}
	For the leading term in (\ref{T}) (omitting irrelevant factor $2\m^{z_1} 
	\overline{\m^{z_2}}$), we further apply the Cauchy-Schwarz inequality, the Ward identity and (\ref{aaBB}) to obtain  
	\begin{align}\label{expand_result_4}
		\Big|\frac{1}{n^{3}}\sum_{a, B} \sum_{j} \E \Big[ &\ga_{aB} \gb_{Bj} \gb_{Ba} 
		 \ga_{j B}\Big] \Big|\prec \frac{\E[(\Im\<G^{z_1}+ G^{z_2}\>+\eta)^2]}{(n\eta)^2}= \OO_\prec(n^{-1}).
	\end{align}
	Hence the estimate in (\ref{estimate_1}) follows from (\ref{T_term}), (\ref{T}) 
	and (\ref{expand_result_4}).

	Next, we aim to obtain a slightly better estimate than (\ref{expand_result_4}) for 
	$|z_1-z_2| \geq n^{-\gamma}$. We expand the left side of (\ref{expand_result_4}) 
	by applying (\ref{identity_0}) to $\wh{\ga_{aB}}$ 
	and performing cumulant expansions as in (\ref{term2}), \ie 
	\begin{align}\label{self_g1g2}
		\frac{1}{n^{3}}\sum_{a, B} \sum_{j} \E \Big[ \ga_{aB} \gb_{Bj} \gb_{Ba}  \ga_{j B}\Big]
		=&\frac{1}{n^{3}}\sum_{a, B} \sum_{j} \E \Big[ \wh{\ga_{aB}} \gb_{Bj} \gb_{Ba}  \ga_{j B}\Big]
		+\OO_\prec(n^{-1-\epsilon})\nonumber\\
		=&\frac{\m^{z_1} \overline{\m^{z_2}}}{n^{3}}\sum_{B} \sum_{j,j'} \E \Big[ \ga_{j'B} \gb_{Bj} \gb_{Bj'} 
		 \ga_{j B}\Big]+\OO_\prec(n^{-1-\epsilon}),
	\end{align}
	where we have replaced one pair of the index $a$ with a fresh index $j'$ as in (\ref{self_g1g2_1}). 
	Moving the leading term to the left side with a combined factor $1-\m^{z_1} \overline{\m^{z_2}}$ 
	and using (\ref{stable_0}), we conclude from (\ref{self_g1g2}) that
	\begin{align}\label{self_g1g2_result}
		\Big| \frac{1}{n^{3}}\sum_{a, B} \sum_{j} \E \Big[ \ga_{aB} \gb_{Bj} \gb_{Ba} & 
		\ga_{j B}\Big]\Big| =\OO_\prec( n^{-1-\epsilon+\gamma}), \qquad |z_1-z_2|\geq n^{-\gamma}.
	\end{align}
	Combining this with (\ref{T_term}) and (\ref{T}), we hence finished the proof of (\ref{estimate_2}).
\end{proof}

\section{Proof of Proposition \ref{prop1} and \ref{lemma2}}\label{sec:proof_tail}

In this section, we follow the proof strategy used in \cite{EX22} for Wigner matrix to prove Proposition~\ref{prop1}. 

\begin{proof}[Proof of Proposition \ref{prop1}]
Recall that the eigenvalues of $H^{z}$ are denoted by $\{\lambda^z_{\pm j}\}_{j=1}^{n}$ in a non-decreasing order, with $\lambda^z_{j}=-\lambda^z_{-j}$. Then the tail bound of $\lambda^z_1 \in \R^+$ can be written as
\begin{equation}\label{right0}
	\P \big( \lambda^z_1 \leq E \big)=\P\big( \Tr \chi_E(H^z) >0 \big)=\E\big[F\big(\Tr \chi_{E}(H^z) \big) \big], \qquad \chi_{E}:=\one_{[-E,E]},
\end{equation}
where $F\,:\,\R_+\longrightarrow \R_+$ is a smooth and non-decreasing cut-off function such that 
\begin{equation}\label{F_function}
	F(x)=0, \quad \mbox{if} \quad 0 \leq x \leq 1/9; \qquad F(x)=1, \quad \mbox{if} \quad x \geq 2/9,
\end{equation}
For any $\eta>0$, we define $\theta_{\eta}(x):=\frac{\eta}{\pi(x^2+\eta^2)}=\frac{1}{\pi} \Im \frac{1}{x-\ii \eta}$ and then
\begin{equation}\label{approx}
 \Tr \chi_{E} \star \theta_{\eta}(H^z)=\frac{1}{\pi} \int_{-E}^{E} \Im \Tr G^z(y+\ii \eta) \dd y\,.
\end{equation}
Following \cite[Lemma 6.1-6.2]{EYY12} or \cite[Lemma 2.4-2.5]{SX22}, and using the rigidity bound \eqref{rigidity},
one can show that $\Tr \chi_E(H^z)$ 
can be bounded by $\Tr \chi_{E\pm l}\star \theta_\eta(H^z)$  
from below and above whenever  $\eta \ll l\ll E \ll n^{-3/4}$.
The following lemma gives the precise formulation and we omit its standard proof. 
\begin{lemma}
	Fix small $\epsilon_1, \epsilon_2>0$ and large $D>0$.
	 For any $n^{-1+\epsilon_1}\leq E\leq n^{-3/4-\epsilon_2}$, setting $\eta=n^{-7\zeta}E$ and $l=n^{-\zeta} E$ with $\zeta=\epsilon_1/100$, then we have
	\begin{equation}\label{approx3}
		\Tr \chi_{E-l} \star \theta_{\eta}(H^z) -n^{-\zeta} \leq \Tr \chi_E(H^z) \leq  \Tr \chi_{E+l} \star \theta_{\eta}(H^z) +n^{-\zeta},
	\end{equation}
	with very high probability. Moreover, we have
	\begin{equation}\label{approx2}
		\E \Big[ F\Big( \Tr \chi_{E-l} \star \theta_{\eta}(H^z) \Big)\Big]-n^{-D} \leq \E\big[F\big(\Tr \chi_{E}(H^z) \big) \big]\leq   \E \Big[F\Big( \Tr \chi_{E+l} \star \theta_{\eta}(H^z) \Big)\Big]+n^{-D}.
	\end{equation}
\end{lemma}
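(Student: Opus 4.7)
The plan is to establish the pointwise bound \eqref{approx3} on a very high probability event and then upgrade to the expectation statement \eqref{approx2} by exploiting the plateau structure of the cutoff $F$ together with the integrality of $\Tr \chi_E(H^z)$. The scheme is the standard one from \cite[Lemma 6.1--6.2]{EYY12} and \cite[Lemma 2.4--2.5]{SX22}, adapted to the cusp regime where $E\leq n^{-3/4-\epsilon_2}$ and the self-consistent density near zero behaves as $\rho^z(y)\sim |y|^{1/3}$.

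For the pointwise bound I would use the explicit arctangent primitive
\[
\chi_{E\pm l}\star\theta_\eta(x)=\tfrac{1}{\pi}\bigl[\arctan((x+E\pm l)/\eta)-\arctan((x-E\mp l)/\eta)\bigr]
\]
and split $\sum_j [\chi_{E\pm l}\star\theta_\eta-\chi_E](\lambda_j^z)$ into three parts. In the interior region $|\lambda_j^z|\leq E-l-C\eta\log n$ the pointwise kernel difference is $O(\eta/l)=O(n^{-6\zeta})$ and the eigenvalue count is $\prec 1$ by rigidity \eqref{rigidity3} combined with the cusp asymptotics and $E\leq n^{-3/4-\epsilon_2}$, giving a contribution $\prec n^{-6\zeta}$. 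In the exterior region $|\lambda_j^z|\geq E+l+C\eta\log n$ I would use the tail bound $\chi_{E\pm l}\star\theta_\eta(x)\lesssim E\eta/((|x|-E)^2+l^2)$, perform a dyadic decomposition in $|\lambda_j^z|-E$, apply rigidity inside each shell, and sum the resulting geometric series to obtain $O_\prec(n^{-7\zeta/3}\log n)\leq n^{-\zeta}$.

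The main technical obstacle is the narrow transition window of width $\sim\eta\log n$ around $\pm(E\pm l)$, where the pointwise kernel difference is of order one and the naive rigidity count from \eqref{rigidity3} is only $O_\prec(1)$. The resolution combines the expected count bound $\lesssim n\rho(E)\eta\log n\lesssim n^{-7\zeta}\log n$ coming from the local law with the index-level rigidity \eqref{rigidity}: since the smallest quantile $\gamma_1^z\sim n^{-3/4}$ is much larger than $E\pm l\lesssim n^{-3/4-\epsilon_2}$, only extremal indices can contribute, and a careful counting-function argument in the spirit of \cite[Lemma 2.5]{SX22}, together with the non-negativity of the smoothed kernel, absorbs the transition contribution into the $n^{-\zeta}$ budget on a very high probability event.

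Finally, to promote \eqref{approx3} to \eqref{approx2} I would exploit that $F$ from \eqref{F_function} is non-decreasing with $F\equiv 0$ on $[0,1/9]$ and $F\equiv 1$ on $[2/9,\infty)$, while $\Tr\chi_E(H^z)\in\mathbb{N}$. For $n$ large enough that $n^{-\zeta}<1/9$, the case $\Tr\chi_E(H^z)=0$ forces $\Tr\chi_{E-l}\star\theta_\eta(H^z)\in[0,n^{-\zeta}]\subset[0,1/9]$, while $\Tr\chi_E(H^z)\geq 1$ forces $\Tr\chi_{E+l}\star\theta_\eta(H^z)\geq 1-n^{-\zeta}\geq 2/9$. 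Monotonicity of $F$ then yields $F(\Tr\chi_{E\pm l}\star\theta_\eta(H^z))=F(\Tr\chi_E(H^z))$ pointwise on the very high probability event from the previous step; on the complementary event one pays only $\|F\|_\infty\leq 1$ times its probability, which is made $\leq n^{-D}$ by choosing the underlying probability threshold sufficiently large.
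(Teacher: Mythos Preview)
Your approach is the standard one the paper invokes (it omits the proof entirely, citing \cite[Lemma~6.1--6.2]{EYY12} and \cite[Lemma~2.4--2.5]{SX22} together with rigidity). The interior/exterior dyadic scheme and the upgrade to \eqref{approx2} via the plateau structure of $F$ and integrality of $\Tr\chi_E(H^z)$ are exactly right; the latter is also how the paper uses the inequality later (see the argument around \eqref{F_k}).

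One conceptual point is worth sharpening: the ``main technical obstacle'' you describe in the transition windows is not actually an obstacle. The entire purpose of the shift by $l\gg\eta$ is that the order-one pointwise differences always carry the \emph{helpful} sign. For the upper bound one only needs $\chi_E(x)-\chi_{E+l}\star\theta_\eta(x)\le 0$ for $|x|>E$ (trivial by nonnegativity) and $\le C\eta/l$ for $|x|\le E$ (since the distance from $x$ to $\pm(E+l)$ is at least $l$); summing against $\#\{|\lambda_j|\le E\}\prec 1$ gives $O_\prec(n^{-6\zeta})$ directly. For the lower bound one only needs $\chi_{E-l}\star\theta_\eta(x)-\chi_E(x)\le 0$ for $|x|\le E$ (trivial) and $\lesssim E\eta/(|x|-E+l)^2$ for $|x|>E$, then either your dyadic sum or the one-line bound $\sum_{|\lambda_j|>E}(\lambda_j-E+l)^{-2}\lesssim l^{-1}\Im\Tr G^z(E+\mathrm{i}l)$ at the \emph{larger} scale $l$ (where the local law error $1/(nl)$ is harmless) closes the estimate. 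So no separate ``transition window'' analysis involving the local-law expected count is needed; the windows where $|\chi_{E\pm l}\star\theta_\eta-\chi_E|\sim 1$ contribute with the correct sign and can be dropped.
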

 The main technical  result is the following GFT: 
\begin{proposition}\label{green_comparison}
Fix small $\epsilon_1, \epsilon_2,\tau>0$ with $\epsilon_2>\tau/2$. For any $E$ with $n^{-1+\epsilon_1}\leq E\leq n^{-3/4-\epsilon_2}$ 
and any $\delta=|z|^2-1$ with $n^{-1/2} \ll \delta \leq n^{-1/2+\tau}$, setting $\eta=n^{-7\zeta}E$ and $l=n^{-\zeta} E$ 
with $\zeta=\epsilon_1/100$, then we have
	\begin{align}\label{tail_thm}
		\left|\E\Big[F\big(\Tr \chi_{E\pm l} \star \theta_{\eta}(H^z)  \big)  \Big]-\E^{\mathrm{Gin}}
		\Big[F\big(\Tr \chi_{E \pm l} \star \theta_{\eta}(H^z)  \big)  \Big] \right|
		= O_\prec\Big(\frac{n^{3/2} E^2 e^{-\frac{n\delta^2}{2}}}{(n\eta)^4}+n^{-100}\Big).
	\end{align}
\end{proposition}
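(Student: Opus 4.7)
The plan is to prove \eqref{tail_thm} by a Green function comparison along the matrix Ornstein--Uhlenbeck interpolation, in the spirit of \cite{EX22}. Consider
$$\dd X_t = -\tfrac{1}{2}X_t\,\dd t + \tfrac{1}{\sqrt{n}}\,\dd B_t, \qquad X_0 = X,$$
where $B_t$ is a matrix of i.i.d.\ standard complex Brownian motions, so that the law of $X_t$ interpolates between the given i.i.d.\ ensemble at $t=0$ and an independent complex Ginibre ensemble at $t=\infty$. Let $H^z_t$, $G^z_t$ denote the associated Hermitization and its resolvent, and set
$$\Phi(t) := \E\Big[F\big(\Tr \chi_{E\pm l}\star \theta_\eta(H^z_t)\big)\Big].$$
It is enough to bound $|\Phi(0)-\Phi(t_0)|$ by the right-hand side of \eqref{tail_thm} for some $t_0 \sim 100\log n$, since the remaining difference $|\Phi(t_0) - \Phi(\infty)|$ is negligible by the deterministic norm bound $\|X_{t_0}-X_\infty\|\prec n^{-100}$, analogous to \eqref{approxxxx}.

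Applying Ito's formula together with a cumulant expansion truncated at some large fixed order $K_0$, and using that the first two moments of $x_{ij}$ agree with those of the complex Ginibre entries, all second-order terms cancel and we obtain
$$\frac{\dd \Phi(t)}{\dd t} = \sum_{p+q+1=3}^{K_0}\frac{1}{p!q!}\Big(\K_{p+1,q}(t) + \wt{\K}_{q,p+1}(t)\Big) + \OO_\prec(n^{-K_0/2 + C}),$$
where each $\K_{p+1,q}(t)$ is a sum over $a\in \llbracket 1,n\rrbracket$, $B\in \llbracket n+1,2n\rrbracket$ of the $(p+q+1)$-st cumulants of $\sqrt{n}w_{aB}$ times $p+q+1$ partial derivatives of $F(\Tr\chi_{E\pm l}\star\theta_\eta(H^z_t))$. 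Via the chain rule and the identity
$$\frac{\partial}{\partial w_{aB}}\Tr\chi_{E\pm l}\star\theta_\eta(H^z_t) = -\frac{1}{\pi}\int_{-(E\pm l)}^{E\pm l}\Im\big[(G^z_t(y+\ii\eta))^2\big]_{Ba}\,\dd y,$$
each $\K_{p+1,q}(t)$ unfolds into sums of products of integrated resolvent entries multiplied by derivative factors $F^{(k)}$ evaluated at $\Tr\chi_{E\pm l}\star\theta_\eta(H^z_t)$.

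The key observation is that every factor $F^{(k)}$ with $k\geq 1$ is supported on the atypical event $\{\Tr\chi_{E\pm l}\star\theta_\eta(H^z_t)\geq 1/9\}$, which by Markov's inequality and the moment bound in Proposition~\ref{lemma2} (applied along the flow) has probability $\OO(n^{3/2}E^2 e^{-n\delta^2/2}+n^{-100})$. This concentration produces exactly the exponential factor appearing in \eqref{tail_thm}. Combining it with the Ward identity $\sum_B|(G^z_t)_{Ba}(y+\ii\eta)|^2 = (\Im G^z_t)_{aa}/\eta$, Lemma~\ref{lemma:aaBB}, and the integration range of length $\sim E$, the dominant third-order ($p+q+1=3$) cumulant terms yield a contribution of order $n^{3/2}E^2 e^{-n\delta^2/2}/(n\eta)^4$, while higher-order cumulants are smaller by additional powers of $n^{-1/2}$ or $(n\eta)^{-1}$ and are negligible. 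Integrating in $t\in[0,t_0]$ preserves the bound up to a harmless $\log n$ factor.

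The main obstacle is that the cumulant expansion uses Proposition~\ref{lemma2} with the exponential factor $e^{-n\delta^2/2}$, which itself is a consequence of the tail bound \eqref{tau_result} we are trying to establish; this circularity is resolved by a Gronwall-type bootstrap in $t$, propagating the estimates from the Ginibre side (where \eqref{tau_result} is known unconditionally from Proposition~\ref{prop:tail}) with a slowly accumulating loss of the form $n^{C\xi}$ that is absorbed into $\prec$. A secondary difficulty is that at the extreme scale $\eta=n^{-1+\epsilon_1}$ each expansion step gains only $n^{-\epsilon_1}$, so $\OO(1/\epsilon_1)$ iterations are required with careful extraction of the deterministic leading terms — a less delicate variant of the iterative analysis performed in Section~\ref{sec:GFT}. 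Naively bounding the $F^{(k)}$-indicator by $1$ and applying Cauchy--Schwarz would destroy the exponential factor, so the restriction to the atypical event must be used in tandem with the already-established moment bound throughout, rather than separately estimated.
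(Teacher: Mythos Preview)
Your broad outline---OU interpolation, cumulant expansion, exploiting that $F^{(j)}$ is supported on the atypical event, and a bootstrap from the Ginibre side in the spirit of \cite{EX22}---matches the paper. However, two of your mechanisms are misidentified, and one of them is a genuine gap.

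\textbf{Third-order terms with $a\neq\ud{B}$.} Your claim that Ward identity, Lemma~\ref{lemma:aaBB}, and the length-$E$ integration control the dominant third-order terms does not work. After converting the $y$-integral to boundary values via $\int (G^2)_{Ba}\,\dd y=\Dim_k G_{Ba}$, the third-order contributions are sums like $\tfrac{\sqrt{n}}{n^2}\sum_{a,B}\E[F'''(\X_k)(\Dim_k G_{aB})^3]$; for $a\neq\ud{B}$ the local law plus Ward identity give at best $\sqrt{n}\,\Psi^3\,\E|F'''(\X_k)|$ with $\Psi=(n\eta)^{-1}$, which for $\eta$ near $n^{-1+\epsilon_1}$ is of order $n^{1/2-3\epsilon_1}$ times the exponential factor---far too large. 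The paper instead treats these as \emph{unmatched} terms (the index $a$ appears an odd number of times) and adapts the iterative expansion of Proposition~\ref{prop:unmatched} while keeping the $F^{(j)}$ prefactor, obtaining
\[
\sum_{p+q+1=3}\big(I_{p+1,q}+I'_{q,p+1}\big)\Big|_{a\neq\ud{B}}
\prec n^{-1}\sum_{j\le 3D_0}\E|F^{(j)}(\X_k)|+\sqrt{n}\,\Psi^{D_0},
\]
with $D_0$ chosen large enough that the last term is absorbed into $\mathcal{E}_0$. You do mention the iterative analysis of Section~\ref{sec:GFT}, but as a ``secondary difficulty'' tied to the small-$\eta$ scale; in fact it is the primary tool for these terms, and without it the proof does not close.

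\textbf{The bootstrap.} The circularity with Proposition~\ref{lemma2} is not resolved by a Gronwall in $t$; Proposition~\ref{lemma2} is never used in this proof. The paper's mechanism is the shift-index bootstrap of \cite{EX22}: the inequality $\E|F^{(j)}(\X_k)|\le C_j\E[F(\X_{k+2})]$ (from the support of $F^{(j)}$ and \eqref{approx3}) yields $\tfrac{\dd}{\dd t}\E[F(\X_k)]\prec \Psi^4\E[F(\X_{k+2})]+\mathcal{E}_0$. One integrates in $t$, combines with the Ginibre bound \eqref{Gaussian} at level $k+2$, feeds the result into the inequality at level $k-2$, and iterates---each step \emph{gaining} a factor $\Psi^4$ rather than accumulating a loss---starting from $k=2S_0\pm1$ down to $k=\pm1$ with $S_0\sim 1/\epsilon_1$.
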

We remark that the last error term $n^{-100}$ in (\ref{tail_thm}) is irrelevant. It is  introduced only to 
incorporate all the polynomially small errors with an arbitrary large power that occur along the proof.
Using this comparison result~\eqref{tail_thm} together with (\ref{right0}), (\ref{approx2}), we have
	\begin{align}
		\P \big( \lambda^z_1 \leq E \big)\leq &\E \Big[F\Big( \Tr \chi_{E+l} \star \theta_{\eta}(H^z) \Big)\Big]
		+O(n^{-D})\nonumber\\
		=&\E^{\mathrm{Gin}} \Big[F\Big( \Tr \chi_{E+l} \star \theta_{\eta}(H^z) \Big)\Big]
		+O_\prec\Big(\frac{n^{3/2} E^2 e^{-\frac{n\delta^2}{2}}}{(n\eta)^4}+n^{-100}\Big)+O(n^{-D})\nonumber\\
\leq &\P^{\mathrm{Gin}} \big( \lambda^z_1 \leq E+2l \big)+O_\prec\Big(\frac{n^{3/2} E^2 e^{-\frac{n\delta^2}{2}}}{(n\eta)^4}+n^{-100}\Big) +O(n^{-D})\nonumber\\
\lesssim & n^{3/2} E^2 e^{-\frac{n\delta^2}{2}}+n^{-100},
	\end{align}
where we chose $D>100$ and used the tail bound in (\ref{tail}) for the Ginibre ensemble 
and that $l\ll E$. We hence proved Proposition \ref{prop1}.
\end{proof}

Next we will prove Proposition \ref{green_comparison} using an iterative  GFT together with a Gronwall argument
 \cite{EX22}.

\begin{proof}[Proof of Proposition \ref{green_comparison}] 

		Recall the matrix interpolating flow $H_t^{z}$ in (\ref{flow}) and its Green function $G_t^z$. 
		 Similarly to the explanations above (\ref{localg}) on the imaginary axis,
		the local law in Theorem \ref{local_thmw} holds true for the time-dependent
		 Green function $G^{z}_t$ simultaneously for all $t
		\in \R^+$. In particular, fixing small $\epsilon'_1,\epsilon'_2>0$, for any $||z|-1| \leq \tau$, and 
		any $w=E+\ii \eta$ with $n^{-1+\epsilon'_1} \leq \eta \leq n^{-3/4-\epsilon'_2}$ 
		and $|E| \lesssim n^{-3/4}$, we~have
\begin{equation}\label{G}
	\sup_{t\geq 0}\max_{i,j} \Big\{\big| G^{z}_{ij}(t,w) - M^z_{ij} (w) \big|\Big\}\prec \frac{1}{n\eta} +\sqrt{\frac{\rho}{n\eta}}
	=:\Psi, \qquad \Im \<M^z(w)\> \sim \frac{\eta}{|z|^2-1}.
\end{equation}
We often omit the dependence on the parameters $t$, $z$ and $w$ for brevity.

	For notational simplicity we set, for any fixed $k\in \Z$
	\begin{equation}\label{Xidef}
		\X_k:=\Tr \chi_{E_k} \star \theta_{\eta}(H_t^z)=\int_{-E_k}^{E_k}  \Im \Tr G_t^z(y+\ii \eta) \dd y, 
		\qquad E_k:=E+kl, \quad \eta \ll l \ll E.
	\end{equation}
Then using~\eqref{right0} and (\ref{approx2}), we have
\begin{align}\label{approxxing}
	\P(\lambda_1 \leq E_{k-1}) \leq \E \big[ F\big( \X_k \big)\big] \leq  \P(\lambda_1 \leq E_{k+1}) \leq \E \big[ F\big( \X_{k+2} \big)\big],
\end{align}
  \ie essentially the existence of a small $\lambda^z_1$ is equivalent to $\X\ge 1/9$ from (\ref{F_function}). 
  Along the iterative GFT procedure, 
the quantity $\X$  will keep track of the
relatively small probability event, indicated by the factor $e^{-n\delta^2/2}$, that there is 
a small eigenvalue $\lambda^z_1$ despite that
$z$ is far away from the unit disk. Recalling the tail estimate for the Ginibre ensemble
 in (\ref{tail}) and using \eqref{approxxing}, we have
\begin{align}\label{Gaussian}
	\E^{\mathrm{Gin}} \big[ F\big( \X_k \big)\big] \lesssim n^{3/2}(E_{k+1})^2 e^{-\frac{n\delta^2}{2}} \lesssim n^{3/2}E^2 e^{-\frac{n\delta^2}{2}},
\end{align}
with $E_k=E+kl$ and $l\ll E$. Since the lower and upper bounds in \eqref{approxxing} involve slightly different $E_k$'s,
 we need a whole sequence of $\X_k$ but this is a minor technicality.
 
In order to prove (\ref{tail_thm}), it then suffices to prove that
\begin{align}\label{tail_goal}
	\Big|\E\big[ F(\X_{\pm 1}) \big]-\E^{\mathrm{Gin}}\big[ F(\X_{\pm 1}) \big]  \Big|= O_\prec\Big( \Psi^4 n^{3/2} E^2 e^{-\frac{n\delta^2}{2}}+n^{-100}\Big)
	=:\mathcal{E}_0.
\end{align}
Despite that the target error $\mathcal{E}_0$ contains a term with an exponential factor, $\mathcal{E}_0$ is actually polynomially small in $1/n$ due to the other irrelavent term $n^{-100}$. This fact will allow us to incorporate into $\mathcal{E}_0$ other polynomially small errors that come with an arbitrary large power $n^{-D}$ in the proof.

For any fixed $k\in \Z$, taking the time derivative of $\E[F(\X_k)]$, we have
\begin{align}\label{step00}
	\frac{\dd}{\dd t}\E [F(\X_k)]
	=&-\frac{1}{2} \sum_{a=1}^n\sum_{B=n+1}^{2n}\left( \sum_{p+q+1=3}^{2M_0}    \frac{c^{(p+1,q)}}{p!q!n^{\frac{p+q+1}{2}}}   \E \left[  \frac{\partial^{p+q+1} F(\X_k)}{\partial w_{aB}^{p+1} \partial \overline{w_{aB}}^{q} }\right] \right)\nonumber\\
	&-\frac{1}{2} \sum_{a=1}^n\sum_{B=n+1}^{2n} \left(\sum_{p+q+1=3}^{2M_0}   \frac{c^{(q,p+1)}}{p!q!n^{\frac{p+q+1}{2}}}   \E \left[  \frac{\partial^{p+q+1} F(\X_k)}{\partial \overline{w_{aB}}^{p+1} \partial w_{aB}^{q} }\right] \right)+O_\prec(n^{-M_0+2})\nonumber\\
	=:&\sum_{p+q+1=3}^{2M_0} \Big(I_{p+1,q}+I'_{q,p+1}\Big)+O_\prec(n^{-M_0+2}),
\end{align}
where $c^{(p,q)}$ are the $(p,q)$-cumulants of the normalized complex-valued i.i.d. entries 
$\sqrt{n} w_{aB}$ in (\ref{Wmatrix}) as before.
 Here we truncated the cumulant expansions at the $(2M_0)$-th order and the last error term is 
 obtained using the local law in (\ref{G}) 
 and the moment condition in (\ref{eq:hmb}). We choose a sufficiently large $M_0>0$ such 
 that $n^{-M_0+2}\leq \mathcal{E}_0$ 
 since  $\mathcal{E}_0$, defined in (\ref{tail_goal}), is polynomially small.

 Since derivatives of $F$ show up in \eqref{step00}, we need to find the analogue of 
 \eqref{approxxing}-\eqref{Gaussian} for $F^{(j)} (\X_k)$, in fact 
we will need only the upper bound, this will be given in~\eqref{F_k} below.
To alleviate the notations, we introduce the following abbreviations. 
Let $P\,:\,\R^+ \times \C \setminus \R \longrightarrow\C$ be an arbitrary function, then we introduce  
\begin{align}\label{dim}
	\widetilde{{\Im}}P(t,w):=\frac{1}{2 \ii} (P(t,w)-P(t,\bar w)),\qquad
	\Delta_k \widetilde \Im P :=\widetilde{{\Im}}P(t,E_k+\ii \eta)-\widetilde{{\Im}}P(t,-E_k+\ii \eta)\,,
\end{align}
with $E_k=E+kl$. From the differentiation rules in (\ref{rule_12}), for any fixed $j \in \N$, we have
\begin{align}\label{rule_3}
	\frac{ \partial F^{(j)}(\X_k) }{ \partial h_{aB} }
	=- F^{(j+1)}(\X_k)\sum_{v=1}^{2n} \widetilde \Im  \Big(\int_{-E_k}^{E_k} G_{va} G_{Bv}(y+\ii \eta) \dd y \Big)
	=- F^{(j+1)}(\X_k) \Dim_k G_{ab}\,,
\end{align}
where we used that $G^2(w)=\frac{\dd }{\dd w} G(w)$. From the definition of the function $F$ in (\ref{F_function})
all its derivatives are bounded, i.e. there exists some constant $C_j>0$ such that $\sup_{x\in \R}|F^{(j)}(x)| \leq C_j$. 
Thus for any $j\ge 1$
\begin{align}
	\E\big[|F^{(j)}(\X_k)|\big] 	\leq C_j \P \Big( \X_k \in [1/9,2/9]\Big).
\end{align}
Recall that the inequalities in (\ref{approx3}) imply that  
\begin{align}
	\# \{j: |\lambda^z_j| < E_{k-1}\}-n^{-\zeta} \leq \X_k=\Tr \chi_{E_k} \star \theta_{\eta}(H^z)
	 \leq \# \{j: |\lambda^z_j| \leq E_{k+1}\}+n^{-\zeta},
\end{align}
with a very high probability. If $\X_k \in [1/9,2/9]$, 
then $\# \{j: |\lambda^z_j| < E_{k-1}\}= 0$ and $\# \{j: |\lambda^z_j| < E_{k}\} \geq 1 $. 
Thus we have, for any fixed $j \in \N$,
\begin{align}\label{F_k}
	\E\big[|F^{(j)}(\X_k)|\big] \leq& C_j \P\big(\lambda^z_1  \in[ E_{k-1},E_{k+1}]\big)\leq C_j\E\big[F(\X_{k+2})\big],
\end{align}
where we also used (\ref{approxxing}) in the last step.

Now we return to (\ref{step00}). By direct computations using the former rule in (\ref{rule_12}) and (\ref{rule_3}), 
each $I_{p+1,q}$ or $I'_{q,p+1}$ on the right side of (\ref{step00}) consists of finitely many products of $p+q+1$ 
Green function entries (bounded by $\Psi^{p+q+1}$ from (\ref{G})) with a derivative of $F$ in front, 
\ie $F^{(j)}(\X_k)$ with $1\leq j\leq p+q+1$. Using the local law in (\ref{G}) and (\ref{F_k}), we
 have for any $4 \leq p+q+1 \leq 2M_0$,
\begin{align}\label{I_k+1}
	|I_{p+1,q}|+|I'_{q,p+1}| \prec& n^{-\frac{p+q-3}{2}}\Psi^{p+q+1}\sum_{j=1}^{p+q+1}
	 \E |F^{(j)}(\X_k)| \lesssim \Psi^4\E\big[F(\X_{k+2})\big].
\end{align}
This takes care of all the fourth and higher order terms in (\ref{step00}). 

Next we focus on estimating the most involved third order terms $I_{p+1,q},~I'_{q,p+1}$ in (\ref{step00}) with $p+q+1=3$. From (\ref{rule_12}) and (\ref{rule_3}), they are linear combinations of terms consisting of three G-factors, \eg
\begin{align}\label{third_F}
&\frac{\sqrt{n}}{n^{2}}  \sum_{a, B}  \E \Big[ F'(\X_k) \Dim_k \big( (G_{aB})^3\big)\Big]
\qquad \frac{\sqrt{n}}{n^{2}} \sum_{a, B} \E \Big[ F''(\X_k) \Dim_k G_{aB} \Dim_k \big( (G_{aB})^2\big)\Big],
\nonumber\\
	&\frac{\sqrt{n}}{n^{2}} \sum_{a, B}  \E \Big[ F'''(\X_k) \Dim_k G_{aB} \Dim_k G_{aB} \Dim_k G_{aB}\Big].
\end{align}
Using (\ref{G}) and (\ref{F_k}), these third order terms with $a=\ud{B}$ can be bounded by 
\begin{align}\label{I_3_ab}
	\sum_{p+q+1=3}\big(I_{p+1,q}+ I'_{q,p+1}\big)\Big|_{a=\ud{B}} \prec n^{-1/2}\sum_{j=1}^{3}\E|F^{(j)}(\X_k)|
	\lesssim n^{-1/2}\E\big[F(\X_{k+2})\big],
\end{align}
where we gained from the index coincidence. 

For the remaining third order terms with $a \neq \ud{B}$~(omitting the $\sqrt{n}$-prefactor), we can adapt 
Definition~\ref{def:unmatch_form} for unmatched terms in (\ref{form}) to a slightly different form as in (\ref{third_F}), 
\ie averaged products of Green function entries on which $\Dim$ acts and with a derivative
 of $F$ in front. Using additionally the
 differentiation rule (\ref{rule_3}) for the function $F$, we can derive similar expansions for these 
 modified unmatched terms as in Lemma~\ref{lemma:expand}, and hence iterate these expansions 
 similarly to Proposition~\ref{prop:unmatched}. The only difference is that we need to keep the derivatives 
 of $F$ in the expansions all of which carry a small probability event indicated by the factor 
 $e^{-n\delta^2/2}$~(see~(\ref{Gaussian}) and (\ref{F_k})). Note that in each expansion as 
 in Lemma~\ref{lemma:expand} the orders of the derivatives of $F$ may be increased by three using~(\ref{rule_3}),
  stemming from the fourth order cumulants as in~(\ref{case1}). Performing expansions 
   iteratively for sufficiently many, say $D_0$ times, the degrees of the unmatched terms 
   generated in iterative expansions have
    been increased to at least $D_0$ and the derivatives of $F$ in these terms may be
     raised to the $(3D_0)$-th order at most. 
    Therefore, these third order terms with $a\neq \ud{B}$ can be bounded by
\begin{align}\label{I_3}
	\sum_{p+q+1=3}\big(I_{p+1,q}+ I'_{q,p+1}\big)\Big|_{a\neq \ud{B}} \prec& \sqrt{n}
	 \Big( n^{-3/2} \sum_{j=1}^{3D_0}\E|F^{(j)}(\X_k)|+O_\prec(\Psi^{D_0})\Big)\nonumber\\
	\lesssim& n^{-1}\E\big[F(\X_{k+2})\big]+\mathcal{E}_0,
\end{align}
where we used (\ref{F_k}) and chose $D_0$ large enough
 such that $\sqrt{n}\Psi^{D_0} \ll \mathcal{E}_0$ with $\mathcal{E}_0$ polynomially small given in (\ref{tail_goal}). 

Therefore, combining (\ref{I_k+1}), (\ref{I_3_ab}), and (\ref{I_3}) with (\ref{step00}), for any fixed $k\in \Z$, we have
\begin{align}\label{stepk}
	\frac{\dd}{\dd t}\E \big[ F\big(\X_{k}\big) \big] \prec  \Psi^4\E \big[ F\big(\X_{k+2}\big) \big]+O(\mathcal{E}_0).
\end{align}
Since the function $F$ in (\ref{F_function}) is uniformly bounded, we integrate
 (\ref{stepk}) in time (up to $t_0=800 \log n$ as before)  in combination with (\ref{approxxxx})
 and obtain
\begin{align}\label{compare_1}
	\Big|\E \big[ F\big(\X_{k}\big) \big]-\E^{\mathrm{Gin}} \big[ F\big(\X_{k}\big) \big] \Big|=O_\prec(\Psi^4+\mathcal{E}_0).
\end{align}
Combining with the estimate in~\eqref{Gaussian} for the Ginibre ensemble, we have proved 
\begin{align}\label{step2}
\E \big[ F\big(\X_{k}\big) \big]=O_\prec\big(\Psi^4+\mathcal{E}_0+n^{3/2}E^2 e^{-\frac{n\delta^2}{2}}\big).
\end{align}
Plugging (\ref{step2}) into the right side of (\ref{stepk}) with $k$ replaced by $k-2$, we have
$$\Big|\frac{\dd}{\dd t}\E \big[ F\big(\X_{k-2}\big) \big] \Big|=
O_\prec\big(\Psi^8+\Psi^4\mathcal{E}_0+\Psi^4n^{3/2}E^2 e^{-\frac{n\delta^2}{2}}+\mathcal{E}_0\big),$$
which further implies after integration that
$$
\Big|\E \big[ F\big(\X_{k-2}\big) \big]-\E^{\mathrm{Gin}}\big[ F\big(\X_{k-2}\big) \big]\Big|
=O_\prec\Big(\Psi^8+\mathcal{E}_0\Big).
$$
Note that we have slightly improved the first error term in (\ref{compare_1}) by an 
additional factor $\Psi^4$ with $k$ reduced to $k-2$.

Now we can iterate this procedure to gain sufficient many $\Psi^4$ improvements to prove (\ref{tail_goal}). 
We choose $S_0$ sufficiently large so that $(\Psi^4)^{S_0} \leq \mathcal{E}_0$ with $\mathcal{E}_0$ 
polynomially small given in (\ref{tail_goal}). We start with (\ref{stepk}) for $k=2S_0\pm 1$ and 
performing the above arguments iteratively for $S_0$ times until $k$ has been reduced to $\pm 1$
 with an error term $(\Psi^4)^{S_0}+\mathcal{E}_0=O(\mathcal{E}_0)$. Note that the number of iterations 
 $S_0$ is indepdendent of $n$. Set $e^{-\frac{n\delta^2}{2}} \lesssim n^{-1/2}$ as needed for 
 (\ref{tail_bound_iid}). If we set $E=n^{-3/4-\epsilon_2}$, then we need to iterate only once. 
 However if we set $E=n^{-1+\epsilon_1}$, then we have to run the iterations for $O(\epsilon_1^{-1})$ times. 
 In this way we have obtained the desired upper bound in (\ref{tail_goal}) and this completes
  the proof of Proposition~\ref{green_comparison}.
\end{proof}

We next use Proposition \ref{prop1} to prove the following proposition.
\begin{proof}[Proof of Proposition \ref{lemma2}]
	From (\ref{tau_result}), for any $n^{-1+\epsilon_1} \leq \eta\leq n^{-3/4-\epsilon_2}$, we have
			\begin{align}\label{use}
				\P\big(|\lambda_1^z| \leq \eta \big) \lesssim n^{-1/2} \eta^2 e^{-n \delta^2/2}+n^{-100}.
			\end{align}
	Using spectral decomposition of $H^{z}$ and the spectrum symmetry, we split the
	 eigenvalues $\{\lambda_i^z\}_{j=1}^n$ into two parts: $\lambda_i^z   \leq  \wt \eta$ 
	 and $\lambda_i^z   \geq  \wt \eta$ with $\wt \eta=n^{-3/4-\alpha}$ for a small $\alpha>0$
	  to be fixed later which will depend on $k$.  We then further divide $\lambda^z_i\in [0,\wt \eta]$ into
	   triadic partitions as in~(\ref{dy}), \ie
		\begin{align}\label{step_lemma_2_1}
		\E\Big[ \Big(\Im \<G^z(\ii \eta)\> \Big)^{k}\Big] =&  \E^{\mathrm{Gin}} 
		\Big[ \Big(\frac{1}{n} \sum_{l=0}^{O(\log n)}\sum_{3^{l-1} \eta \leq \lambda_i^z  < 3^{l} \eta}
		\frac{\eta}{(\lambda^z_{i})^2+\eta^2} +\frac{1}{n}\sum_{\lambda_i^z   \geq  \wt \eta }  
		\frac{\eta}{(\lambda^z_{i})^2+\eta^2} \Big)^k \Big]\nonumber\\
		\lesssim &  ( \log n)^{k} \E\Big[ \frac{1}{n^k}\sum_{l=0}^{O(\log n)} \sum_{3^{l-1} \eta
		 \leq \lambda_i^z  < 3^{l} \eta} \Big( \frac{\eta}{(\lambda^z_{i})^2+\eta^2}\Big)^{k}
		 +\Big(\frac{1}{n}\sum_{\lambda_i^z   \geq  \wt \eta} \frac{\eta}{(\lambda^z_{i})^2+\eta^2}\Big)^k \Big],
	\end{align}
recalling the convention that for $l=0$ we set $3^{l-1}\eta :=0$  in the lower limit of the summations. 
For the first part in (\ref{step_lemma_2_1}), we have
\begin{align}\label{step_lemma_2_2}
	 \frac{1}{n^k} \E\Big[ \sum_{l=0}^{O(\log n)} \sum_{3^{l-1} \eta
	  \leq \lambda_i^z  < 3^{l} \eta} \Big( \frac{\eta}{(\lambda^z_{i})^2+\eta^2}\Big)^{k}\Big] 
	  \lesssim &\frac{n^{k\xi}}{n^k} \sum_{l=0}^{O(\log n)} \Big( \frac{\eta}{(3^{l-1} \eta)^2
	  +\eta^2}\Big)^{k}  \P^{\mathrm{Gin}}\big( \lambda^z_1 \leq 3^{l} \eta \big)\nonumber\\
	 &\lesssim n^{k\xi}\log n\Big( \frac{n^{-1/2}}{(n\eta)^{k-2}}e^{-n \delta^2/2}+n^{-100}\Big),
\end{align}
where we used the rigidity of eigenvalues in (\ref{rigidity0}) for a small $\xi>0$ to be fixed, 
and we also used (\ref{use}) in the last line. For the second part in (\ref{step_lemma_2_1}), we have
\begin{align}\label{step_lemma_2_3}
	\E\Big[ \Big( \frac{1}{n} \sum_{\lambda_i^z   \geq  \wt \eta} \frac{\eta}{(\lambda^z_{i})^2
	+\eta^2}\Big)^k \Big] \lesssim &\E\Big[ \Big( \frac{1}{n} \sum_{\lambda_i^z   \geq  \wt \eta} \frac{\eta}{(\lambda^z_{i})^2
	+\wt \eta^2}\Big)^k \Big]\lesssim \frac{\eta^k}{\wt \eta^k}\E\Big[ \Big(\Im \<G^z(\ii \wt \eta)\> \Big)^{k}\Big] \nonumber\\
	\prec &\frac{\eta^k}{\wt \eta^k}\Big( \frac{1}{(n\wt \eta)^k} +(\sqrt{n}\wt \eta)^k\Big) 
	\lesssim n^{2\alpha k} (\sqrt{n}\eta)^k,
\end{align}
which follows from the local law in (\ref{averagew}) and (\ref{rho}). 
Combining (\ref{step_lemma_2_2}) with (\ref{step_lemma_2_3}), we obtain from~(\ref{step_lemma_2_1}) 
\begin{align}
	\E\Big[ \Big(\Im \<G^z(\ii \eta)\> \Big)^{k}\Big] \prec (n^{\xi}\log n)^{k}
	 \Big(\frac{n^{-1/2}e^{-n \delta^2/2}}{(n\eta)^{k-2}}+n^{-100}\Big)+(n^{2\alpha}\log n)^k (\sqrt{n}\eta)^k.
\end{align}
For any fixed $k\in \N$ and any small $\tau>0$, choosing $\alpha \leq \frac{\tau}{2k}$ and $\xi\leq \frac{\tau}{k}$, we obtain
$$\E\Big[ \Big(\Im \<G^z(\ii \eta)\> \Big)^{k}\Big] 
\lesssim n^{\tau} \Big(\frac{n^{-1/2}e^{-n \delta^2/2}}{(n\eta)^{k-2}}+n^{-100}+(\sqrt{n}\eta)^k\Big).$$
We hence finished the proof of Proposition~\ref{lemma2}.
\end{proof}

We finally present the proof of Proposition \ref{prop2_old}.

\begin{proof}[Proof of Proposition \ref{prop2_old}]
	The expectation estimate in (\ref{difference}) was already stated in 
	\cite[Proposition 3.7]{maxRe} and proved via an iterative  GFT argument in
	 \cite[Section 4]{maxRe}, where in each step 
	of the iteration an additional factor $1/(n\eta)\le n^{-\epsilon}$ was gained. 
	The proof of (\ref{difference_var}) is quite similar to this argument in \cite[Section 4]{maxRe}, 
	so we only sketch the proof for brevity.  To simply the notations, we may assume $z=z'$, and the same proof also applies to general $z \neq z'$.

		In the following we use the same index conventions 
	as in Notation \ref{not:index} and also some statements in Section~\ref{sec:GFT}. 
	We also comment that comparing the error terms in (\ref{difference_var}) to those 
	in~(\ref{difference}), we almost gain 
	an additional factor $1/(n\eta)$; this is mainly because the a priori bound of the 
	variance is $1/(n\eta)$ better than the corresponding expectation.

	 Recall the matrix interpolating flow $H_t^{z}$ in (\ref{flow}) and 
	 the local law for the Green function of $H_t^{z}$, denoted by $G^z_t$ in (\ref{localg}). 
	Applying Ito's formula to $\<G_t^z\>$ in (\ref{normal_trace}) and performing 
	the cumulant expansion formula, we obtain \cf \cite[Eq (4.11)]{maxRe}
	\begin{align}\label{var_deri}
		\frac{\dd}{\dd t} \V[\<G^{z}_t\>]=&-\frac{1}{2n^2}\sum_{u,v,a=1}^{n} \sum_{B=n+1}^{2n}
		\left( \sum_{p+q+1= 3}^{K_0}\frac{c^{(p+1,q)}}{p!q!n^{\frac{p+q+1}{2}}} 
		\E \left[  \frac{\partial^{p+q+1} \big(G_{uu}-\E G_{uu}\big)\big(G_{vv}-\E G_{vv}\big)}{\partial w_{aB}^{p+1} 
		\partial \overline{w_{aB}}^{q} } \right]\right)\nonumber\\
		&- \frac{1}{2n^2} \sum_{u,v,a=1}^{n} \sum_{B=n+1}^{2n} 
		\left(\sum_{p+q+1= 3}^{K_0}\frac{c^{(q,p+1)}}{p!q!n^{\frac{p+q+1}{2}}} 
		\E \left[  \frac{\partial^{p+q+1} \big(G_{uu}-\E G_{uu}\big)
		\big(G_{vv}-\E G_{vv}\big) }{\partial \overline{w_{aB}}^{p+1} \partial w_{aB}^{q}}\right]\right)\nonumber\\
		&\qquad  +\OO_{\prec}(n^{-\frac{K_0}{2}+2})\nonumber\\
		=:&\sum_{p+q+1= 3}^{K_0} \big( L_{p+1,q}+L'_{q,p+1}\big)+\OO_{\prec}(n^{-\frac{K_0}{2}+2}),
	\end{align}
	where $c^{(p,q)}$ are the $(p,q)$-cumulants of the normalized complex-valued i.i.d. entries 
	of $\sqrt{n}w_{aB}$ given in~(\ref{Wmatrix}) and we choose $K_0=100$. For brevity we
	 dropped the parameters $t$ and $z$ from $G$ and also from $L$ and $L'$.  
	Using the differentiation rules in (\ref{rule_12}), each term $L_{p+1,q}$ or $L'_{q,p+1}$ in~(\ref{var_deri})
	 consists of finitely many products of $p+q+3$ Green function entries. 
	 Thus using the local law in~(\ref{localg})-(\ref{localm}), they can be bounded by, for any $p+q+1\geq 4$,
	\begin{align}\label{var_four}
		|L_{p+1,q}|+|L'_{q,p+1}| \prec n^{-\frac{p+q-3}{2}} \big(\Psi^{p+q+3}+n^{-1}\big),\qquad \Psi=(n\eta)^{-1},
	\end{align}
	where the term $n^{-1}$ is from the cases with an index coincidence, \eg $a=\ud{B}$. This easy estimate 
	is sufficient for all terms of order four or higher.  
	
	It then suffices to estimate the third order terms in (\ref{var_deri}) with $p+q+1=3$. 
	We split the summations over $u,v,a,B$ into two parts: the restricted summations 
	with $a \neq u \neq v\neq \ud{B}$ and the remaining summations with at least one
	index coincidence, \eg $a=\ud{B}$. Then the third order terms in (\ref{var_deri}) with 
	$a \neq u \neq v\neq \ud{B}$ are unmatched terms with a factor $\sqrt{n}$ (see \cite[Definition 4.4]{maxRe} 
	or Definition~\ref{def:unmatch_form}), since the index $a$ or $B$ appears three times as 
	the row/column index of Green function entries. Using \cite[Proposition 4.5]{maxRe} or 
	Proposition \ref{prop:unmatched}, they can be bounded by $O_\prec(n^{-1})$. 
	Moreover, for the remaining summations with an index coincidence, \eg $a=\ud{B}$, 
	by direct computations they contain at least three off-diagonal (or centered diagonal) 
	Green function entries and thus are bounded by $O_\prec(n^{-1/2}\Psi^3)$. Therefore we have
	\begin{align}\label{var_three}
		\sum_{p+q+1=3} |L_{p+1,q}|+|L'_{q,p+1}|=O_\prec(n^{-1}+n^{-1/2}\Psi^3).
	\end{align}
	Using (\ref{var_four}) and (\ref{var_three}), we obtain from (\ref{var_deri}) that
	\begin{align}\label{var_time}
		\Big|\frac{\dd}{\dd t} \V[\<G^{z}_t\>]\Big|=O_\prec(n^{-1/2}\Psi^3+\Psi^6+n^{-1}).
	\end{align}
	Integrating (\ref{var_time}) over $t\in [0,t_0]$ with $t_0=800\log n$ as before and using the estimate in (\ref{approxxxx}), we hence finished the proof of (\ref{difference_var}).

\end{proof}

\section{Proof of Lemma~\ref{lemma_Gin}}
\label{app:Gincal}

The proof of this lemma is similar to the proof of~\cite[Lemma 3.1]{maxRe}, which itself 
relies on~\cite[Section 2]{maxRe_Gin}. Before presenting the proof of Lemma~\ref{lemma_Gin}, 
we introduce some notation from \cite{maxRe_Gin} (see also~\cite{RS14}). We prove 
Lemma~\ref{lemma_Gin} only in the complex case to keep the presentation short; the proof 
in the real case is analogous proceeding similarly to~\cite[Section 3]{maxRe_Gin} instead
 of similarly to~\cite[Section 2]{maxRe_Gin}. Define the kernel
\begin{equation}
\label{eq:defkerzw}
\widetilde{K}_n(z,w):=\frac{n}{\pi} e^{-(|z|^2+|w|^2-2z\overline{w})/2}\frac{\Gamma(n,nz\overline{w})}{\Gamma(n)},
\end{equation}
with $\Gamma(\cdot,\cdot)$ the incomplete gamma function defined as
\[
\Gamma(s,z):=\int_z^\infty t^{s-1}e^{-t}\,\dd t,
\]
where $s\in\mathbb{N}$ and the integration contour is from $z\in\C$ to real infinity. Then we have the following expression for expectation and variance of linear statistics
\begin{equation}
\begin{split}
\label{eq:linstatker}
\E \sum_i f(\sigma_i)&=\int_\C f(z)\widetilde{K}_n(z,z)\, \dd^2z \\
\V \sum_i f(\sigma_i)&=\int_\C f(z)^2\widetilde{K}_n(z,z)\, \dd^2z-\int_\C\int_\C f(z)f(w)\big|\widetilde{K}_n(z,w)\big|^2\, \dd^2z\dd^2w.
\end{split}
\end{equation}

We will now prove Lemma~\ref{lemma_Gin} as a consequence of the following technical result.
 
\begin{lemma}
\label{lem:kernbound}
Let $\gamma=\gamma_n:=\log n-2\log\log n-\log 2\pi$. Rescale the kernel variable as
\begin{equation}
\label{eq:resvar}
z=e^{\ii \theta}\left(1+\sqrt{\frac{\gamma}{4n}}+\frac{x}{\sqrt{4\gamma n}}\right),
\end{equation}
with $\theta\in [0,2\pi)$ and $x\in\R$. Then, in the regime $|x|\le \sqrt{\log n}/2$ we have the asymptotics
\begin{equation}
\label{eq:kernasympneed}
\frac{\widetilde{K}_n(z,z)}{\sqrt{n\gamma}}=\frac{e^{-x}}{\pi}\left(1+O\left(\frac{\log \log n+x^2}{\log n}\right)\right).
\end{equation}
Furthermore, for $x\ge 0$ we have the uniform bound
\begin{equation}
\label{eq:zzeno}
\frac{\widetilde{K}_n(z,z)}{\sqrt{\gamma n}}\lesssim |z|^2 e^{-x/3}.
\end{equation}
\end{lemma}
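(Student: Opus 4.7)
Since $w = z$, the exponential prefactor in the definition of $\widetilde K_n$ trivializes and $\widetilde K_n(z,z) = (n/\pi) Q_n(|z|^2)$, where $Q_n(a) := \Gamma(n,na)/\Gamma(n)$ is the upper incomplete gamma function evaluated just outside its Laplace saddle at $s=1$. My plan is to first establish, by saddle-point analysis combined with Stirling, the following base estimate uniformly for $a = 1 + \delta$ with $c\sqrt{\log n/n} \le \delta = o(1)$:
\begin{equation*}
Q_n(a) = \frac{e^{-n\phi(a)}}{\sqrt{2\pi n}\,\delta}\bigl(1 + O(1/(n\delta^2) + \delta)\bigr), \qquad \phi(a) := a - 1 - \log a.
\end{equation*}
The derivation is standard: write $Q_n(a) = (n^n/\Gamma(n))\int_a^\infty s^{n-1}e^{-ns}\,ds$, apply $\Gamma(n) = \sqrt{2\pi/n}\,n^n e^{-n}(1 + O(1/n))$, and change variables $s = a + t/n$ to turn the integral into $(a^{n-1}/n)\,e^{-na}\int_0^\infty e^{-t(a-1)/a + O(t/(na) + t^2/(na^2))}\,dt$; the unperturbed Laplace integral yields $a/(a-1)$ while the corrections are controlled by dominated convergence using the regularizer $e^{-t\delta/2}$ effective for $t \gg 1/\delta$.

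For \eqref{eq:kernasympneed} I substitute the parametrization $r = 1 + \sqrt{\gamma/(4n)} + x/\sqrt{4\gamma n}$, $\delta = r^2 - 1 = \sqrt{\gamma/n}(1 + x/\gamma) + O(\gamma/n)$, and Taylor-expand the saddle value:
\begin{equation*}
n\phi(a) = \frac{n\delta^2}{2} - \frac{n\delta^3}{3} + O(n\delta^4) = \frac{\gamma}{2} + x + \frac{x^2}{2\gamma} + O(\gamma^{3/2}/\sqrt n),
\end{equation*}
together with $1/\delta = \sqrt{n/\gamma}(1 + O(x/\gamma))$. Two elementary identities drive the rest: $e^{-\gamma/2} = \sqrt{2\pi}\,\log n/\sqrt n$ and $\log n/\gamma = 1 + O(\log\log n/\log n)$, both following directly from $\gamma = \log n - 2\log\log n - \log 2\pi$. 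Collecting these inputs, the factors of $\sqrt n$ and $\sqrt{2\pi}$ cancel against $\sqrt{n\gamma}$ in the denominator, leaving $e^{-x}/\pi$ to leading order, with the error $O((\log\log n + x^2)/\log n)$ arising from three sources of equal size on the relevant window $|x| \le \sqrt{\log n}/2$: the ratio $\log n/\gamma$, the Taylor expansion of $e^{-x^2/(2\gamma)}$, and the multiplicative correction $O(1/(n\delta^2) + \delta) = O(1/\log n)$ inherited from the base estimate.

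For the uniform bound \eqref{eq:zzeno} I argue in two ranges. For $0 \le x \le \sqrt{\log n}/2$ estimate \eqref{eq:kernasympneed} already yields the claim since $|z|^2 \ge 1$ and $e^{-x} \le e^{-x/3}$. For larger $x$ I replace the saddle-point asymptotics by the one-step integration by parts $\Gamma(n,na) \le (na)^{n-1} e^{-na} \cdot na/(na-n+1)$, yielding the deterministic inequality $Q_n(a) \lesssim a\,e^{-n\phi(a)}/[\sqrt n\,(a-1)]$ valid for every $a > 1$. Multiplying by $n/\pi$ and dividing by $\sqrt{n\gamma}$ produces an upper bound proportional to $a\,e^{-n\phi(a)}/[\sqrt\gamma\,\delta]$; inserting the expansion of $n\phi(a)$ and splitting into the subregimes $\sqrt{\log n}/2 \le x \le \gamma$ (where $1/\delta \lesssim \sqrt{n/\gamma}$ and $n\phi(a) \ge \gamma/2 + x$) and $x > \gamma$ (where $1/\delta \lesssim \sqrt{\gamma n}/x$ and the term $x^2/(2\gamma)$ in the exponent supplies extra decay that dominates the polynomial loss) checks the $|z|^2 e^{-x/3}$ bound in both cases. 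The principal technical obstacle is the bookkeeping inside the expansion for \eqref{eq:kernasympneed}: the error $O((\log\log n + x^2)/\log n)$ forces us to retain $x^2/(2\gamma)$ inside the exponential through the derivation and only afterwards linearize it, while simultaneously controlling the cubic Taylor error $O(n\delta^3) = O(\gamma^{3/2}/\sqrt n)$, which is small but must not be confused with the targeted error; once this is in place the remaining manipulations are routine.
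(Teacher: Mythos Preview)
Your approach is correct and follows essentially the same route as the paper: both reduce to an asymptotic for $\Gamma(n,na)/\Gamma(n)$ with $a=|z|^2>1$, then Taylor-expand the saddle value $n\phi(a)$ and invoke the identity $e^{-\gamma/2}=\sqrt{2\pi}\,(\log n)/\sqrt{n}$. The paper obtains the base estimate by citing the erfc representation $\Gamma(n,nt)/\Gamma(n)=\frac{t\mu(t)}{\sqrt{2}(t-1)}\mathrm{erfc}(\sqrt{n}\mu(t))(1+O(n^{-1/2}))$ with $\mu(t)^2=\phi(t)$ from Rider--Sinclair and then expanding erfc, whereas you derive it directly by endpoint Laplace; these are equivalent.

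The one place where the paper's argument is cleaner is the uniform bound \eqref{eq:zzeno}. Instead of splitting on the size of $x$ and invoking the Taylor expansion of $n\phi(a)$ (which tacitly requires $\delta=|z|^2-1=o(1)$, i.e.\ $x=o(\sqrt{\gamma n})$), the paper uses a single \emph{linear} lower bound $\phi(t)\ge \delta_0(1-\delta_0)(t-1)/2$ valid for all $t\ge 1+\delta_0$ with $\delta_0=\sqrt{\gamma/n}$; this gives $n\phi(|z|^2)\ge \tfrac{\gamma}{2}(1-\sqrt{\gamma/n})+x/3$ uniformly in $x\ge 0$ and finishes in one stroke. Your case analysis works, but as written the step ``inserting the expansion of $n\phi(a)$'' for $x>\gamma$ needs the observation that either $\delta<1$ (so $\phi(1+\delta)\ge \delta^2/2-\delta^3/3\ge \delta^2/(2(1+\delta))$ gives $n\phi(a)\ge (\gamma+x)^2/(4\gamma)$, which suffices) or $\delta\ge 1$ (where $n\phi(a)\gtrsim n\delta$ is enormous and the bound is trivial). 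Once this is made explicit your argument is complete.
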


\begin{proof}[Proof of Lemma~\ref{lemma_Gin}]

By \eqref{eq:linstatker}, given the asymptotic in \eqref{eq:kernasympneed} and the uniform bound in \eqref{eq:zzeno}, the proof of Lemma~\ref{lemma_Gin} is completely analogous to the proof of~\cite[Lemma 3.1]{maxRe}.

\end{proof}

We conclude this section with the proof of Lemma~\ref{lem:kernbound}.

\begin{proof}[Proof of Lemma~\ref{lem:kernbound}]

The proof of this lemma is completely analogous (actually easier) to~\cite[Lemma 6]{maxRe_Gin}. 
In particular, the current proof needs only $\widetilde{K}_n(z,z)$, instead of~\cite[Lemma 6]{maxRe_Gin} where  $|\widetilde{K}_n(w,z)|^2$ has been considered. We present the detailed proof here for completeness
following the steps from~\cite{maxRe_Gin}.

We start with the bound in \eqref{eq:zzeno}. Recall from \eqref{eq:defkerzw} that
\begin{equation}
\label{eq:defkerzz}
\widetilde{K}_n(z,z)=\frac{n}{\pi}\cdot\frac{\Gamma(n,n|z|^2)}{\Gamma(n)},
\end{equation}
and recall the asymptotic \cite[Lemma 3.2]{RS14}
\begin{equation}
\label{eq:gammaexp}
\frac{\Gamma(n,nt)}{\Gamma(n)}=\frac{t\mu(t)\mathrm{erfc}(\sqrt{n}\mu(t))}{\sqrt{2}(t-1)}\left(1+O\left(n^{-1/2}\right)\right), \qquad \mu(t):=\sqrt{t-\log t-1},
\end{equation}
uniformly in $t>1$. Next, plugging \eqref{eq:gammaexp} into \eqref{eq:defkerzz}, and using
\[
|z|^2=1+\frac{\gamma+x}{\sqrt{\gamma n}}+\frac{(x+\gamma)^2}{4\gamma n}\ge 1+\frac{\gamma+x}{\sqrt{\gamma n}}
\]
together with $\mathrm{erfc}(x)\lesssim e^{-x^2}/x$, we obtain that, for $x\geq 0$ 
\begin{equation}
\label{eq:finbzz}
\frac{\widetilde{K}_n(z,z)}{\sqrt{\gamma n}}\lesssim \frac{|z|^2e^{-n\mu(|z|^2)^2}}{\sqrt{\gamma}(|z|^2-1)}\lesssim \frac{\sqrt{n}}{\gamma}|z|^2 e^{-\gamma/2}e^{-x/3}.
\end{equation}
We remark that in the last inequality we also used that
\begin{equation}
\label{eq:uslbmu}
\mu(t)^2\ge t-\log t-1\ge \delta(1-\delta)(t-1)/2,
\end{equation}
for $t\ge 1+\delta$ and $\delta\in [0,1)$, i.e. for $t=|z|^2$ and $\delta=(\gamma/n)^{1/2}$ we used
\[
\big(\mu(|z|^2)\big)^2\ge \frac{\gamma+x}{2n}\left(1-\sqrt{\frac{\gamma}{n}}\right)\ge \frac{\gamma}{2n}\left(1-\sqrt{\frac{\gamma}{n}}\right)+\frac{x}{3n}.
\]
We then conclude the bound \eqref{eq:zzeno} plugging
\begin {equation}
\label{eq:egamma}
e^{-\gamma/2}=\exp\left(-\frac{1}{2}\log\frac{n}{(\log n)^2 2\pi}\right)=\frac{(2\pi)^{1/2}\gamma}{\sqrt{n}}\left(1+O\left(\frac{\log\log n}{\log n}\right)\right)
\end{equation}
in the rhs. of \eqref{eq:finbzz}.

Next, to compute the asymptotic in \eqref{eq:kernasympneed} we use the Taylor expansions
\begin{equation}
\mu(1+d)=\frac{d}{\sqrt{2}}+O(|d|^2),\qquad \mathrm{erfc}(t)= \frac{e^{-t^2}}{\sqrt{\pi}t}\left(1+O\left(\frac{1}{t^2}\right)\right),
\end{equation}
which, by \eqref{eq:gammaexp}, imply
\begin{equation}\label{expdec}
\begin{split}
\frac{\Gamma (n,n|z|^2)}{\Gamma(n)}&= \frac{|z|^2 e^{-n\mu(|z|^2)^2}}{\sqrt{2\pi} (|z|^2-1)}\left(1+O\left(\frac{1}{n\mu(|z|^2)^2}+\frac{1}{\sqrt{n}}\right)\right) \\
&= \frac{e^{-\frac{n}{2}(|z|^2-1)^2}}{\sqrt{2\pi n} (|z|^2-1)}\left(1+O\left(|z|^2-1+n(|z|^2-1)^3+\frac{1}{n(|z|^2-1)^2}\right) \right).
\end{split}
\end{equation}
Note that to go from the first to the second line we also used \eqref{eq:uslbmu} in
 the estimate of the error term. We thus finally conclude that, for $|x|\le \sqrt{\log n}/2$
\begin{equation}
\begin{split}
\frac{\widetilde{K}_n(z,z)}{\sqrt{n\gamma}}&
=\frac{e^{-x-\gamma/2}}{\sqrt{2\gamma}\pi^{3/2}(|z|^2-1)}\left(1+O\left(\frac{x^2}{\log n}\right)\right) \\
&=\frac{e^{-x}}{\pi}\left(1+O\left(\frac{\log \log n+x^2}{\log n}\right)\right),
\end{split}
\end{equation}
where to go from the first to the second line we used 
$(|z|^2-1)=\sqrt{\gamma/n}(1+O(|x|/\gamma))$ and \eqref{eq:egamma}.
\end{proof}

\section{Proof of Proposition~\ref{lemma_new_2}}
\label{app:decres}

First, we note that  by the rigidity estimate in \eqref{rigidity} we have 
(see~\cite[Eqs. (7.6)--(7.7)]{CES19} for exactly the same computations in the bulk regime) 
\begin{equation}\label{eq:0}
\<G^{z_1}(\ii \eta_2)\>=\frac{1}{2n}\sum_{|i|\le n^{\widehat{\omega}}} \frac{\eta_2}{(\lambda_i^{z_1})^2
+\eta_2^2}+O_\prec\left(\frac{\sqrt{n}\eta_2}{n^{\widehat{\omega}/2}}\right),
\end{equation}
where we chose $\wh \omega$ as in Theorem~\ref{theo:ind}, since we will shortly use
 Theorem~\ref{theo:ind} in the leading term of \eqref{eq:0} to compare it to the same 
 quantity with $\lambda_i^{z_1}$ replaced with $\mu_i^{(1)}$. Next, we compute
\[
\left|\E  \left[\<G^{z_2}(\ii \eta_2)\>O_\prec\left(\frac{\sqrt{n}\eta_2}{n^{\widehat{\omega}/2}}\right)\right]\right|
\le O_\prec\left(\frac{\sqrt{n}\eta_2}{n^{\widehat{\omega}/2}}\right)\E\big|\<G^{z_2}(\ii \eta_2)\>\big|
= O_\prec\left(\frac{\sqrt{n}\eta_2}{n^{\widehat{\omega}/2}}\right)\E\<\Im G^{z_2}(\ii \eta_2)\>
=O_\prec\left(\frac{n\eta_2^2}{n^{\widehat{\omega}/2}}\right),
\]
where in the penultimate equality we used that $\big|\<G^{z_2}(\ii \eta_2)\>\big|=\<\Im G^{z_2}(\ii \eta_2)\>$, 
and in the last equality we used Proposition~\ref{lemma2} for $k=1$. We thus obtain
\begin{equation}
\label{eq:1}
\E\Big[\<G^{z_1}(\ii \eta_2)\> \<G^{z_2}(\ii \eta_2)\>\Big]
=\E\left(\frac{1}{2n}\sum_{|i|\le n^{\widehat{\omega}}} 
\frac{\eta_2}{(\lambda_i^{z_1})^2+\eta_2^2}\right)
\left(\frac{1}{2n}\sum_{|i|\le n^{\widehat{\omega}}} \frac{\eta_2}{(\lambda_i^{z_2})^2+\eta_2^2}\right)
+O_\prec\left(\frac{n\eta_2^2}{n^{\widehat{\omega}/2}}\right),
\end{equation}
which shows that it is sufficient to prove \eqref{eq:almostlast} when all $G$'s are replaced by its 
leading approximation, the first term in the right hand side of~\eqref{eq:0}.

By a simple GFT argument (see e.g. the last display in~\cite[Section 5.4]{EKS20}) it follows that
\[
\E\Big[\<G^{z_1}(\ii \eta_2)\> \<G^{z_2}(\ii \eta_2)\>\Big] =\E\Big[\<G^{z_1}_{ct_1}(\ii \eta_2)\> 
\<G^{z_2}_{ct_1}(\ii \eta_2)\>\Big] + O_\prec\left(t_1n^{-1/4+10\delta}\right),
\]
for $t_1:=n^{-1/2+\omega_1}$ as in Theorem~\ref{theo:ind} and 
$c=c(t_1)=1+(1-e^{-t_1})t_1^{-1}=1+O(t_1)$, and $G_{ct_1}^z$ being the
 resolvent of the Hermitization of $X_{ct_1}-z$, with $X_t$ defined in~(\ref{eq:diffmat}).

Then, together with \eqref{eq:1}, this implies
\begin{equation}
\label{eq:2}
\begin{split}
&\E\left[\left(\frac{1}{2n}\sum_{|i|\le n^{\widehat{\omega}}} \frac{\eta_2}{(\lambda_i^{z_1})^2+\eta_2^2}\right)
\left(\frac{1}{2n}\sum_{|i|\le n^{\widehat{\omega}}} \frac{\eta_2}{(\lambda_i^{z_2})^2+\eta_2^2}\right)\right] \\
&\qquad=\E\left[\left(\frac{1}{2n}\sum_{|i|\le n^{\widehat{\omega}}}
 \frac{\eta_2}{(\lambda_i^{z_1}(ct_1))^2+\eta_2^2}\right)
 \left(\frac{1}{2n}\sum_{|i|\le n^{\widehat{\omega}}} \frac{\eta_2}{(\lambda_i^{z_2}(ct_1))^2
 +\eta_2^2}\right)\right]+O_\prec\left(t_1 n^{-1/4+10\delta}+\frac{n\eta_2^2}{n^{\widehat{\omega}/2}}\right).
\end{split}
\end{equation}

We now want to replace the $\lambda_i^{z_l}(ct_1)$ with $\mu_i^{(l)}(ct_1)$ using Theorem~\ref{theo:ind}.  
Under the assumptions $-C_*n^{-1/2+\tau}\le |z_l|^2-1\le C^* n^{-1/2+\tau}$ and $|z_1-z_2|\ge n^{-\gamma}$, 
the bound \eqref{ovass}, which is the main assumption in Theorem~\ref{theo:ind}, holds 
 (see the detailed argument around \eqref{uu}). 
Hence, by Theorem~\ref{theo:ind} the high probability bound \eqref{eq:mainb} holds as well. 
Then, by \eqref{eq:mainb}, we readily obtain (note that \eqref{eq:mainb} also holds if $t_1$ is
 replaced by $ct_1$ since $c\approx 1$)
\begin{equation}
\label{eq:2.1}
\begin{split}
\frac{1}{2n}\sum_{|i|\le n^{\widehat{\omega}}} \frac{\eta_2}{(\lambda_i^{z_l}(ct_1))^2+\eta_2^2}
-\frac{1}{2n}\sum_{|i|\le n^{\widehat{\omega}}} \frac{\eta_2}{(\mu^{(l)}(ct_1))^2+\eta_2^2}&
=\frac{1}{2n}\sum_{|i|\le n^{\widehat{\omega}}}\frac{\eta_2(\lambda_i^{z_l}(ct_1)-\mu_i^{(l)}(ct_1))(\lambda_i^{z_l}(ct_1)+\mu_i^{(l)}(ct_1))}{[(\lambda_i^{z_l}(ct_1))^2+\eta_2^2][(\mu_i^{(l)}(ct_1))^2+\eta_2^2]} 
\\
&=O_\prec\left(\frac{1}{n^{3/4+\omega}\eta_2}\left[\big|\<G_{ct_1}^{z_l}(\ii \eta_2)\>\big|
+\big|\<\widetilde{G}_{ct_1}^{z_l}(\ii \eta_2)\>\big|\right] \right).
\end{split}
\end{equation}
Here $\widetilde{G}_t^{z_l}$ denotes the resolvent of the Hermitization of $X_t^{(l)}-z_l$, with $X_t^{(l)}$ 
 being the solution of \eqref{eq:Gindiff}. Note that to estimate the error term in \eqref{eq:2.1} 
 we also used the simple bound
\[
\frac{\eta_2(\lambda_i^{z_l}(ct_1)+\mu_i^{(l)}(ct_1))}{[(\lambda_i^{z_l}(ct_1))^2+\eta_2^2][(\mu_i^{(l)}(ct_1))^2+\eta_2^2]}\le \frac{1}{(\mu_i^{(l)}(ct_1))^2+\eta_2^2}+\frac{1}{(\lambda_i^{z_l}(ct_1))^2+\eta_2^2}.
\]

We now compute
\begin{equation}
\label{eq:addblong}
\begin{split}
&\E\left[\left(\frac{1}{2n}\sum_{|i|\le n^{\widehat{\omega}}} \frac{\eta_2}{(\lambda_i^{z_1}(ct_1))^2+\eta_2^2}\right)\left(\frac{1}{2n}\sum_{|i|\le n^{\widehat{\omega}}} \frac{\eta_2}{(\lambda_i^{z_2}(ct_1))^2+\eta_2^2}\right)\right]\\
&\qquad= \E\left[\left(\frac{1}{2n}\sum_{|i|\le n^{\widehat{\omega}}} \frac{\eta_2}{(\mu_i^{(1)}(ct_1))^2+\eta_2^2}\right)\left(\frac{1}{2n}\sum_{|i|\le n^{\widehat{\omega}}} \frac{\eta_2}{(\lambda_i^{z_2}(ct_1))^2+\eta_2^2}\right)\right]
\\
&\qquad\quad+\E\left[\left(\frac{1}{2n}\sum_{|i|\le n^{\widehat{\omega}}} \frac{\eta_2}{(\lambda_i^z(ct_1))^2+\eta_2^2}-\frac{1}{2n}\sum_{|i|\le n^{\widehat{\omega}}} \frac{\eta_2}{(\mu_i^{(1)}(ct_1))^2+\eta_2^2}\right)\left(\frac{1}{2n}\sum_{|i|\le n^{\widehat{\omega}}} \frac{\eta_2}{(\lambda_i^{z_2}(ct_1))^2+\eta_2^2}\right)\right] \\
&\qquad= \E\left[\left(\frac{1}{2n}\sum_{|i|\le n^{\widehat{\omega}}} \frac{\eta_2}{(\mu_i^{(1)}(ct_1))^2+\eta_2^2}\right)\left(\frac{1}{2n}\sum_{|i|\le n^{\widehat{\omega}}} \frac{\eta_2}{(\lambda_i^{z_2}(ct_1))^2+\eta_2^2}\right)\right] \\
&\qquad\quad+O_\prec\left(\frac{\big|\<G_{ct_1}^{z_2}(\ii \eta_2)\>\big|}{n^{3/4+\omega}\eta_2}\left[\big|\<G_{ct_1}^{z_1}(\ii \eta_2)\>\big|+\big|\<\widetilde{G}_{ct_1}^{z_1}(\ii \eta_2)\>\big|\right] \right),
\end{split}
\end{equation}
where in the last equality we used \eqref{eq:2.1}. An analogous bound holds when
 we replace the sum over the $\lambda_i^{z_2}(ct_1)$ with the one over the $\mu_i^{(2)}(ct_1)$.

Next, using the Cauchy-Schwarz inequality and Proposition~\ref{lemma2} for $k=2$, we get
\begin{equation}
\label{eq:goodbnew}
\E \Big|\<G_{ct_1}^{z_1}(\ii \eta_2)\> \<G_{ct_1}^{z_2}(\ii \eta_2)\>\Big|\le \left(\E \Big|\<G_{ct_1}^{z_1}(\ii \eta_2)\>\Big|^2\right)^{1/2}\left( \E\Big|\<G_{ct_1}^{z_2}(\ii \eta_2)\>\Big|^2\right)^{1/2}\le n\eta_2^2.
\end{equation}
A similar bound holds with $\<G_{ct_1}^{z_1}(\ii \eta_2)\>$ replaced by $\<\widetilde{G}_{ct_1}^{z_1}(\ii \eta_2)\>$. 
Then, using \eqref{eq:goodbnew} to estimate the error term in \eqref{eq:addblong}, we obtain
\begin{equation}
\label{eq:3}
\begin{split}
&\E\left(\frac{1}{2n}\sum_{|i|\le n^{\widehat{\omega}}} \frac{\eta_2}{(\lambda_i^{z_1}(ct_1))^2+\eta_2^2}\right)\left(\frac{1}{2n}\sum_{|i|\le n^{\widehat{\omega}}} \frac{\eta_2}{(\lambda_i^{z_2}(ct_1))^2+\eta_2^2}\right)\\
&\qquad=\E\left(\frac{1}{2n}\sum_{|i|\le n^{\widehat{\omega}}} \frac{\eta_2}{(\mu_i^{(1)}(ct_1))^2+\eta_2^2}\right)\left(\frac{1}{2n}\sum_{|i|\le n^{\widehat{\omega}}} \frac{\eta_2}{(\mu_i^{(2)}(ct_1))^2+\eta_2^2}\right)+O_\prec\left(\frac{n^{1/4}\eta_2}{n^\omega}\right).
\end{split}
\end{equation}

Combining \eqref{eq:1},\eqref{eq:2}, and \eqref{eq:3}, we conclude 
\begin{equation}
\label{eq:finsplit}
\begin{split}
\E\Big[\<G^{z_1}(\ii \eta_2)\> \<G^{z_2}(\ii \eta_2)\>\Big]&=\E\left[\left(\frac{1}{2n}\sum_{|i|\le n^{\widehat{\omega}}} \frac{\eta_2}{(\mu_i^{(1)}(ct_1))^2+\eta_2^2}\right)\left(\frac{1}{2n}\sum_{|i|\le n^{\widehat{\omega}}} \frac{\eta_2}{(\mu_i^{(2)}(ct_1))^2+\eta_2^2}\right)\right] \\
&\quad+O_\prec\left(\frac{n^{1/4}\eta_2}{n^\omega}+\frac{n\eta_2^2}{n^{\widehat{\omega}/2}}+\frac{t_1 n^{10\delta}}{n^{1/4}}\right).
\end{split}
\end{equation}
Additionally, since $\mu_i^{(1)}(t)$ and $\mu_i^{(2)}(t)$ are fully independent for any $t\ge 0$ by construction, we also have
\begin{equation}
\label{eq:splitexp}
\begin{split}
&\E\left[\left(\frac{1}{2n}\sum_{|i|\le n^{\widehat{\omega}}} \frac{\eta_2}{(\mu_i^{(1)}(ct_1))^2+\eta_2^2}\right)\left(\frac{1}{2n}\sum_{|i|\le n^{\widehat{\omega}}} \frac{\eta_2}{(\mu_i^{(2)}(ct_1))^2+\eta_2^2}\right)\right] \\
&\qquad\qquad\quad= \E\left[\left(\frac{1}{2n}\sum_{|i|\le n^{\widehat{\omega}}} \frac{\eta_2}{(\mu_i^{(1)}(ct_1))^2+\eta_2^2}\right)\right]\E\left[\left(\frac{1}{2n}\sum_{|i|\le n^{\widehat{\omega}}} \frac{\eta_2}{(\mu_i^{(2)}(ct_1))^2+\eta_2^2}\right)\right].
\end{split}
\end{equation}

Finally, proceeding exactly as in \eqref{eq:1},\eqref{eq:2}, and \eqref{eq:3}, this time for the product of the expectations rather than the expectation of the product, we conclude that
\begin{equation}
\begin{split}
\label{eq:finest}
&\E\left[\left(\frac{1}{2n}\sum_{|i|\le n^{\widehat{\omega}}} \frac{\eta_2}{(\mu_i^{(1)}(ct_1))^2+\eta_2^2}\right)\right]\E\left[\left(\frac{1}{2n}\sum_{|i|\le n^{\widehat{\omega}}} \frac{\eta_2}{(\mu_i^{(2)}(ct_1))^2+\eta_2^2}\right)\right] \\
&\qquad =\E\Big[\<G^{z_1}(\ii \eta_2)\>\Big]\E\Big[\<G^{z_2}(\ii \eta_2)\>\Big]+O_\prec\left(\frac{n^{1/4}\eta_2}{n^\omega}+\frac{n\eta_2^2}{n^{\widehat{\omega}/2}}+\frac{t_1 n^{10\delta}}{n^{1/4}}\right).
\end{split}
\end{equation}
Combining \eqref{eq:finest} with \eqref{eq:finsplit} we conclude the proof of Proposition~\ref{lemma_new_2}. 
\qed

\end{document}